\documentclass[a4paper]{amsart}

\usepackage[utf8]{inputenc}
\usepackage[english]{babel}
\usepackage{amsmath,amsthm,amssymb,hyperref,microtype}
\usepackage[percent]{overpic}
\usepackage[all]{xy}
\usepackage{tikz}
\usetikzlibrary{arrows}
\usepackage{multirow}
\usepackage{fancybox}

%\usetikzlibrary{fit,backgrounds}

\usepackage{hhline}

\newcommand{\R}{\mathbb R} 
\newcommand{\Z}{\mathbb Z}
 
\newcommand{\N}{\mathbb N}

\newcommand{\inv}{^{-1}}
\newcommand{\Sing}{\text{Sing}}
\newcommand{\ind}{\text{ind}}
\newcommand{\defi}{\textbf}

\newcommand{\QSone}{Q_{\mathbb S^1}}
\newcommand{\PF}{A} % Prefix-suffix automaton

\newcommand{\Trepul}{T_{\sigma\inv}}
\newcommand{\Tattr}{T_{\sigma}}
\newcommand{\Pmin}{\mathcal P_{\text{min}}}
\newcommand{\Pmax}{\mathcal P_{\text{max}}}
\newcommand{\ssm}{\smallsetminus}

\newcommand{\upor}[1]{\overset{\leadsto}{#1}}   % 

\newcommand{\circlebox}[2]{\rule[-1.2ex]{0pt}{4ex}\tikz[baseline = (text.base)]{\node[thick,rounded corners=3pt, draw=#1, text=#1] (text) {#2};}}

% THEOREM Environments---------------------------------------------------

\newtheorem{thm}{Theorem}
\newtheorem{lem}{Lemma}[section]

\newtheorem{prop}[lem]{Proposition}

\newtheorem*{thm*}{Theorem}
\newtheorem*{prop*}{Proposition}
\newtheorem*{thm-main*}{Theorem~\ref{thm:main}}

\theoremstyle{definition} 
\newtheorem{defn}[lem]{Definition}
\newtheorem*{defn*}{Definition} 
\newtheorem{rem}[lem]{Remark}
\newtheorem*{rem*}{Remark} 
\newtheorem{example}[lem]{Example}

\newtheorem{quest}[lem]{Question}
\numberwithin{equation}{section} 
\newtheorem{algo}[lem]{Algorithm}

\begin{document}

\title{Tree substitutions and {R}auzy fractals}

\author{Thierry Coulbois}
\author{Milton Minervino}

\address{Institut de mathématiques de Marseille (i2m), 39 rue F.~Joliot-Curie, 13013 Marseille, France}

\email{thierry.coulbois@univ-amu.fr, milton.minervino@univ-amu.fr}

\date{\today}

\keywords{Substitution, Tiling, Rauzy fractal, Free group automorphism, Tree, Real Tree, Interval Exchange, IET}

\subjclass{20E05, 20E08, 20F65, 28A80, 37B10, 37E05, 52C23}

\thanks{The second author was founded by Labex Archimède - Université d'Aix-Marseille}

\begin{abstract}
  We work with attracting subshifts generated by substitutions which
  are also irreducible parageometric automorphisms of free groups. For
  such a dynamical system, we construct a tree substitution to
  approximate the repelling real tree of the automorphism. We produce
  images of this tree inside the Rauzy fractal when the substitution
  is irreducible Pisot. We describe the contour of this tree
  and compute an interval exchange transformation of the circle
  covering the original substitution.
\end{abstract}

\maketitle

\section{Introduction}

\subsection{Results}

The main objects of our work are attracting subshifts generated by a
substitution $\sigma$ over a finite alphabet $A$. These are also
called substitutive subshifts  and are defined as the action of the
shift $S$ on the set $X_\sigma$ of bi-infinite words all of whose
finite factors are factors of some iterations of the substitution on
some letter.  The word ``attracting" reflects that under iterations of
the substitution any letter $a \in A$ converges to the attracting
subshift $X_\sigma$.

Our motivation is to find a suitable geometrical interpretation for
these subshifts.

Rauzy fractals provide such a geometric interpretation. If $\sigma$ is
irreducible Pisot, there exists a continuous map $\varphi:X_\sigma\to
E_c$ from the attracting subshift $X_\sigma$ into the contracting
space $E_c$ of the abelianization matrix $M_\sigma$. The image
$\mathcal R_\sigma=\varphi(X_\sigma)$ is called the Rauzy fractal, it
is a compact subset equipped with a domain exchange and the contracting 
action of the matrix $M_\sigma$, which are the push-forward of the shift 
map and of the substitution respectively. This celebrated construction
goes back to the seminal work of Rauzy~\cite{rauzy} and was later
generalized by Arnoux and Ito~\cite{arnoux-ito}.

Another geometric interpretation arises when $\sigma$ is also a fully
irreducible (abbreviated as iwip) automorphism of the free group $F_A$
on the alphabet $A$. Following the work of Bestvina and Handel,
Gaboriau, Jaeger, Levitt and, Lustig~\cite{gjll} described the repelling
tree of such an iwip automorphism. This is a real tree $\Trepul$ with
an action of the free group by isometries and a homothety $H$. The
homothety is contracting by a factor $\frac{1}{\lambda_{\sigma\inv}}$,
where $\lambda_{\sigma\inv}$ is the expansion factor of the inverse
automorphism of $\sigma$. Later a continuous map $Q:X_\sigma\to
\overline{T}_{\sigma^{-1}}$, where $\overline{T}_{\sigma^{-1}}$ is the
metric completion of the repelling tree, was defined~\cite{ll-north-south,chl4}. The image $\Omega_A=Q(X_\sigma)$ is
a compact subset and the shift map is pushed-forward through $Q$ to
the actions of the elements of $A^{\pm 1}$, while the action of
$\sigma$ is pushed-forward to the homothety.

It was remarked~\cite{kl-diagonal-closure,chr} that these two
geometric interpretations are one above the other: when $\sigma$ is both
irreducible Pisot and iwip, there exists a continuous equivariant map
$\psi:\Omega_A\to\mathcal R_\sigma$ that makes the diagram commute:
\[
\xymatrix{
X_\sigma\ar[dr]_\varphi\ar[r]^Q & \Omega_A\ar[d]^\psi \\ & \mathcal
  R }
\]
When $\sigma$ is a parageometric iwip automorphism, $\Omega_A$ is
a connected subset of $\overline{T}_{\sigma^{-1}}$: a compact
$\R$-tree, called the compact heart. The first aim of this paper is to provide approximations of
the compact heart and to draw them inside the Rauzy fractal.

The approximation of the repelling tree (or rather of its subtree
$\Omega_A$) is achieved by a tree
substitution. Jullian \cite{jullian-coeur} introduced this version of
graph-directed self-similar systems (in the sense of~\cite{mw}) and
described some examples, in particular in the case of the Tribonacci
automorphism $\sigma:a\mapsto ab, b\mapsto ac,c\mapsto a$. We
generalize this construction: we construct an abstract
tree substitution which converges to the repelling tree of the
automorphism.

\begin{thm}[Proposition~\ref{prop:tree-substitution-converge} and Algorithms~\ref{algo:singular-points} and \ref{algo:gluing-and-tau}]\label{thm:abstract-tree}
Let $\sigma$ be a primitive substitution on a finite alphabet $A$ and
a parageometric iwip automorphism of the free group $F_A$. Then there
exists a tree substitution $\tau$ such that the iterations of $\tau$
on the initial tree $W$ renormalized by the ratio of the contracting
homothety converge to the repelling tree of the automorphism $\sigma$:
\[
\lim_{n\to\infty}\bigg(\frac 1{\lambda_{\sigma\inv}}\bigg)^n\tau^n(W)=\Omega_A\subseteq\overline{T}_{\sigma^{-1}}.
\]
We provide an algorithm to construct this tree substitution.
\end{thm}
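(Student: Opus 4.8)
The plan is to exploit the self-similar structure that the contracting homothety $H$ imposes on the compact heart $\Omega_A$. Fix a train-track representative $f$ of $\sigma$ on a graph --- after subdivision, a rose with one petal per letter --- so that the description recalled in the introduction applies: $\overline{T}_{\sigma\inv}$ carries the $F_A$-action and the homothety $H$ of ratio $1/\lambda_{\sigma\inv}$, the map $Q\colon X_\sigma\to\overline{T}_{\sigma\inv}$ intertwines the shift $S$ with the actions of the letters of $A^{\pm1}$, and $Q$ conjugates the substitution $\sigma$ (as an injective map $X_\sigma\to X_\sigma$) to $H$, up to the action of a letter. Writing $\sigma(a)=a_1\cdots a_{\ell_a}$, the induced decomposition of $X_\sigma$ into shifted cylinders on the $a_i$ inside $\sigma(X_\sigma)$ pushes forward, through $Q$, to a covering of $\Omega_A$ by finitely many $F_A$-translates of the scaled copy $H(\Omega_A)$. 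Recording this covering combinatorially is exactly what a tree substitution does.

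First I would isolate the \emph{singular points} of $\Omega_A$: its branch points together with the endpoints of the pieces $w\cdot H(\Omega_A)$ above. In the train-track picture these are governed by the illegal turns of $f$ and the indivisible Nielsen paths; Algorithm~\ref{algo:singular-points} should output them as a finite union of $F_A$-orbits of points of $\overline{T}_{\sigma\inv}$, and pulling them back through $Q$ realises them as periodic points of the subshift. Taking the initial tree $W$ to be the convex hull of these singular points gives a finite subtree of $\overline{T}_{\sigma\inv}$ which, up to the $F_A$-translations dictated by $\sigma$, contains $H(W)$; its edges serve as the letters of the tree alphabet.

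Next, for each edge $e$ of $W$ I would compute $\lambda_{\sigma\inv}H^{-1}(e)$ and express it as a concatenation of $F_A$-translates of edges of $W$, reading off which endpoints are identified; this is Algorithm~\ref{algo:gluing-and-tau}, and it defines the tree substitution $\tau$. One checks that $\tau$ is primitive --- inherited from primitivity of $\sigma$ via $f$ --- so that $\tau^n(W)$ grows and $(1/\lambda_{\sigma\inv})^n$ is the correct renormalisation. Convergence is then a Mauldin--Williams attractor argument in the sense of \cite{mw}: the operator $\mathcal T\mapsto(1/\lambda_{\sigma\inv})\,\tau(\mathcal T)$ on compact subtrees of $\overline{T}_{\sigma\inv}$, equipped with the Hausdorff distance, is a contraction with a unique fixed point, and the commuting relation between $Q$, $\sigma$ and $H$ together with density of cylinders in $X_\sigma$ identifies that fixed point with $\Omega_A$. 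This is Proposition~\ref{prop:tree-substitution-converge}.

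The hard part will be the consistency of the gluing: I must show that the identifications forced at level $n$ are compatible with those at level $n+1$, and that no \emph{spurious} identifications occur --- so that $\tau$ really maps trees to trees and the attractor is a genuine $\R$-tree rather than a quotient with loops. This is where the hypotheses enter. Bounded backtracking (bounded cancellation) for the train-track map bounds how far $F_A$-translates of $H(\Omega_A)$ can overlap, which confines identifications to the recorded singular points; and the parageometric hypothesis --- equivalently $\lambda_\sigma\neq\lambda_{\sigma\inv}$, or compactness and connectedness of the heart --- guarantees that $\Omega_A$ is itself a compact $\R$-tree, so that the limit set is exactly $\Omega_A$ and not a strictly larger or smaller subset of $\overline{T}_{\sigma\inv}$.
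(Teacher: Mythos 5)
Your overall architecture matches the paper's: isolate a finite set of distinguished points from Nielsen-path/asymptotic-pair data, take $W$ to be their convex hull, read the gluing instructions off the self-similar decomposition $\Omega_A=\bigcup_\gamma p(\gamma)\inv H^n(\Omega_{a_n})$, and renormalize. But two steps you treat as routine are where the actual content lies, and as written they have gaps.

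First, your set of ``singular points'' is not the right one. The branch points of $\Omega_A$ are not part of the initial data: a single tile $\Omega_a$ contains infinitely many of them, their locations and prefix-suffix expansions are not directly computable, and the paper deliberately excludes them (Remark~\ref{rem:branch-point-desubstitution} recovers them \emph{a posteriori} by iterating $\tau$). The correct finite set is $\Sing(\Omega_a)$, the points where two distinct tiles $\Omega_\gamma$, $\Omega_{\gamma'}$ of the same level meet, pulled back to the prototile by $\sigma^{-n}(p(\gamma))H^{-n}$. That this set is finite \emph{per prototile} is not an immediate consequence of there being finitely many $F_A$-orbits of such points: it requires that the singular bi-infinite words are finitely many (Queff\'elec), that their desubstitution paths are eventually periodic (Holton--Zamboni), and Moss\'e's recognizability to bound the shifts after which the two expansions merge (Lemma~\ref{lem:mosse}, Proposition~\ref{prop:finite-singular}). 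Your appeal to bounded cancellation gestures at the right circle of ideas but does not yield the eventual periodicity that makes the set of tails $\delta$, and hence $\Sing(\Omega_a)$, finite and computable. Relatedly, your algorithm for $\tau$ presupposes knowledge of the edges of $W$, i.e.\ of the simplicial structure of the convex hull of the singular points, which is exactly the information that is not available at the outset.

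Second, your convergence argument proves less than Proposition~\ref{prop:tree-substitution-converge}. A Mauldin--Williams contraction on compact subtrees of $\overline{T}_{\sigma\inv}$ in the Hausdorff metric does establish convergence for the canonical realization $\sigma_T$ \emph{inside} the repelling tree (this is Proposition~\ref{prop:connected-tree-substitution}, and it is essentially immediate since each piece of $H^n(\sigma_T^n(Y_a))$ sits inside the corresponding $\Omega_\gamma$, whose diameter shrinks like $\lambda_{\sigma\inv}^{-n}$). But the proposition cited by the theorem asserts Gromov--Hausdorff convergence for \emph{any} abstract prototiles $W_a$ carrying arbitrary metrics, with only the gluing data prescribed; there is then no ambient space and no a priori contraction. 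The paper handles this with a genuinely different argument: a matrix of distances $M$ between gluing points, whose Perron eigenvector is the vector of true distances in $\Trepul$, together with the indecomposability of $\Trepul$ to produce enough positivity in $M^n$ for uniform contraction in the Hilbert metric on the positive cone ($M$ need not be primitive). Finally, a small correction: parageometric is not equivalent to $\lambda_\sigma\neq\lambda_{\sigma\inv}$; the relevant characterizations are convexity of the limit set ($\Omega_A=K_A$) or maximality of the index.
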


The key of our algorithm is to describe the vertices of the tree $W$
by singular bi-infinite words.  The singular bi-infinite words of the
attracting shift are also known in the literature as asymptotic pairs,
proximal pairs or special infinite words
\cite{cassaigne-speciaux, bd-complete-invariant,bd-proximality}. In
the train-track dialect finding singular words amounts to compute
periodic Nielsen paths for the automorphism $\sigma$. They exactly
correspond to pairs of distinct bi-infinite words with the same
image by $Q$.

Turning back to the Pisot hypothesis, using the map $\varphi$ we can
regard the tree substitution inside the contracting space. More
precisely we can use as vertices the images by $\varphi$ in the Rauzy
fractal of the singular words.  The surjectivity of the map $\varphi$
implies that the iterates under the tree substitution of the set of
singular points fills the Rauzy fractal. If we connect them, for
instance with line segments, we get a
picture of a tree reminding of a Peano curve inside the Rauzy
fractal. 

However, there is no guarantee that the tree substitution yields a
tree in the contracting space. Indeed, the map $\varphi$ is not injective
and distinct singular words can be misleadingly identified. This is
the case in all our examples. Nevertheless, we provide a way to overcome
this difficulty by using coverings of the tree substitution. A
covering consists in extending the alphabet to a bigger one $\tilde
A$, defining a forgetful map $f:\tilde A\to A$ and allowing different
prototiles $W_{\tilde a}$ for different $\tilde a\in f\inv(a)$ but
keeping the same self-similar structure. In practice we prune the tree
substitution, i.e. we erase some branches of the tree which are
dead-end and which create loops when iterating the embedded tree
substitution. This results in extending the definition of the tree
substitution to a larger number of initial subtrees.

Summing up the above discussion we get the following theorem.

\begin{thm}[Sections~\ref{sec:tree-substitution-in-Rauzy} and \ref{sec:pruning-cheating}]\label{thm:embedded-tree}
Let $\sigma$ be an irreducible Pisot substitution on a finite alphabet
$A$ and a parageometric iwip automorphism of the free group
$F_A$. Then, the tree substitution $\tau$ can be realized inside the
contracting space $E_c$ of $\sigma$ and the renormalized iterated
images $M_\sigma^n\tau^n(W)$ converge to the Rauzy fractal
$\mathcal{R}_\sigma$.

In all our examples we provide a covering tree
substitution which yields trees inside the contracting space.
\end{thm}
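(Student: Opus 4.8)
The plan is to transport the abstract tree substitution $\tau$ of Theorem~\ref{thm:abstract-tree} from $\overline{T}_{\sigma^{-1}}$ to the contracting space $E_c$ through the commutative triangle $X_\sigma\xrightarrow{Q}\Omega_A\xrightarrow{\psi}\mathcal R_\sigma$, and then to read off the convergence statement from the convergence already known in $\overline{T}_{\sigma^{-1}}$ together with continuity of $\psi$. Recall that the vertices of the initial tree $W$, and of every iterate $\tau^n(W)$, are indexed by singular bi-infinite words of $X_\sigma$; via $Q$ they become vertices of the compact heart $\Omega_A$, and via $\varphi=\psi\circ Q$ points of the Rauzy fractal $\mathcal R_\sigma\subseteq E_c$. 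The realization of $\tau$ in $E_c$ is obtained by mapping each prototile $W_a$ to the set of $\varphi$-images of its vertices, joined by line segments; one checks that $W_a$ lands inside the subtile $\mathcal R_\sigma(a)$ and that, since $\tau$ reproduces on singular words the action of $\sigma$, the subdivision rule of $\tau$ is compatible with the self-affine (Dumont--Thomas) set equation satisfied by the subtiles $\mathcal R_\sigma(a)$ under $M_\sigma$. Thus $\tau$ is realized as a genuine geometric tree substitution in $E_c$, the linear map $M_\sigma|_{E_c}$ playing the role on the $E_c$ side of the contracting homothety $H$ (of ratio $1/\lambda_{\sigma\inv}$) on the tree side; note that $M_\sigma|_{E_c}$ need not be conformal, so the realized trees are distorted copies of the abstract ones.

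For the convergence I would use the equivariance $\psi\circ H=M_\sigma\circ\psi$ on $\Omega_A$, which follows from $Q\circ\sigma=H\circ Q$ and $\varphi\circ\sigma=M_\sigma\circ\varphi$. Iterating it shows that the $E_c$-side renormalization by $M_\sigma^n$ corresponds, under $\psi$, exactly to the tree-side renormalization by $(1/\lambda_{\sigma\inv})^n$ of Theorem~\ref{thm:abstract-tree}; hence by that theorem and continuity of $\psi$ (uniform on the compact $\Omega_A$) the compact sets $M_\sigma^n\tau^n(W)$ converge, in the Hausdorff metric of $E_c$, to $\psi(\Omega_A)$. Since $\psi(\Omega_A)=\psi(Q(X_\sigma))=\varphi(X_\sigma)=\mathcal R_\sigma$ by definition of the Rauzy fractal, the limit is the whole of $\mathcal R_\sigma$. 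For the line-segment realization there is one extra point: in the renormalized tree $(1/\lambda_{\sigma\inv})^n\tau^n(W)$ the edges have length tending to $0$ (their number grows like $\lambda_{\sigma\inv}^{\,n}$ while the total size stays bounded), so consecutive vertices become arbitrarily close; by uniform continuity of $\psi$, replacing the geodesics of $\overline{T}_{\sigma^{-1}}$ joining them by straight chords in $E_c$ contributes a Hausdorff error that vanishes as $n\to\infty$, so the segment realization has the same limit.

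The delicate point --- and the reason the last assertion is made only for our examples, not in general --- is that $\varphi$ (hence $\psi$) is not injective: two distinct singular words may have the same image in $E_c$, so the realized $\varphi(\tau^n(W))$ need not be an embedded tree, and it develops folds and loops when one iterates the embedded substitution. To cure this I would pass to a covering of the tree substitution: enlarge the alphabet to $\tilde A$ with a forgetful map $f\colon\tilde A\to A$, allow distinct prototiles $W_{\tilde a}$ over the fibres $f^{-1}(a)$ while keeping the same self-similar combinatorics, and then prune the dead-end branches responsible for the loops of the embedded iteration, as carried out in Sections~\ref{sec:tree-substitution-in-Rauzy} and \ref{sec:pruning-cheating}. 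One must then check, in each example, that pruning the covering destroys neither the self-similar structure nor the compatibility with the set equation for the subtiles --- so that the renormalized iterates $M_\sigma^n\tau^n(W)$ still converge to $\mathcal R_\sigma$ --- and that the embedded iterates of the pruned covering are now simple, i.e. honest trees inside $E_c$. Checking simultaneously that enough is pruned to kill every loop and that not too much is pruned to break self-similarity is the main obstacle, and it is handled case by case.
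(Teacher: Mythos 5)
Your proposal is correct and follows essentially the same route as the paper: realize the vertices as the $\varphi$-images of the singular points (computable from their eventually periodic prefix-suffix expansions), use the equivariance $\psi\circ H=M_\sigma\circ\psi$ together with the shrinking of the tiles $M_\sigma^n\mathcal R_\sigma(a)+\pi_c(\ell(p(\gamma)))$ to get Hausdorff convergence of $M_\sigma^n\tau^n(W)$ to $\mathcal R_\sigma$, and handle the failure of injectivity of $\psi$ by pruning/covering verified example by example. One small precision: for the convergence step you should invoke the Hausdorff convergence of the subtrees $H^n(\sigma_T^n(Y_a))$ inside $\overline{T}_{\sigma^{-1}}$ (Proposition~\ref{prop:connected-tree-substitution}) rather than the Gromov--Hausdorff statement of Theorem~\ref{thm:abstract-tree}, since a continuous map does not by itself transport Gromov--Hausdorff limits of abstract metric spaces into a fixed ambient space.
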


We will show some pictures of these trees in the contracting space
like in Figure~\ref{fig:peano-tree-tribo-thirteen}.  Such pictures
where already obtained by Arnoux~\cite{A88} (for the Tribonacci
substitution) and Bressaud and Jullian~\cite{BJ12} (as well as some other
examples); in \cite{BDJP,DPV} the tree associated with the Tribonacci substitution is
obtained by connecting adjacent cubes of a thickened version of the
stepped approximation of the contracting plane.

\begin{figure}[h]
  \includegraphics[width=\textwidth, height=.7\textwidth]{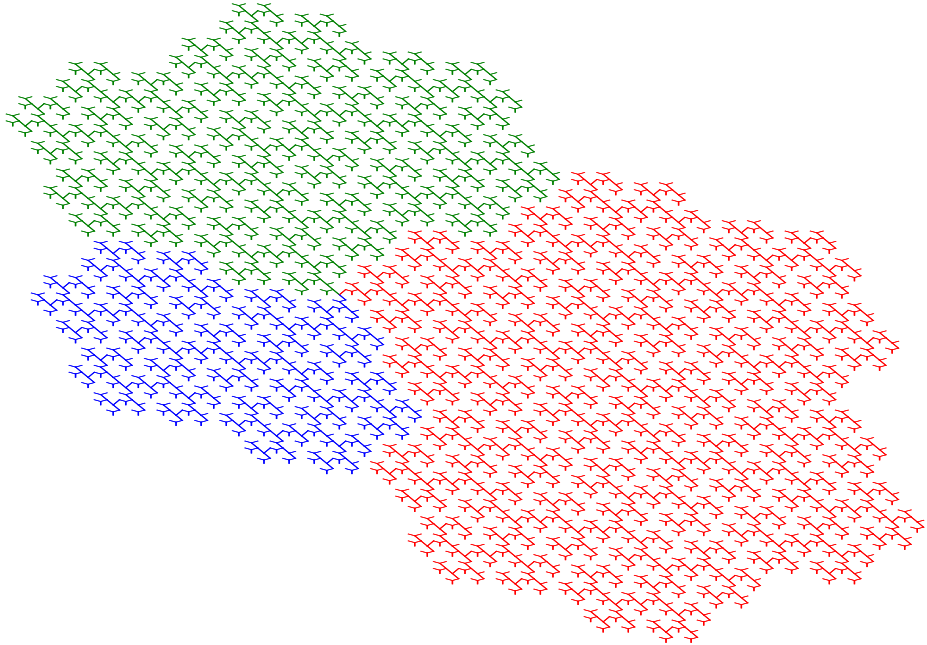}
  \caption{\label{fig:peano-tree-tribo-thirteen} Twelfth iterate of the
    tree substitution for the Tribonacci automorphism $\sigma:a\mapsto
    ab, b\mapsto ac,c\mapsto a$ inside the Rauzy fractal.}
\end{figure}

The last geometric interpretation we have in mind are interval
exchange transformations. We are looking for an interpretation of the
original attracting shift $X_\sigma$ as an interval exchange
transformation.

We take inspiration mainly from the Arnoux-Yoccoz interval
exchange~\cite{arnoux-yoccoz}. Arnoux and Yoccoz studied a certain
exchange of six intervals on the unit circle. They observed that this
exchange is conjugate to its first return map into a subinterval,
that is, it is self-induced. Moreover, it is measurably conjugate to
the attracting subshift defined by the Tribonacci substitution
$\sigma:a\mapsto ab$, $b\mapsto ac$, $c\mapsto a$. At the time, this
transformation provided one of the first non-trivial examples of
pseudo-Anosov automorphism with a cubic dilatation factor, namely the
dominant root of the polynomial $x^3-x^2-x-1$. Arnoux-Rauzy words
\cite{arnoux-rauzy} are another example of codings of six interval
exchanges.

For a substitution which is a parageometric iwip automorphism,
Bressaud and Jullian~\cite{BJ12} proved that the contour of the
compact heart $\Omega_A$ always provides a self-induced interval
exchange of the circle.

We provide an algorithm to compute the contour substitution. Using our
tree substitution $\tau$, we described the repelling tree $\Trepul$
and its compact heart $\Omega_A$ by the iterated images
$\tau^n(W)$. Once the cyclic orders at branch points of the finite
tree $W$ and $\tau(W)$ are chosen and satisfy the compatibility
conditions
\ref{condition:simplicial-isomorphism}-\ref{condition:Vershik-compatible},
we get a contour substitution $\chi$ and we prove that it is primitive
with same dominant eigenvalue as $\sigma$. It turns out that $\chi$
has a dual substitution $\chi^*$ and we get a continuous map
$\QSone:X_{\chi^*}\to\mathbb S^1$ from the attracting shift of
$\chi^*$ to the circle. The shift map and $\chi^*$ are pushed forward
by $\QSone$ to a piecewise exchange of the circle and to a piecewise
homothety respectively. The dual substitution $\chi^*$ defined on the
bigger alphabet $\tilde A$ covers $\sigma$: there exists a forgetful
map $f:\tilde A\to A$ such that $f\circ\chi^*=\sigma\circ f$. The
attracting shift $X_{\chi^*}$ factors onto $X_\sigma$.

\begin{thm}[Sections~\ref{sec:contour} and \ref{sec:vershik-map}]\label{thm:contour-iet}
Let $\sigma$ be a primitive substitution on a finite alphabet $A$
and a parageometric iwip automorphism of the free group
$F_A$.
Let $\tau$ be the tree substitution constructed in
Theorem~\ref{thm:abstract-tree}. Then, there exist cyclic orders on
the initial trees $W$ and $\tau(W)$ which define a contour
substitution $\chi$ and a dual contour substitution $\chi^*$ such that:
\begin{enumerate}
\item the substitutions $\chi$ and $\chi^*$ are primitive and the
  iterated images of the contour of $W$ under $\chi$ converge to the
  contour of the compact heart $\Omega_A$ of the repelling tree;
\item there exists a continuous map $\QSone: X_{\chi^*}\to\mathbb S^1$
  which pushes forward the action of the shift on $X_{\chi^*}$ to an
  interval exchange on the contour circle of $\Omega_A$ induced by the
  action of the elements of $A^{\pm 1}$ on the repelling tree $\Trepul$. 
  The action of $\chi^*$ is pushed forward to a piecewise contracting
  homothety of ratio $\frac{1}{\lambda_\sigma}$.
\end{enumerate}
We provide an algorithm to compute $\chi$ and $\chi^*$.
\end{thm}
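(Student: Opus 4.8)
The plan is to build the contour substitution $\chi$ directly from the self-similar structure of the tree substitution $\tau$, to read off its dual $\chi^*$, and then to run the standard one-dimensional Rauzy-fractal machinery for $\chi^*$ on the circle. First I would equip the branch points of $W$ and of $\tau(W)$ with cyclic orders. By Theorem~\ref{thm:abstract-tree} the simplicial type of $\tau(W)$ is a gluing of copies of the prototiles along their ends, so conditions~\ref{condition:simplicial-isomorphism}--\ref{condition:Vershik-compatible} are statements purely about $W$ and $\tau(W)$: one fixes the cyclic order on $\tau(W)$, transports it to $W$ through the simplicial isomorphisms, and spends the remaining freedom making the orders ``Vershik-compatible'', i.e.\ inducing the same successor relation on ends after one and after two applications of $\tau$. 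Writing $c(W)$ for the boundary walk of the planar tree $W$ --- the cyclic word visiting each oriented edge once, equivalently the word of turns at branch points --- condition~\ref{condition:simplicial-isomorphism} lets me cut $c(\tau(W))$ into blocks indexed by the letters of $c(W)$, and I \emph{define} $\chi$ to send each such letter to its block. An induction on $n$ (self-similarity of $\tau$ for the blocks, Vershik-compatibility for the cyclic order in which they are concatenated) yields $c(\tau^n(W))=\chi^n(c(W))$. The dual $\chi^*$ is obtained by reading the same data from the arcs that the turning points cut out on the contour: a level-$n$ arc subdivides into level-$(n+1)$ arcs and $\chi^*$ records this subdivision; a letter of $\chi^*$ is an oriented edge of $W$ together with a germ of the contour on one side, which is exactly what produces the enlarged alphabet $\tilde A$, the forgetful map $f$ and the identity $f\circ\chi^*=\sigma\circ f$ (so $X_{\chi^*}$ factors onto $X_\sigma$). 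The algorithm merely records the blocks and the arc subdivisions once the two cyclic orders have been chosen.

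Next I would establish primitivity and the Perron eigenvalue. The incidence structure of $\chi$ is governed by $M_\sigma$: each segment of $\tau^n(W)$ projects to an edge of $W$ subdivided according to $\sigma$, so the number of edges of $\tau^n(W)$, and hence the combinatorial length of $c(\tau^n(W))$, grows like $\lambda_\sigma^n$; from this one reads that $\chi$ is primitive (inherited from primitivity of $M_\sigma$) with Perron--Frobenius eigenvalue $\lambda_\sigma$, and the primitivity argument transfers to $\chi^*$ by duality with the same eigenvalue $\lambda_\sigma$. Note it is $\lambda_\sigma$ and not $\lambda_{\sigma\inv}$: the metric diameter of $\tau^n(W)$ grows like $\lambda_{\sigma\inv}^n$ by Proposition~\ref{prop:tree-substitution-converge}, while the combinatorial length of the contour grows like $\lambda_\sigma^n$, and these two rates differ because $\sigma$ is parageometric. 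For convergence, Proposition~\ref{prop:tree-substitution-converge} gives $(1/\lambda_{\sigma\inv})^n\tau^n(W)\to\Omega_A\subseteq\Trepul$; the boundary walk is continuous for this convergence because the graph-directed system defining $\tau$ is contracting, so the metrically renormalized contours form a Cauchy sequence of loops whose limit can only be the boundary walk of $\Omega_A$. Reparametrizing this walk by the combinatorial clock turns its parameter space into a topological circle $\mathbb S^1$ on which the turning points of the level-$n$ contour occupy the addresses prescribed by $\chi^n$; this proves claim~(1).

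For claim~(2) I would run the usual construction for the primitive substitution $\chi^*$: take the positive right Perron eigenvector of its incidence matrix as a vector of arc lengths, making $\chi^*$ a self-induced subdivision of $\mathbb S^1$ with inflation $\lambda_\sigma$, and for $w\in X_{\chi^*}$ let $\QSone(w)$ be the point selected by the shrinking sequence of arcs addressed by the nested desubstitutions of $w$ (well defined since $\lambda_\sigma>1$ forces the level-$n$ arcs to shrink uniformly). Uniform contraction gives continuity and uniform shrinking gives surjectivity onto $\mathbb S^1$. Equivariance is built in: $\QSone$ conjugates the shift on $X_{\chi^*}$ to the successor map of the Vershik order on turning points, which by the choice of cyclic orders in the first step is exactly the exchange of arcs of $\mathbb S^1$ induced by the action of the letters of $A^{\pm1}$ on $\Trepul$; and the action of $\chi^*$ is pushed forward to the piecewise contracting homothety of $\mathbb S^1$ of ratio $1/\lambda_\sigma$ given by the subdivision, because prepending $\chi^*(w_0)$ to an address of $\QSone$ rescales it by that factor.

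The delicate point --- and the main obstacle --- is the first step together with the ``no folding'' part of the convergence: one must show that cyclic orders satisfying \ref{condition:simplicial-isomorphism}--\ref{condition:Vershik-compatible} \emph{exist} (the repelling tree is an abstract $\R$-tree with no a priori planar structure), and that the limit boundary walk is an almost-everywhere injective parametrization of $\mathbb S^1$ rather than a space-filling identification, so that the pushed-forward dynamics is a genuine interval exchange rather than a degenerate one. I expect existence to be settled by propagating one coherent choice through the finitely many prototiles and verifying the two-step Vershik condition by a finite computation, and almost-everywhere injectivity to follow from a combinatorial non-degeneracy of the turns occurring in $c(\tau(W))$ --- equivalently, aperiodicity of $\chi^*$ together with the Pisot-type spectral behaviour forced by the pair $\lambda_\sigma,\lambda_\sigma^{-1}$.
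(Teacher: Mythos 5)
Your overall architecture matches the paper's: read the contour substitution $\chi$ off the boundary walk of $W$ and $\tau(W)$ once compatible cyclic orders are fixed, prove $\chi^n(c)=c(\tau^n(W))$ by induction (this is Proposition~\ref{prop:contour-iterate}), take the Perron eigenvector of $M_\chi$ as arc lengths, and push the Vershik map forward through $\QSone$. Your observation that the relevant eigenvalue is $\lambda_\sigma$ and not $\lambda_{\sigma\inv}$ is correct and is exactly how the paper sets up the circle. However, there are three places where your proposal either asserts something false or leaves the actual work undone.

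First, primitivity. You claim $\chi$ is primitive because its incidence structure is ``governed by $M_\sigma$'' and primitivity is ``inherited from primitivity of $M_\sigma$.'' This inference is not valid: $\chi^*$ is a covering of $\sigma$ via the forgetful map $f$, and a covering of a primitive substitution need not be primitive --- the growth-rate argument only produces a positive eigenvector with eigenvalue $\lambda_\sigma$ and shows that $M_{\chi^*}$ is block-diagonalizable with each block meeting every fibre $f\inv(a)$. To rule out several blocks the paper (Proposition~\ref{prop:Mdeta-primitive}) needs the extra hypothesis that $\tau^n(W)$ is sufficiently branching (unboundedly many extremal tiles), so that some single arc $a_{ij}$ eventually wraps around an entire extremal patch $W_b$ and hence one block already spans all of $f\inv(b)$; this forces a unique block. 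Your proposal contains no substitute for this step. Second, the finiteness of the exchange. Pushing the Vershik map forward through $\QSone$ a priori produces a rotation on each of the \emph{infinitely} many cylinder arcs $\QSone([\gamma])$ with non-empty suffix; to get an interval exchange one must show that all but finitely many adjacent such arcs are rotated by the same angle, so that the pieces coalesce into finitely many intervals bounded by $\QSone(\Pmax)$. This is the content of Lemma~\ref{lem:vershik-adjacent-cylinders}, whose proof uses Conditions~\ref{condition:simplicial-isomorphism}--\ref{condition:Vershik-compatible} in an essential way; your proposal asserts the conclusion (``exactly the exchange of arcs'') without addressing why the number of pieces is finite. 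Third, existence of the cyclic orders: you correctly flag this as the delicate point but only say you ``expect'' it to follow from a finite computation. The paper's argument (Remark~\ref{rem:order_from_Trepul}) is concrete: $F_A$ acts freely on $\Trepul$ with finitely many orbits of branch points of finite valence, and the homothety $H$ permutes the directions at these orbits, so after passing to a power of $H$ (and of $\sigma$) any choice of cyclic orders on direction-orbits extends to an $F_A$- and $H$-invariant family, which then satisfies \ref{condition:tile-cyclic-order}--\ref{condition:Vershik-compatible} by construction. Conversely, the almost-everywhere injectivity of the limit boundary walk that you list as a main obstacle is not actually needed: the paper sidesteps the non-injectivity of $\QSone$ by defining a piecewise rotation as a finite division into arcs each carrying a rotation, rather than as a self-map of $\mathbb S^1$.
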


We also refer to the work of Sirvent~\cite{Sir1,Sir2} on self-similar
interval exchange of the circle associated to some particular
substitutions. The trees we construct are dual to the geodesic
laminations obtained in his works.

It is conjectured that each irreducible Pisot
substitutive subshift has pure discrete spectrum: this is known as the
Pisot conjecture (see \cite{ABBLS15} for a survey). Pure discreteness
of the spectrum can be proved geometrically by showing that the Rauzy
fractal associated with an irreducible Pisot substitution tiles
periodically the contracting space. In this case, the substitutive
subshift is measurably conjugate to a translation on a torus. We
expect that our constructions of the tree substitution and of the
contour interval exchange could shed new light and open new techniques
to attack the Pisot conjecture, at least when the substitution is a
parageometric iwip automorphism.

\subsection{Techniques and self-similarity}

All the objects we consider, that is, attracting shifts, repelling
trees and their compact hearts, Rauzy fractals, tree substitutions,
contour interval exchanges of the circle, are governed by
self-similarity. This self-similarity is best described using the
prefix-suffix automaton of the
substitution. Mossé~\cite{mosse-reconnaissabilite} proved that any
bi-infinite word in the attracting subshift of a primitive
substitution can be uniquely desubstituted: this allows to define a
map $\Gamma:X_\sigma\to \mathcal{P}$ from the attracting shift to the
set $\mathcal P$ of infinite desubstitution paths.  For a finite path
$\gamma$ in the prefix-suffix automaton we consider the cylinder
$[\gamma]$ of the bi-infinite words in $X_\sigma$ with desubstitution
paths ending by $\gamma$. Then $X_\sigma$ decomposes as the disjoint
union of cylinders $[\gamma]$, for all paths $\gamma$ of length $n$ in
the prefix-suffix automaton. This self-similarity is passed through
$Q$ to the repelling tree and through $\varphi$ to the Rauzy fractal:
\[ \Omega_A=\bigcup_\gamma p(\gamma)\inv H^n(\Omega_a),\quad \mathcal
  R_\sigma= \bigcup_\gamma M_\sigma^{n}\mathcal R_\sigma(a) +
  \pi_c(\ell(p(\gamma))) \]
where $a$ is the starting letter of $\gamma$, $p(\gamma)$ is its
prefix, $\ell$ is the abelianization map, $\pi_c$ the projection to
the contracting space and $H$ is the contracting homothety.
%and the images $\Omega_\gamma=Q([\gamma])$ in the repelling tree and,
%$\mathcal R_{\gamma}=\varphi([\gamma])=\psi(\Omega_\gamma)$ in the
%Rauzy fractal. At step $n$ the self-similarity is
%expressed by
%\[
%X_\sigma=\bigsqcup_{\gamma}[\gamma],\quad
%\Omega_A=\bigcup_{\gamma}\Omega_\gamma\quad \text{and}\quad \mathcal
%R_\sigma=\bigcup_{\gamma}\mathcal R_\gamma,\] where the unions are
%taken over all paths $\gamma$ of length $n$ in the prefix-suffix
%automaton. 
If $\sigma$ is a parageometric iwip automorphism the tiles
$\Omega_\gamma$ are compact trees with at most one point in common,
while with the irreducible Pisot hypothesis the tiles $\mathcal
R_\gamma$ are disjoint in measure, if the strong coincidence
condition holds.

The tree substitution of Theorem~\ref{thm:abstract-tree} reflects the 
self-similarity in the repelling tree. Indeed, iterates of the 
substitution $\tau$ on an initial patch $W$ are made of  tiles
$W_\gamma$, each of them isomorphic to one of the finitely many
 prototiles $(W_a)_{a\in A}$:
\[
\tau^n(W)=\bigcup_\gamma W_\gamma.
\]

The key of our algorithm to construct the tree substitution is to
describe how to glue together the abstract trees $W_\gamma$ to mimic
the self-similarity of $\Omega_A$.  These gluing instructions, or
adjacency relations, between the tiles $W_\gamma$, are governed by
pairs of singular bi-infinite words which are exactly the pairs with the same
$Q$-image in the repelling tree.
From the works of Queffelec~\cite{quef-book} and, Holton and
Zamboni~\cite{hz-directed} there are finitely many singular words and
their desubstitution paths are eventually periodic. 

The Rauzy fractal is also the limit
of projected and renormalized iterates of the dual substitution
$E_1^*(\sigma)$ acting on codimension one faces of the unit
hypercube. We denote such a face based at $x$ and orthogonal to the
basis vector $e_i$ by $(x,i)^*$. The dual substitution is defined by
\[
E_1^*(\sigma)^n\left(\bigcup_{i\in
  A}(0,i)^*\right)=\bigcup_\gamma \left(M_\sigma^{-n}\ell(p(\gamma)),a\right)^*. 
\]
This is once again the same self-similarity and, we also use the dual
substitution to draw our tree substitution in the contracting space.

Regarding the construction of the contour map and the piecewise
exchange on the circle, we consider infinite paths in the
prefix-suffix automaton of $\chi$ and $\chi^*$. In this way, the shift
map on the attracting shift $X_{\chi^*}$ is translated through
$\Gamma$ to what is called the Vershik map. 
Pushing forward by $\QSone$ the Vershik map gives the piecewise rotation 
on the circle.

We implemented our algorithms in Sage~\cite{sagemath}.

\subsection{Perspectives}

At the end of this work several questions remain open.

Our construction only works for parageometric iwip automorphisms but
we expect that (with some more technicalities) the repelling tree of
any iwip substitution can be described by a tree substitution. Recall
that from the botany obtained with Hilion~\cite{ch-b} the attracting
tree of a non-parageometric iwip is of Levitt type: the limit set
$\Omega_A$ and the cylinders $\Omega_\gamma$ are no longer trees but
rather Cantor sets inside the repelling tree. Thus, the adjacency
relation between tiles is more complicated to describe.

Our work raises the question of which substitutions can be covered by
an interval exchange transformation. This is the purpose of the (dual)
contour substitution and we ask if for any substitution there exists a
bigger alphabet $\tilde A$, a forgetful map $f:\tilde A\to A$ and
a substitution $\tilde\sigma$ such that $f\circ\tilde\sigma=\sigma\circ
f$ and such that $\tilde\sigma$ codes an interval exchange
transformation.

The trees we get are also given by adjacency relations of tiles inside
the repelling tree. Through the map $Q$ these adjacency relations are
preserved inside the Rauzy fractal. But is it true that they are
satisfied for the dual substitution or some variation of it?

Our original goal was to draw trees in the contracting space. Is it
true that, for any irreducible Pisot substitution which is an iwip 
parageometric automorphism, there exists a tree substitution which can be 
realized injectively inside the contracting space (that is, are the iterated images $\tau^n(W)$ visualized as trees therein)?

The map $Q$ implies that the Rauzy fractal of a parageometric Pisot
substitution is arcwise connected. We ask if furthermore it is always
disk-like.

Finally, we have in mind the Pisot conjecture. Do our constructions
give informations on the spectrum of the original substitution?

\section{Preliminaries and notations}

\subsection{Substitutions, attracting shift and prefix-suffix automaton}\label{sec:subst-attr-shift}

Let $A$ be a finite alphabet, $A^*$ be the free monoid on $A$, where
the operation is the concatenation of words.  The full bi-infinite
shift is the space $A^\Z$ of bi-infinite words. For $w\in A^\Z$, we
denote by $w_i$ the letter at position $i$ and by $w_{[i,j]}$ the
factor $w_iw_{i+1}\cdots w_j$ between positions $i$ and $j$. We put
$w_0$ right after a dot. For example
\[ w=\cdots aaaa \cdot bbbb\cdots,\quad w_{[-2,3]}=aabbbb \]

It comes equipped with the shift operator
$S:(w_n)_{n\in\mathbb{Z}}\mapsto(w_{n+1})_{n\in\mathbb{Z}}$ and with a
topology given by clopen cylinders: for a finite word $u$, and for
$i\in\Z$ we define the cylinder $[u]_i$ as the subset of bi-infinite
words which read $u$ at position $i$:
\[ [u]_i =\{w\in A^\Z : w_{i}\cdots w_{i+|u|-1} =u\}\]
where $|u|$ denotes the length of the word $u$.  We extend this
notation by letting
\[
[w]=[w]_0\quad\text{ and }\quad [u\cdot v]=\{w\in A^\Z : w_{-|u|}\cdots w_{-1}w_0\cdots w_{|v|-1} = uv\}.\]
Let $\sigma$ be a substitution on $A$, that is, a free monoid
endomorphism $\sigma:A^*\to A^*$ which is completely determined by its
restriction $\sigma:A\to A^*$. 

\begin{example}\label{exsub}
Here is a list of substitutions which we will use as examples:
\[
\begin{array}[t]{rcl}a&\mapsto&ab\\b&\mapsto&ac\\c&\mapsto&a\end{array}, 
\quad\begin{array}[t]{rcl}a&\mapsto&ac\\b&\mapsto&ab\\c&\mapsto&b\end{array},
\quad\begin{array}[t]{rcl}a&\mapsto&abc\\b&\mapsto&bcabc\\c&\mapsto&cbcabc\end{array}
\]
The leftmost substitution is known as the Tribonacci substitution.
\end{example}

The \defi{language} of $\sigma$ is defined by
\[ \mathcal{L}_\sigma = \{w\in A^* : w \text{ is a factor of } \sigma^n(i) \text{ for some } i \in A, n\in \mathbb{N} \}. \]
The \defi{attracting shift} $X_\sigma$ of a substitution $\sigma$ is the
set of bi-infinite words in $A^\Z$ whose factors are in $\mathcal{L}_\sigma$. This is
a closed (indeed compact) shift-invariant subset of $A^\Z$. 
We can define the action of $\sigma$ on $A^\Z$ by letting
\[
\sigma(\cdots a_{-2}a_{-1}\cdot a_0a_1a_2\cdots)=\cdots\sigma(a_{-2})\sigma(a_{-1})\cdot \sigma(a_0)\sigma(a_1)\sigma(a_2)\cdots
\]
The attracting shift is invariant by the action of $\sigma$.

The \defi{prefix-suffix automaton} of $\sigma$ has the alphabet $A$ as set of
states and a transition $a\stackrel{p,s}{\longleftarrow}b$ whenever $\sigma(b)=pas$ for $(p,a,s)\in A^*\times A\times A^*$. We let $\mathcal{P}$ be the set of infinite paths $\gamma$ in the prefix-suffix automaton.

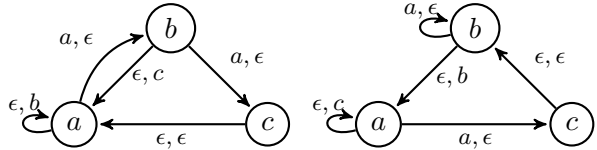
\begin{figure}[h]
\centering
\begin{tikzpicture}[->,>=stealth',shorten >=1pt,auto,node distance=1.8cm,
                    thick,main node/.style={circle,draw,font=\sffamily\bfseries}]

  \node[main node] (1) {\large $a$};
  \node[main node] (2) [above right of=1] {\large $b$};
  \node[main node] (3) [below right of=2] {\large $c$};
  
  \path[every node/.style={font=\sffamily\small}]
    (1) edge [bend left] node [above left] {$a,\epsilon$} (2)    
        edge [loop left] node [above] {$\epsilon,b$} (1)
    (2) edge [] node[right] {$\epsilon,c$} (1)
    (2) edge [] node[above right] {$a,\epsilon$} (3)
    (3) edge [] node[below] {$\epsilon,\epsilon$} (1);
\end{tikzpicture}\,
\begin{tikzpicture}[->,>=stealth',shorten >=1pt,auto,node distance=1.8cm,
                    thick,main node/.style={circle,draw,font=\sffamily\bfseries}]

  \node[main node] (1) {\large $a$};
  \node[main node] (2) [above right of=1] {\large $b$};
  \node[main node] (3) [below right of=2] {\large $c$};
  
  \path[every node/.style={font=\sffamily\small}]
    (1) edge [] node [below] {$a,\epsilon$} (3)    
    (1) edge [loop left] node [above] {$\epsilon,c$} (1)
    (2) edge [loop left] node [above] {$a,\epsilon$} (2)
    (2) edge [] node[right] {$\epsilon,b$} (1)
    (3) edge [] node[above right] {$\epsilon,\epsilon$} (2);
\end{tikzpicture}\,
\caption{The prefix-suffix automatons of the first two substitutions of Example~\ref{exsub}.}
\end{figure}

Mossé~\cite{mosse-reconnaissabilite} proved that every primitive substitution satisfies a property of bilateral recognizability, i.e.~every bi-infinite word $w\in X_\sigma$ can be ``desubstituted'' uniquely: there exist a unique bi-infinite word $w'$ and a unique integer $k$ with  $0\le k\le |\sigma(w_0')|$ such that $w=S^k(\sigma(w'))$. We denote by $\theta:X_\sigma\to X_\sigma$, $w\mapsto w'$ this map.
We remark that $k=|p|$ where $w_0\stackrel{p,s}{\longleftarrow}w_0'$ is an edge of the prefix-suffix automaton.

The \defi{desubstitution map} $\Gamma: X_\sigma\to \mathcal{P}$ associates to $w\in X_\sigma$ the infinite path $\gamma\in\mathcal{P}$ defined by applying recursively $\theta$ to $w$:
\[
\gamma=a_0\stackrel{p_0,s_0}{\longleftarrow}a_1\stackrel{p_1,s_1}{\longleftarrow}a_2\stackrel{p_2,s_2}{\longleftarrow}\cdots\quad (\text{with } a_0 = w_0).
\] 
We call $\gamma$ the \defi{infinite desubstitution path}, or the \defi{prefix-suffix expansion}, of $w$.

\begin{prop}[\cite{cs-pref-suff,hz-directed}]
The map $\Gamma$ is continuous, onto and one-to-one except on the orbit of periodic points of $\sigma$. 
Furthermore $\Gamma(X_\sigma^\mathrm{per}) = \mathcal{P}_\mathrm{min}$ and $\Gamma(S^{-1}X_\sigma^\mathrm{per}) = \mathcal{P}_\mathrm{max}$, where $\mathcal{P}_\mathrm{min}$ and $\mathcal{P}_\mathrm{max}$ are the infinite desubstitution paths with empty prefixes and empty suffixes respectively.
\end{prop}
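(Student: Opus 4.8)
The plan is to deduce everything from Mossé recognizability, used through the desubstitution map $\theta$ and the recursion $\Gamma(w)=e_0(w)\,\Gamma(\theta(w))$, where $e_0(w)$ is the first edge $w_0\stackrel{p,s}{\longleftarrow}\theta(w)_0$. Two consequences are used repeatedly. First, $\theta\circ\sigma=\mathrm{id}$ on $X_\sigma$, since $\sigma(v)=S^0(\sigma(v))$ is the unique desubstitution of $\sigma(v)$. Second, iterating the definition of $\theta$ gives $w=S^{\pi_n}(\sigma^n(\theta^n w))$ with $\pi_n=\sum_{i=0}^{n-1}|\sigma^i(p_i)|$, where the $p_i$ are the prefixes read along $\Gamma(w)$; since $\sigma^n((\theta^n w)_0)=\sigma^n(a_n)$ occupies the positions $0\le j<|\sigma^n(a_n)|$ of $\sigma^n(\theta^n w)$, the word $w$ coincides with $\sigma^n(a_n)$ on the interval of positions $[-\pi_n,\;|\sigma^n(a_n)|-\pi_n)$, and this factor depends only on the first $n$ edges of $\Gamma(w)$. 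For continuity, $e_0(w)$ records only $\theta(w)_0$ and the integer $|p|$, hence is determined by $w$ on a bounded window by recognizability; combining this with continuity of $\theta$ and the identity $\Gamma^{-1}([e_0\delta])=\Gamma^{-1}([e_0])\cap\theta^{-1}(\Gamma^{-1}([\delta]))$, an induction on the length of a finite path $\delta$ shows each preimage of a cylinder $[\delta]$ of $\mathcal P$ is a finite union of cylinders of $X_\sigma$, hence clopen; as these generate the topology, $\Gamma$ is continuous. For surjectivity, given a finite path $\delta=e_0\cdots e_{n-1}$ ending at the letter $a_n$, pick $v\in X_\sigma$ with $v_0=a_n$ (possible by primitivity) and set $w=S^{\pi_n}(\sigma^n(v))$; unfolding $\delta$ inside $\sigma^n(a_n)$ and using uniqueness of desubstitution gives $\theta^n(w)=v$ with first $n$ edges exactly $\delta$, so $\Gamma^{-1}([\delta])\ne\emptyset$. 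Since $\Gamma^{-1}(\gamma)$ is the nested intersection of the nonempty compact sets $\Gamma^{-1}([\delta])$ over the finite prefixes $\delta$ of $\gamma$, it is nonempty, so $\Gamma$ is onto.

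The heart of the statement is the identity $\bigcap_{n\ge 0}\sigma^n(X_\sigma)=X_\sigma^{\mathrm{per}}$, together with the finiteness of this set. The inclusion $\supseteq$ is immediate. For $\subseteq$, if $u=\sigma^n(v_n)$ for every $n$, then uniqueness of desubstitution forces every step of $\Gamma(u)$ to have empty prefix and $\theta^n(u)=\sigma(\theta^{n+1}(u))$; hence $(\theta^n u)|_{[0,\infty)}=\lim_k\sigma^k((\theta^{n+k}u)_0)$ is a one-sided periodic point of $\sigma$, and symmetrically $(\theta^n u)|_{(-\infty,-1]}$ is a left-sided periodic point. A primitive substitution has only finitely many one-sided and left-sided periodic points, so $\bigcap_n\sigma^n(X_\sigma)$ is finite; as $\sigma$ restricts to a bijection of it (with inverse $\theta$), every element is $\sigma$-periodic, which yields the identity and the finiteness of $X_\sigma^{\mathrm{per}}$.

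Now suppose $\Gamma(w)=\Gamma(v)=\gamma$. By the factor identity of the first paragraph, $w$ and $v$ agree on $[-\pi_n,\;|\sigma^n(a_n)|-\pi_n)$ for all $n$. By primitivity $|\sigma^i(b)|\to\infty$ for any nonempty word $b$, so $\pi_n\to\infty$ unless all but finitely many $p_i$ are empty, and the right endpoint tends to $\infty$ unless all but finitely many $s_i$ are empty; if neither degeneracy occurs, the windows exhaust $\Z$ and $w=v$. If the prefixes are eventually empty, $\pi_n$ stabilizes at some $\pi_\infty$ and $S^{-\pi_\infty}w=\sigma^n(\theta^n w)\in\bigcap_n\sigma^n(X_\sigma)=X_\sigma^{\mathrm{per}}$, so $w$ lies in the $S$-orbit of $X_\sigma^{\mathrm{per}}$; the case of eventually empty suffixes is symmetric. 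Hence $\Gamma$ is one-to-one on the complement of the orbit of periodic points. Finally, if $\sigma^p(u)=u$ then $u=\sigma^p(u)$ places position $0$ at the left end of a block at every level, so every prefix of $\Gamma(u)$ is empty and $\Gamma(u)\in\mathcal P_{\mathrm{min}}$; conversely, for $\gamma\in\mathcal P_{\mathrm{min}}$ any preimage $w$ has all $\pi_n=0$, hence $w=\sigma^n(\theta^n w)\in\bigcap_n\sigma^n(X_\sigma)=X_\sigma^{\mathrm{per}}$; thus $\Gamma(X_\sigma^{\mathrm{per}})=\mathcal P_{\mathrm{min}}$. The identity $\Gamma(S^{-1}X_\sigma^{\mathrm{per}})=\mathcal P_{\mathrm{max}}$ is proved the same way with ``left end'' replaced by ``right end'': for $\gamma\in\mathcal P_{\mathrm{max}}$ a preimage $w$ has $|\sigma^n(a_n)|-\pi_n$ constant equal to $1$, so $S(w)=\sigma^n(S(\theta^n w))\in\bigcap_n\sigma^n(X_\sigma)=X_\sigma^{\mathrm{per}}$, i.e.\ $w\in S^{-1}X_\sigma^{\mathrm{per}}$; conversely $S^{-1}$ of a periodic point has position $0$ at the right end of a block at every level.

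The step I expect to be the real work is the identity $\bigcap_n\sigma^n(X_\sigma)=X_\sigma^{\mathrm{per}}$, and behind it the classical finiteness of the one-sided, left-sided and bi-infinite periodic points of a primitive substitution (and the verification that the one-sided limits appearing above are genuinely periodic). Everything else is bookkeeping with the desubstitution conventions; the only mild asymmetry is that $\mathcal P_{\mathrm{max}}$ corresponds not to periodic points themselves but to their shift by $-1$, which is forced by the convention placing $w_0$ at the start of its block after desubstitution.
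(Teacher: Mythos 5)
Your argument is correct: the paper itself gives no proof of this Proposition (it is quoted from Canterini--Siegel and Holton--Zamboni), and what you have written is a faithful reconstruction of the standard argument from those references, resting on Moss\'e recognizability, the factor window $[-\pi_n,|\sigma^n(a_n)|-\pi_n)$, and the identification $\bigcap_n\sigma^n(X_\sigma)=X_\sigma^{\mathrm{per}}$. The only places where you lean on unproved classical facts (finiteness of one-sided periodic points, eventual periodicity of the letter sequence along an all-empty-prefix path via pigeonhole) are exactly the ones you flag, and they are standard for primitive substitutions.
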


The last proposition implies that, if $w\in X_\sigma$ is such that its prefix-suffix expansion $\gamma$ does not have $p_i=\epsilon$ or $s_i=\epsilon$ for all $i\ge i_0$ for some $i_0\in\mathbb{N}$, then we can express $w$ as  
\[
w = \lim_{n\to\infty} \sigma^{n}(p_n)\sigma^{n-1}(p_{n-1})\cdots\sigma(p_1)p_0\cdot w_0s_0\sigma(s_1)\cdots\sigma^{n-1}(s_{n-1})\sigma^{n}(s_{n}).
\]

\begin{example}\label{ex:tribo-fix-points}Let $\sigma$ be the Tribonacci substitution. We consider the three periodic points
\begin{align*}
w_a&=\lim_{n\to\infty}\sigma^{3n}(a)\cdot\sigma^n(a)=\cdots abacab abacaba\cdot abacabaabacabab\cdots \\
w_b&=\lim_{n\to\infty}\sigma^{3n}(b)\cdot\sigma^n(a)= \cdots abacaba abacab\cdot abacabaabacabab\cdots \\
w_c&=\lim_{n\to\infty}\sigma^{3n}(c)\cdot\sigma^n(a)= \cdots abacaba abac\cdot abacabaabacabab\cdots
\end{align*}
The infinite desubstitution path of $w_a$, $w_b$ and $w_c$ is
$\gamma_0$ which we describe as the unique periodic path of the graph
\begin{center}
\begin{tikzpicture}[->,>=stealth',shorten >=1pt,auto,node distance=2cm,
                    thick,main node/.style={circle,draw,font=\sffamily\bfseries}]

  \node[main node] (1) {\large $a$};
  
  \path[every node/.style={font=\sffamily\small}]
    (1) edge [loop right] node [right] {$\epsilon,b$} (1);
\end{tikzpicture}\end{center}
while those of $S^{-1}(w_a)$, $S^{-1}(w_b)$ and $S^{-1}(w_c)$ are
$\gamma_a$, $\gamma_b$ and $\gamma_c$ which are the unique infinite
paths of the graph
\begin{center}
\begin{tikzpicture}[->,>=stealth',shorten >=1pt,auto,node distance=1.8cm,
                    thick,main node/.style={circle,draw,font=\sffamily\bfseries}]

  \node[main node] (1) {\large $a$};
  \node[main node] (2) [above right of=1] {\large $b$};
  \node[main node] (3) [below right of=2] {\large $c$};
  
  \path[every node/.style={font=\sffamily\small}]
    (1) edge [] node [above left] {$a,\epsilon$} (2)    
    (2) edge [] node[above right] {$a,\epsilon$} (3)
    (3) edge [] node[below] {$\epsilon,\epsilon$} (1);
\end{tikzpicture}
\end{center}
ending respectively at $a$, $b$ or $c$.
\end{example}

For a finite path $\gamma$ in the prefix-suffix automaton
\[
\gamma=a_0\stackrel{p_0,s_0}{\longleftarrow}a_1\stackrel{p_1,s_1}{\longleftarrow}a_2\stackrel{p_2,s_2}{\longleftarrow}\cdots\stackrel{p_{n-1},s_{n-1}}{\longleftarrow}a_n
\]
its length is denoted by $|\gamma|$ and equals the number $n$ of edges.
We say that $a_0$ is the end and $a_n$ is the beginning of the path.

The \defi{cylinder} $[\gamma]$ is the set of bi-infinite words $w \in X_\sigma$ with infinite desubstitution path ending with $\gamma$. Remark that all words $w$ in $[\gamma]$ have indexed factor:
\[
\sigma^{n-1}(p_{n-1})\cdots\sigma(p_1)p_0\cdot a_0s_0\sigma(s_1)\cdots\sigma^{n-1}(s_{n-1}).
\]
It is convenient to introduce the notation
\[ p(\gamma) := \sigma^{n-1}(p_{n-1})\cdots\sigma(p_1)p_0 \]
to denote what we call the \defi{prefix of the path} $\gamma$. 
Given two finite paths 
\[
\gamma = a_0\stackrel{p_0,s_0}{\longleftarrow}\cdots\stackrel{p_{n-1},s_{n-1}}{\longleftarrow}a_n, \quad
\gamma' = a_n\stackrel{p_n,s_n}{\longleftarrow}\cdots\stackrel{p_{m-1},s_{m-1}}{\longleftarrow}a_{m},
\]
we denote their concatenation by $\gamma\gamma'$ and we say that
$\gamma'$ \defi{extends} $\gamma$, $\gamma$ is the \defi{head} and
$\gamma'$ the \defi{tail}. In Section~\ref{sec:not-renorm-sing} we
will use the notation $B(\gamma)$ to \defi{behead} the prefix-suffix
expansion $\gamma$ of its heading edge, thus with the above notation:
\[
B^n(\gamma\gamma')=\gamma'.
\]

\begin{prop}\label{prop:cylinders-autosimilar}
For any path $\gamma\gamma'$ in the prefix-suffix automaton with $n=|\gamma|$ we have
\[
[{\gamma\gamma'}]=
 S^{|p_0|}\, \sigma\, S^{|p_1|}\,\sigma\,\cdots\, S^{|p_{n-1}|}\,\sigma([{\gamma'}]).
\]
Furthermore we have the following prefix-suffix decomposition of cylinders: for any path
$\gamma$ in the prefix-suffix automaton and every $n\in\N^*$
\[
[\gamma]=\bigsqcup_{\gamma': |\gamma'|=n}[{\gamma\gamma'}],\quad \text{ and in
  particular }\quad X_\sigma=\bigsqcup_{\gamma': |\gamma'|=n}[{\gamma'}],
\] 
where the disjoint union is taken over all paths $\gamma'$ in the
prefix-suffix automaton extending $\gamma$ (or ending at any state in
case $\gamma$ is the empty path) of length $n$.
\end{prop}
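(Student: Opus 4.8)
The plan is to derive everything from the recursive definition of the desubstitution map together with Mossé's recognizability, and to reduce the self-similarity formula to its one-edge instance. Write $B$ for the beheading operator on prefix-suffix paths. I will use two elementary facts. First: if $w\in X_\sigma$ and the heading edge of $\Gamma(w)$ is $a_0\stackrel{p_0,s_0}{\longleftarrow}a_1$, then by definition of $\theta$ one has $w=S^{|p_0|}\sigma(\theta(w))$, with $w_0=a_0$, $\theta(w)_0=a_1$ and $\sigma(a_1)=p_0a_0s_0$. Second: $\Gamma\circ\theta=B\circ\Gamma$, because $\Gamma(w)$ is built by listing one after another the desubstitution edges of $w,\theta(w),\theta^2(w),\dots$, so erasing the first edge amounts to starting from $\theta(w)$. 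Granting these, I claim it is enough to prove that for a single edge $e=a_0\stackrel{p_0,s_0}{\longleftarrow}a_1$ and any path $\gamma'$ with $e\gamma'$ defined,
\[
[e\gamma']=S^{|p_0|}\,\sigma([\gamma']).
\]
The stated formula then follows by induction on $n=|\gamma|$: if $e_0$ is the heading edge of $\gamma$, then $[\gamma\gamma']=[e_0\,(B(\gamma)\gamma')]=S^{|p_0|}\sigma([B(\gamma)\gamma'])$, and $|B(\gamma)|=n-1$, so the induction hypothesis applied to $B(\gamma)\gamma'$ yields $S^{|p_0|}\sigma S^{|p_1|}\sigma\cdots S^{|p_{n-1}|}\sigma([\gamma'])$.

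For the one-edge identity I would argue the two inclusions separately. If $w\in[e\gamma']$, the first fact gives $w=S^{|p_0|}\sigma(\theta(w))$, and the second fact gives $\Gamma(\theta(w))=B(\Gamma(w))$, which starts with $\gamma'$; hence $\theta(w)\in[\gamma']$ and $w\in S^{|p_0|}\sigma([\gamma'])$. For the converse, take $v\in[\gamma']$ and set $w:=S^{|p_0|}\sigma(v)$; since $X_\sigma$ is invariant under both $S$ and $\sigma$, $w\in X_\sigma$. The letter $v_0$ is the end state of $\gamma'$, namely $a_1$, so $\sigma(v_0)=\sigma(a_1)=p_0a_0s_0$; by the indexing convention for the $\sigma$-action, position $0$ of $\sigma(v)$ is the first letter of $\sigma(v_0)$, so positions $0,\dots,|p_0|-1$ of $\sigma(v)$ spell $p_0$ and position $|p_0|$ carries $a_0$. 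Thus $w_0=a_0$ and $0\le|p_0|<|\sigma(v_0)|$, i.e.\ $(v,|p_0|)$ is an admissible desubstitution pair for $w$; by the uniqueness in Mossé's theorem it is the only one, so $\theta(w)=v$ and the heading edge of $\Gamma(w)$ is $a_0\stackrel{p_0,s_0}{\longleftarrow}a_1=e$ (the prefix of length $|p_0|$ of $\sigma(a_1)=p_0a_0s_0$ being $p_0$, the suffix after $a_0$ being $s_0$). Since $B(\Gamma(w))=\Gamma(v)$ starts with $\gamma'$, the path $\Gamma(w)$ starts with $e\gamma'$, i.e.\ $w\in[e\gamma']$.

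For the prefix-suffix decomposition the argument is purely formal. Given $w\in[\gamma]$, the path $\Gamma(w)$ starts with $\gamma$ and its $n$ edges right after $\gamma$ form a unique length-$n$ path $\gamma'$ extending $\gamma$ with $w\in[\gamma\gamma']$; conversely $[\gamma\gamma']\subseteq[\gamma]$ always, and distinct length-$n$ extensions give disjoint cylinders because $\Gamma(w)$ determines them uniquely. Hence $[\gamma]=\bigsqcup_{|\gamma'|=n}[\gamma\gamma']$, and taking $\gamma$ to be the empty path gives $X_\sigma=\bigsqcup_{|\gamma'|=n}[\gamma']$, the union running over all length-$n$ paths (ending at any state). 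I expect the only genuinely delicate point to be the reverse inclusion in the one-edge identity: there one must recognize the \emph{ad hoc} factorisation $w=S^{|p_0|}\sigma(v)$ as the canonical (Mossé-recognizable) one, and the whole content is the verification that $|p_0|$ lies in the admissible range and that $w_0=a_0$; once that is in place, uniqueness finishes it, and everything else is bookkeeping in the prefix-suffix automaton.
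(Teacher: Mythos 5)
Your argument is correct. Note that the paper states Proposition~\ref{prop:cylinders-autosimilar} without proof, treating it as a standard consequence of Moss\'e's recognizability and the definition of the desubstitution map (it is essentially contained in the cited references on the prefix--suffix automaton); your write-up supplies exactly the argument one would expect. The reduction to the one-edge identity $[e\gamma']=S^{|p_0|}\sigma([\gamma'])$ followed by induction via the beheading operator is the natural route, and you correctly isolate the only point with real content: in the reverse inclusion one must check that $w_0=a_0$ and that $|p_0|$ lies in the admissible range, so that the uniqueness in Moss\'e's theorem forces $\theta(w)=v$ and forces the heading edge of $\Gamma(w)$ to be $e$ (including the correct pair $(p_0,s_0)$, not merely the correct endpoints, since $\sigma(a_1)$ may contain several occurrences of $a_0$). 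The decomposition into disjoint cylinders is, as you say, purely formal once $\Gamma$ is known to be well defined.
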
 

\subsection{Singular Words}\label{sec:singular-leaves}

Two bi-infinite words $Z$ and $Z'$ in the attracting shift $X_\sigma$ \defi{share a half} if either $Z_{[0;+\infty)}=Z'_{[0;+\infty)}$ or $Z_{(-\infty;-1]}=Z'_{(-\infty;-1]}$. 
We say that $Z$ and $Z'$ are \defi{singular} if either they share their right half and $Z_{-1}\neq Z'_{-1}$ or they share their left half and $Z_0\neq Z'_0$. 

In the dialect of word combinatorics, if $Z$ and $Z'$ share their right half, i.e.~$Z_{[0;+\infty)}=Z'_{[0;+\infty)}$, and their letters $Z_{-1}$ and $Z'_{-1}$ are distinct then the common half $Z_{[0;+\infty)}$ is a \defi{left-special word} (and similarly for \defi{right-special} left-infinite words). 

In the dialect of dynamical system, \defi{asymptotic pairs} are equivalence classes of words $Z,Z'$ such that for $\ell$ big enough $S^{\ell}Z$ and $S^{\ell}Z'$ share their right half.  

In the case of the attracting shift of a primitive substitution
Queffelec proved that there are only finitely many singular words, see
also~\cite{bdh-asymptotic-orbits}.

\begin{prop}[\cite{quef-book}]\label{prop:finite-index}
Let $\sigma$ be a primitive substitution, then there are finitely many singular bi-infinite words in the attracting shift $X_\sigma$. \qed
\end{prop}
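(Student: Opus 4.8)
The plan is to show that each singular word is ultimately periodic under desubstitution and that the ``tail'' of its desubstitution path, together with finitely many data, determines the word; then a finiteness count finishes the argument.

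First I would use the recognizability map $\theta:X_\sigma\to X_\sigma$ from Moss\'e's theorem to produce, from a singular pair $(Z,Z')$, a ``desubstituted'' pair. Suppose $Z$ and $Z'$ share their right half and $Z_{-1}\neq Z'_{-1}$. The key observation is that desubstitution essentially commutes with sharing a half: if $\theta(Z)=Y$ and $\theta(Z')=Y'$, then, because a long common suffix of $Z_{(-\infty;-1]}$ forces (by uniqueness of the desubstitution) a long common suffix of $Y$ and $Y'$ up to a bounded shift, either $Y$ and $Y'$ are eventually equal on the right and differ somewhere, yielding again a singular (or shift of singular) pair, or they become equal, in which case the difference $Z_{-1}\neq Z'_{-1}$ must come from the two edges $Z_0\xleftarrow{p,s}Y_0$ and $Z'_0\xleftarrow{p',s'}Y'_0$ of the prefix-suffix automaton being distinct while $Y_0=Y'_0$ and $Y_{[0,\infty)}=Y'_{[0,\infty)}$: i.e.\ the singularity ``resolves at this level''. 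There are only finitely many such resolved configurations, since they are given by a pair of distinct edges of the prefix-suffix automaton out of a common state, together with the (finite) letter data at position $-1$.

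Next I would iterate: applying $\theta$ repeatedly to a singular pair either keeps us in the (shift-orbit of the) set of singular pairs or lands, after finitely many steps, in a resolved configuration. The pigeonhole/finiteness then comes from the prefix-suffix automaton being a finite graph: the sequence of edges read along the desubstitution of a singular pair is eventually periodic, and a singular pair is reconstructed from (i) its eventually-periodic desubstitution path, which by finiteness of the graph takes only finitely many values once we know it is eventually periodic with bounded pre-period and period, and (ii) the finitely many resolved-configuration germs. Combining, each singular pair is the image under a bounded composition $S^{k}\sigma^{m}$ of one of finitely many ``seed'' pairs, hence there are finitely many.

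The main obstacle I expect is controlling the bounded shift that appears each time one desubstitutes: $\theta$ shifts by $|p|$ where $p$ is the prefix of the relevant edge, so the two words $Z$ and $Z'$ may desubstitute with different shifts $|p|\neq|p'|$, and one must check that after this relative shift the pair $Y,Y'$ still shares a half and is still ``singular'' in the appropriate sense, possibly after applying a bounded power of $S$. This requires a careful bookkeeping argument using Moss\'e's constant of recognizability (the length beyond which a common factor forces coincidence of the desubstitution), and it is the place where one genuinely uses primitivity, via the fact that $|\sigma^n(a)|\to\infty$ uniformly. Once this lemma is in place, the finiteness is a routine consequence of the finiteness of the prefix-suffix automaton. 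Alternatively, and perhaps more cleanly, one may invoke the cited results of Holton and Zamboni on eventually periodic desubstitution paths of special words directly, in which case the proof reduces to quoting \cite{hz-directed} and observing that the set of eventually periodic paths with bounded parameters, together with the bounded germ data, is finite.
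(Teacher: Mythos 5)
You should first be aware that the paper offers no proof of Proposition~\ref{prop:finite-index}: it is quoted from Queff\'elec's book \cite{quef-book} with a \emph{qed}. The argument there is a counting argument via the complexity function: for a primitive aperiodic substitution the differences $p(n+1)-p(n)$ are bounded, and since $p(n+1)-p(n)$ equals the sum over left-special factors of length $n$ of (number of left extensions minus one), there are boundedly many left-special factors of each length; a K\"onig-lemma argument then bounds the number of left-special right-infinite words and the number of bi-infinite words admitting a given special ray as a half, and symmetrically on the other side. Your proposal takes a genuinely different, renormalization-style route through desubstitution and Moss\'e recognizability; that route is the one the paper actually uses later, for the related Proposition~\ref{prop:finite-singular}, but there it is applied \emph{on top of} the finiteness asserted here, not as a proof of it.

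As written, your argument has a gap exactly at the step that is supposed to deliver finiteness. You assert that ``the sequence of edges read along the desubstitution of a singular pair is eventually periodic \dots with bounded pre-period and period'' and justify this by ``the prefix-suffix automaton being a finite graph.'' Finiteness of the graph gives nothing: a finite graph carries uncountably many infinite paths, and eventual periodicity without uniform bounds on preperiod and period would at best give countability, not finiteness. The missing mechanism is a pigeonhole on the finite set of \emph{germs} (the letters of the two words at positions $-1$ and $0$ together with the offsets $|p|<|\sigma(a)|$ produced by $\theta$), the observation that desubstitution acts on germs by a fixed self-map of this finite set so that the germ sequence eventually enters a cycle, and then a separate lemma that a pair whose germ data is periodic from some level on is a fixed point of $S^k\sigma^m$ for bounded $k,m$, of which there are finitely many. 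None of this is in your sketch, and the two shortcuts you gesture at do not close it: pigeonholing on the set of singular pairs itself is circular (their finiteness is the goal), and invoking Holton--Zamboni's eventual periodicity (Proposition~\ref{prop:singular-preperiodic}) both inverts the logical order of the paper and still leaves you without the uniform bounds needed to count.
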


\begin{example}\label{ex:tribo-right-left-special}
For the Tribonacci substitution $\sigma$ the three infinite words $w_a$, $w_b$ and $w_c$ of Example~\ref{ex:tribo-fix-points} are the only infinite words in the attracting shift with a left-special right half (and they share this right half). The three infinite words
\begin{align*}
w'_a &= \lim_{n\to\infty} \widetilde{\sigma}^{3n}(a)\cdot\widetilde{\sigma}^{3n}(a) = \cdots bacabaabacba\cdot abacababacaba\cdots\\
w'_b &= \lim_{n\to\infty} \widetilde{\sigma}^{3n}(a)\cdot\widetilde{\sigma}^{3n}(b) = \cdots bacabaabacba\cdot bacabaabacaba\cdots
\\
w'_c &= \lim_{n\to\infty} \widetilde{\sigma}^{3n}(a)\cdot\widetilde{\sigma}^{3n}(c) = \cdots bacabaabacba\cdot cabaabacaba\cdots
\end{align*}
where $\widetilde{\sigma}:a\mapsto ba$, $b\mapsto ca$, $c\mapsto a$ is the flipped Tribonacci substitution, are the only infinite words in $X_\sigma$ with a right-special left half. They come from the periodic Nielsen path (of period 3)
\[
\begin{tikzpicture}
\draw (0,0) to [out=0,in=180] node[near end,above,sloped] {$ab$} (.7,.4);
\draw (0,0) to [out=0,in=180] node[near end,right] {$\;ba$} (.7,0);
\draw (0,0) to [out=0,in=180] node[near end,below,sloped] {$ca$} (.7,-.4);
\end{tikzpicture}
\]
Indeed, applying $\sigma$ three times to these three words we get that they coincide on the prefix $abacaba$ and after that each of them starts again with $ab$, $ba$ and $ca$ respectively. This process is responsible for the fact that $w_a'$, $w_b'$ and $w_c'$ are generated by $i_{abacaba}\circ\sigma^3 = \widetilde{\sigma}^3$, where $i_{w}(u)=w^{-1}uw$ denotes the inner automorphism.
\end{example}

Using the work of Holton and Zamboni~\cite{hz-directed} we get:

\begin{prop}[{\cite[Theorem~5.1]{hz-directed}}]\label{prop:singular-preperiodic}
Let $Z$ be a singular bi-infinite word in the attracting shift $X_\sigma$ of a primitive substitution. Then, the infinite de-substitution path $\gamma$ of $Z$ is eventually periodic.
\end{prop}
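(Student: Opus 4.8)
The plan is to combine the finiteness of the set of singular words with the self-similarity encoded by Mossé's desubstitution map $\theta\colon X_\sigma\to X_\sigma$. One may assume $X_\sigma$ infinite; otherwise $\sigma$ is (up to a power) a permutation, $X_\sigma$ is a single periodic orbit, and every desubstitution path is periodic. Write $\mathcal S\subseteq X_\sigma$ for the set of singular words, finite by Proposition~\ref{prop:finite-index}; each $Z\in\mathcal S$ has a partner $Z'\in\mathcal S$ with, say, $Z_{[0,\infty)}=Z'_{[0,\infty)}$ and $Z_{-1}\neq Z'_{-1}$ (the mirror case is symmetric).

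The crux is to show that $\theta$ sends $\mathcal S$ into a bounded shift-neighbourhood of itself: there is $C=C(\sigma)$ with $\theta(Z)=S^{-m}\Phi(Z)$, $\Phi(Z)\in\mathcal S$, $0\le m\le C$, for every $Z\in\mathcal S$. I would prove this by desubstituting $Z$ and $Z'$ together, $Z=S^{k}\sigma(\theta Z)$ and $Z'=S^{k'}\sigma(\theta Z')$ with $0\le k<|\sigma((\theta Z)_0)|$ and likewise for $k'$. As $Z$ and $Z'$ agree on $[0,\infty)$, Mossé's recognizability forces their $\sigma$-cutting points to coincide far to the right, so the blocks of $\sigma(\theta Z)$ and $\sigma(\theta Z')$ are aligned with a fixed offset on a common right half; hence $\theta Z$ and $S^{d}\theta Z'$ agree on a common right tail for a suitable $d$. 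If $m$ is the least index beyond which they agree, then $S^{m}\theta Z$ and $S^{m+d}\theta Z'$ share their right half and differ at $-1$, so $\Phi(Z):=S^{m}\theta Z\in\mathcal S$. Finiteness of $m$ (i.e.\ $\theta Z\neq S^{d}\theta Z'$) uses aperiodicity: equality would put $Z$ and $Z'$ on a single shift orbit while agreeing on a half, hence produce a periodic point. The bound $0\le m\le\max_{a}|\sigma(a)|$ follows by pushing the statement "$\theta Z$ and $S^{d}\theta Z'$ first differ at $m-1$" through $\sigma$ and matching it with "$Z$ and $Z'$ first differ at $-1$": this pins position $k-1$ inside block number $m-1$ of $\sigma(\theta Z)$, which is $\sigma((\theta Z)_0)$ (or, if $k=0$, the block just left of it).

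With this in hand, one iterates. Put $Z^{(n)}=\Phi^{n}(Z)\in\mathcal S$ and let $M_n\ge 0$ satisfy $\theta^{n}(Z)=S^{-M_n}Z^{(n)}$. Desubstituting $S^{-M_n}Z^{(n)}$ sends position $0$ into a block of $\sigma(\theta Z^{(n)})$ of index of size at most $(1+\delta)^{-1}M_n+O(1)$, where $\delta>0$ exists because $\sigma$ is primitive and $X_\sigma$ infinite (factors of $X_\sigma$ using only letters with $\sigma$-image of length $1$ are of bounded length, so $|\sigma(u)|\ge(1+\delta)|u|-O(1)$); hence $M_{n+1}\le(1+\delta)^{-1}M_n+m(Z^{(n)})+O(1)$ and $(M_n)$ is bounded. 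Thus $(Z^{(n)},M_n)$ ranges over a finite set and evolves deterministically, so it is eventually periodic, and therefore so is $\theta^{n}(Z)=S^{-M_n}Z^{(n)}$ in $n$. Since the $n$-th edge $a_n\stackrel{p_n,s_n}{\longleftarrow}a_{n+1}$ of $\gamma=\Gamma(Z)$ is determined by $\theta^{n}(Z)$ alone (by recognizability it records $(\theta^{n}Z)_0$, $(\theta^{n+1}Z)_0$ and the length of $p_n$), the path $\gamma$ is eventually periodic.

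I expect the bounded-displacement claim for $\theta$ on $\mathcal S$ to be the main obstacle: one has to check that a single desubstitution neither lets the break point of a singular pair recede to infinity (excluded by aperiodicity, via injectivity of $\sigma$) nor moves it more than one $\sigma$-block from the origin (the recognizability bookkeeping above), and that these bounded displacements cannot pile up along the iteration (the uniform-expansion estimate). This is precisely the content of \cite[Theorem~5.1]{hz-directed}, from which the proposition can equally well be quoted directly.
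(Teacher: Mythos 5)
The paper offers no proof of this proposition: it is imported verbatim from Holton and Zamboni \cite[Theorem~5.1]{hz-directed}. Your self-contained argument is therefore necessarily a different route, and its skeleton is sound and is essentially the standard one. Finiteness of the set $\mathcal S$ of singular words comes from Proposition~\ref{prop:finite-index}; applying Lemma~\ref{lem:mosse} to $S^NZ$ and $S^NZ'$ for $N\ge L$ shows that the $\sigma$-cuttings of a singular pair coincide on $[L,\infty)$, so $\theta Z$ and a shift of $\theta Z'$ share a right tail; aperiodicity excludes $\theta Z$ being a shift of $\theta Z'$ (that would force a half of $Z$ to be eventually periodic, impossible in an infinite minimal subshift); the uniform expansion estimate $|\sigma(u)|\ge(1+\delta)|u|-O(1)$ (valid since, by uniform recurrence, every long factor contains a letter whose image has length at least $2$, unless all images have length $1$, in which case $X_\sigma$ is finite) keeps the displacements $M_n$ bounded; and since $\theta^{n+1}Z=\theta(\theta^nZ)$ with $\theta^nZ$ confined to a finite set, the sequence $\theta^nZ$, hence the edge sequence of $\Gamma(Z)$, is eventually periodic. (Determinism should be attributed to $\theta$ itself rather than to the pair $(Z^{(n)},M_n)$, since $\Phi$ may involve a choice of partner; this changes nothing.)

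One step is stated incorrectly, though it is repairable inside the framework you already set up. The bound $0\le m\le\max_a|\sigma(a)|$, justified by the claim that the first disagreement ``pins position $k-1$ inside block number $m-1$'', is wrong: the rightmost block where $\theta Z$ and $S^d\theta Z'$ disagree need not be the block of $\sigma(\theta Z)$ containing position $-1$ of $Z$. Recognizability only aligns the cuttings of $Z$ and $Z'$ beyond position $L$; on $[0,L]$ the common word $Z_{[0,L]}=Z'_{[0,L]}$ may be cut differently in the two desubstitutions (images of distinct letters can coincide or share long affixes), so the first differing block can lie as far as roughly $L$ letters to the right of the origin. The correct uniform bound is of order $L+2\max_a|\sigma(a)|$, obtained by counting the blocks between position $-k$ and the first common cutting point beyond $L$; your argument only needs \emph{some} uniform bound, so the proof survives, but the constant and its stated justification do not. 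Likewise, in the periodic case the statement is vacuous because a finite orbit contains no singular pair, not because desubstitution paths are periodic ($\theta$ is not even well defined there).
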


\begin{example}\label{ex:tribo-periodic-desubstitution}
For the Tribonacci substitution, we know from Example~\ref{ex:tribo-fix-points} the common infinite desubstitution path of $w_a$, $w_b$ and $w_c$: 
\[
\gamma_0=a\stackrel{\epsilon,b}{\longleftarrow}a
\stackrel{\epsilon,b}{\longleftarrow}a
\stackrel{\epsilon,b}{\longleftarrow}a\cdots.
\]
The infinite desubstitution paths of $w'_a$, $w'_b$ and $w'_b$ from Example~\ref{ex:tribo-right-left-special} are
\[
\gamma_a=a\stackrel{\epsilon,b}{\longleftarrow}\big(a
\stackrel{\epsilon,b}{\longleftarrow}a
\stackrel{\epsilon,c}{\longleftarrow}b
\stackrel{a,\epsilon}{\longleftarrow}\big)\big(a
\stackrel{\epsilon,b}{\longleftarrow}a
\stackrel{\epsilon,c}{\longleftarrow}b
\stackrel{a,\epsilon}{\longleftarrow}\big)\cdots,
\]
\[
\gamma_b=b
\stackrel{a,\epsilon}{\longleftarrow}a
\stackrel{\epsilon,b}{\longleftarrow}\big(a
\stackrel{\epsilon,b}{\longleftarrow}a
\stackrel{\epsilon,c}{\longleftarrow}b
\stackrel{a,\epsilon}{\longleftarrow}\big)\big(a
\stackrel{\epsilon,b}{\longleftarrow}a
\stackrel{\epsilon,c}{\longleftarrow}b
\stackrel{a,\epsilon}{\longleftarrow}\big)\cdots\text{ and,}
\]
\[
\gamma_c=c
\stackrel{a,\epsilon}{\longleftarrow}b
\stackrel{a,\epsilon}{\longleftarrow}a
\stackrel{\epsilon,b}{\longleftarrow}\big(a
\stackrel{\epsilon,b}{\longleftarrow}a
\stackrel{\epsilon,c}{\longleftarrow}b
\stackrel{a,\epsilon}{\longleftarrow}\big)\big(a
\stackrel{\epsilon,b}{\longleftarrow}a
\stackrel{\epsilon,c}{\longleftarrow}b
\stackrel{a,\epsilon}{\longleftarrow}\big)\cdots
\]
\end{example}

\subsection{Repelling trees}\label{sec:preliminary-T}

The monoid $A^*$ is embedded inside the free group $F_A$ of reduced words in $A^{\pm 1}$. The (Gromov) boundary $\partial F_A$ of $F_A$ consists of infinite reduced words in $A^{\pm 1}$. The free group $F_A$ acts continuously by left multiplication on its boundary. 

A bi-infinite word $Z$ in $A^\Z$ can be identified with the pair $(X,Y) \in \partial^2 F_A = (\partial F_A \times \partial F_A)\setminus \Delta$, where $\Delta$ denotes the diagonal, such that $Z=X\inv\cdot Y$. Then the action of $F_A$ on $\partial F_A$ induces a partial action on $A^\Z$.
In particular, the shift map $S$ is defined on each point $(X,Y) \in \partial^2 F_A$ by 
\[ S(X,Y) = (Y_0^{-1} X, Y_0^{-1} Y). \]

In all this paper we assume that the substitution $\sigma$ is \defi{iwip} (\defi{irreducible with irreducible powers}), that is, it induces an automorphism whose powers fix no proper free factor of $F_A$. An important object to study the dynamics of $\sigma$ is the {\bf attracting tree} $\Tattr$. The tree $\Tattr$ is an $\R$-tree (geodesic and $0$-hyperbolic metric space) with a minimal (there
is no proper $F_A$-invariant subtree), very-small (see \cite{cl-verysmall}) action of $F_A$ by isometries.

For completeness we recall a concrete construction of $\Tattr$ \cite{gjll}.

First, any isometry of a real tree without fixed points acts as a translation along an axis. This defines the translation length of an isometry. For instance, the element $u\in F_A$ acts on the Cayley tree $T_A$ of the free group $F_A$ by a translation of length $\|u\|_A$ where $\|u\|_A$ stands for the length of the cyclically reduced part of $u$.  The length function (of a real tree $T$ with an action of the free group by isometries) maps any element $u \in F_A$ to the translation length $\|u\|_T$ of the action of $u$ on $T$. A minimal action is completely determined by its translation length function \cite{lyndon,chiswell-length-function,chiswell-lambda-tree}. 

Second, any automorphism $\sigma$ of $F_A$ has an expansion  factor $\lambda_\sigma$ which is its maximal exponential growth rate:
\[
\lambda_\sigma=\max_{u\in F_A}\lim_{n\to\infty}\sqrt[n]{\|\sigma^n(u)\|_A}.
\]
Alternatively, and more concretely, the expansion factor of an iwip automorphism is the dominant eigenvalue of the matrix of a train-track representative for $\sigma$. We will not deal with the train-track machinery in this paper and we let the reader learn from Bestvina and Handel work \cite{bh-traintrack}.

Finally, the translation length function for the attracting tree $T_{\sigma}$ is given by
\[
\|u\|_{T_{\sigma}}=\lim_{n\to\infty}\frac{\|\sigma^{n}(u)\|_A}{{\lambda_{\sigma}^n}}.
\]
Note that $\lambda_\sigma$ is the only real number such that this length function $\|\cdot\|_{\Tattr}$ is finite for any $u\in F_A$ and non-zero for at least one $u$.

However, passing from the translation length function to the tree $\Tattr$ is not straightforward. A shortcut for this construction is to consider the Cayley tree $T_A$ of the free group $F_A$ as a metric space by realizing the edges as isometric copies of the real unit segment $[0,1]$ and by extending equivariantly and continuously the automorphism $\sigma$ to edges. We will call this extension the topological realization of $\sigma$. We can then define 
\[
d_\infty(x,y)=\lim_{n\to\infty}\frac{d(\sigma^{n}(x),\sigma^{n}(y))}{\lambda_\sigma^n}
\]
between two points $x,y$ in $T_A$ (all points, not only vertices). The function $d_\infty$ is a pseudo-distance on $T_A$ and the attracting tree is obtained by identifying points at distance $0$. Once again, this construction might be better understood and more concretely handled by starting with a train-track representative rather than the Cayley tree. 

From this last construction we observe that the topological realization of $\sigma$ is a homothety of ratio $\lambda_\sigma$ for the (pseudo-)distance $d_\infty$:
\[
d_\infty(\sigma(x),\sigma(y))=\lambda_\sigma d_\infty(x,y).
\]

Unfortunately for the reader, in this paper we are interested in the {\bf repelling tree} rather than the attracting tree. Indeed we deal with the duality between the attracting shift and the repelling tree. We sum up the above discussion (and replace the attracting tree by the repelling one).

\begin{prop}[\cite{gjll}]\label{thm:gjll}
Every iwip automorphism $\sigma$ has a repelling tree $T_{\sigma\inv}$ which is a real tree with a minimal action of $F_A$ by isometries and a contracting homothety $H$ of ratio $\frac{1}{\lambda_{\sigma\inv}}$ such that for all point $P$ in $\Trepul$ and all element $u$ in $F_A$:
\[
H(uP)=\sigma(u)H(P).
\]
\end{prop}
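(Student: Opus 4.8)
\emph{Proof sketch.} The plan is to construct $\Trepul$ as the attracting tree of the inverse automorphism $\sigma\inv$ and to obtain $H$ as the inverse of the expanding homothety already built for that tree. First, $\sigma\inv$ is again iwip: the powers of $\sigma\inv$ are the inverses of the powers of $\sigma$, and an automorphism and its inverse fix the same free factors, so no power of $\sigma\inv$ fixes a proper free factor. Hence the construction recalled above applies to $\sigma\inv$ and produces (this is the content of \cite{gjll}) an $\R$-tree $\Trepul = T_{\sigma\inv}$, namely the quotient of the Cayley tree $T_A$ by the pseudo-distance $d_\infty$ associated to $\sigma\inv$, carrying a minimal action of $F_A$ by isometries, and such that the topological realization of $\sigma\inv$ descends to a homothety $g\colon\Trepul\to\Trepul$ of ratio $\lambda_{\sigma\inv}$. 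Moreover, because the topological realization is obtained by extending the monoid endomorphism $\sigma\inv$ equivariantly over the edges, $g$ satisfies the twisted equivariance $g(uP)=\sigma\inv(u)\,g(P)$ for every $u\in F_A$ and $P\in\Trepul$.

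Second, I would check that $g$ is a bijection. Its image $g(\Trepul)$ is a subtree, and it is $F_A$-invariant by the twisted equivariance together with $\sigma\inv(F_A)=F_A$; by minimality of the action, $g(\Trepul)=\Trepul$, so $g$ is onto, and a surjective homothety is a bijection. Setting $H=g\inv$, we obtain a homothety of ratio $\frac{1}{\lambda_{\sigma\inv}}$, which is contracting since $\lambda_{\sigma\inv}>1$.

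Third, the equivariance identity follows formally. Replacing $P$ by $H(P)$ in $g(uP)=\sigma\inv(u)\,g(P)$ gives $g\big(u\,H(P)\big)=\sigma\inv(u)\,P$; applying $H$ yields $u\,H(P)=H\big(\sigma\inv(u)\,P\big)$. Writing $v=\sigma\inv(u)$, equivalently $u=\sigma(v)$, this reads $H(vP)=\sigma(v)\,H(P)$ for all $v\in F_A$ and all $P\in\Trepul$, which is the claimed formula.

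The only point that needs care is the direction of the homothety: carrying out the attracting-tree construction for $\sigma\inv$ makes $\sigma\inv$ \emph{expand} $\Trepul$ by the factor $\lambda_{\sigma\inv}$, so it is $\sigma$, the inverse map, that acts on $\Trepul$ by the \emph{contracting} homothety $H$ of ratio $\frac{1}{\lambda_{\sigma\inv}}$. Everything else is a transcription of the construction recalled above with $\sigma$ replaced by $\sigma\inv$; there is no serious obstacle, since the existence of the tree, its $\R$-tree structure and the minimality of the action are exactly what \cite{gjll} provides. \qed
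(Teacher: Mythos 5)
Your proposal is correct and follows essentially the same route as the paper, which (as indicated by the citation to Gaboriau--Jaeger--Levitt--Lustig) simply recalls the attracting-tree construction via the pseudo-distance $d_\infty$ and then applies it to $\sigma\inv$, inverting the resulting expanding homothety to obtain the contracting $H$ with the twisted equivariance $H(uP)=\sigma(u)H(P)$. Your extra verifications (that $\sigma\inv$ is again iwip, that the homothety is surjective by minimality, and the algebraic derivation of the equivariance for $H=g\inv$) are all sound.
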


Indeed the duality between the attracting shift and the repelling tree is emphasized by the existence of the {\bf map $Q$}. Let $\overline{T}_{\sigma\inv}$ denote the metric completion of the tree $T_{\sigma\inv}$.

\begin{prop}
There exists a unique continuous map $Q: X_\sigma\to \overline{T}_{\sigma\inv}$ such that, for any bi-infinite word $Z$ in the attracting shift $X_\sigma$ with first letter $Z_0$,
\[
Q(S(Z))= Z_0^{-1} Q(Z). 
\]
Moreover, using the contracting homothety $H$ on $\Trepul$, for
any bi-infinite word $Z\in X_\sigma$:
\[
Q(\sigma(Z))=H(Q(Z)). 
\]
\end{prop}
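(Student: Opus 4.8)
The plan is to construct $Q$ by a limiting procedure on the metric completion, using the contraction provided by the homothety $H$. First I would fix, for each letter $a\in A$, a periodic point $w_a\in X_\sigma$ whose first letter is $a$ (by primitivity of $\sigma$ one can take a power so that $\sigma^p$ has a fixed point starting with each letter, and then pass to the appropriate periodic orbit as in Example~\ref{ex:tribo-fix-points}). On each such periodic point the functional equation $Q(\sigma(Z))=H(Q(Z))$, combined with $Q(S(Z))=Z_0\inv Q(Z)$, forces the value of $Q(w_a)$ to be a fixed point of a contracting map of the form $P\mapsto g\, H^k(P)$ for a suitable $g\in F_A$ and $k\ge 1$; since $H$ contracts by $\frac1{\lambda_{\sigma\inv}}<1$ and $\overline T_{\sigma\inv}$ is complete, this map has a unique fixed point, which \emph{defines} $Q(w_a)$. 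Then I would propagate the definition to the whole orbit of the periodic points under $S$ and $\sigma$ using the two equivariance relations, and check this is consistent (the only ambiguity is along the periodic orbit, where consistency is exactly the fixed-point equation we just solved).

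Next I would extend $Q$ to all of $X_\sigma$ by density and uniform continuity. The natural way is to use the prefix–suffix / self-similarity structure from Proposition~\ref{prop:cylinders-autosimilar}: for a word $w$ with prefix–suffix expansion $\gamma = a_0\stackrel{p_0,s_0}{\longleftarrow}a_1\stackrel{p_1,s_1}{\longleftarrow}\cdots$, iterating the two relations gives the candidate formula
\[
Q(w)=\lim_{n\to\infty} p(\gamma_n)\inv\, H^n\big(Q(\theta^n w)\big),
\]
where $\gamma_n$ is the length-$n$ head of $\gamma$ and $p(\gamma_n)=\sigma^{n-1}(p_{n-1})\cdots p_0$; here $\theta^n w$ ranges over a compact set, its $Q$-image (already defined on the dense set of points whose expansion is eventually periodic, in particular on the periodic orbits) is bounded, and $H^n$ contracts distances by $\lambda_{\sigma\inv}^{-n}$, so the sequence is Cauchy in $\overline T_{\sigma\inv}$ and the limit exists. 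I would then verify that this limit is independent of the (essentially unique, by Mossé recognizability) choices, that it agrees with the values already fixed on periodic orbits, and that $w\mapsto Q(w)$ is continuous: two words agreeing on a long central block have prefix–suffix expansions agreeing for many steps, so their $Q$-images differ by at most $C\lambda_{\sigma\inv}^{-n}$.

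With $Q$ constructed, the two functional equations are checked directly from the formula: applying $\sigma$ shifts the prefix–suffix expansion by prepending one edge, which replaces $Q(w)$ by $H(Q(w))$ (this is where Proposition~\ref{thm:gjll}, $H(uP)=\sigma(u)H(P)$, is used to commute $H$ past the prefix words), and applying $S$ changes the expansion by beheading/relabeling its first edge in the way recorded by $\theta$, producing the factor $Z_0\inv$. Uniqueness is the easy part: if $Q'$ is another continuous map satisfying $Q'(S(Z))=Z_0\inv Q'(Z)$, then on any periodic point $Q'$ must solve the same contracting fixed-point equation, hence agrees with $Q$ there; the relation then propagates $Q'=Q$ on the dense orbit of periodic points, and continuity finishes it. The main obstacle I anticipate is the well-definedness of the limiting formula at points whose prefix–suffix expansion is \emph{not} eventually periodic together with the orbit points of periodic words — i.e.\ making the bootstrap from "defined on a dense set via fixed points" to "defined everywhere via a convergent renormalization" fully rigorous, in particular controlling $\|Q(\theta^n w)\|$ uniformly; this is exactly where one needs Mossé's recognizability (Proposition, $\Gamma$ continuous, onto, a.e. injective) to guarantee the expansion is well defined and the renormalization unambiguous.
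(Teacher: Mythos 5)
The paper does not actually prove this Proposition: the map $Q$ is quoted from the literature (\cite{ll-north-south,chl4}), and the only in-paper justification is the remark following Proposition~\ref{prop:autosimilar-tree}, which records the limit formula
$Q(Z)=\lim_{n\to\infty}p_0\inv H(p_1\inv H(\cdots(H(p_n\inv P))\cdots))$
for an \emph{arbitrary} base point $P\in\Trepul$, the terms forming a Cauchy sequence because $H$ is a contracting homothety. Your construction is essentially this one, so you are following the paper's (sketched) route rather than a genuinely different one.

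Two steps need repair. First, your candidate formula $Q(w)=\lim_n p(\gamma_n)\inv H^n(Q(\theta^n w))$ is circular: $\theta^n w$ need not have an eventually periodic expansion, so $Q(\theta^n w)$ is not yet defined at the moment you use it (you flag this yourself at the end). The fix is the paper's: replace $Q(\theta^n w)$ by any fixed $P\in\overline{T}_{\sigma\inv}$; since $H^n$ scales distances by $\lambda_{\sigma\inv}^{-n}$ and the relevant points stay in a bounded set, the limit exists and is independent of $P$, which also makes your preliminary fixed-point step on periodic points unnecessary (it becomes a consequence, as in Proposition~\ref{prop:Pexpansion}). Second, your uniqueness argument assumes only that $Q'$ satisfies $Q'(SZ)=Z_0\inv Q'(Z)$, yet invokes ``the same contracting fixed-point equation'', which you derived using \emph{both} relations; the shift relation alone does not produce $Q'(w_a)=g\,H^k(Q'(w_a))$. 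If you assume both relations for $Q'$ (the natural reading of the statement), there is a cleaner argument: $Z\mapsto d(Q(Z),Q'(Z))$ is continuous and shift-invariant (the $F_A$-action is by isometries), hence constant $c$ by minimality of $X_\sigma$, and the relation $Q(\sigma Z)=H(Q(Z))$ gives $c=c/\lambda_{\sigma\inv}$, so $c=0$. Uniqueness under the shift relation alone is a finer point that your argument does not establish and which belongs to \cite{chl4}.
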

The image $Q(X_\sigma)$ is a compact subset of $\overline{T}_{\sigma\inv}$ called the \defi{compact limit set} $\Omega_A$ of $\Trepul$ \cite{chl4}. 

We remark that, as a topological space, the limit set $\Omega_A$ can also be obtained as a quotient of the attracting shift. Indeed, let $\sim$ be the equivalence relation on $X_\sigma$ which is the transitive closure of the ``share a half'' relation (defined in Section \ref{sec:singular-leaves}), then
\[
\Omega_A=X_\sigma/\sim.
\]
The above equality is a consequence of the following statement.

\begin{prop}[{\cite[Corollary~1.3]{chr}},{\cite[Theorem~2]{kl-diagonal-closure}}]\label{prop:Q-injective}
Let $\sigma$ be a substitution which induces an iwip automorphism.
Let $Z,Z'$ be two bi-infinite words in the attracting shift $X_\sigma$.
If $Z$ and $Z'$ share a half then $Q(Z)=Q(Z')$. 
Conversely, if $Q(Z)=Q(Z')$, then there exists a finite sequence $Z=Z_0$, $Z_1,\ldots,Z_{n-1}$, $Z_n=Z'$ of bi-infinite words in the attracting shift $X_\sigma$
such that for each $i$, $Z_i$ and $Z_{i+1}$ share a half.
\end{prop}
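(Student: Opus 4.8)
The plan is to treat the two implications separately; the first is soft and the second is the substantial one. For ``$Z$ and $Z'$ share a half $\Rightarrow Q(Z)=Q(Z')$'' I would argue directly from the two functional equations of $Q$ together with the compactness of $X_\sigma$, using nothing about the tree beyond the existence of $Q$. Assume first that $Z$ and $Z'$ share their right half, i.e.\ $Z_i=Z'_i$ for all $i\ge 0$. Iterating $Q(S(W))=W_0\inv Q(W)$ gives $Q(W)=W_0W_1\cdots W_{n-1}\,Q(S^n(W))$ for every $n\ge 1$; applied to $Z$ and to $Z'$, and using that the prefixes $Z_0\cdots Z_{n-1}=Z'_0\cdots Z'_{n-1}$ coincide in $F_A$ while $F_A$ acts by isometries on $\overline T_{\sigma\inv}$, this yields
\[
d\bigl(Q(Z),Q(Z')\bigr)=d\bigl(Q(S^nZ),Q(S^nZ')\bigr)\qquad\text{for all }n\ge 0.
\]
By compactness of $X_\sigma$ I then pass to a subsequence along which $S^{n_k}Z\to U$ and $S^{n_k}Z'\to U'$; since the two words agree on all positions $\ge 0$, for every fixed $m$ one has $(S^{n_k}Z)_m=(S^{n_k}Z')_m$ as soon as $n_k+m\ge 0$, hence $U=U'$, and continuity of $Q$ forces $d(Q(Z),Q(Z'))=d(Q(U),Q(U'))=0$. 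The case where $Z$ and $Z'$ share their left half is identical with the negative shifts $S^{-n}$, rewriting the functional equation as $Q(S^{-n}W)=(W_{-n}\cdots W_{-1})\,Q(W)$ and noting that then $S^{-n}Z$ and $S^{-n}Z'$ agree on all positions $m\le n-1$.

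The converse is the deep direction, and this is where I expect the real obstacle; I would \emph{not} reprove it but cite it, since it is precisely \cite[Corollary~1.3]{chr} and \cite[Theorem~2]{kl-diagonal-closure}. For completeness I would sketch the mechanism. Through the Levitt--Lustig map on $\partial F_A$, describing a fibre $Q\inv(P)$ for $P\in\Omega_A$ amounts to describing which boundary points of $F_A$ are identified by $Q$; for the repelling tree of an iwip automorphism --- which is geometric in the present situation --- there are only finitely many orbits of branch points, the fibres of $Q$ are finite, and over a branch point the fibre is governed by the finitely many leaves of the attracting lamination passing through $P$ (this is the content behind Proposition~\ref{prop:finite-index} and the directed-system description of Holton--Zamboni \cite{hz-directed}). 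Unwinding this structure at the level of the word combinatorics of $X_\sigma$ produces exactly a finite chain $Z=Z_0,Z_1,\dots,Z_n=Z'$ with consecutive terms sharing a half. The hard point is genuinely here: it relies on the index theory of $\R$-trees dual to iwip automorphisms, not on anything internal to the substitution combinatorics developed so far.

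Combining the two implications, $Q$ factors through a continuous bijection $X_\sigma/\!\sim\ \to\ \Omega_A$: the soft implication makes this quotient map well defined and continuous, while the cited converse supplies its injectivity. This is the promised identification of the compact limit set with the quotient of the attracting shift by the transitive closure of ``share a half''.
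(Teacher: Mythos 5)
The paper gives no proof of this proposition at all --- it is quoted directly from \cite[Corollary~1.3]{chr} and \cite[Theorem~2]{kl-diagonal-closure} --- so your choice to cite the converse rather than reprove it is exactly what the paper does, and your identification of that direction as the one carrying the real content is accurate. Your self-contained argument for the forward implication (iterate the equivariance to get $Q(W)=W_0\cdots W_{n-1}\,Q(S^nW)$, use that the common prefix acts by isometry so that $d(Q(Z),Q(Z'))=d(Q(S^nZ),Q(S^nZ'))$ for all $n$, then extract by compactness of $X_\sigma$ a subsequence along which both shifted words converge to the same limit and conclude by continuity of $Q$) is correct, as is its left-half analogue, and is a small genuine addition since the paper does not prove even this easy direction.
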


We also consider the convex hull $K_A$ of $\Omega_A$, which is called the \defi{compact heart} of $\Trepul$. In this paper we focus on the situation where the limit set is convex: $\Omega_A=K_A$. An iwip automorphism in such a situation is called \defi{parageometric}.

\subsection{Index}

For the subshift $X_\sigma$ we considered the transitive closure of the ``share a half'' equivalence relation $\sim$. For each equivalence class $[Z]$ we define the {\bf index} as the number of possible letters around the origin minus two: 
\[
\ind([Z])=|\{Z'_{-1}\ |\ Z'\in [Z]\}|+|\{Z'_{0}\ |\ Z'\in [Z]\}|-2.
\]
Using Proposition~\ref{prop:finite-index} this index is finite and there are finitely many equivalence classes with positive index. Thus we define
\[
\ind(X_\sigma)=\sum_{[Z]}\ind([Z]).
\]

\begin{prop}[\cite{gjll}]For any substitution $\sigma$ which induces an iwip automorphism the index of the attracting subshift is bounded above by $2N-2$:
\[\ind(X_\sigma)\leq 2N-2.
\]
Moreover, this index is maximal if and only if $\sigma$ is parageometric.
\end{prop}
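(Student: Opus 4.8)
The statement is due to Gaboriau, Jaeger, Levitt and Lustig \cite{gjll}, and the plan is not to reprove their work but to read their geometric index inequality for $\R$-trees in the language of the attracting subshift, by transporting everything through the map $Q$ of Proposition~\ref{prop:Q-injective}. First I would set up the dictionary between $\ind(X_\sigma)$ and the Gaboriau--Levitt geometric index of the repelling tree $\Trepul$. By Proposition~\ref{prop:Q-injective}, $Q$ induces a bijection between the $\sim$-equivalence classes and the points of $\Omega_A=Q(X_\sigma)$. The local lemma to establish --- and this is the place where one uses the fine structure of $Q$ from \cite{chl4,chr} --- is that for a class $[Z]$ the directions of $\Trepul$ at $P=Q(Z)$ are in natural bijection with the disjoint union $\{Z'_{-1}:Z'\in[Z]\}\sqcup\{Z'_{0}:Z'\in[Z]\}$: the ``backward'' germs are those of the arcs $[Q(Z'),Q(S\inv Z')]=[Q(Z'),Z'_{-1}Q(Z')]$, which are pairwise distinct for distinct first letters $Z'_{-1}$ precisely because the corresponding words are singular, symmetrically for the ``forward'' germs, and these exhaust the directions at $P$ because $\Trepul$ is spanned by the $F_A$-orbit of $\Omega_A$. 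Hence $\ind([Z])=\deg_{\Trepul}(Q(Z))-2$, i.e.\ the combinatorial index of a class equals the tree index of the corresponding point.

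Next I would promote this to an identity of global indices. The equivariance $Q(S(Z))=Z_0\inv Q(Z)$ shows that shifting $Z$ moves $Q(Z)$ inside its $F_A$-orbit; conversely, using minimality of the action on $\Trepul$ together with the self-similar decomposition $\Omega_A=\bigcup_\gamma p(\gamma)\inv H^n(\Omega_a)$, every branch point of $\Trepul$ lies in the $F_A$-orbit of some $Q(Z)$. Thus $[Z]\mapsto F_A\cdot Q(Z)$ is a bijection from the set of $\sim$-classes of positive index onto the set of $F_A$-orbits of branch points of $\Trepul$; this set is finite by Proposition~\ref{prop:finite-index}, and summing the local identity yields $\ind(X_\sigma)=\ind_{\mathrm{geom}}(\Trepul)$. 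The bound $\ind(X_\sigma)\le 2N-2$ (with $N=|A|=\mathrm{rk}(F_A)$) is then exactly the geometric index inequality of \cite{gjll} for the very small $F_A$-tree $\Trepul$ with trivial arc stabilizers.

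Finally I would treat the equality case. By \cite{gjll}, a very small $F_A$-tree with dense orbits and trivial arc stabilizers attains the maximal geometric index $2N-2$ if and only if it is \emph{geometric}, i.e.\ dual to a measured foliation on a compact band complex (no ``Levitt-type'' part). It remains to match this with the running definition of parageometric: a tree with dense orbits is geometric if and only if its limit set $\Omega_A$ is connected, and a compact subset of an $\R$-tree is connected exactly when it coincides with its own convex hull, i.e.\ $\Omega_A=K_A$. The implication ``geometric $\Rightarrow$ $\Omega_A$ connected'' uses the foliated model of a geometric tree, where the limit set is the leaf-space quotient of a connected object; for the converse one uses the tiling of $\Omega_A=K_A$ by the finitely many pieces $p(\gamma)\inv H^n(\Omega_a)$ meeting pairwise in at most one point, which produces an interval-exchange (hence geometric) model of $\Trepul$ and forces the index to be maximal. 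This equivalence can alternatively be quoted from the botany of \cite{ch-b}.

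The main obstacle is this last paragraph together with the local lemma of the first one: identifying the combinatorial ``number of letters around the origin'' with the tree-theoretic valence at $Q(Z)$, and reconciling the metric definition of parageometric ($\Omega_A=K_A$) with the lamination-theoretic notion of a geometric tree. Both rest on the structure theory of \cite{gjll,chl4,chr} rather than on any new computation, so the remaining work is careful bookkeeping of directions at branch points and a precise citation of the equality case of \cite{gjll}.
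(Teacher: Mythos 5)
The paper itself gives no proof of this proposition; it is quoted from \cite{gjll}. Judging your argument on its own terms: the overall strategy (convert $\ind(X_\sigma)$ into a tree-theoretic index and invoke an index inequality for very small $F_A$-trees) is the right one, but you run it through the wrong tree, and this breaks both halves of the argument. Your ``local lemma'' asserts that the directions of $\Trepul$ at $P=Q(Z)$ are in bijection with $\{Z'_{-1}\}\sqcup\{Z'_{0}\}$, i.e.\ that $\ind([Z])$ equals the valence of $P$ minus $2$, so that $\ind(X_\sigma)$ becomes the Gaboriau--Levitt geometric index of $\Trepul$. This is false in general: the singular classes of the \emph{attracting} shift control the cardinalities of the fibres of $Q$ (the $Q$-index, i.e.\ the index of the dual lamination of $\Trepul$ in the sense of \cite{ch-a}), whereas the branch points of $\Trepul$ and their valences are governed by the singular leaves of the \emph{repelling} lamination, hence by $\sigma\inv$ --- the paper says exactly this in Remark~\ref{rem:branch-point-desubstitution}. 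The identity actually established in \cite{gjll} is $\ind(X_\sigma)=\ind_{\mathrm{geom}}(\Tattr)$, the geometric index of the \emph{attracting} tree (equivalently, twice the fixed-point index of $\sigma$ computed from attracting fixed points on $\partial F_A$); the bound $2N-2$ then follows from the Gaboriau--Levitt inequality applied to $\Tattr$, or directly from the main theorem of \cite{gjll} without any tree.

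The equality case makes the error visible. You claim that a tree with dense orbits is geometric if and only if its limit set is connected, and conclude that maximal index is equivalent to $\Trepul$ being geometric. But the repelling tree of a parageometric, non-geometric iwip has $\Omega_A=K_A$ connected and is \emph{not} geometric (it is of pseudo-surface type in the botany of \cite{ch-b}); were your chain of equivalences correct, every parageometric automorphism would be geometric. Concretely, for such $\sigma$ one has $\ind_Q(\Trepul)=2N-2$ while $\ind_{\mathrm{geom}}(\Trepul)<2N-2$, so at the singular points the valence in $\Trepul$ is strictly smaller than the number of halves in the class, and the bijection between directions and letters that your first paragraph relies on fails precisely in the case the proposition is about. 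The correct route for the ``moreover'' part is: $\ind(X_\sigma)$ maximal iff $\Tattr$ is geometric (\cite{gjll} together with Gaboriau--Levitt), iff the limit set of the dual tree $\Trepul$ is the whole compact heart (\cite{chl4,ch-b}), which is the paper's definition of parageometric.
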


\begin{example} For Tribonacci substitution there are two equivalence classes with positive index, both contain three bi-infinite words: $\{w_a,w_b,w_c\}$ (see Example~\ref{ex:tribo-fix-points}) and $\{w'_a,w'_b,w'_c\}$ (see Example~\ref{ex:tribo-right-left-special}), thus the index is $2+2=4=2\times 3-2$: the Tribonacci automorphism is parageometric.
\end{example}

\subsection{Cylinders in the repelling tree}\label{sec:cylind-repell-tree}

For a finite word $w\in A^*$ we push forward the cylinder $[w]$ to get a compact subset $\Omega_w=Q([w])$ of the compact limit set. If $\gamma$ is a finite path in the prefix-suffix automaton we similarly push forward $[\gamma]$ to get $\Omega_\gamma=Q([\gamma])$.

Pushing forward Proposition~\ref{prop:cylinders-autosimilar}, we get
\begin{prop}\label{prop:autosimilar-tree}
Let $\sigma$ be a substitution which induces an iwip automorphism.
For any finite path $\gamma\gamma'$ in the prefix-suffix automaton we have
\[\Omega_{\gamma\gamma'}=p_0^{-1}\sigma(p_1^{-1})\cdots\sigma^{n-1}(p_{n-1}^{-1})H^n(\Omega_{\gamma'})
=p(\gamma)\inv H^n(\Omega_{\gamma'})\]
where
\[
\gamma
=a_0\stackrel{p_0,s_0}{\longleftarrow}a_1\stackrel{p_1,s_1}{\longleftarrow}a_2
\cdots\stackrel{p_{n-1},s_{n-1}}{\longleftarrow}a_n.
\]
Moreover, for any path $\gamma$ in the prefix-suffix automaton and any $n\in\N^*$
\[
\Omega_\gamma=\bigcup_{\gamma', |\gamma'|=n}\Omega_{\gamma\gamma'}\quad \text{in
  particular for } \gamma=\epsilon,\quad \Omega_A=\bigcup_{\gamma',
  |\gamma'|=n}\Omega_{\gamma'}
\] 
where the union is taken over all paths $\gamma'$ in the prefix-suffix automaton extending $\gamma$ (in case $\gamma$ is the empty path, ending at any state) of length $n$. Furthermore, if $\sigma$ is parageometric then each $\Omega_\gamma$ is connected.
\end{prop}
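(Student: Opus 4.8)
The plan is to transport the two identities of Proposition~\ref{prop:cylinders-autosimilar} through the map $Q$, using its equivariances $Q(SZ)=Z_0\inv Q(Z)$ and $Q(\sigma Z)=H(Q(Z))$ together with the homothety relation $H(uP)=\sigma(u)H(P)$ of Proposition~\ref{thm:gjll} (iterated to $H^k(uP)=\sigma^k(u)H^k(P)$).

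The computational core is a single step. Suppose $Z\in X_\sigma$ has first letter $b=Z_0$ and $a\stackrel{p,s}{\longleftarrow}b$ is an edge of the prefix-suffix automaton, so $\sigma(b)=pas$; then $\sigma(Z)$ reads $p$ at positions $0,\dots,|p|-1$, and iterating the first equivariance gives $Q(S^kW)=W_{[0,k-1]}\inv\, Q(W)$, so with $W=\sigma(Z)$ and $k=|p|$,
\[
Q\big(S^{|p|}\sigma(Z)\big)=p\inv Q(\sigma(Z))=p\inv H(Q(Z)).
\]
Now fix a path $\gamma\gamma'$ with $n=|\gamma|$ and $\gamma=a_0\stackrel{p_0,s_0}{\longleftarrow}a_1\stackrel{p_1,s_1}{\longleftarrow}\cdots\stackrel{p_{n-1},s_{n-1}}{\longleftarrow}a_n$. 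For $Z'\in[\gamma']$ (so $Z'_0=a_n$) set $Z^{(n)}=Z'$ and $Z^{(i)}=S^{|p_i|}\sigma(Z^{(i+1)})$; an immediate induction gives $Z^{(i)}_0=a_i$, so the core step applies at every level and yields $Q(Z^{(i)})=p_i\inv H(Q(Z^{(i+1)}))$. Chaining these and pushing the group elements through the iterates of $H$ produces
\[
Q(Z^{(0)})=p_0\inv\sigma(p_1\inv)\cdots\sigma^{n-1}(p_{n-1}\inv)\,H^n(Q(Z'))=p(\gamma)\inv H^n(Q(Z')).
\]
By Proposition~\ref{prop:cylinders-autosimilar}, $Z^{(0)}$ ranges exactly over $[\gamma\gamma']$ as $Z'$ ranges over $[\gamma']$; applying $Q$ and using that direct images commute with unions gives the first identity $\Omega_{\gamma\gamma'}=p(\gamma)\inv H^n(\Omega_{\gamma'})$. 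The decompositions $\Omega_\gamma=\bigcup_{\gamma'}\Omega_{\gamma\gamma'}$ and $\Omega_A=\bigcup_{\gamma'}\Omega_{\gamma'}$ are then just the images under $Q$ of the prefix-suffix decompositions of $[\gamma]$ and of $X_\sigma$; disjointness is of course lost, since $Q$ is not injective.

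It remains to treat the connectedness. Applying the first identity with $\gamma'$ the length-zero path at the initial vertex $a_n$ of $\gamma$ --- whose cylinder is the letter cylinder $\{w\in X_\sigma:w_0=a_n\}$, so that $\Omega_{\gamma'}=\Omega_{a_n}$ --- gives $\Omega_\gamma=p(\gamma)\inv H^{|\gamma|}(\Omega_{a_n})$, the image of the letter-tile $\Omega_{a_n}$ under a homothety followed by an isometry of $\overline{T}_{\sigma\inv}$; hence it suffices to show each $\Omega_a$, $a\in A$, is connected. Since $\sigma$ is parageometric, $\Omega_A=K_A$ is the convex hull of $Q(X_\sigma)$ in the $\R$-tree $\overline{T}_{\sigma\inv}$, hence a compact $\R$-tree, so $\Omega_A$ is connected. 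The connectedness of the letter-tiles themselves, however, is genuinely more than bookkeeping: from $\Omega_A=\bigcup_a\Omega_a$ connected, the $\Omega_a$ closed and (by finiteness of the index, Proposition~\ref{prop:finite-index}) pairwise meeting in at most one point, one cannot formally conclude that each $\Omega_a$ is connected --- for instance $[0,2]=[0,\tfrac12]\cup[\tfrac12,1]\cup(\{0\}\cup[1,2])$ is a union of three closed sets pairwise meeting in a single point, the last of which is disconnected. I would instead derive the connectedness of $\Omega_a$ from the Gaboriau--Jaeger--Levitt--Lustig description of the repelling tree of a parageometric iwip~\cite{gjll}, in which the compact heart $K_A$ is assembled precisely out of the cylinder trees $\Omega_\gamma$; a self-contained argument would show that the (unique) arc of $K_A$ between two points of $\Omega_a$ cannot leave $\Omega_a$, localizing near the finitely many gluing points via the self-similar covering $\Omega_a=\bigcup_{a\stackrel{p,s}{\longleftarrow}b}p\inv H(\Omega_b)$ and exploiting structural features of $\Omega_a$ (such as the absence of isolated points) that the bad example above lacks. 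Carrying out this localization is the step I expect to be the main obstacle.
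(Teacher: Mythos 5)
Your computation of the first identity is exactly the paper's argument---the paper's proof is a one-line ``apply $Q$ to both sides of Proposition~\ref{prop:cylinders-autosimilar} recalling that $Q(S(Z))=Z_0^{-1}Q(Z)$ and $Q(\sigma(Z))=H(Q(Z))$'', which you have simply written out in full---and the union decompositions follow in the same way. For the connectedness of $\Omega_\gamma$ the paper does not argue either: it defers to the reference \cite{chl4}, so your honest assessment that this step requires external structural input on the compact heart (and your correct observation that it is not a formal consequence of the decomposition into tiles meeting in single points) is consistent with the paper, which simply supplies the citation you were missing.
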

\begin{proof}
The result follows easily applying the map $Q$ to both sides of the statements of Proposition~\ref{prop:cylinders-autosimilar} recalling that $Q(S(Z))=Z_0^ {-1}Q(Z)$ and $Q(\sigma(Z)) = H(Q(Z))$. For a proof of the connectedness of $\Omega_\gamma$ we refer to \cite{chl4}.
\end{proof}

The formula in the previous Proposition can be used to define (or at least better understand) the map $Q$. For a bi-infinite word $Z\in X_\sigma$, with prefix-suffix development $\gamma=a_0\stackrel{p_0,s_0}{\longleftarrow}a_1\stackrel{p_1,s_1}{\longleftarrow}\cdots$, for any point $P\in \Trepul$:
\[
Q(Z)=\lim_{n\to\infty}{p_0}\inv H({p_1}\inv H(\cdots (H({p_n}\inv P))\cdots )),
\]
indeed, as $H$ is a contracting homothety, the terms inside the limit form a Cauchy sequence.
Abusing of notations, we use the above formula to consider the  map $Q:\mathcal{P}\to\Trepul$: for an infinite desubstitution path $\gamma$ and a bi-infinite word $Z$ such that $\Gamma(Z)=\gamma$, we have 
\[
Q(\gamma)=Q(\Gamma(Z))=Q(Z).
\]
For an infinite desubstitution path $\gamma=\gamma'\gamma''$ (with $\gamma'$ a finite desubstitution path),
\[
Q(\gamma'\gamma'')=p(\gamma')\inv H^{|\gamma'|}(Q(\gamma'')).
\]
In particular for words with eventually periodic prefix-suffix expansions we get the following statement.

\begin{prop}\label{prop:Pexpansion}
Let $Z\in X_\sigma$ with eventually periodic prefix-suffix expansion $\gamma=\alpha\beta^\infty$, where
\[
\alpha=a_0\stackrel{p_0,s_0}{\longleftarrow}a_1\stackrel{p_1,s_1}{\longleftarrow}
\cdots\stackrel{p_{m-1},s_{m-1}}{\longleftarrow}a_m \quad \text{and},
\]
\[
\beta=a_m\stackrel{p_{m},s_{m}}{\longleftarrow}a_{m+1}\stackrel{p_{m+1},s_{m+1}}{\longleftarrow}
\cdots a_{m+n-1}\stackrel{p_{m+n-1},s_{m+n-1}}{\longleftarrow}a_{m}.
\] 
Then we have
\[
P=Q(Z)=Q(\gamma)=p(\alpha)\inv\sigma^m(p(\beta)\inv)\sigma^n(p(\alpha))H^n(P).
\]
\end{prop}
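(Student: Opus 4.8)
The plan is to derive the fixed-point equation directly from the self-similarity formulas established just before the statement, specialized to an eventually periodic prefix-suffix expansion. First I would write $\gamma=\alpha\beta^\infty$ and apply the formula $Q(\gamma'\gamma'')=p(\gamma')\inv H^{|\gamma'|}(Q(\gamma''))$ with $\gamma'=\alpha$ and $\gamma''=\beta^\infty$; this gives $P=Q(\gamma)=p(\alpha)\inv H^m(Q(\beta^\infty))$, where $m=|\alpha|$. Next, since $\beta^\infty=\beta\beta^\infty$, the same formula applied to the infinite periodic path yields $Q(\beta^\infty)=p(\beta)\inv H^n(Q(\beta^\infty))$, with $n=|\beta|$; in other words, writing $P'=Q(\beta^\infty)$, we have $P'=p(\beta)\inv H^n(P')$, so $P'$ is the unique fixed point of the contracting map $R\mapsto p(\beta)\inv H^n(R)$ (contracting because $H^n$ contracts by $\lambda_{\sigma\inv}^{-n}<1$ and $p(\beta)\inv$ acts by isometry).

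Having isolated $P'$, I would substitute back: $P=p(\alpha)\inv H^m(P')$. To reach the stated form I need to commute $H^m$ past the element $p(\beta)\inv$ of $F_A$. Here I use the intertwining relation from Proposition~\ref{thm:gjll}, namely $H(uR)=\sigma(u)H(R)$ for all $u\in F_A$, iterated to give $H^m(uR)=\sigma^m(u)H^m(R)$. Applying this to $P'=p(\beta)\inv H^n(P')$ gives $H^m(P')=\sigma^m(p(\beta)\inv)H^m(H^n(P'))=\sigma^m(p(\beta)\inv)H^{m+n}(P')$. Then
\[
P=p(\alpha)\inv H^m(P')=p(\alpha)\inv\sigma^m(p(\beta)\inv)H^{m+n}(P').
\]
Now I rewrite $H^{m+n}(P')=H^n(H^m(P'))$ and push the $F_A$-element $p(\alpha)$ back inside: from $P=p(\alpha)\inv H^m(P')$ we get $H^m(P')=p(\alpha)P$, hence $H^{m+n}(P')=H^n(p(\alpha)P)=\sigma^n(p(\alpha))H^n(P)$, again by the iterated intertwining relation. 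Substituting yields exactly
\[
P=p(\alpha)\inv\sigma^m(p(\beta)\inv)\sigma^n(p(\alpha))H^n(P),
\]
which is the claimed identity.

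The only genuinely delicate point is the uniqueness/existence step for $P'=Q(\beta^\infty)$: one must know that the infinite periodic path $\beta^\infty$ has a well-defined $Q$-image and that it coincides with the fixed point of the contraction $R\mapsto p(\beta)\inv H^n(R)$. This is exactly the content of the Cauchy-sequence remark preceding Proposition~\ref{prop:Pexpansion}: the partial products $p(\beta)\inv H^n(p(\beta)\inv H^n(\cdots))$ converge because $H^n$ is a strict contraction, so $\beta^\infty$ determines a single point of $\overline{T}_{\sigma\inv}$, and by continuity of $Q$ this point is $Q(\beta^\infty)$. Everything else is bookkeeping with the relation $H^k(uR)=\sigma^k(u)H^k(R)$, which follows from Proposition~\ref{thm:gjll} by induction on $k$; I would state that lemma explicitly and then perform the three substitutions above in order. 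I expect the main obstacle to be purely notational — keeping track of which $F_A$-elements get pushed through which powers of $H$ and picking up the correct $\sigma^m$ versus $\sigma^n$ on each factor.
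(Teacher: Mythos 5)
Your proof is correct and follows essentially the same route as the paper: decompose $Q(\alpha\beta^\infty)=p(\alpha)\inv H^m(Q(\beta^\infty))$, use periodicity to get the fixed-point equation for $Q(\beta^\infty)$, and commute powers of $H$ past group elements via $H(uP)=\sigma(u)H(P)$ before substituting back. The paper's version is just a terser write-up of the same three substitutions.
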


\begin{proof}
By Proposition~\ref{prop:autosimilar-tree} we have 
\begin{equation}\label{eq:P}
P = p(\alpha)^{-1}H^{|\alpha|}(P_\beta),
\end{equation}
where $P_\beta = Q(Z')$ and $Z'\in X_\sigma$ has infinite desubstitution path $\beta^\infty$. Now applying Theorem~\ref{thm:gjll} we obtain
\[ P = p(\alpha)^{-1}\sigma^{|\alpha|}(p(\beta^{-1})) H^{|\alpha|+
|\beta|}(P_\beta) \] 
and by \eqref{eq:P} we finally get the result.
\end{proof}

\subsection{Rauzy fractals}\label{sec:preliminary-rauzy}

Let $\ell:A^*\to\Z^{A}$, $w\mapsto (|w|_a)_{a\in A}$ be the abelianization map and $M_\sigma$ be the abelianization matrix of $\sigma$ acting on $\Z^{A}$ or $\R^{A}$. Then $\ell(\sigma(w))=M_\sigma \ell(w)$ holds for any $w\in A^*$.

We assume that the substitution $\sigma$ is \defi{irreducible Pisot}, that is, the characteristic polynomial of $M_\sigma$ is the minimal polynomial of a Pisot number $\lambda_\sigma$, i.e.~a real number strictly bigger than $1$ whose conjugates are strictly smaller than one in modulus. Then we have the $M_\sigma$-invariant decomposition
\[
\R^{A}=\R\,\vec u \oplus E_c
\]
where $\vec u$ is the eigenvector associated to $\lambda_\sigma$ and, $E_c\cong \R^{|A|-1}$ is the contracting hyperplane spanned by the eigenvectors associated with the Galois conjugates of $\lambda_\sigma$ (except itself). Let $\pi_c$ be the projection of $\R^{A}$ to $E_c$ along $\vec u$.

For $Z\in X_\sigma$ with prefix-suffix decomposition
$\gamma=a_0\stackrel{p_0,s_0}{\longleftarrow}a_1\stackrel{p_1,s_1}{\longleftarrow}a_2
\cdots$ define
\begin{equation}
\varphi:X_\sigma\to E_c,\quad
\varphi(Z)= \sum_{i=0}^\infty \pi_c(M^i_\sigma\, \ell(p_i)),
\end{equation}
where the infinite sum converges since $M_\sigma$ is a contraction in $E_c$. Remark that the map $\varphi$ is continuous and only depends on the prefix-suffix expansion of $Z$ and thus, abusing again of notations, we define $\varphi(\gamma)=\varphi(\Gamma(Z))=\varphi(Z)$ and we regard also the map $\varphi:\mathcal P\to E_c$. 

The Pisot property allows to represent the attracting shift $X_\sigma$
geometrically as a compact domain with fractal boundary, called Rauzy
fractal in honor of G.~Rauzy who first defined it for the Tribonacci
substitution \cite{rauzy}.

\begin{defn}
The {\bf Rauzy fractal} $\mathcal{R}_\sigma$ is the compact set
$\varphi(X_\sigma)\subset E_c$.  Since $X_\sigma$ is the union of the
cylinders $[a]$, for $a\in A$, the Rauzy fractal is decomposed into
subpieces $\mathcal{R}_\sigma(a) = \varphi([a])$.
\end{defn}

We state now some properties of these fractals. For more details we refer to \cite{cant-chapter7}.

\begin{prop}\label{prop:rauzyprop}
Let $\sigma$ be a Pisot substitution. Then the following properties hold:
\begin{itemize}
\item $\mathcal{R}_\sigma$ is the closure of its interior.
\item $\partial\mathcal{R}_\sigma(a)$ has measure zero, for each $a\in A$.
\item The Rauzy fractal obeys to the set equation
\[  \mathcal{R}_\sigma(a) = \bigcup_{a\stackrel{p,s}{\leftarrow}b} M_\sigma\mathcal{R}_\sigma(b) + \pi_c(\ell(p)), \quad \text{for }a\in A. \]
Furthermore the union is measure disjoint.

\end{itemize}
\end{prop}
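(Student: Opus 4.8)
The plan is to prove Proposition~\ref{prop:rauzyprop} by treating its three items in turn, all of them following from the defining formula for $\varphi$ together with the prefix-suffix decomposition of $X_\sigma$ (Proposition~\ref{prop:cylinders-autosimilar}) and standard properties of the prefix-suffix automaton.

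First I would establish the set equation, since the other two items rest on it. Starting from the decomposition $[a]=\bigsqcup_{a\stackrel{p,s}{\leftarrow}b}[a\stackrel{p,s}{\longleftarrow}b]$ given by Proposition~\ref{prop:cylinders-autosimilar} with $n=1$, I would apply $\varphi$ and unfold its definition: if $Z\in[a\stackrel{p,s}{\longleftarrow}b]$ has prefix-suffix expansion $a\stackrel{p,s}{\longleftarrow}b\stackrel{p_1,s_1}{\longleftarrow}\cdots$, then
\[
\varphi(Z)=\pi_c(\ell(p))+\sum_{i\ge 1}\pi_c(M_\sigma^i\ell(p_i))=\pi_c(\ell(p))+M_\sigma\sum_{i\ge 0}\pi_c(M_\sigma^i\ell(p_{i+1}))=\pi_c(\ell(p))+M_\sigma\varphi(S\theta^{-1}\cdots)
\]
where the tail expansion $b\stackrel{p_1,s_1}{\longleftarrow}\cdots$ is an arbitrary infinite path starting at $b$, i.e.\ ranges over $\varphi([b])=\mathcal R_\sigma(b)$. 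Taking closures and using that $M_\sigma$ and $\pi_c$ are continuous linear maps (so $M_\sigma$ commutes with closure on compact sets and the finite union of closed sets is closed) yields $\mathcal R_\sigma(a)=\bigcup_{a\stackrel{p,s}{\leftarrow}b}M_\sigma\mathcal R_\sigma(b)+\pi_c(\ell(p))$.

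Next, for measure-disjointness of this union and for the statement that $\partial\mathcal R_\sigma(a)$ has measure zero, I would argue simultaneously by the usual self-similar/bounded-distortion reasoning. The key point is that distinct edges $a\stackrel{p,s}{\leftarrow}b$ and $a\stackrel{p',s'}{\leftarrow}b'$ correspond to disjoint cylinders $[a\stackrel{p,s}{\longleftarrow}b]$ in $X_\sigma$, and iterating the set equation $n$ times decomposes $\mathcal R_\sigma$ into $M_\sigma^n$-contracted copies $M_\sigma^n\mathcal R_\sigma(a_n)+\pi_c(\ell(p(\gamma)))$ indexed by length-$n$ paths $\gamma$; since $M_\sigma$ is a contraction on $E_c$ with $\|M_\sigma^n\|\to 0$, the diameters of these pieces shrink uniformly, while there are boundedly many of them covering any fixed point. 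A standard measure-theoretic argument (comparing the Lebesgue measure of $\mathcal R_\sigma$ with the sum over a length-$n$ decomposition, and using $\det(M_\sigma|_{E_c})$ being the product of the subdominant eigenvalues, whose modulus is $1/\lambda_\sigma<1$) shows the pieces at each level are measure-disjoint and that the boundary, being covered at every scale by pieces whose total measure does not grow, has measure zero. Equivalently I would invoke the general theory of graph-directed self-affine sets satisfying the open set condition as in \cite{cant-chapter7}, for which this is exactly the classical conclusion.

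Finally, for the first item, that $\mathcal R_\sigma$ is the closure of its interior: I would show $\mathcal R_\sigma$ has nonempty interior and then propagate this through the set equation. Nonempty interior follows because $\mathcal R_\sigma$ has positive Lebesgue measure (it is the image of the full shift under $\varphi$, and by the measure-disjoint decomposition its measure is a fixed positive number) together with the self-similar structure: if $\mathcal R_\sigma$ were nowhere dense it would have measure zero by the scaling argument above, a contradiction. Then applying $M_\sigma^{-n}$ translates of interior points through the set equation shows interior points are dense in $\mathcal R_\sigma$, so $\mathcal R_\sigma=\overline{\mathrm{int}\,\mathcal R_\sigma}$; that $\partial\mathcal R_\sigma(a)$ has measure zero is then the statement already proved. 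The main obstacle is the measure-disjointness/bounded-distortion step: one must rule out that the contracted copies overlap on a set of positive measure, which is precisely where the Pisot hypothesis (via $|\det M_\sigma|_{E_c}|<1$) and the disjointness of prefix-suffix cylinders are essential, and where appealing carefully to the graph-directed IFS framework of \cite{cant-chapter7} is the cleanest route.
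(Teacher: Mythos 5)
The paper itself gives no proof of this proposition: it is quoted from the literature with a pointer to \cite{cant-chapter7}, so there is no internal argument to compare yours against. Your derivation of the set equation is correct and is the standard one: apply $\varphi$ to the level-one prefix-suffix partition of $[a]$, use $\pi_c M_\sigma = M_\sigma \pi_c$ and the surjectivity of $\Gamma$ onto $\mathcal P$; no closure operation is even needed, since all the sets involved are already compact.

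The two measure-theoretic items, however, contain a genuine circularity. Made precise, your ``comparison of Lebesgue measures across the length-$n$ decomposition'' is the Perron--Frobenius subinvariance argument: writing $v_a$ for the Lebesgue measure of $\mathcal R_\sigma(a)$, subadditivity in the set equation gives $\lambda_\sigma v\le M_\sigma v$ coordinatewise (at least in the unimodular case --- your identification of $|\det(M_\sigma|_{E_c})|$ with $1/\lambda_\sigma$ already presupposes $|\det M_\sigma|=1$, which the paper's definition of irreducible Pisot does not impose), and equality, hence measure-disjointness, follows \emph{only if} $v\neq 0$. But you obtain $v\neq 0$ from the measure-disjoint decomposition itself, and your fallback justification, that $\mathcal R_\sigma$ ``is the image of the full shift under $\varphi$'', proves nothing: a continuous image of $X_\sigma$ can perfectly well be Lebesgue-null. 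Likewise ``nowhere dense implies measure zero'' is false for general compact sets, and positive measure does not by itself give nonempty interior. The missing ingredient --- and the place where the Pisot and irreducibility hypotheses genuinely act --- is the covering property: the translates of $\mathcal R_\sigma$ by the countably many points $\pi_c(x)$, for $x\in\Z^A$ on the appropriate side of the expanding direction, cover $E_c$; Baire's theorem then gives $\mathcal R_\sigma$ nonempty interior, hence $v\neq 0$, and only then do the subinvariance arguments yield, in this order, measure-disjointness, null boundaries, and $\mathcal R_\sigma=\overline{\mathrm{int}\,\mathcal R_\sigma}$. Finally, invoking ``the open set condition'' for the graph-directed system is not an escape route: that condition is not among your hypotheses and is essentially equivalent to the disjointness you are trying to establish.
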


The substitution $\sigma$ satisfies the \defi{strong coincidence condition} if $\forall\,(a,b)\in A^2$ there exist $n\in\mathbb{N}$ and $i\in A$ such that $\sigma^n(a)=p_1 i s_1$ and $\sigma^n(b)=p_2 i s_2$ and the prefixes $p_1$ and $p_2$ share the same abelianization.
In this case the subtiles $\mathcal{R}_\sigma(a)$ are pairwise disjoint in measure and we can define the \defi{domain exchange}
\[ E: \mathcal{R}_\sigma \to \mathcal{R}_\sigma,\quad z \mapsto z+\pi_c(\ell(a)),\quad\text{if }z\in\mathcal{R}_{\sigma}(a).  \]

From the definition of the map $\varphi$ we get the following properties \cite{cs}:
\[
\varphi(S(Z))=\varphi(Z)+\pi_c(\ell(Z_0))\quad \text{and}\quad \varphi(\sigma(Z))=M_\sigma\,\varphi(Z).
\]
Thus the shift and the action of $\sigma$ correspond respectively to the domain exchange and to the contraction $M_{\sigma}$ on the Rauzy fractal.

We emphasize that if we have an ultimately periodic prefix-suffix decomposition
$\gamma=\alpha\beta^\infty$ (see notations at the end of Section~\ref{sec:preliminary-T}) then we get the following formula:
\[
\varphi(Z)=\pi_c(\ell(p(\alpha)))+\pi_c(M_\sigma^m(M_\sigma^n-Id)\inv \ell(p(\beta))).
\]

\begin{figure}[h]
\centering
\includegraphics[scale=.35]{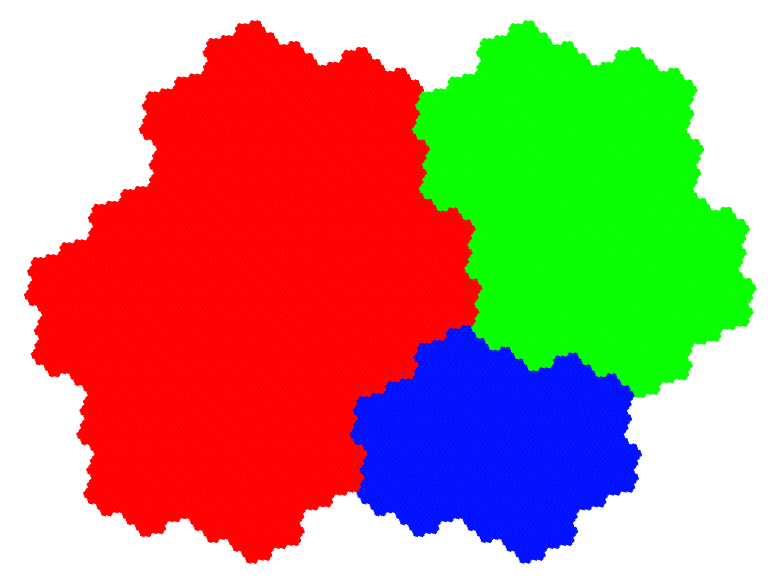}
\caption{The Rauzy fractal of the Tribonacci substitution.}
\end{figure}

Rauzy fractals are an important geometrical tool to understand the
dynamics of the attracting shift $X_\sigma$. It is conjectured that
every $(X_\sigma,S)$ generated by an irreducible Pisot substitution has
pure discrete spectrum, or equivalently is metrically conjugate to a
translation on a compact Abelian group. This famous problem is known
as {\bf Pisot conjecture} (see \cite{ABBLS15} for a recent
survey). One way to attack this conjecture consists in showing that
the Rauzy fractal tiles periodically the contracting space
$E_c$. Indeed, if this is the case, then the Rauzy fractal is a
fundamental domain of the torus $\mathbb{T}^{A-1}$ and the domain
exchange turns into a toral translation. Words in the attracting shift
$X_\sigma$ are then codings of this toral translation with respect to
the partition $\{\mathcal{R}(i): i\in A\}$.

In this work we do not tackle this important problem but we believe
that the tree substitutions approach will give a new particular
insight to it.

\subsection{The global picture}\label{sec:global-picture}

The map $\varphi$ of the previous section factors through the map $Q$ of Section~\ref{sec:preliminary-T}, and we can state

\begin{prop}\label{prop:global-picture}
For an irreducible Pisot substitution $\sigma$ which is an iwip
automorphism of the free group $F_A$, let $X_\sigma$ be the attracting
shift of $\sigma$, $\Trepul$ the repelling tree and,
$\mathcal{R}_\sigma$ the Rauzy fractal, as defined in the previous
sections. Then there exists a continuous map $\psi$ such that the
following diagram commutes
\[
\xymatrix{
   A^\Z\supseteq X_\sigma \ar[r]^\Gamma \ar []+DR;[dr]+UL^Q \ar@/_/ []+DR-<5pt,0pt>;[ddr]+UL_\varphi
&\mathcal{P} \ar[d]+UL+<10pt,0pt>_Q \ar@/^4pc/[dd]+UL+<15pt,0pt>^\varphi\\
&\Omega_A\subseteq K_A\subset \overline{T}_{\sigma^{-1}} \ar []+DL+<10pt,0pt>;[d]+UL+<8pt,0pt>^\psi \\
& \mathcal{R}_\sigma\subset E_c &
}
\]
Moreover the action of the shift on $X_\sigma$ is pushed forward to
the action of a system of partial isometries of $\Omega_A$ and to the
domain exchange on $\mathcal{R}_\sigma$, while the action of $\sigma$
on $X_\sigma$ is pushed forward to the contracting homothety $H$ of
the attracting tree and the matrix $M_\sigma$ on the contracting
hyperplane: $\forall Z\in X_\sigma$,
\begin{align*}
Q(SZ)&=Z_0^{-1}Q(Z), & \varphi(SZ)&=\varphi(Z)+\pi_c(Z_0), \\
Q(\sigma(Z))&=H(Q(Z)), & \varphi(\sigma(Z))&=M_\sigma\varphi(Z).
\end{align*}
\end{prop}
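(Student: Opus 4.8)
The plan is to produce $\psi$ as the unique map satisfying $\psi\circ Q=\varphi$, and then to read the dynamical statements off the identities already recorded. The upper triangle of the diagram commutes for free: as recalled in Sections~\ref{sec:cylind-repell-tree} and~\ref{sec:preliminary-rauzy}, both $Q$ and $\varphi$ depend only on the prefix-suffix expansion and hence factor through $\Gamma$. So the whole statement reduces to exhibiting a continuous $\psi\colon\Omega_A\to\mathcal R_\sigma$ with $\psi\circ Q=\varphi$, i.e.\ to proving that $\varphi$ is constant on the fibres of $Q$. Granting this, put $\psi(Q(Z)):=\varphi(Z)$; it is surjective onto $\varphi(X_\sigma)=\mathcal R_\sigma$, and it is continuous because $Q\colon X_\sigma\to\Omega_A$ is a quotient map (a continuous surjection from the compact space $X_\sigma$ onto the subspace $\Omega_A$ of a metric space is closed, hence a quotient map, so $\psi\circ Q=\varphi$ continuous forces $\psi$ continuous).

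The core step is the implication $Q(Z)=Q(Z')\Rightarrow\varphi(Z)=\varphi(Z')$. By Proposition~\ref{prop:Q-injective}, $Q(Z)=Q(Z')$ forces $Z$ and $Z'$ to be joined by a finite chain of bi-infinite words each consecutive pair of which shares a half; it therefore suffices to treat one such pair, and by symmetry we may assume $Z_{[0,+\infty)}=Z'_{[0,+\infty)}$. Iterating $\varphi(S(Z))=\varphi(Z)+\pi_c(\ell(Z_0))$ yields, for every $n\ge 0$,
\[
\varphi(S^nZ)-\varphi(S^nZ')=\bigl(\varphi(Z)-\varphi(Z')\bigr)+\pi_c\bigl(\ell(Z_{[0,n-1]})-\ell(Z'_{[0,n-1]})\bigr)=\varphi(Z)-\varphi(Z'),
\]
since $Z_{[0,n-1]}=Z'_{[0,n-1]}$. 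On the other hand $S^nZ$ and $S^nZ'$ agree at every position $\ge-n$ (both read $Z_{n+i}=Z'_{n+i}$ there), so $d(S^nZ,S^nZ')\to 0$; as $\varphi$ is continuous on the compact space $X_\sigma$, the left-hand side tends to $0$, whence $\varphi(Z)=\varphi(Z')$. If $Z$ and $Z'$ share their left half instead, the same computation with $S^{-n}$ in place of $S^n$ works, using $\varphi(S^{-n}Z)=\varphi(Z)-\pi_c(\ell(Z_{[-n,-1]}))$ and the fact that $S^{-n}Z,S^{-n}Z'$ agree at every position $\le n-1$.

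For the dynamical assertions there is nothing left to compute: one only translates through $Q$ and $\varphi$ the four identities displayed in the statement, which are quoted from the previous sections. The relation $Q(SZ)=Z_0\inv Q(Z)$, together with $Q(Z)\in\Omega_{Z_0}$, presents the shift on $X_\sigma$ as the system of partial isometries $P\mapsto a\inv P$ defined on $\Omega_a$ ($a\in A$); the relation $\varphi(SZ)=\varphi(Z)+\pi_c(\ell(Z_0))$ presents it as the translation $z\mapsto z+\pi_c(\ell(a))$ on $\mathcal R_\sigma(a)$, i.e.\ the domain exchange of Section~\ref{sec:preliminary-rauzy} (a genuine exchange when the strong coincidence condition holds); and $Q(\sigma Z)=H(Q(Z))$ and $\varphi(\sigma Z)=M_\sigma\varphi(Z)$ give the contracting homothety $H$ on $\Trepul$ and the linear contraction $M_\sigma$ on $E_c$. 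These last two are intertwined by $\psi$ because both are the push-forward of the single self-map $Z\mapsto\sigma(Z)$ of $X_\sigma$ and $\psi\circ Q=\varphi$.

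I expect no substantial obstacle: the proposition essentially packages maps that have already been constructed, the factorisation of $\varphi$ through $Q$ being morally \cite{kl-diagonal-closure,chr}. The only step needing attention is the core one, where ``sharing a half'' has to be upgraded---via the converse part of Proposition~\ref{prop:Q-injective}---to the full fibre relation of $Q$, and where one genuinely uses uniform continuity of $\varphi$ on the compact shift rather than mere continuity.
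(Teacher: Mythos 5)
Your proposal is correct. Note that the paper itself offers no proof of Proposition~\ref{prop:global-picture}: it is presented as a packaging of facts recalled earlier, with the existence of $\psi$ attributed to \cite{kl-diagonal-closure,chr}. What you supply is therefore a genuine, self-contained argument where the paper only cites. Your route is the natural one and it works: the upper triangle is immediate because both $Q$ and $\varphi$ are shown in Sections~\ref{sec:cylind-repell-tree} and~\ref{sec:preliminary-rauzy} to depend only on the prefix-suffix expansion; the existence of $\psi$ reduces to constancy of $\varphi$ on fibres of $Q$, which you reduce via the converse half of Proposition~\ref{prop:Q-injective} to the case of a pair sharing a half; and there the combination of the cocycle identity $\varphi(SZ)=\varphi(Z)+\pi_c(\ell(Z_0))$ (which makes $\varphi(S^nZ)-\varphi(S^nZ')$ constant in $n$) with $d(S^nZ,S^nZ')\to 0$ and uniform continuity of $\varphi$ on the compact shift forces $\varphi(Z)=\varphi(Z')$. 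The two points you flag as needing care are exactly the right ones: one must pass from ``share a half'' to the full fibre relation (this is where the iwip hypothesis enters, through Proposition~\ref{prop:Q-injective}), and one must use uniform rather than pointwise continuity, since the statement $\varphi(x_n)-\varphi(y_n)\to 0$ for two sequences with $d(x_n,y_n)\to 0$ does not follow from continuity alone. The quotient-map argument for continuity of $\psi$ (continuous surjection from a compact space onto a metric subspace is closed, hence a quotient map) is also correct. One cosmetic remark: the displayed identity $\varphi(SZ)=\varphi(Z)+\pi_c(Z_0)$ in the statement omits the abelianization $\ell$; your version $\pi_c(\ell(Z_0))$ is the intended one.
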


The aim of this paper is to study $\psi$ and more specifically to draw approximations of $\psi(\Omega_A)$ inside the Rauzy fractal.

\subsection{Renormalization}\label{sec:renormalization}

Each in its own fashion, the attracting shift, the repelling tree and
the Rauzy fractal are self-similar. This is expressed in
Propositions~\ref{prop:cylinders-autosimilar},
\ref{prop:autosimilar-tree} and \ref{prop:rauzyprop}. In the repelling
tree and in the Rauzy fractal, the self-similarity is here expressed
after renormalization (i.e.~after applying the contracting homothety
$H$ or the matrix $M_\sigma$ on the contracting space). In order to
draw the Rauzy fractal and the repelling tree using geometric
substitutions (tree substitutions or dual substitutions which we will
define in the next sections), it is convenient not to
renormalize. Thus we introduce the following notations.  For a finite
path $\gamma$ of length $n$ in the prefix-suffix automaton we let
\begin{itemize}
\item $\widetilde{\Omega}_\gamma = H^{-n}(\Omega_\gamma)$ in the repelling tree $\Trepul$; 
\item $\widetilde{\mathcal R}_\gamma=M_\sigma^{-n}(\mathcal R_\gamma)$ in the contracting hyperplane.
\end{itemize}

In this setting, tiles $\widetilde{\mathcal R}_\gamma$ of the Rauzy fractal are translates one from another as soon as the paths $\gamma$ begins with the same letter, and similarly for the tiles $\widetilde{\Omega}_\gamma$ of the repelling tree:

\begin{prop}\label{prop:translated-tiles}
Let $\gamma=a_0\stackrel{p_0,s_0}{\longleftarrow}
\cdots\stackrel{p_{n-1},s_{n-1}}{\longleftarrow}a_n$ be a finite path in the prefix-suffix automaton and recall our notation $p(\gamma)=\sigma^{n-1}(p_{n-1})\cdots\sigma(p_1)p_0$. Then 
\[
\widetilde{\Omega}_\gamma = \sigma^{-n}(p(\gamma)\inv) \Omega_{a_n} \quad (\text{and } \Omega_{a_n}=\widetilde{\Omega}_{a_n} ) \]
\[
\widetilde{\mathcal{R}}_\gamma=\mathcal{R}_{a_n}+M_\sigma^{-n}\pi_c(\ell(p(\gamma))) \quad (\text{and } \mathcal{R}_{a_n}=\widetilde{\mathcal{R}}_{a_n}=\mathcal{R}(a_n) ).
\]
\end{prop}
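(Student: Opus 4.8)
The plan is to derive everything from the two self-similarity identities already available, namely Proposition~\ref{prop:autosimilar-tree} for the repelling tree and Proposition~\ref{prop:rauzyprop} (the set equation) iterated, for the Rauzy fractal. The statement is, after all, just a renormalized reformulation of those, so no new geometric input is needed.

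First I would treat the tree side. By Proposition~\ref{prop:autosimilar-tree} applied with $\gamma' = a_n$ (the empty path at the state $a_n$, so $|\gamma'| = 0$ and $\Omega_{\gamma'} = \Omega_{a_n}$), we have
\[
\Omega_\gamma = p_0\inv \sigma(p_1\inv)\cdots\sigma^{n-1}(p_{n-1}\inv)\,H^n(\Omega_{a_n}) = p(\gamma)\inv H^n(\Omega_{a_n}).
\]
Now apply $H^{-n}$ to both sides. The only point requiring care is that $H$ does not commute with the $F_A$-action naively: by Proposition~\ref{thm:gjll}, $H(uP) = \sigma(u)H(P)$, hence $H^{-1}(uP) = \sigma^{-1}(u)H^{-1}(P)$, and inductively $H^{-n}(uP) = \sigma^{-n}(u)H^{-n}(P)$ for any $u\in F_A$ and any point $P$. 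Applying this with $u = p(\gamma)\inv$ and $P$ ranging over $H^n(\Omega_{a_n})$ gives
\[
\widetilde\Omega_\gamma = H^{-n}(\Omega_\gamma) = H^{-n}\bigl(p(\gamma)\inv H^n(\Omega_{a_n})\bigr) = \sigma^{-n}(p(\gamma)\inv)\,H^{-n}H^n(\Omega_{a_n}) = \sigma^{-n}(p(\gamma)\inv)\,\Omega_{a_n},
\]
which is the first displayed formula. The parenthetical $\Omega_{a_n} = \widetilde\Omega_{a_n}$ is immediate from the definition since the path $a_n$ has length $0$, so $H^0 = \mathrm{id}$.

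For the Rauzy fractal side the argument is parallel but uses the affine (rather than isometric) nature of the pieces. Iterating the set equation in Proposition~\ref{prop:rauzyprop} $n$ times along the path $\gamma$ — or equivalently pushing Proposition~\ref{prop:cylinders-autosimilar} through $\varphi$ using $\varphi(SZ) = \varphi(Z) + \pi_c(\ell(Z_0))$ and $\varphi(\sigma Z) = M_\sigma\varphi(Z)$ — yields $\mathcal R_\gamma = M_\sigma^n \mathcal R_{a_n} + \pi_c(\ell(p(\gamma)))$; here one should check that the translation part telescopes correctly, i.e.\ that $\pi_c\bigl(\ell(\sigma^{n-1}(p_{n-1})) + \cdots + \ell(p_0)\bigr) = \pi_c(\ell(p(\gamma)))$, which holds because $\ell$ is a monoid homomorphism and $\ell(\sigma^i(p_i)) = M_\sigma^i\ell(p_i)$, matching the definition $p(\gamma) = \sigma^{n-1}(p_{n-1})\cdots p_0$. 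Then apply $M_\sigma^{-n}$, using that $M_\sigma$ is linear so it commutes with scaling and distributes over the translation: $M_\sigma^{-n}\pi_c = \pi_c M_\sigma^{-n}$ since $\pi_c$ is the $M_\sigma$-equivariant projection along $\vec u$. This gives $\widetilde{\mathcal R}_\gamma = \mathcal R_{a_n} + M_\sigma^{-n}\pi_c(\ell(p(\gamma)))$, and again the case $n=0$ gives the parenthetical identities.

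The only genuine subtlety — and the step I would flag as the main obstacle, though it is minor — is the noncommutativity in the tree case: one must be scrupulous that the conjugation-type identity $H^{-n}(uP) = \sigma^{-n}(u)H^{-n}(P)$ is applied with the group element on the correct side and that $\sigma$ (not $\sigma\inv$) appears, which follows by inverting the relation in Proposition~\ref{thm:gjll} and iterating. Everything else is bookkeeping: tracking how $\ell$, $\pi_c$, $M_\sigma$ and the prefix notation $p(\gamma)$ interact, all of which is already set up in the preliminaries. I would present the proof as: (1) recall the un-renormalized formulas from Propositions~\ref{prop:autosimilar-tree} and \ref{prop:rauzyprop}; (2) record the commutation rule for $H^{-n}$ with the $F_A$-action; (3) apply $H^{-n}$ and $M_\sigma^{-n}$ respectively; (4) note the degenerate case $n=0$.
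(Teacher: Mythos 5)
Your proof is correct and follows exactly the route the paper intends: the paper states Proposition~\ref{prop:translated-tiles} without proof, as an immediate consequence of the renormalization definitions together with Proposition~\ref{prop:autosimilar-tree} (tree side) and the iterated set equation of Proposition~\ref{prop:rauzyprop} (Rauzy fractal side). Your careful handling of the twisted commutation $H^{-n}(uP)=\sigma^{-n}(u)H^{-n}(P)$, obtained by inverting the relation of Proposition~\ref{thm:gjll}, is precisely the detail the paper leaves implicit, and it is applied on the correct side.
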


\subsection{Dual substitution}\label{sec:dual-substitution}

For each $(x,a)\in\mathbb{Z}^{A}\times A$ we let $(x,a)^*$ be the face of the hypercube based at $x$ orthogonal to $\ell(a)$; precisely
\[ (x,a)^* = \Big\{ x + \sum_{i\neq a} t_i e_i : t_i\in [0,1] \Big\}.  \]
The \defi{dual substitution} $E^*_1(\sigma)$ is defined by
\[
E^*_1(\sigma)(x,a)^*=\bigcup_{a\stackrel{p,s}{\longleftarrow}b}(M^{-1}_\sigma(x+\ell(p)),b)^*
\]
where the union is taken over all edges $a\stackrel{p,s}{\longleftarrow}b$ ending at $a$ in the prefix-suffix automaton.

Iterating the dual substitution on the hyperface $(0,a)^*$ and renormalizing through the contracting action of $M_\sigma$ we converge in the Hausdorff metric towards the $a$-th subtile of the Rauzy fractal:
\[\lim_{n\to\infty}M_\sigma^n\, \pi_c\big(E^*_1(\sigma)^n(0,a)^*\big)=\mathcal R_\sigma(a) .\]

\begin{example}
The dual Tribonacci substitution is defined by
\begin{align*}
E^*_1(\sigma): (x,1)^* &\mapsto (M_\sigma^{-1}x,1)^* \cup (M_\sigma^{-1}x,2)^* \cup (M_\sigma^{-1}x,3)^* \\
(x,2)^* &\mapsto (M_\sigma^{-1}(x+e_1),1)^* \\ 
(x,3)^* &\mapsto (M_\sigma^{-1}(x+e_1),2)^*
\end{align*}

\begin{figure}[h]
\centering
\begin{tikzpicture}
[x={(-0.433013cm,-0.250000cm)}, y={(0.433013cm,-0.250000cm)},
z={(0.000000cm,0.500000cm)}]
\definecolor{facecolor}{rgb}{0.000,1.000,0.000}
\fill[fill=facecolor, draw=black, shift={(0,0,0)}]
(0, 0, 0) -- (0, 0, 1) -- (1, 0, 1) -- (1, 0, 0) -- cycle;
\definecolor{facecolor}{rgb}{1.000,0.000,0.000}
\fill[fill=facecolor, draw=black, shift={(0,0,0)}]
(0, 0, 0) -- (0, 1, 0) -- (0, 1, 1) -- (0, 0, 1) -- cycle;
\definecolor{facecolor}{rgb}{0.000,0.000,1.000}
\fill[fill=facecolor, draw=black, shift={(0,0,0)}]
(0, 0, 0) -- (1, 0, 0) -- (1, 1, 0) -- (0, 1, 0) -- cycle;
\node[circle,fill=black,draw=black,minimum size=1.5mm,inner sep=0pt] at
(0,0,0) {};
\end{tikzpicture}
\quad
\begin{tikzpicture}
[x={(-0.433013cm,-0.250000cm)}, y={(0.433013cm,-0.250000cm)},
z={(0.000000cm,0.500000cm)}]
\definecolor{facecolor}{rgb}{0.000,0.000,1.000}
\fill[fill=facecolor, draw=black, shift={(0,0,1)}]
(0, 0, 0) -- (0, 0, 1) -- (1, 0, 1) -- (1, 0, 0) -- cycle;
\definecolor{facecolor}{rgb}{1.000,0.000,0.000}
\fill[fill=facecolor, draw=black, shift={(0,0,0)}]
(0, 0, 0) -- (0, 0, 1) -- (1, 0, 1) -- (1, 0, 0) -- cycle;
\fill[fill=facecolor, draw=black, shift={(0,0,0)}]
(0, 0, 0) -- (0, 1, 0) -- (0, 1, 1) -- (0, 0, 1) -- cycle;
\definecolor{facecolor}{rgb}{0.000,1.000,0.000}
\fill[fill=facecolor, draw=black, shift={(0,0,1)}]
(0, 0, 0) -- (0, 1, 0) -- (0, 1, 1) -- (0, 0, 1) -- cycle;
\definecolor{facecolor}{rgb}{1.000,0.000,0.000}
\fill[fill=facecolor, draw=black, shift={(0,0,0)}]
(0, 0, 0) -- (1, 0, 0) -- (1, 1, 0) -- (0, 1, 0) -- cycle;
\node[circle,fill=black,draw=black,minimum size=1.5mm,inner sep=0pt] at
(0,0,0) {};
\end{tikzpicture}
\quad
\begin{tikzpicture}
[x={(-0.433013cm,-0.250000cm)}, y={(0.433013cm,-0.250000cm)},
z={(0.000000cm,0.500000cm)}]
\definecolor{facecolor}{rgb}{1.000,0.000,0.000}
\fill[fill=facecolor, draw=black, shift={(0,0,1)}]
(0, 0, 0) -- (0, 0, 1) -- (1, 0, 1) -- (1, 0, 0) -- cycle;
\fill[fill=facecolor, draw=black, shift={(0,0,1)}]
(0, 0, 0) -- (0, 1, 0) -- (0, 1, 1) -- (0, 0, 1) -- cycle;
\definecolor{facecolor}{rgb}{0.000,0.000,1.000}
\fill[fill=facecolor, draw=black, shift={(0,1,0)}]
(0, 0, 0) -- (0, 1, 0) -- (0, 1, 1) -- (0, 0, 1) -- cycle;
\definecolor{facecolor}{rgb}{0.000,1.000,0.000}
\fill[fill=facecolor, draw=black, shift={(0,1,-1)}]
(0, 0, 0) -- (1, 0, 0) -- (1, 1, 0) -- (0, 1, 0) -- cycle;
\fill[fill=facecolor, draw=black, shift={(0,1,-1)}]
(0, 0, 0) -- (0, 0, 1) -- (1, 0, 1) -- (1, 0, 0) -- cycle;
\definecolor{facecolor}{rgb}{1.000,0.000,0.000}
\fill[fill=facecolor, draw=black, shift={(0,0,0)}]
(0, 0, 0) -- (0, 0, 1) -- (1, 0, 1) -- (1, 0, 0) -- cycle;
\definecolor{facecolor}{rgb}{0.000,1.000,0.000}
\fill[fill=facecolor, draw=black, shift={(0,1,-1)}]
(0, 0, 0) -- (0, 1, 0) -- (0, 1, 1) -- (0, 0, 1) -- cycle;
\definecolor{facecolor}{rgb}{1.000,0.000,0.000}
\fill[fill=facecolor, draw=black, shift={(0,0,0)}]
(0, 0, 0) -- (1, 0, 0) -- (1, 1, 0) -- (0, 1, 0) -- cycle;
\fill[fill=facecolor, draw=black, shift={(0,0,0)}]
(0, 0, 0) -- (0, 1, 0) -- (0, 1, 1) -- (0, 0, 1) -- cycle;
\node[circle,fill=black,draw=black,minimum size=1.5mm,inner sep=0pt] at
(0,0,0) {};
\end{tikzpicture}
\quad
\begin{tikzpicture}
[x={(-0.433013cm,-0.250000cm)}, y={(0.433013cm,-0.250000cm)},
z={(0.000000cm,0.500000cm)}]
\definecolor{facecolor}{rgb}{1.000,0.000,0.000}
\fill[fill=facecolor, draw=black, shift={(0,0,1)}]
(0, 0, 0) -- (0, 0, 1) -- (1, 0, 1) -- (1, 0, 0) -- cycle;
\fill[fill=facecolor, draw=black, shift={(0,0,0)}]
(0, 0, 0) -- (0, 1, 0) -- (0, 1, 1) -- (0, 0, 1) -- cycle;
\fill[fill=facecolor, draw=black, shift={(0,0,1)}]
(0, 0, 0) -- (0, 1, 0) -- (0, 1, 1) -- (0, 0, 1) -- cycle;
\fill[fill=facecolor, draw=black, shift={(0,1,0)}]
(0, 0, 0) -- (0, 1, 0) -- (0, 1, 1) -- (0, 0, 1) -- cycle;
\definecolor{facecolor}{rgb}{0.000,0.000,1.000}
\fill[fill=facecolor, draw=black, shift={(1,0,-1)}]
(0, 0, 0) -- (1, 0, 0) -- (1, 1, 0) -- (0, 1, 0) -- cycle;
\definecolor{facecolor}{rgb}{0.000,1.000,0.000}
\fill[fill=facecolor, draw=black, shift={(1,-1,1)}]
(0, 0, 0) -- (0, 1, 0) -- (0, 1, 1) -- (0, 0, 1) -- cycle;
\fill[fill=facecolor, draw=black, shift={(1,-1,1)}]
(0, 0, 0) -- (0, 0, 1) -- (1, 0, 1) -- (1, 0, 0) -- cycle;
\definecolor{facecolor}{rgb}{1.000,0.000,0.000}
\fill[fill=facecolor, draw=black, shift={(0,0,0)}]
(0, 0, 0) -- (0, 0, 1) -- (1, 0, 1) -- (1, 0, 0) -- cycle;
\fill[fill=facecolor, draw=black, shift={(0,1,-1)}]
(0, 0, 0) -- (1, 0, 0) -- (1, 1, 0) -- (0, 1, 0) -- cycle;
\fill[fill=facecolor, draw=black, shift={(0,1,-1)}]
(0, 0, 0) -- (0, 0, 1) -- (1, 0, 1) -- (1, 0, 0) -- cycle;
\definecolor{facecolor}{rgb}{0.000,1.000,0.000}
\fill[fill=facecolor, draw=black, shift={(1,-1,0)}]
(0, 0, 0) -- (0, 1, 0) -- (0, 1, 1) -- (0, 0, 1) -- cycle;
\definecolor{facecolor}{rgb}{1.000,0.000,0.000}
\fill[fill=facecolor, draw=black, shift={(0,1,-1)}]
(0, 0, 0) -- (0, 1, 0) -- (0, 1, 1) -- (0, 0, 1) -- cycle;
\fill[fill=facecolor, draw=black, shift={(0,0,0)}]
(0, 0, 0) -- (1, 0, 0) -- (1, 1, 0) -- (0, 1, 0) -- cycle;
\definecolor{facecolor}{rgb}{0.000,1.000,0.000}
\fill[fill=facecolor, draw=black, shift={(1,-1,0)}]
(0, 0, 0) -- (1, 0, 0) -- (1, 1, 0) -- (0, 1, 0) -- cycle;
\fill[fill=facecolor, draw=black, shift={(1,-1,0)}]
(0, 0, 0) -- (0, 0, 1) -- (1, 0, 1) -- (1, 0, 0) -- cycle;
\definecolor{facecolor}{rgb}{0.000,0.000,1.000}
\fill[fill=facecolor, draw=black, shift={(1,0,-1)}]
(0, 0, 0) -- (0, 0, 1) -- (1, 0, 1) -- (1, 0, 0) -- cycle;
\fill[fill=facecolor, draw=black, shift={(1,0,-1)}]
(0, 0, 0) -- (0, 1, 0) -- (0, 1, 1) -- (0, 0, 1) -- cycle;
\node[circle,fill=black,draw=black,minimum size=1.5mm,inner sep=0pt] at
(0,0,0) {};
\end{tikzpicture}
\quad
\begin{tikzpicture}
[x={(-0.433013cm,-0.250000cm)}, y={(0.433013cm,-0.250000cm)},
z={(0.000000cm,0.500000cm)}]
\definecolor{facecolor}{rgb}{0.000,0.000,1.000}
\fill[fill=facecolor, draw=black, shift={(0,-1,2)}]
(0, 0, 0) -- (1, 0, 0) -- (1, 1, 0) -- (0, 1, 0) -- cycle;
\definecolor{facecolor}{rgb}{0.000,1.000,0.000}
\fill[fill=facecolor, draw=black, shift={(-1,0,2)}]
(0, 0, 0) -- (0, 1, 0) -- (0, 1, 1) -- (0, 0, 1) -- cycle;
\definecolor{facecolor}{rgb}{1.000,0.000,0.000}
\fill[fill=facecolor, draw=black, shift={(1,-1,1)}]
(0, 0, 0) -- (0, 1, 0) -- (0, 1, 1) -- (0, 0, 1) -- cycle;
\fill[fill=facecolor, draw=black, shift={(0,1,-1)}]
(0, 0, 0) -- (0, 0, 1) -- (1, 0, 1) -- (1, 0, 0) -- cycle;
\fill[fill=facecolor, draw=black, shift={(1,-1,0)}]
(0, 0, 0) -- (0, 1, 0) -- (0, 1, 1) -- (0, 0, 1) -- cycle;
\definecolor{facecolor}{rgb}{0.000,1.000,0.000}
\fill[fill=facecolor, draw=black, shift={(-1,0,3)}]
(0, 0, 0) -- (0, 1, 0) -- (0, 1, 1) -- (0, 0, 1) -- cycle;
\definecolor{facecolor}{rgb}{0.000,0.000,1.000}
\fill[fill=facecolor, draw=black, shift={(0,-1,2)}]
(0, 0, 0) -- (0, 0, 1) -- (1, 0, 1) -- (1, 0, 0) -- cycle;
\definecolor{facecolor}{rgb}{0.000,1.000,0.000}
\fill[fill=facecolor, draw=black, shift={(-1,1,1)}]
(0, 0, 0) -- (0, 1, 0) -- (0, 1, 1) -- (0, 0, 1) -- cycle;
\definecolor{facecolor}{rgb}{1.000,0.000,0.000}
\fill[fill=facecolor, draw=black, shift={(0,1,-1)}]
(0, 0, 0) -- (1, 0, 0) -- (1, 1, 0) -- (0, 1, 0) -- cycle;
\definecolor{facecolor}{rgb}{0.000,1.000,0.000}
\fill[fill=facecolor, draw=black, shift={(-1,0,2)}]
(0, 0, 0) -- (0, 0, 1) -- (1, 0, 1) -- (1, 0, 0) -- cycle;
\definecolor{facecolor}{rgb}{1.000,0.000,0.000}
\fill[fill=facecolor, draw=black, shift={(1,-1,0)}]
(0, 0, 0) -- (0, 0, 1) -- (1, 0, 1) -- (1, 0, 0) -- cycle;
\definecolor{facecolor}{rgb}{0.000,1.000,0.000}
\fill[fill=facecolor, draw=black, shift={(-1,0,3)}]
(0, 0, 0) -- (0, 0, 1) -- (1, 0, 1) -- (1, 0, 0) -- cycle;
\fill[fill=facecolor, draw=black, shift={(-1,0,2)}]
(0, 0, 0) -- (1, 0, 0) -- (1, 1, 0) -- (0, 1, 0) -- cycle;
\definecolor{facecolor}{rgb}{1.000,0.000,0.000}
\fill[fill=facecolor, draw=black, shift={(0,0,1)}]
(0, 0, 0) -- (0, 1, 0) -- (0, 1, 1) -- (0, 0, 1) -- cycle;
\fill[fill=facecolor, draw=black, shift={(0,1,0)}]
(0, 0, 0) -- (0, 1, 0) -- (0, 1, 1) -- (0, 0, 1) -- cycle;
\definecolor{facecolor}{rgb}{0.000,1.000,0.000}
\fill[fill=facecolor, draw=black, shift={(-1,1,1)}]
(0, 0, 0) -- (1, 0, 0) -- (1, 1, 0) -- (0, 1, 0) -- cycle;
\definecolor{facecolor}{rgb}{1.000,0.000,0.000}
\fill[fill=facecolor, draw=black, shift={(1,0,-1)}]
(0, 0, 0) -- (1, 0, 0) -- (1, 1, 0) -- (0, 1, 0) -- cycle;
\definecolor{facecolor}{rgb}{0.000,0.000,1.000}
\fill[fill=facecolor, draw=black, shift={(0,-1,2)}]
(0, 0, 0) -- (0, 1, 0) -- (0, 1, 1) -- (0, 0, 1) -- cycle;
\fill[fill=facecolor, draw=black, shift={(0,-1,3)}]
(0, 0, 0) -- (0, 1, 0) -- (0, 1, 1) -- (0, 0, 1) -- cycle;
\definecolor{facecolor}{rgb}{1.000,0.000,0.000}
\fill[fill=facecolor, draw=black, shift={(1,-1,0)}]
(0, 0, 0) -- (1, 0, 0) -- (1, 1, 0) -- (0, 1, 0) -- cycle;
\definecolor{facecolor}{rgb}{0.000,1.000,0.000}
\fill[fill=facecolor, draw=black, shift={(-1,1,2)}]
(0, 0, 0) -- (0, 1, 0) -- (0, 1, 1) -- (0, 0, 1) -- cycle;
\definecolor{facecolor}{rgb}{1.000,0.000,0.000}
\fill[fill=facecolor, draw=black, shift={(0,0,1)}]
(0, 0, 0) -- (0, 0, 1) -- (1, 0, 1) -- (1, 0, 0) -- cycle;
\fill[fill=facecolor, draw=black, shift={(0,0,0)}]
(0, 0, 0) -- (0, 1, 0) -- (0, 1, 1) -- (0, 0, 1) -- cycle;
\fill[fill=facecolor, draw=black, shift={(1,-1,1)}]
(0, 0, 0) -- (0, 0, 1) -- (1, 0, 1) -- (1, 0, 0) -- cycle;
\fill[fill=facecolor, draw=black, shift={(0,0,0)}]
(0, 0, 0) -- (0, 0, 1) -- (1, 0, 1) -- (1, 0, 0) -- cycle;
\fill[fill=facecolor, draw=black, shift={(1,0,-1)}]
(0, 0, 0) -- (0, 0, 1) -- (1, 0, 1) -- (1, 0, 0) -- cycle;
\definecolor{facecolor}{rgb}{0.000,0.000,1.000}
\fill[fill=facecolor, draw=black, shift={(0,-1,3)}]
(0, 0, 0) -- (0, 0, 1) -- (1, 0, 1) -- (1, 0, 0) -- cycle;
\definecolor{facecolor}{rgb}{1.000,0.000,0.000}
\fill[fill=facecolor, draw=black, shift={(0,1,-1)}]
(0, 0, 0) -- (0, 1, 0) -- (0, 1, 1) -- (0, 0, 1) -- cycle;
\fill[fill=facecolor, draw=black, shift={(0,0,0)}]
(0, 0, 0) -- (1, 0, 0) -- (1, 1, 0) -- (0, 1, 0) -- cycle;
\fill[fill=facecolor, draw=black, shift={(1,0,-1)}]
(0, 0, 0) -- (0, 1, 0) -- (0, 1, 1) -- (0, 0, 1) -- cycle;
\definecolor{facecolor}{rgb}{0.000,1.000,0.000}
\fill[fill=facecolor, draw=black, shift={(-1,1,1)}]
(0, 0, 0) -- (0, 0, 1) -- (1, 0, 1) -- (1, 0, 0) -- cycle;
\node[circle,fill=black,draw=black,minimum size=1.5mm,inner sep=0pt] at
(0,0,0) {};
\end{tikzpicture}
\caption{The first steps of the dual substitution for Tribonacci on $\{(0,1)^*,(0,2)^*,(0,3)^*\}$.}
\end{figure}
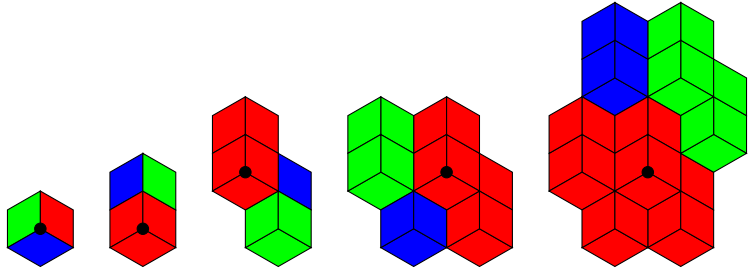
\end{example}

From the definition, the $n$-th iterate of the dual substitution yields the patch
\[
E_1^*(\sigma)^n\left(\bigcup_{a\in A}(0,a)^*\right)=\bigcup_{\gamma} M_\sigma^{-n}\ell(p(\gamma))+(0,a_n)^*,
\]
where $\gamma$ ranges over all paths of length $n$ in the
prefix-suffix automaton and, $a_n$ is the starting letter of $\gamma$.

We focus on the contracting space $E_c$ and we allow more freedom by
picking arbitrary prototiles $P_a\subseteq E_c$. The $n$-th iterate of
the dual substitution yields the patch
\[
P_n=\bigcup_{\gamma} M_\sigma^{-n}\pi_c(\ell(p(\gamma))) + P_{a_n},
\]
which decomposes as the union of the patches $P_n(a)$ obtained by
restricting the union to those paths $\gamma$ of length $n$ ending at
the letter $a$. For each such path $\gamma$ in the prefix-suffix automaton, starting at the letter $a_n$ we denote by $P_\gamma$ the tile
\[
P_\gamma=M_\sigma^{-n}\pi_c(\ell(p(\gamma))) + P_{a_n}.
\]

If we start with the prototiles $P_a=\pi_c((0,a)^*)$ we get the usual
picture of the iterates of the dual substitution with the property
that, for each $n$, $\bigcup_{a\in A}P_n(a)$ tiles periodically the contracting space $E_c$ under the action of the lattice 
\[
\Lambda=\sum_{i,j\in A, i\neq j} M_\sigma^{-n}\pi_c(e_j-e_i)\Z.
\]

If we start with the prototiles $P_a=\mathcal R(a)$ given by the Rauzy
fractal, we get the usual self-similar decomposition of the Rauzy
fractal. If $\sigma$ is an iwip parageometric automorphism, using the
map $\varphi$ of Proposition~\ref{prop:global-picture}, for two
distinct paths $\gamma$ and $\gamma'$ of length $n$ in the
prefix-suffix automaton, we get that if in the repelling tree $\Omega_\gamma\cap\Omega_{\gamma'}\neq\emptyset$ then $P_\gamma\cap P_{\gamma'}\neq\emptyset$.

\begin{quest}\label{quest:properties-dual-substitution}
Let $\sigma$ be an irreducible Pisot substitution and an iwip automorphism. Let $E_c$ be the contracting space and, recall the notations $P_a$, $P_n$ and $P_\gamma$ as above.
Do there exist prototiles $P_a\subseteq E_c$ with the following properties?
\begin{enumerate}
  \item\label{item:shape-quest-properties-dual-substitution} for each $n$ and $a\in A$, the patch $P_n(a)$ is (a) connected, (b) simply connected, (c) disk-like or, (d) a tree;
  \item\label{item:adjacency-quest-properties-dual-substitution} for two distinct finite paths $\gamma$ and $\gamma'$ of length
    $n$ in the prefix-suffix automaton, if in the repelling tree
    $\Omega_\gamma\cap\Omega_{\gamma'}\neq\emptyset$ then
    $P_\gamma\cap P_{\gamma'}\neq\emptyset$;
  \item\label{item:tiling-quest-properties-dual-substitution} the patch $P_0$ tiles the plane under the action of the lattice $\Lambda$.
  \end{enumerate}
\end{quest}

From the above discussion, the prototiles $P_a=\pi_c((0,a)^*)$
satisfies
conditions~\ref{item:shape-quest-properties-dual-substitution}~(a)-(b)-(c) and \ref{item:tiling-quest-properties-dual-substitution}.
The polygonal prototiles given by $P_a=\pi_c(E_1^*(\sigma)^k((0,a)^*))$ are another choice which satisfies condition~\ref{item:tiling-quest-properties-dual-substitution}. 
Whereas fractal prototiles $P_a=\mathcal R(a)$ satisfy
conditions~\ref{item:shape-quest-properties-dual-substitution}(a) and
\ref{item:adjacency-quest-properties-dual-substitution}.

\section{Tree substitutions}

\subsection{Singular points}\label{sec:singular-points}

The next lemma is a consequence of Moss\'e's recognizability
results~\cite{mosse-puissances,mosse-reconnaissabilite}.

\begin{lem}\label{lem:mosse}
Let $\sigma$ be a primitive substitution which admits a non-periodic
fixed point $u$. There exists an integer $L > 0$ such that if
$w_{[-L,L]} = w'_{[-L,L]}$ for some $w,w'\in X_\sigma$, then the
prefix-suffix expansions of $w$ and $w'$ have the same final edge.
\end{lem}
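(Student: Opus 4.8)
The plan is to read this off from Moss\'e's recognizability theorem. The hypothesis that $\sigma$ is primitive with a non-periodic fixed point guarantees that $X_\sigma$ is infinite and aperiodic, so Moss\'e's result applies and the desubstitution map $\theta$ of Section~\ref{sec:subst-attr-shift} is well defined and \emph{recognizable}. I would use recognizability in its local form: there is a constant $L_1>0$ such that, for every $w\in X_\sigma$, the central block $w_{[-L_1,L_1]}$ already determines the desubstitution data at the origin, namely the letter $b=\theta(w)_0$ together with the offset $k$ of the index $0$ inside the block $\sigma(b)$ into which it falls. If one prefers to cite only the cutting-point version of Moss\'e's theorem, this local form follows by enlarging the window: for a suitable $L_1'$ depending only on $L_1$ and $\max_{a\in A}|\sigma(a)|$, the block $w_{[-L_1',L_1']}$ determines the set of cutting points of the desubstitution in a large enough interval around $0$, hence the cutting point $c$ immediately preceding or equal to $0$, hence the whole block $\sigma(b)$ starting at $c$; injectivity of $\sigma$ on letters (itself a consequence of recognizability) then recovers $b$, and $k=-c$.

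The remaining step is pure bookkeeping in the prefix-suffix automaton. By the remark following the definition of $\theta$, the final edge of the prefix-suffix expansion of $w$ is exactly $w_0\stackrel{p_0,s_0}{\longleftarrow}\theta(w)_0$, where $\sigma(\theta(w)_0)=p_0\,w_0\,s_0$ and $|p_0|=k$. Hence this edge is completely determined by the pair $(b,k)=(\theta(w)_0,k)$: knowing $b$ gives the word $\sigma(b)$, and cutting it after its length-$k$ prefix yields $p_0$, then the letter $w_0$, then $s_0$. Therefore, taking $L=L_1'$ (or $L=L_1$ if the local form is available directly), $w_{[-L,L]}=w'_{[-L,L]}$ forces $w$ and $w'$ to have the same desubstitution data at $0$, and thus the same final edge; any larger $L$ works as well.

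I expect the only delicate point to be selecting and quoting the exact form of Moss\'e's theorem — it is often stated only for cutting points, not for the full local desubstitution label — and carrying out the elementary passage between the two as sketched above. Once that is fixed, the argument is a short formal rewriting, so I do not anticipate further obstacles.
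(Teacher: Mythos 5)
Your proof is correct and follows exactly the route the paper intends: the paper states this lemma without proof, merely as a consequence of Moss\'e's recognizability results, and your argument is precisely the standard derivation (the window $w_{[-L,L]}$ determines the desubstitution data $(b,k)$ at the origin, which in turn determines the final edge $w_0\stackrel{p_0,s_0}{\longleftarrow}b$ of the prefix-suffix expansion). Your passage from the cutting-point form of Moss\'e's theorem to the local-label form, including the recovery of the letter $b$ via injectivity of $\sigma$ on letters, is sound.
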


Combining Propositions~\ref{prop:finite-index} and \ref{prop:Q-injective} we get the following result.

\begin{prop}\label{prop:finite-singular}
  Let $\sigma$ be a primitive substitution and an iwip
  automorphism. Then, there exist finitely many pairs $(Z,Z')$ of 
  distinct bi-infinite words in the attractive shift $X_\sigma$ such that
  $Q(Z)=Q(Z')$, and such that the prefix-suffix expansions of $Z$ and 
  $Z'$ do not have a common final edge.
\end{prop}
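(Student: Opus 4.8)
The plan is to combine the three finiteness statements available to us and organize the pairs $(Z,Z')$ according to the behaviour of their prefix-suffix expansions. First recall that, by Proposition~\ref{prop:Q-injective}, the relation $Q(Z)=Q(Z')$ is exactly the transitive closure of the ``share a half'' relation; moreover by Proposition~\ref{prop:finite-index} there are only finitely many singular words, hence only finitely many equivalence classes $[Z]$ of positive index, and each such class is finite. Since a pair $(Z,Z')$ with $Q(Z)=Q(Z')$ and $Z\neq Z'$ forces $[Z]=[Z']$ to be one of these finitely many nontrivial classes, there are a priori only finitely many such pairs \emph{overall}; but we must be slightly careful, because the classes themselves, although each finite, could in principle be infinite in number — this is precisely what Proposition~\ref{prop:finite-index} rules out. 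So the total number of pairs $(Z,Z')$ with $Q(Z)=Q(Z')$ and $Z\neq Z'$ is already finite, and a fortiori the subcollection satisfying the extra hypothesis ``the prefix-suffix expansions do not share a common final edge'' is finite.

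In fact the cleaner way I would present it is to isolate the role of the final edge. If $\sigma$ is primitive and admits a non-periodic fixed point, Lemma~\ref{lem:mosse} gives an integer $L>0$ such that whenever $w_{[-L,L]}=w'_{[-L,L]}$ the prefix-suffix expansions of $w$ and $w'$ end in the same edge. Now let $(Z,Z')$ be a pair with $Q(Z)=Q(Z')$, $Z\neq Z'$, and no common final edge in their prefix-suffix expansions. By Proposition~\ref{prop:Q-injective} there is a chain $Z=Z_0,Z_1,\dots,Z_n=Z'$ in which consecutive words share a half; along such a chain the one-sided half-infinite tail (left or right) is locally constant, so in particular $Z$ and $Z'$ either agree on $[0,+\infty)$ or agree on $(-\infty,-1]$. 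In either case $Z$ and $Z'$ agree on arbitrarily long intervals on one side, hence on all of $[-L,L]$ once we shift so that the agreeing side covers that window — wait, that is not automatic, so the correct statement to extract is: $Z$ and $Z'$ differ only in the letter at position $-1$ or only in the letter at position $0$ (they are, after the chain argument, \emph{singular} in the sense of Section~\ref{sec:singular-leaves}), and there are finitely many such singular pairs by Proposition~\ref{prop:finite-index}. Among these finitely many singular pairs, discard the ones whose prefix-suffix expansions do share a final edge; what remains is a finite list.

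Concretely, the steps are: (i) invoke Proposition~\ref{prop:Q-injective} to reduce a pair with equal $Q$-image to a ``share a half'' chain, and observe that the endpoints of such a chain form a singular pair (same right half and $Z_{-1}\neq Z'_{-1}$, or same left half and $Z_0\neq Z'_0$); (ii) invoke Proposition~\ref{prop:finite-index} to conclude there are only finitely many singular bi-infinite words, hence only finitely many singular pairs $(Z,Z')$; (iii) restrict to the subcollection in which the prefix-suffix expansions of $Z$ and $Z'$ have no common final edge, which is still finite. The hypothesis that $\sigma$ admits a non-periodic fixed point needed for Lemma~\ref{lem:mosse} is harmless: a primitive iwip substitution has expansion factor $\lambda_\sigma>1$, so no power of $\sigma$ fixes any bi-infinite word pointwise except possibly periodic points, and in any case one passes to a suitable power of $\sigma$ (which has the same attracting shift and the same $Q$) to guarantee an honest non-periodic fixed point; I would state this reduction at the outset.

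The only genuine subtlety — the ``main obstacle'' — is step (i): making sure that the \emph{endpoints} of a ``share a half'' chain really form a singular pair in the strict sense, rather than merely a pair that agrees on some half. Two words agreeing on a full half $[0,+\infty)$ but with $Z_{-1}=Z'_{-1}$ are not singular; however such a pair is not a genuine obstruction because if they agree on $[0,+\infty)$ and also at position $-1$, one iterates this observation leftward along the chain and uses that the chain has finite length to pin down where the first discrepancy occurs, and at that position the two words realize a left-special or right-special factor, i.e. a singular pair sits inside the class. Equivalently, one can simply quote that the finitely many equivalence classes of positive index are exactly the classes of singular words (as in Section~\ref{sec:singular-leaves} and the index discussion), so that a pair with $Z\neq Z'$ and $Q(Z)=Q(Z')$ automatically lies in one of these finitely many finite classes, giving finitely many pairs directly. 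I expect the author's proof to take exactly this shortcut, using Propositions~\ref{prop:finite-index} and~\ref{prop:Q-injective} in one line, with Lemma~\ref{lem:mosse} entering only to handle the bookkeeping about common final edges in the companion Algorithm~\ref{algo:singular-points}.
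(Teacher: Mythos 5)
There is a genuine gap, and it sits at the center of your argument: you assert that the set of \emph{all} pairs $(Z,Z')$ of distinct words with $Q(Z)=Q(Z')$ is already finite, so that the hypothesis on final edges merely selects a subcollection of an a priori finite list. This is false. If $Z$ and $Z'$ share their left halves and $Q(Z)=Q(Z')$, then $Q(S^{-1}W)=W_{-1}Q(W)$ gives $Q(S^{-i}Z)=Q(S^{-i}Z')$ with $S^{-i}Z\neq S^{-i}Z'$ for every $i\geq 0$ (and symmetrically for right halves and positive shifts), so there are infinitely many such pairs; equivalently, the transitive closure of ``share a half'' has infinitely many non-trivial classes, and Proposition~\ref{prop:finite-index} bounds only the number of classes of \emph{positive index}, not the number of non-trivial classes. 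For the same reason your reduction ``the endpoints form a singular pair in the strict sense'' fails: for Tribonacci the pair $(S^{-1}w'_a,S^{-1}w'_b)$ has equal $Q$-images and distinct final edges (see Example~\ref{ex:tribo-singular-points}), so it must be counted by the Proposition, yet it agrees at positions $-1$ and $0$ and is not singular. The final-edge hypothesis is therefore not bookkeeping: it is what makes the statement true, and Lemma~\ref{lem:mosse} is the engine of the proof rather than an afterthought. In the paper's argument (case $n=1$ of a shortest chain), if $Z$ and $Z'$ share their left halves and first differ at position $i\geq 0$, then $S^iZ$ and $S^iZ'$ form one of the finitely many singular pairs, while $Z_{[-i,i]}=Z'_{[-i,i]}$ together with the final-edge hypothesis forces $i<L$ by Lemma~\ref{lem:mosse}; this bound on the shift is exactly the step you are missing.

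The second gap is your treatment of chains of length $n>1$. You claim the agreeing half is ``locally constant'' along the chain, so that $Z$ and $Z'$ still agree on a half; but a shortest chain necessarily alternates sides ($Z_0,Z_1$ share left halves, $Z_1,Z_2$ share right halves, say), and then $Z_0$ and $Z_2$ need not share any half. The paper handles this case by a different mechanism: the intermediate word $Z_1$ is simultaneously a backward shift of a word with right-special left half and a forward shift of a word with left-special right half; since both families are finite and the subshift is aperiodic, this pins down $Z_1$ and the two shifts to finitely many possibilities, and hence $Z$ and $Z'$ as well --- and in this case the final-edge hypothesis is not even needed. The two cases thus require genuinely different arguments, neither of which is the one-line quotation of Propositions~\ref{prop:finite-index} and~\ref{prop:Q-injective} you propose at the end.
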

\begin{proof}
  From Proposition~\ref{prop:Q-injective} there exists a finite
  sequence $Z=Z_0$, $Z_1,\ldots,$ $Z_{n-1},Z_n=Z'$ in $X_\sigma$ such
  that $Z_i$ and $Z_{i+1}$ share a half. Assume that this sequence is
  the shortest one. We discuss two cases. If $n>1$, then (up to a
  symmetric argument) $Z$ and $Z_1$ share their left halves and $Z_1$
  and $Z_2$ share their right halves. Thus there exist $i,j\geq 0$
  such that the left half of $S^i Z_1$ is right-special and the right
  half of $S^{-j}Z_1$ is left-special. By
  Proposition~\ref{prop:finite-index}, there are finitely many
  right-special and left-special words, each of them with finitely
  many extensions. We conclude that there are finitely many possible
  choices for $Z_1$, $i$ and $j$ and thus for $Z$. By symmetry there
  are finitely many possible choices for $Z'$.

  We now study the second case, that is to say when $n=1$. Again, up
  to a symmetric argument we assume that $Z$ and $Z'$ share their left
  halves. As above, there exists $i\geq 0$ such that the left half of
  $S^i Z$ is right-special. Then the indexed finite words $Z_{[-i,i]}$
  and $Z'_{[-i,i]}$ are equal. By Lemma~\ref{lem:mosse} there are at
  most finitely many such words $Z$ and $Z'$ whose prefix-suffix
  expansions have a different final edge, precisely those for which
  $i<L$. Since there are finitely many right-special words we
  conclude that there are finitely many such pairs $Z$ and $Z'$.
\end{proof}

A pair of infinite desubstitution paths $(\gamma,\gamma')$ is
\textbf{singular} if $Q(\gamma)=Q(\gamma')$ and $\gamma$ and $\gamma'$
have different final edges. An infinite desubstitution path $\gamma$ is
\textbf{singular} if it belongs to at least one singular pair. The
above proposition states that there are finitely many singular pairs
of infinite desubstitution paths. From the proof we get that for
each singular infinite desubstitution path $\gamma=\Gamma(Z)$, $Z\in X_\sigma$ is a bi-infinite word in the shift orbit of a singular word. A direct consequence of Proposition~\ref{prop:singular-preperiodic} is the following.

\begin{prop}\label{prop:finite-singular-desubstitutions}
  Let $\sigma$ be a primitive substitution that induces an iwip
  automorphism. Then, there exist finitely many singular infinite
  desubstitution paths. Each of them is eventually periodic.
\end{prop}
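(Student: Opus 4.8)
The plan is to transport both halves of the statement along the desubstitution map $\Gamma$: finiteness will come out of Proposition~\ref{prop:finite-singular}, and eventual periodicity out of Proposition~\ref{prop:singular-preperiodic}, the only genuine work being to see that having an eventually periodic desubstitution path is preserved by the shift.

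For finiteness, I would first observe that a pair $(\gamma,\gamma')$ of infinite desubstitution paths is singular if and only if it is of the form $(\Gamma(Z),\Gamma(Z'))$ for a pair $(Z,Z')$ of distinct bi-infinite words in $X_\sigma$ with $Q(Z)=Q(Z')$ whose prefix-suffix expansions have distinct final edges. One direction is immediate, since $Q$ factors through $\Gamma$ and the last edge of $\Gamma(W)$ is the final edge of the prefix-suffix expansion of $W$; for the converse, picking any $\Gamma$-preimages $Z$ of $\gamma$ and $Z'$ of $\gamma'$, these are distinct and satisfy $Q(Z)=Q(\gamma)=Q(\gamma')=Q(Z')$. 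Hence the set of singular pairs of infinite desubstitution paths is exactly the image under $\Gamma\times\Gamma$ of the finite set supplied by Proposition~\ref{prop:finite-singular}; in particular there are finitely many singular infinite desubstitution paths.

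For eventual periodicity, fix a singular infinite desubstitution path $\gamma=\Gamma(Z)$. As noted just after the proof of Proposition~\ref{prop:finite-singular}, we may write $Z=S^kZ_0$ with $Z_0$ a singular bi-infinite word and $k\in\Z$; by Proposition~\ref{prop:singular-preperiodic} the path $\Gamma(Z_0)$ is eventually periodic, and it suffices to prove that $\Gamma(S^kZ_0)$ is eventually periodic too. The elementary engine is that $\theta$ almost commutes with the shift: straight from the definition of $\theta$ and the identity $\sigma(SW)=S^{|\sigma(W_0)|}\sigma(W)$ (and its analogue for $S^{-1}$) one gets $\theta(S^{\pm1}W)\in\{\theta(W),S^{\pm1}\theta(W)\}$ for every $W\in X_\sigma$, whence, iterating, $\theta^n(S^kZ_0)=S^{c_n}\theta^n(Z_0)$ for integers $c_n$ with $|c_{n+1}|\le|c_n|\le|k|$, so $c_n$ is eventually equal to some constant $c$. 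On the other hand, eventual periodicity of $\Gamma(Z_0)$ upgrades to the word-level statement that $\theta^{a+q}(Z_0)=\theta^a(Z_0)$ for some $a\ge0$ and $q\ge1$: when the periodic block of $\Gamma(Z_0)$ has a nonempty prefix and a nonempty suffix, $\Gamma$ is injective on the fibre over that block and the claim is immediate, while otherwise the block lies in $\Pmin$ or in $\Pmax$, so $Z_0$ is the shift of a $\sigma$-periodic point and one applies the same monotonicity to a power of $\sigma$ fixing that point. Combining the two, for $n$ large $\theta^{a+nq}(S^kZ_0)=S^c\theta^{a+nq}(Z_0)=S^c\theta^a(Z_0)$ does not depend on $n$, so $\theta^{a+nq}(S^kZ_0)=\theta^{a+(n+1)q}(S^kZ_0)$ and $\Gamma(S^kZ_0)$ is eventually periodic, as wanted.

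The step I expect to be the main obstacle is precisely the shift-invariance of ``having an eventually periodic desubstitution path''. The commutation estimate $\theta(S^{\pm1}W)\in\{\theta(W),S^{\pm1}\theta(W)\}$ and the bookkeeping with prefix-suffix expansions are routine, but passing from eventual periodicity of the \emph{path} $\Gamma(Z_0)$ to the equality $\theta^{a+q}(Z_0)=\theta^a(Z_0)$ at the level of \emph{words} requires care because $\Gamma$ is not injective over the orbits of $\sigma$-periodic points; that degenerate case ($\Pmin$, $\Pmax$) must be isolated and handled via $\sigma$-periodicity rather than merged into the generic argument.
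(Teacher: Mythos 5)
Your argument is correct and follows essentially the same route as the paper: finiteness is pulled back through $\Gamma$ from Proposition~\ref{prop:finite-singular}, and eventual periodicity is imported from Proposition~\ref{prop:singular-preperiodic} via the observation that every singular path is the expansion of a shift $S^kZ_0$ of a singular word. The paper treats the transfer of eventual periodicity along the shift as immediate (it also follows in one line from $\Gamma(SZ)=V(\Gamma(Z))$, since the Vershik map alters only a finite head of the expansion), whereas you establish it by hand via the $\theta$--$S$ commutation; your separate treatment of the $\Pmin$/$\Pmax$ fibres, where $\Gamma$ fails to be injective, is sound and is indeed the point that needs care.
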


From the self-similar decomposition of the attracting shift and of the
limit set (Propositions~\ref{prop:cylinders-autosimilar} and
\ref{prop:autosimilar-tree}), we get the next Proposition.

\begin{prop}\label{prop:singular-periodic}
  Let $\sigma$ be a primitive substitution which induces an iwip
  para-geometric automorphism. Let $\gamma$ and $\gamma'$ be two
  distinct finite paths of the same length in the prefix-suffix
  automaton such that the subtrees $\Omega_\gamma$ and
  $\Omega_{\gamma'}$ have the point $P$ in common. Then there exist
  bi-infinite singular words $Z\in [\gamma]$ and $Z'\in [{\gamma'}]$
  such that $Q(Z)=Q(Z')=P$. 
  The infinite desubstitution paths of $Z$ and $Z'$ are eventually
  periodic and satisfy
  \[
    \Gamma(Z)=\alpha\beta\delta,\ \Gamma(Z')=\alpha\beta'\delta',
  \]
  with $\alpha$ the common head of $\gamma$ and $\gamma'$:
  $\gamma=\alpha\beta$, $\gamma'=\alpha\beta'$ and where
  $(\beta\delta,\beta'\delta')$ is a singular pair of infinite
  desubstitution paths.  
\end{prop}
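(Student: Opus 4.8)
The plan is to pick bi-infinite words realizing the common point $P$ in each of the two cylinders, strip off the longest common head $\alpha$ of $\gamma$ and $\gamma'$, and observe that what remains is, by definition, a singular pair of infinite desubstitution paths; the eventual periodicity is then free from Proposition~\ref{prop:finite-singular-desubstitutions}.

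First I would choose $Z\in[\gamma]$ and $Z'\in[\gamma']$ with $Q(Z)=Q(Z')=P$; such words exist since $\Omega_\gamma=Q([\gamma])\ni P$, and likewise for $\gamma'$. As $\gamma\neq\gamma'$ have the same length, $[\gamma]\cap[\gamma']=\emptyset$ by the prefix-suffix decomposition of cylinders (Proposition~\ref{prop:cylinders-autosimilar}), so $Z\neq Z'$. Let $\alpha$ be the longest common head of $\gamma$ and $\gamma'$. Because the two paths are distinct and of equal length, $\alpha$ is a \emph{proper} head of each, so $\gamma=\alpha\beta$ and $\gamma'=\alpha\beta'$ with $\beta,\beta'$ non-empty and with \emph{different} final edges (the final edge being the one at the end of the path). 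Writing $\Gamma(Z)=\alpha\beta\delta$ and $\Gamma(Z')=\alpha\beta'\delta'$ fixes the decompositions in the statement.

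The crux is to show that $(\beta\delta,\beta'\delta')$ is a singular pair of infinite desubstitution paths. Beheading $\alpha$ and invoking the self-similarity of $Q$ on paths (Proposition~\ref{prop:autosimilar-tree}, together with $H(uP)=\sigma(u)H(P)$ from Proposition~\ref{thm:gjll}) gives
\[
P=Q(Z)=p(\alpha)\inv H^{|\alpha|}\big(Q(\beta\delta)\big),\qquad P=Q(Z')=p(\alpha)\inv H^{|\alpha|}\big(Q(\beta'\delta')\big).
\]
As $H$ is an injective homothety, this forces $Q(\beta\delta)=H^{-|\alpha|}(p(\alpha)P)=Q(\beta'\delta')$; since moreover $\beta\delta$ and $\beta'\delta'$ have different final edges, they form a singular pair of infinite desubstitution paths by definition, so in particular each of $\beta\delta$ and $\beta'\delta'$ is a singular infinite desubstitution path.

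To conclude, Proposition~\ref{prop:finite-singular-desubstitutions} makes $\beta\delta$ and $\beta'\delta'$ eventually periodic, and prepending the fixed finite head $\alpha$ keeps $\Gamma(Z)=\alpha\beta\delta$ and $\Gamma(Z')=\alpha\beta'\delta'$ eventually periodic. Finally, $\theta^{|\alpha|}(Z)$ is the bi-infinite word with desubstitution path $\beta\delta$, which, being a singular infinite desubstitution path, puts $\theta^{|\alpha|}(Z)$ in the shift orbit of a singular word (the observation recorded just after the definition of singular infinite desubstitution paths); the same holds for $Z'$, and this is the meaning of ``singular bi-infinite words'' here. I expect the only real care to be in the bookkeeping: checking that ``longest common head'' forces the final edges of $\beta$ and $\beta'$ to differ (which uses that $\gamma,\gamma'$ have equal length), and keeping straight the three flavours of ``singular'' in play — singular pair of words, singular pair of desubstitution paths, and singular bi-infinite word. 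The geometric content, namely that a coincidence of two tree points descends to a coincidence of $Q$-values of beheaded paths, is already packaged in Proposition~\ref{prop:autosimilar-tree}; parageometricity is used only to guarantee the $\Omega_\gamma$ are genuine subtrees and plays no further role.
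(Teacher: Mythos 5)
Your proof is correct and follows exactly the route the paper intends: the paper gives no written proof of this proposition, merely asserting that it follows from the self-similar decompositions of Propositions~\ref{prop:cylinders-autosimilar} and \ref{prop:autosimilar-tree}, and that is precisely the machinery you use (strip the longest common head $\alpha$, apply $Q(\alpha\beta\delta)=p(\alpha)\inv H^{|\alpha|}(Q(\beta\delta))$ and the injectivity of $H$ and of the group action to get $Q(\beta\delta)=Q(\beta'\delta')$ with differing final edges, then invoke Proposition~\ref{prop:finite-singular-desubstitutions}). Your closing caveat about the three flavours of ``singular'' matches the paper's own loose usage --- it only ever claims such a $Z$ lies in the shift orbit of a singular word --- so nothing further is needed.
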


Such an intersection point $P$ is called a \defi{singular point} of $\Omega_\gamma$.

\subsection{Not renormalized singular points}\label{sec:not-renorm-sing}

In this Section we state a finiteness result for singular points. We
need to compare singular points of different tiles $\Omega_\gamma$
for different finite paths $\gamma$ in the prefix-suffix
automaton. This is the purpose of using not renormalized tiles and
not renormalized singular points.

Recall from Section~\ref{sec:renormalization} that for a finite path
$\gamma$ in the prefix-suffix automaton beginning at state $a_n\in A$:
\[
\widetilde{\Omega}_\gamma=\sigma^{-n}(p(\gamma)\inv)\Omega_{a_n}.
\]
Observe that for any finite path $\gamma$ in the prefix-suffix
automaton beginning at state $a$, for any distinct path $\gamma'$ with
the same length as $\gamma$ such that $ {\Omega}_\gamma
\cap {\Omega}_{\gamma'} = \{P\}$, using the notations of Proposition~\ref{prop:singular-periodic},
\[
P=Q(\alpha\beta\delta)=Q(\alpha\beta'\delta'),
\]
and $(\beta\delta,\beta'\delta')$ is a singular pair of infinite
desubstitution paths.

In the not renormalized form, 
\[
  \widetilde{\Omega}_\gamma \cap \widetilde{\Omega}_{\gamma'} =
  \{H^{-n}(P)\},
\]
and, using the above formula and Section~\ref{sec:cylind-repell-tree},
\[
P':=\sigma^{-n}(p(\gamma))H^{-n}(P)=H^{-n}(p(\gamma)Q(\gamma\delta))=Q(\delta) \in \Omega_a.
\]
We call $P'$ a \textbf{singular point} in $\Omega_a$.
For a letter $a\in A$, we consider the set of all such singular points $P'$:
\[
\Sing(\Omega_a)=\big\{P'\in\Omega_a\ |\ \exists\gamma,\gamma',\ |\gamma|=|\gamma'|=n,\ \gamma(n)=a,
\]
\[ \hspace{2cm}
{\Omega}_\gamma\cap {\Omega}_{\gamma'}=\{P\},\ P'=\sigma^{-n}(p(\gamma))H^{-n}(P)\big\}.
\]

\begin{prop}\label{prop:finite-singular-points} 
For every $a\in A$, the set $\Sing(\Omega_a)$ in the subtree $\Omega_a$ is finite.
\end{prop}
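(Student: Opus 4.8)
The plan is to bound $\Sing(\Omega_a)$ by a manifestly finite set, namely the $Q$-images of all tails of the finitely many singular infinite desubstitution paths supplied by Proposition~\ref{prop:finite-singular-desubstitutions}.

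Concretely, I would start from a point $P'\in\Sing(\Omega_a)$, witnessed by distinct finite paths $\gamma,\gamma'$ of common length $n$ with $\gamma$ beginning at $a$ (i.e.\ $\gamma(n)=a$) and $\Omega_\gamma\cap\Omega_{\gamma'}=\{P\}$, so that $P'=\sigma^{-n}(p(\gamma))H^{-n}(P)$. Proposition~\ref{prop:singular-periodic} then supplies bi-infinite words $Z\in[\gamma]$, $Z'\in[\gamma']$ with $Q(Z)=Q(Z')=P$, prefix-suffix expansions $\Gamma(Z)=\alpha\beta\delta$, $\Gamma(Z')=\alpha\beta'\delta'$, a factorisation $\gamma=\alpha\beta$, $\gamma'=\alpha\beta'$, and a singular pair $(\beta\delta,\beta'\delta')$ of infinite desubstitution paths. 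As already computed just before the definition of $\Sing(\Omega_a)$ — using $Q(\gamma\delta)=p(\gamma)\inv H^{n}(Q(\delta))$ together with the homothety relation $H^{n}(uP)=\sigma^{n}(u)H^{n}(P)$ of Theorem~\ref{thm:gjll} — this gives $P'=Q(\delta)$. Since $\gamma$ begins at $a$ and $\delta=B^{n}(\Gamma(Z))$, the path $\delta$ has end state $a$, so $Q(\delta)\in\Omega_a$ as it should be. Finally put $\eta:=\beta\delta=B^{|\alpha|}(\Gamma(Z))$: this is again an infinite desubstitution path, and it is singular because it belongs to the singular pair $(\beta\delta,\beta'\delta')$. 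Thus $\delta=B^{|\beta|}(\eta)$ is a tail, with end state $a$, of a singular infinite desubstitution path.

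It then remains only to count. By Proposition~\ref{prop:finite-singular-desubstitutions} there are finitely many singular infinite desubstitution paths, and each such $\eta$ is eventually periodic, say $\eta=\mu\nu^{\infty}$; hence its set of tails $\{B^{k}(\eta):k\geq0\}$ has at most $|\mu|+|\nu|$ elements. Consequently
\[
\Sing(\Omega_a)\subseteq\big\{\,Q(B^{k}(\eta))\ :\ \eta\ \text{a singular infinite desubstitution path},\ k\geq0,\ \text{end state of}\ B^{k}(\eta)=a\,\big\},
\]
which is a finite set, and the proposition follows. I do not anticipate a real obstacle, as the statement is essentially a corollary of Propositions~\ref{prop:singular-periodic} and~\ref{prop:finite-singular-desubstitutions}; the only care needed is in the middle step — checking that the $\delta$ arising from an intersection point of tiles really is a tail $B^{k}(\eta)$ of a \emph{singular} infinite desubstitution path $\eta$ (not merely of the expansion of some word), that its end state is $a$ so that $Q(\delta)\in\Omega_a$, and that beheading preserves the property of being an infinite desubstitution path (indeed $B^{k}(\Gamma(Z))=\Gamma(\theta^{k}Z)$), so that every $Q(B^{k}(\eta))$ appearing above is well defined.
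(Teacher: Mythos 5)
Your argument is correct and is essentially the paper's own proof: both reduce a singular point $P'=\sigma^{-n}(p(\gamma))H^{-n}(P)$ to $Q(\delta)$ for $\delta$ a tail of a singular infinite desubstitution path via Proposition~\ref{prop:singular-periodic}, and then conclude by the finiteness and eventual periodicity of such paths given in Proposition~\ref{prop:finite-singular-desubstitutions}. Your write-up is merely more explicit about the bookkeeping (end state $a$, the bound $|\mu|+|\nu|$ on the number of tails), which the paper leaves implicit.
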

\begin{proof}
  From the above discussion and notations, given $\gamma,\gamma'$ as
  in Proposition~\ref{prop:singular-periodic}, there exist finite paths $\alpha,\beta,\beta'$
  in the prefix-suffix automaton and infinite desubstitution paths
  $\delta, \delta'$ such that
\[
  \gamma=\alpha\beta,\ \gamma'=\alpha\beta',\
  P=Q(\alpha\beta\delta)=Q(\alpha\beta'\delta'),\ P'=Q(\delta)
\]
and, $(\beta\delta,\beta'\delta')$ is a singular pair. Note that if
$\beta$ and $\beta'$ ends at the same letter, then $\alpha$ and thus
$\beta$ and $\beta'$ must contain at least one edge.

Recall from Proposition~\ref{prop:finite-singular-desubstitutions}
that there are finitely many such singular pairs and each singular
infinite desubstitution path is eventually periodic. Thus there are at most
finitely many tails $\delta$, which proves the finiteness of
$\Sing(\Omega_a)$.
\end{proof}

The proof of Proposition~\ref{prop:finite-singular-points} gives an explicit method to compute the set of singular points $\Sing(\Omega_a)$ for any $a\in A$. This is the first step of the algorithm for constructing the tree substitution associated with $\sigma$:

\begin{algo}\label{algo:singular-points}
  \begin{enumerate}
  \item Compute the pairs of singular bi-infinite words in the
    attracting shift $X_\sigma$. This is a classical computation of
    special infinite words for substitution specialists or,
    alternatively, it amounts to compute periodic Nielsen paths which
    has been implemented~\cite{coulbois-sage,cl-long-turns}.
  \item Compute the pairs of singular prefix-suffix expansions. From
    Proposition~\ref{prop:finite-singular-desubstitutions} this is a
    finite set and its computation is detailed in the proof of
    Proposition~\ref{prop:finite-singular}. For each pair of singular
    bi-infinite words $(Z,Z')$, for $i=0,1,\ldots$ and
    $i=-1,-2,\ldots$, compute the prefix-suffix expansions $\gamma_i$
    and $\gamma'_i$ of the shifted pairs $(S^iZ,S^iZ')$. Then
    $(\gamma_i,\gamma'_i)$ is a pair of singular prefix-suffix
    expansions as long as they end with different edges in the
    prefix-suffix expansion. From Lemma~\ref{lem:mosse}, the index $i$ is
    bounded above by Mossé's constant $L$.
  \item For each singular prefix-suffix expansion (which is an
    eventually periodic path), compute the finitely many infinite
    paths $\delta$ such that $\gamma=\beta\delta$ as in
    Proposition~\ref{prop:singular-periodic} and in the proof of
    Proposition~\ref{prop:finite-singular-points}. The paths $\delta$
    ending in $a$ are in one-to-one correspondance with the singular
    points in $\Sing(\Omega_a)$.
  \end{enumerate}
\end{algo}

\begin{example}\label{ex:tribo-singular-points}
Let $\sigma$ be the Tribonacci substitution and recall the infinite singular words $w_a$, $w_b$, $w_c$, $w'_a$, $w'_b$ and $w'_c$ with their infinite desubstitution paths $\gamma_0$, $\gamma_a$, $\gamma_b$ and $\gamma_c$ from Examples~\ref{ex:tribo-fix-points} and \ref{ex:tribo-periodic-desubstitution}. Each of these infinite desubstitution paths is eventually periodic: 
\[
\gamma_0=\alpha\cdot\alpha\cdot\alpha\cdots, \quad
\gamma_a=\big(a\stackrel{\epsilon,b}{\longleftarrow})\cdot\beta\cdot\beta\cdot\beta\cdots, \]
\[
\gamma_b=\big(b\stackrel{a,\epsilon}{\longleftarrow} a\stackrel{\epsilon,b}{\longleftarrow})\cdot\beta\cdot\beta\cdot\beta\cdots,\quad \gamma_c=\big(c
\stackrel{a,\epsilon}{\longleftarrow}b \stackrel{a,\epsilon}{\longleftarrow} a\stackrel{\epsilon,b}{\longleftarrow}\big)\cdot\beta\cdot\beta\cdot\beta\cdots,
\]
where $\alpha$ is $\big(a\stackrel{\epsilon,b}{\longleftarrow}\big)$
and $\beta$ is $\big(a \stackrel{\epsilon,b}{\longleftarrow}a
\stackrel{\epsilon,c}{\longleftarrow}b\stackrel{a,\epsilon}{\longleftarrow}\big)$.
In the repelling tree $\Trepul$ we consider the
point \[P=Q(w'_a)=Q(w'_b)=Q(w'_c).\]
From Propositions~\ref{prop:finite-singular} and
\ref{prop:finite-singular-points} we also have to consider the shifts
$w''_a=S\inv w'_a$, $w''_b=S\inv w'_b$, $w''_c=S\inv w'_c$, with their
infinite desubstitution paths $\gamma'_a$, $\gamma'_b$ and, $\gamma'_c$:
\[
\gamma'_a=\big(a\stackrel{\epsilon,\epsilon}{\longleftarrow}c
\stackrel{a,\epsilon}{\longleftarrow}b\stackrel{a,\epsilon}{\longleftarrow}a\stackrel{\epsilon,b}{\longleftarrow}\big)\cdot\beta\cdot\beta\cdot\beta\cdots, 
\]
\[
\gamma'_b=\big(
a\stackrel{\epsilon,b}{\longleftarrow}
a\stackrel{\epsilon,b}{\longleftarrow}\big)
\cdot\beta\cdot\beta\cdot\beta\cdots,
\]
\[
\gamma'_c=\big(
a\stackrel{\epsilon,c}{\longleftarrow}
b\stackrel{a,\epsilon}{\longleftarrow}a
\stackrel{\epsilon,b}{\longleftarrow}\big)
\cdot\beta\cdot\beta\cdot\beta\cdots,
\]
and their common image $a P=Q(w''_a)=Q(w''_b)=Q(w''_b)$ in the repelling tree.

From the proof of Proposition~\ref{prop:finite-singular-points}, the
singular points are given by the finitely many infinite tails $\delta$ of
$\gamma'_a$, $\gamma'_b$ and $\gamma'_c$ which are obtained by
beheading up to $6$ edges (recall from Section~\ref{sec:subst-attr-shift} the notation $B(\gamma)$ to behead a prefix-suffix expansion).
The singular prefix-suffix expansions
$\gamma_a=B(\gamma_b)=B^2(\gamma_c)=B^3(\gamma'_a)=\ldots$,
$B(\gamma_a)$ and $B^2(\gamma_a)$ give three singular points in
$\Sing(\Omega_a)$, while $\gamma_b=B(\gamma_c)=\ldots$ and
$B^3(\gamma_a)$ give the two singular points in $\Sing(\Omega_b)$;
finally, $\gamma_c$ gives the unique singular point in
$\Sing(\Omega_c)$.

We use the formulas in
Section~\ref{sec:not-renorm-sing} to identify
$F_A$-orbits and we get:
\[
\Sing(\Omega_a)=\{P, b\inv P, c\inv P\},\ \Sing(\Omega_b)=\{P, a\inv P\},\ \Sing(\Omega_c)=\{P\}.
\]
\end{example}

\subsection{Tree substitution inside $\Trepul$}\label{sec:tree-substitution-inside}

Using Proposition~\ref{prop:finite-singular-points} we can define the tree substitution $\sigma_T$.

Let $Y_a$ be the subtree of $\Omega_a$ spanned by the finite set of points $\Sing(\Omega_a)$ of Proposition~\ref{prop:finite-singular-points}. 
We consider the tree substitution $\sigma_T$ which maps each of the finite tree $Y_a$ to the tree
\begin{equation}\label{eq:sigmaT}
\sigma_T(Y_a)=\bigcup_{a\stackrel{p,s}{\longleftarrow}b}\sigma^{-1}(p^{-1}) Y_b, 
\end{equation}
where the union is taken over all edges $a\stackrel{p,s}{\longleftarrow}b$ in the prefix-suffix automaton (compare with the definition of dual substitution in Section~\ref{sec:dual-substitution}). This union is to be understood as a union of subtrees in the repelling tree $\Trepul$. 

For any word $u \in F_A$, the tree substitution extends to translates
$u\,Y_a$ of the finite tree $Y_a$ as
\[
\sigma_T(u\,Y_a)=\bigcup_{a\stackrel{p,s}{\longleftarrow}b}\sigma\inv(u p\inv)\, Y_b.
\]

\begin{prop}\label{prop:connected-tree-substitution}
With the above hypotheses and notations, for any $a\in A$ and any $n>0$, the iterated image 
\[
{\sigma_T}^n(Y_a)=\bigcup_{\gamma=a\stackrel{p_0,s_0}{\longleftarrow}
\cdots \stackrel{p_{n-1},s_{n-1}}{\longleftarrow}b}\sigma^{-n}(p(\gamma)\inv) Y_b
\]
is connected. Furthermore, after renormalization, $H^{n}({\sigma_T}^n(Y_a))$ is a finite subtree of $\Omega_a$ which converges towards $\Omega_a$:
\[
\overline{\bigcup_{n\to+\infty}H^{n}({\sigma_T}^n(Y_a))}=\Omega_a.
\]
Finally, $Y_A=\bigcup_{a\in A}Y_a$ is a connected subtree of $\Omega_A$ and, its iterated images ${\sigma_T}^n(Y_A)$ are connected subtrees of $\Omega_A$ which converge towards $\Omega_A$.
\end{prop}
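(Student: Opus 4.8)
The plan is to prove, in turn: (i) the displayed iteration formula for $\sigma_T^n(Y_a)$; (ii) that $H^n(\sigma_T^n(Y_a))\subseteq\Omega_a$ is a finite subtree; (iii) connectedness of $\sigma_T^n(Y_a)$; (iv) the convergence $\overline{\bigcup_n H^n(\sigma_T^n(Y_a))}=\Omega_a$; and (v) the assertions about $Y_A$, which are (iii)--(iv) glued over the finitely many letters. The single engine behind (iii)--(v) is the reading of the definition of $\Sing(\Omega_a)$: every intersection point of two distinct level-$n$ tiles, transported by the isometry $x\mapsto\sigma^{-n}(p(\gamma))H^{-n}(x)$ that carries $\widetilde\Omega_\gamma=H^{-n}(\Omega_\gamma)$ onto $\Omega_b$ ($b$ the beginning of $\gamma$), lands in $\Sing(\Omega_b)\subseteq Y_b$.

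First I would verify $\sigma_T^n(Y_a)=\bigcup_\gamma\sigma^{-n}(p(\gamma)^{-1})Y_b$ (union over length-$n$ paths $\gamma$ ending at $a$, $b$ the beginning of $\gamma$) by induction on $n$: applying the extension rule $\sigma_T(uY_c)=\bigcup_{c\stackrel{p,s}{\leftarrow}d}\sigma^{-1}(up^{-1})Y_d$ to a single term and using the cocycle identity $p\bigl(\gamma\cdot(a_n\stackrel{p_n,s_n}{\leftarrow}a_{n+1})\bigr)=\sigma^n(p_n)\,p(\gamma)$ reproduces exactly the length-$(n+1)$ terms. By Proposition~\ref{prop:translated-tiles} each term satisfies $\sigma^{-n}(p(\gamma)^{-1})Y_b\subseteq\sigma^{-n}(p(\gamma)^{-1})\Omega_b=\widetilde\Omega_\gamma=H^{-n}(\Omega_\gamma)$, so applying $H^n$ gives $H^n(\sigma_T^n(Y_a))=\bigcup_\gamma p(\gamma)^{-1}H^n(Y_b)\subseteq\bigcup_\gamma\Omega_\gamma=\Omega_a$. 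Each $p(\gamma)^{-1}H^n(Y_b)$ is an isometric image of the convex hull of a finite subset of an $\R$-tree, hence a finite subtree, and (once connectedness is known) a connected finite union of finite subtrees is again a finite subtree.

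For connectedness of $\sigma_T^n(Y_a)$ it suffices to show that the connected pieces $\sigma^{-n}(p(\gamma)^{-1})Y_b$ have connected nerve. Their nerve contains the nerve of the level-$n$ tiles $\{\widetilde\Omega_\gamma\}$: if $\widetilde\Omega_\gamma\cap\widetilde\Omega_{\gamma'}=\{H^{-n}(P)\}$ (a single point in the parageometric case, $P$ the seam point furnished by Proposition~\ref{prop:singular-periodic}), then $\sigma^{-n}(p(\gamma))H^{-n}(P)\in\Sing(\Omega_b)\subseteq Y_b$, hence $H^{-n}(P)\in\sigma^{-n}(p(\gamma)^{-1})Y_b$, and symmetrically $H^{-n}(P)\in\sigma^{-n}(p(\gamma')^{-1})Y_{b'}$, so the two pieces meet. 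The nerve of $\{\widetilde\Omega_\gamma\}$ is connected because these are compact connected sets (parageometric case of Proposition~\ref{prop:autosimilar-tree}) covering the connected set $\widetilde\Omega_a=H^{-n}(\Omega_a)$, and a finite closed cover of a connected space by connected sets has connected nerve. For the convergence, $\overline{\bigcup_n H^n(\sigma_T^n(Y_a))}\subseteq\Omega_a$ follows from (ii) and compactness; conversely the same computation shows every level-$n$ seam point of $\Omega_a$ lies in $H^n(\sigma_T^n(Y_a))$, while $\mathrm{diam}\,\Omega_\gamma=\lambda_{\sigma\inv}^{-n}\,\mathrm{diam}\,\Omega_b\to0$ uniformly, so (using that $\Omega_a$ is non-degenerate and connected, hence for large $n$ each level-$n$ tile contained in it is a proper closed subset and therefore has a boundary point, which must be a seam point) every point of $\Omega_a$ lies within $\mathrm{diam}\,\Omega_\gamma$ of a seam point; thus $\bigcup_n H^n(\sigma_T^n(Y_a))$ is dense in $\Omega_a$.

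For $Y_A=\bigcup_a Y_a$: the $\Omega_a$ are compact connected sets covering the connected $\Omega_A$, so their nerve is connected, and whenever $\Omega_a\cap\Omega_b\ne\emptyset$ with $a\ne b$ any common point $P$ lies in $Y_a\cap Y_b$ — taking $\gamma_a,\gamma_b$ the unique length-one paths with empty prefix starting at $a,b$, one has $\Omega_{\gamma_a}=H(\Omega_a)$, $\Omega_{\gamma_b}=H(\Omega_b)$, so $\{H(P)\}=\Omega_{\gamma_a}\cap\Omega_{\gamma_b}$ and, by the definition of $\Sing$, $P=\sigma^{-1}(p(\gamma_a))H^{-1}(H(P))\in\Sing(\Omega_a)$, likewise $P\in\Sing(\Omega_b)$. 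Hence $Y_A$ is a connected finite subtree of $\Omega_A$. The identical mechanism gives $\sigma_T^n(Y_a)\cap\sigma_T^n(Y_b)\ne\emptyset$ whenever $\Omega_a\cap\Omega_b\ne\emptyset$ (a common point is a level-$n$ seam, and its $H^{-n}$-image lies in both pieces), so $\sigma_T^n(Y_A)=\bigcup_a\sigma_T^n(Y_a)$ is connected; applying $H^n$ and taking the finite union over $a$ of the limits from (iv) gives $H^n(\sigma_T^n(Y_A))\subseteq\Omega_A$ and $\overline{\bigcup_n H^n(\sigma_T^n(Y_A))}=\Omega_A$. The step requiring the most care is precisely the identification of the gluing points of level-$n$ tiles with points of the prototiles $Y_b$: it hinges on reading the definition of $\Sing(\Omega_a)$ exactly right (which prefix $p(\gamma)$ is used, and that $\gamma$ begins — not ends — at the letter indexing $Y_b$) together with the parageometric hypothesis that distinct tiles meet in at most one point; everything else is soft topology of finite covers by closed connected sets and the contraction estimate $\mathrm{diam}\,\Omega_\gamma\to0$. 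One should also note the mild edge cases (non-degeneracy and pairwise distinctness of the $\Omega_a$, standard for a parageometric iwip) used in the density and $Y_A$ arguments, and that $\sigma_T^n(Y_A)$ literally sits in $H^{-n}(\Omega_A)\supseteq\Omega_A$, so these last assertions — like those for a single $Y_a$ — are to be read after renormalizing by $H^n$.
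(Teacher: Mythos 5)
Your proof is correct and follows essentially the same route as the paper's: identify each intersection point of two level-$n$ tiles $\Omega_\gamma$, $\Omega_{\gamma'}$ with a singular point of the corresponding prototiles $Y_b$, $Y_{b'}$ to deduce that adjacent pieces of $\sigma_T^n(Y_a)$ meet, and use the contraction $\mathrm{diam}\,\Omega_\gamma=\lambda_{\sigma\inv}^{-n}\mathrm{diam}\,\Omega_b\to 0$ for convergence. You merely make explicit several steps the paper compresses (the induction giving the displayed formula, the nerve argument behind "the union is connected", and the extension to $Y_A$, which the paper leaves to the reader), and you route the density claim through seam points rather than through the fact that each $\Omega_\gamma$ already contains the nonempty set $p(\gamma)\inv H^n(Y_b)$; both variants rest on the same estimate.
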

\begin{proof}
First we show that the right hand side of the first equality is a connected subtree. Recall from Proposition~\ref{prop:autosimilar-tree} that each subtree $\Omega_a$ is connected and decomposes as the union of the subtrees $\Omega_\gamma$, with $\gamma$ ranging over the paths of length $n$ in the prefix-suffix automaton ending at $a$. Two such subtrees $\Omega_\gamma$ and $\Omega_{\gamma'}$, with $\gamma$ and $\gamma'$ starting respectively at state $b$ and $b'$, have at most a point $P$ in common, and this point corresponds to a singular point in $\Sing(\Omega_b)$ and $\Sing(\Omega_{b'})$: 
\[
p(\gamma)H^{-n}(P)\in\Sing(\Omega_b)\quad\text{ and }\quad
p(\gamma')H^{-n}(P)\in\Sing(\Omega_{b'}).
\]
Thus the finite trees $\sigma^{-n}(p(\gamma)\inv) Y_b$ and $\sigma^{-n}(p(\gamma')\inv) Y_{b'}$ have the point $H^{-n}(P)$ in common and the union 
\[
\bigcup_{\gamma=a\stackrel{p_0,s_0}{\longleftarrow}
\cdots \stackrel{p_{n-1},s_{n-1}}{\longleftarrow}b}\sigma^{-n}(p(\gamma)^{-1}) Y_b
\]
is connected.

The convergence of the iterated images follows from the decomposition in Proposition~\ref{prop:autosimilar-tree}. Indeed, each of the subtrees $\Omega_\gamma$ contains a point of the corresponding $H^n(\sigma^{-n}(p(\gamma)\inv) Y_b)$ and the diameter of $\Omega_\gamma$ is $(\frac{1}{\lambda_{\sigma\inv}})^n$ times the diameter of $\Omega_b$ which goes to $0$.
We let the reader extend the proof to the union $Y_A$ of the elementary trees $Y_a$.
\end{proof}

The above approximation of $\Omega_A$ by finite subtrees uses the
action of the free group on the repelling tree. This action is more
difficult to handle than translations in the plane, that we used for
the dual substitution in Section~\ref{sec:dual-substitution}.

It is much easier to describe how to glue together subtrees at a
common vertex instead of considering the action of the free group. And
fortunately we know that these gluings occur only at singular
points. In the next Section we will leave the fact that the trees
$Y_a$ are subtrees of the repelling tree and construct an abstract
tree substitution.

\subsection{Combinatorial Tree Substitutions}

\subsubsection{Definition}\label{sec:definition-tree-substitution}

Let us now use the previous Section to define a purely combinatorial tree substitution. Our aim is to describe how to approximate the limit set $\Omega_A$ from finite data.
We will use the notion of tree substitutions developed by Jullian~\cite{jullian-coeur} with minor adaptations.

A tree substitution starts with a finite collection of finite
simplicial trees $(W_a)_{a\in A}$, each of which has a finite set
of special vertices $V_a$ used to glue together other trees. Note that 
neither vertices in $V_a$ need to be leaves (i.e.~extremal points) nor 
all leaves need to be in $V_a$. All trees we consider are constructed 
as finite disjoint unions of copies of the trees $W_a$ by identifying 
some vertices:
\begin{equation}\label{eq:W}
W=\bigsqcup_{i=1}^n W_{b_i}/\sim,
\end{equation}
where $b_i\in A$ and $\sim$ is an equivalence relation which identifies some points of $V_{b_i}$ with some points of $V_{b_j}$ for some $i,j\in\{1,\ldots,n\}$.

The tree substitution $\tau$ replaces the tree $W_a$ by a tree $\tau(W_a)$ which is a union of copies of the $W_b$ with some vertices identified as in \eqref{eq:W}. The map $\tau$ also sends each gluing vertex in $V_a$ to some gluing vertex of some $V_{b}$.

The tree substitution extends to any tree as above:
\[
\tau(W)=\bigsqcup_{i=1}^n \tau(W_{b_i})/\sim
\]
where the vertices $v$ and $v'$ of $\tau(W_{b_i})$ and $\tau(W_{b_j})$ are identified if and only if there exists $u$ in $V_{b_i}$ and $u'$ in $V_{b_j}$ such that $u\sim u'$ in $W$, $\tau(u)=v$ and $\tau(u')=v'$.
From the construction it is obvious that the iterated images of a tree $W$ are trees.

\subsubsection{Combinatorial tree substitutions for parageometric automorphisms}\label{sec:combinatorial-tree-substitution-parageometric}

Now let $\sigma$ be a primitive substitution over the alphabet $A$ and a parageometric iwip automorphism of $F_A$. For each $a\in A$ consider the subtree $Y_a$ of the repelling tree $\Trepul$. We consider $Y_a$ as a simplicial tree by forgetting its metric structure and we let the set of vertices $V_a$ be the set of singular points $\Sing(\Omega_a)$.

In this framework, the tree substitution is defined by 
\begin{equation}\label{eq:tau_Ya}
\tau(Y_a)=\bigsqcup_{a\stackrel{p,s}{\longleftarrow}b}Y_b/\sim
\end{equation}
where the disjoint union is taken over all edges
$a\stackrel{p,s}{\longleftarrow}b$ in the prefix-suffix automaton and
where, for two such edges ${a\stackrel{p,s}{\longleftarrow}b}$ and
${a\stackrel{q,t}{\longleftarrow}b'}$, we identify the singular point
$P\in Y_b$ with the singular point $P'\in Y_{b'}$ if $\sigma^{-1}(p\inv)
P=\sigma^{-1}(q\inv) P'$ in the repelling tree $\Trepul$. Note that the simplicial
tree $\tau(Y_a)$ is homeomorphic to the subtree $\sigma_T(Y_a)$ of $\Trepul$ defined in \eqref{eq:sigmaT}.

Furthermore, a singular point $P\in\Sing(\Omega_a)\subseteq Y_a$ with
prefix-suffix expansion
$\gamma=a\stackrel{p,s}{\longleftarrow}b\longleftarrow\cdots$ is
mapped to $\sigma_T(P)=H\inv(P)$ which lies in
$\sigma\inv(p\inv)Y_b$. From
Section~\ref{sec:not-renorm-sing}, we know that the tail $B(\gamma)$
of $\gamma$ also gives a singular point, and thus in our new setting
the point $\sigma_T(P)$ is the point $H^{-1}(P) = \sigma^{-1}(p^{-1})
Q(B(\gamma))\in\Sing(\Omega_b)\subseteq Y_b$ in the
corresponding copy of $Y_b$.

This defines the map from the set of gluing points $V_a$ in $Y_a$ to the set of gluing points $V_b$ in $Y_b$.

The simplicial trees $\tau^n(Y_a)$ are obtained from their metric
counterpart described in Section~\ref{sec:tree-substitution-inside} by
forgetting the metric. To state a convergence result we use the
renormalization by the contracting homothety $H$. This renormalization
allows much more flexibility.

\medskip

So far we have considered the tree substitution inside the repelling
tree $T_{\sigma^{-1}}$ and the abstract combinatorial tree
substitution where we forget the metric structure of the tree keeping
only the simplicial one. We want to show that what is really important
in these constructions is the set of gluing points, that is, the
singular points. Indeed, no matter which simplicial or metric
structure we give to the tree, we will converge (in the
Gromov-Hausdorff sense) iterating the tree substitution and
renormalizing to the compact limit set $\Omega_A$.

To this purpose let $W_a$ be any simplicial tree with a finite set of gluing vertices in bijection with the set $V_a$ of gluing vertices of $Y_a$, for each $a\in A$. Let $\tau$ be the tree substitution in \eqref{eq:tau_Ya} defined on the $W_a$:
\[
\tau(W_a)=\bigsqcup_{a\stackrel{p,s}{\longleftarrow}b}W_b/\sim
\]
where the disjoint union is taken over all edges $a\stackrel{p,s}{\longleftarrow}b$ in the prefix-suffix automaton and where for two such edges ${a\stackrel{p,s}{\longleftarrow}b}$ and ${a\stackrel{q,t}{\longleftarrow}b'}$ we identify the gluing points $P$ of $W_b$ and $P'$ of $W_{b'}$ if the corresponding points of $Y_b$ and $Y_{b'}$ are identified. Moreover $\tau$ maps the gluing points of $W_a$ to the gluing points of the $W_b$ exactly as $\sigma_T$ maps the gluing points of $Y_a$ to the gluing points of the $Y_b$.

Let us fix any metric on each of the $W_a$ by fixing the lengths of edges. We get that $\tau^n(W_a)$ is also a metric tree and we renormalize this metric by contraction by $(\lambda_{\sigma\inv})^{-n}$. We denote this metric tree by $(\lambda_{\sigma\inv})^{-n}\tau^n(W_a)$.

\begin{prop}\label{prop:tree-substitution-converge}
Let $\sigma$ be a primitive substitution and a parageometric iwip
automorphism. Let $\tau$ be any tree substitution such that the gluing
vertices $V_a$ of each prototile $W_a$ are in bijection with the
singular points in $\Sing(\Omega_a)$ and such that the gluing
instructions mimic the tree substitution inside the repelling
tree. Then, the renormalized iterated images
$(\lambda_{\sigma\inv})^{-n}\tau^n(W_a)$ converge in the
    Gromov-Hausdorff topology towards the compact real tree
    $\Omega_a$.

Moreover, the renormalized iterated images of $W=\bigsqcup_{a\in A}
W_a/\sim$ converge towards the compact limit set $\Omega_A$.
\end{prop}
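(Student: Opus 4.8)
The plan is to transfer the convergence already established inside the repelling tree (Proposition~\ref{prop:connected-tree-substitution}) to the abstract combinatorial setting by showing that, after renormalization, the metric tree $(\lambda_{\sigma\inv})^{-n}\tau^n(W_a)$ and the renormalized image $H^{n}({\sigma_T}^n(Y_a))$ become Gromov--Hausdorff close, uniformly in the initial metric chosen on the $W_a$. The key structural observation is that both $\tau^n(W_a)$ and ${\sigma_T}^n(Y_a)$ decompose into the \emph{same} combinatorial pattern of tiles indexed by paths $\gamma$ of length $n$ in the prefix-suffix automaton ending at $a$: in each case there is one tile per such $\gamma$, isomorphic (as a simplicial tree with marked gluing vertices) to $W_b$ respectively $Y_b$ where $b$ is the starting letter of $\gamma$, and the gluing pattern between tiles is dictated identically by the singular-point identifications of Proposition~\ref{prop:singular-periodic}. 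So the two iterated trees share one and the same \emph{nerve} (the incidence graph of tiles and their gluing vertices).

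First I would make this precise: fix $n$, and let $G_n$ be the finite graph whose vertices are the tiles $\gamma$ (and their distinguished gluing points) and whose edges record which tiles are glued at which singular point. By Proposition~\ref{prop:connected-tree-substitution} this nerve is a tree and $G_n$ is the same for $\tau^n(W_a)$ and ${\sigma_T}^n(Y_a)$. Now the diameter of each renormalized tile $H^{n}(\sigma^{-n}(p(\gamma)\inv)Y_b)=\Omega_\gamma$ is $(\lambda_{\sigma\inv})^{-n}$ times the diameter of $\Omega_b$, hence bounded by $C(\lambda_{\sigma\inv})^{-n}$ for $C=\max_b\mathrm{diam}(\Omega_b)$; and by construction the renormalized copy of $W_b$ inside $(\lambda_{\sigma\inv})^{-n}\tau^n(W_a)$ likewise has diameter $(\lambda_{\sigma\inv})^{-n}\mathrm{diam}(W_b)$, bounded by $C'(\lambda_{\sigma\inv})^{-n}$. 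Thus in \emph{both} renormalized trees every tile shrinks to a point at the same exponential rate, and the large-scale shape of both trees is governed entirely by the common nerve $G_n$ together with the location of the gluing points. The standard estimate for a tree assembled from pieces of diameter $\le\varepsilon$ glued along a common combinatorial pattern gives that both $(\lambda_{\sigma\inv})^{-n}\tau^n(W_a)$ and $\Omega_a$ lie within Gromov--Hausdorff distance $O((\lambda_{\sigma\inv})^{-n})$ of the same finite metric realization of $G_n$ — more carefully, one maps each tile to a single representative point (its image gluing vertex under iterating $\sigma_T$), checks this is an $\varepsilon$-net in each tree, and verifies that distances between representative points agree up to $O(\varepsilon)$ because a geodesic in a tree glued from subtrees passes through the gluing vertices recorded in $G_n$. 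Combining, $d_{GH}\big((\lambda_{\sigma\inv})^{-n}\tau^n(W_a),\,\Omega_a\big)\to 0$.

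For the second statement I would run the same argument with $W=\bigsqcup_{a\in A}W_a/\sim$ in place of $W_a$, using that $Y_A=\bigcup_{a\in A}Y_a$ and ${\sigma_T}^n(Y_A)$ are connected subtrees of $\Omega_A$ converging to $\Omega_A$ by the last sentence of Proposition~\ref{prop:connected-tree-substitution}; the nerve of $\tau^n(W)$ equals that of ${\sigma_T}^n(Y_A)$ by exactly the same reasoning, so the uniform-shrinking estimate applies verbatim. The main obstacle I anticipate is purely technical bookkeeping rather than conceptual: one must verify that geodesics in a finite tree obtained by gluing subtrees at specified vertices really do factor through those vertices in the expected way, so that distances in the assembled tree are determined, up to the tile diameter, by graph distances in the nerve together with intra-tile distances to the gluing points — and that the isometry-type data on each $W_b$ (its chosen edge lengths) only affects the limit through an error $O((\lambda_{\sigma\inv})^{-n})$, hence disappears in the limit. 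This is where the hypothesis ``parageometric'' is essential, since it guarantees via Proposition~\ref{prop:autosimilar-tree} that each $\Omega_\gamma$ is connected and that distinct tiles meet in at most one point, so the nerve is genuinely a tree and no higher-order identifications occur.
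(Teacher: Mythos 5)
There is a genuine gap at the step where you claim that $(\lambda_{\sigma\inv})^{-n}\tau^n(W_a)$ and $\Omega_a$ both lie within Gromov--Hausdorff distance $O((\lambda_{\sigma\inv})^{-n})$ of \emph{the same} finite metric realization of the nerve $G_n$. The combinatorial nerve is indeed common to $\tau^n(W_a)$ and $\sigma_T^n(Y_a)$, and each tile does shrink at rate $(\lambda_{\sigma\inv})^{-n}$; but the metric of the assembled tree is not determined by the nerve up to $O(\varepsilon)$. A geodesic between two points of $\tau^n(V_a)$ crosses on the order of $(\lambda_{\sigma\inv})^{n}$ tiles, and in each tile it contributes the intra-tile distance between the entry and exit gluing points; for the abstract tree this is $(\lambda_{\sigma\inv})^{-n}d_{W_b}(x_i,y_i)$ with your arbitrarily chosen edge lengths, while for the tree in $\Trepul$ it is $(\lambda_{\sigma\inv})^{-n}d_{\Trepul}(x_i,y_i)$. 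The per-tile discrepancies are each of size $O((\lambda_{\sigma\inv})^{-n})$ but there are $O((\lambda_{\sigma\inv})^{n})$ of them, so they sum to $O(1)$: the two ``metric realizations of $G_n$'' are \emph{not} at distance $O(\varepsilon)$ from one another. Your argument would only work if the initial metric on $W_a$ already agreed with the restriction of $d_{\Trepul}$ to the singular points, which is precisely what the proposition does not assume.

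What is missing is the mechanism forcing the renormalized pairwise distances between gluing points to converge for an \emph{arbitrary} initial metric. The paper encodes the decomposition of geodesics through tiles as a linear recursion $\vec d\,'=M\vec d$ on the vector of pairwise distances between gluing points of the prototiles; the vector of true distances in $\Trepul$ is a positive eigenvector of $M$ for the eigenvalue $\lambda_{\sigma\inv}$, and indecomposability of the repelling tree is invoked to show that powers of $M$ have enough positive entries to make $M$ uniformly contracting for the Hilbert metric on the positive cone. This Perron--Frobenius-type argument is what yields $\lim_n(\lambda_{\sigma\inv})^{-n}d_{\tau^n(W_a)}(\tau^n(u),\tau^n(v))=d_{\Trepul}(u,v)$ independently of the chosen metric; only after that does the $\varepsilon$-net argument you sketch (which is fine in itself) finish the proof. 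Without some such control on the iterated distance vector, your ``standard estimate'' has nothing with which to bound the accumulated error.
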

\begin{proof}
We first define the matrix of distances for the tree substitution $\tau$. Let $u$ and $v$ be distinct gluing points in $W_a$. After one step of the tree substitution those points are mapped to gluing points $u'$ and $v'$ in $\tau(W_a)$. The segment $[u';v']$ in $\tau(W_a)$ crosses some of the tiles $W_b$ and, for each such tile $W_b$, it crosses a segment between two gluing points of $W_b$. Thus there exist $n$ tiles $W_{b_0},\ldots,W_{b_n}$ and distinct gluing points $x_i,y_i$ in $W_{b_i}$ with $x_0=u'$, $y_i\sim x_{i+1}$, for $i=0,\ldots,n-1$, and $y_n=v'$. As we are in a tree, the distances add up and we get that 
\[
d(\tau(u),\tau(v))=\sum_{i=0}^n d(x_i,y_i).
\]
Repeating this observation for all pairs of distinct gluing points in each $W_a$, we get a \defi{matrix of distances} $M$ with non-negative integer entries, such that if $\vec d=(d_{W_a}(u,v)))_{a\in A, u\neq v\in V_a}$ and $\vec d'=(d_{\tau(W_a)}(\tau(u),\tau(v)))_{a\in A, u\neq v\in V_a}$:
\[\vec d'=M\vec d.\]
Recall that the set of gluing points $V_a$ is the set $\Sing(\Omega_a)\subset Y_a$. Moreover the abstract
tree substitution $\tau$ mimics the tree substitution $\sigma_T$ inside
$\Trepul$. Thus, if we start with the vector $\vec
d_\sigma=(d_{\Trepul}(u,v))_{a\in A, u\neq v\in \Sing(\Omega_a)}$ of
distances between pairs of distinct gluing points in $Y_a$, using the
homothety $H$ we get:
\[\vec d_\sigma=\frac{1}{\lambda_{\sigma\inv}}M\vec d_\sigma.\]
This proves that the matrix of distances $M$ has for dominant
eigenvalue $\lambda_{\sigma\inv}>1$ associated with the positive
eigenvector $\vec d_\sigma$.

It is known that the repelling tree of an iwip automorphism
$\Trepul$ is indecomposable~\cite{ch-b}, this implies that for any two
non-degenerate arcs $I$ and $J$ in the limit set $\Omega_A$, there exists $u\in F_A$
such that $K=I\cap uJ$ is a non-degenerate arc. Let $P$ be a point in
the interior of $K$ with exactly one pre-image by $Q$ (both in the
attracting shift $X_\sigma$ and in the set of infinite
prefix-suffix expansion paths $\mathcal P$) and such that $P$ is not a
branch point in $\Trepul$. Note that such a point $P$ exists as both
branch points and singular points are countable. Let
$Z\in X_\sigma$ and $\gamma\in\mathcal P$ be the preimages:
$Q(Z)=Q(\gamma)=P$. We assume furthermore that both the prefix and
suffix of $\gamma$ are unbounded words (this is again always the case
outside a countable subset of the limit set $\Omega_A$). It is a
property of the limit set~\cite{chl4} (and indeed of the compact
heart) that, as $P$ and $uP$ are points in $\Omega_A$, $uZ$ is in the
attracting shift $X_\sigma$, see also Proposition~\ref{prop:global-picture}. In our
situation, as $\sigma$ is a substitution, $u$ is either a negative
word  and $u\inv$ is a prefix of the right half $Z_{[0;+\infty)}$ or $u$ is a
positive word and a suffix of the left half
$Z_{(-\infty;-1]}$.

We pick $\gamma'$ a prefix of $\gamma$, such that either $u$ is a
positive word and a suffix of $p(\gamma')$ or $u$ is a negative word
and $u\inv$ is a prefix of $a_0s(\gamma')$ with $a_0$ the ending
letter of $\gamma$. We get that there exists a path $\gamma''$ in the
prefix-suffix automaton of the same length as $\gamma'$ such that
\[u\Omega_{\gamma'}=\Omega_{\gamma''}.\]
Indeed, $\gamma''$ is obtained from $\gamma'$ by performing the Vershik
map (see Section~\ref{sec:vershik-map}) $|u|$ times. We remark that $\gamma'$ and $\gamma''$ start at the same letter
$a_n$.

Starting with any two arcs $I=[x_1;y_1]$ and $J=[x_2;y_2]$ joining
distinct gluing points in $W_{b_0}$ and $W_{c_0}$ we found an arc
$[x_0;y_0]$ between two distinct gluing points in $W_{a_n}$, such that
$H^n([x_0;y_0])=I\cap \Omega_{\gamma'}$ is a non-degenerate sub-arc of
$I$ and $u\inv H^n([x_0;y_0])=J\cap \Omega_{\gamma''}$ is a
non-degenerate sub-arc of $J$. This proves that the matrix $M^n$ has
positive coefficients in the columns for $d(x_1,y_1)$ and $d(x_2,y_2)$
at the line of $d(x_0,y_0)$.

This does not prove that the matrix $M$ is primitive (and we do not
claim or expect it is) but this is enough to prove that this matrix is
uniformly contracting for the Hilbert distance (see e.g.~\cite{yocc-2005}
\cite[Section~26]{viana-survey}) in the positive cone of the projective
space. And thus, no matter which distances we chose on the trees $W_a$
(and no matter which simplicial trees $W_a$ we chose, the only assumption
is that $W_a$ has the same gluing points $V_a$ as $Y_a$), for any two
gluing points $u,v$ in $V_a$ we have:
\[\lim_{n\to\infty}(\frac{1}{\lambda_{\sigma\inv}})^n d_{\tau^n(W_a)}(\tau^n(u),\tau^n(v))=d_{\Trepul}(u,v).\]
This extends to gluing points $u$ and $v$ in $\tau^m(W_a)$:
\[\lim_{n\to\infty}(\frac{1}{\lambda_{\sigma\inv}})^n d_{\tau^{m+n}(W_a)}(\tau^n(u),\tau^n(v))=d_{\Trepul}(u,v).\]
We get that the sets of gluing points in $\tau^m(W_a)$, as a metric
space, converge in the Gromov-Hausdorff topology to the limit set
$\Omega_a$.  We let the reader extend the proof to $W$.
\end{proof}

We complete now the algorithm to construct the tree substitution associated with $\sigma$.

\begin{algo}\label{algo:gluing-and-tau}
  Recall that the sets of singular points $V_i = \Sing(\Omega_i)$, for all $i\in A$, have been
  computed in Algorithm~\ref{algo:singular-points}. We now proceed to
  compute the gluing instructions and the map $\tau$.
  \begin{enumerate}\setcounter{enumi}{3}
  \item From the set of pairs of singular prefix-suffix expansions we
    get the gluings between singular points in $V_i$ and $V_j$ with
    $i\neq j$. From these gluing instructions we build the initial patch $W=\bigsqcup W_i/\sim$.
  \item Each gluing point $P\in V_a$ is given by its prefix-suffix
    expansion $\gamma$. It is mapped by $\tau$ to a gluing point
    $\tau(P) \in V_b\subseteq W_b$, where $b$ comes
    from the last edge $e_1(\gamma)=a\stackrel{p,s}{\longleftarrow} b$
    of $\gamma$. The point $\tau(P)\in V_b$ is given by
	\[ \tau(P) = \sigma^{-1}(p^{-1})P' \]    
    and $P'$ has prefix-suffix expansion $B(\gamma)$.
  \item Let $a\stackrel{p,s}{\longleftarrow} b$ and
    $a\stackrel{p',s'}{\longleftarrow} b'$ be two distinct edges of the
    prefix-suffix automaton ending at the same letter $a$.  The points
    $P\in V_b$ and $P'\in V_{b'}$ are identified in $\tau(W_a)$ if
    they are given by $\gamma$ and $\gamma'$ such that
    $((a\stackrel{p,s}{\longleftarrow}
    b)\cdot\gamma,(a\stackrel{p',s'}{\longleftarrow} b')\cdot\gamma')$ is a pair
    of singular prefix-suffix expansions.
  \end{enumerate}
\end{algo}

\begin{example}\label{ex:tribo-tree-substitution}
In Example~\ref{ex:tribo-singular-points} we computed the singular
points for the Tribonacci substitution.
According to \eqref{eq:sigmaT}, the Tribonacci tree substitution inside the repelling tree is 
\begin{align*}
\sigma_T: Y_a &\mapsto Y_a \cup Y_b \cup Y_c\\
Y_b &\mapsto c\inv Y_a \\
Y_c &\mapsto c\inv Y_b 
\end{align*}

Alternatively, we consider the abstract simplicial prototiles $W_a$,
$W_b$ and $W_c$ which are the convex hulls of the sets of gluing
points $V_a$, $V_b$ and $V_c$ in one-to-one correspondence with the
sets of singular points.  We now compute how to glue together the tiles of $W$ and $\tau(W)$ and describe the images by the map $\tau$ of the singular points.

The set $V_a$ consists of three gluing points numbered by $1$, $2$ and
$3$ (in red in Figure~\ref{fig:tribo-tree-substitution}). For example, the point $3$ comes from the singular prefix-suffix expansion
$\gamma_a$, it is mapped by $\tau$ to the singular point corresponding
to the beheaded prefix-suffix expansion $B(\gamma_a)$ in the copy of
$W_a$ coming from the ending edge $e_1(\gamma)=a\stackrel{\epsilon,b}{\longleftarrow}a$.  We repeat this computation for all the singular points. For each of them we give the singular prefix-suffix expansion $\gamma$, its image by the map $Q$ in the repelling tree, the singular prefix-suffix expansion of the image by $\tau$ in the copy of the prototile given by the heading edge $e_1(\gamma)$ of $\gamma$ and the image by the tree substitution $\tau$.
\[
\begin{array}{|c||c|c|c||c|c||c||}
\cline{2-7}
\multicolumn{1}{c||}{}&\multicolumn{3}{c||}{V_a}&\multicolumn{2}{c||}{V_b}&\multicolumn{1}{c||}{V_c}\\
\cline{2-7}
\multicolumn{1}{c||}{}&\circlebox{red}{1}&\circlebox{red}{2}&\circlebox{red}{3}&\circlebox{green}{1}&\circlebox{green}{2}&\circlebox{blue}{1}\\
\hline
\gamma&B(\gamma_a)&B^2(\gamma_a)&\gamma_a&B^3(\gamma_a)&\gamma_b&\gamma_c\\
\hline
Q(\gamma)&c\inv P&b\inv P&P&a\inv P&P&P\\
\hline
\hline
e_1(\gamma)&a\stackrel{\epsilon,b}{\longleftarrow}a&a\stackrel{\epsilon,c}{\longleftarrow}b&a\stackrel{\epsilon,b}{\longleftarrow}a&b\stackrel{a,\epsilon}{\longleftarrow}a&b\stackrel{a,\epsilon}{\longleftarrow}a&c\stackrel{a,\epsilon}{\longleftarrow}b\\
\hline
B(\gamma)&B^2(\gamma_a)&B^3(\gamma_a)&B(\gamma_a)&B(\gamma_a)&\gamma_a&\gamma_b\\
\hline
\tau (Q(\gamma))&b\inv P&a\inv P&c\inv P&c^{-2}P&c\inv P&c\inv P\\
\hline
\end{array}
\]
Finally the gluings between tiles are given by the pairs of singular prefix-suffix expansions. Both the tiles $W_a$, $W_b$ and, $W_c$ of the patch $W$ and of the patch $\tau(W_a)$ have the point $P=Q(\gamma_a)=Q(\gamma_b)=Q(\gamma_c)$ in common.

The tree substitution is described in Figure~\ref{fig:tribo-tree-substitution}.

\begin{figure}[h]
\centering
\begin{overpic}[scale=.7]{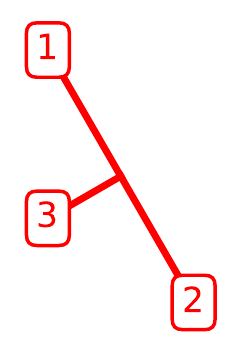}
%\put (0,100) {$c^{-1}P$}
%\put (10,-15) {$b^{-1}P$}
%\put (-15,30) {$P$}
\end{overpic}
\raisebox{.9cm}{$\mapsto$}
\begin{overpic}[scale=.7]{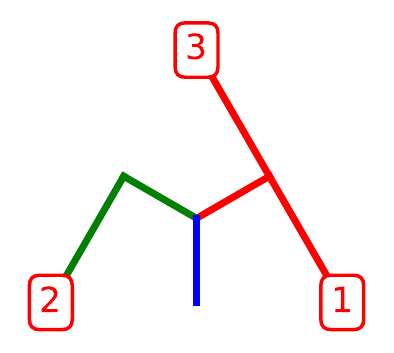}
%\put (40,95) {$c^{-1}P$}
%\put (90,25) {$b^{-1}P$}
%\put (-5,-10) {$a^{-1}P$}
\end{overpic}\hspace{2cm}
\begin{overpic}[scale=.7]{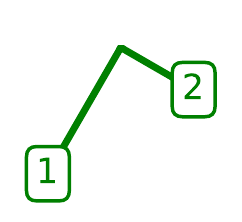}
%\put (80,80) {$P$}
%\put (-30,40) {$a^{-1}P$}
\end{overpic}
\raisebox{.7cm}{$\mapsto$}
\begin{overpic}[scale=.7]{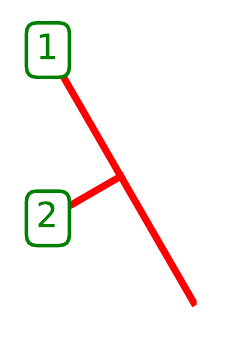}
%\put (25,95) {$c^{-2}P$}
%\put (-5,5) {$c^{-1}P$}
\end{overpic}
\end{figure}

\begin{figure}[h]
\centering
\begin{overpic}[scale=.7]{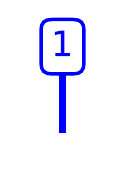}
%\put (-15,80) {$P$}
\end{overpic}
\raisebox{.7cm}{$\mapsto$}
\begin{overpic}[scale=.6]{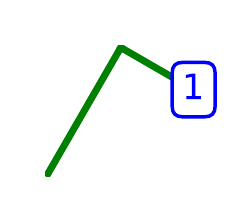}
%\put (80,20) {$c^{-1}P$}
\end{overpic}
\caption{Images of $W_a$ (red), $W_b$ (green) and $W_c$ (blue) by the Tribonacci tree substitution.\label{fig:tribo-tree-substitution}}
\end{figure}

\begin{figure}[h]
\centering
\includegraphics[scale=.6]{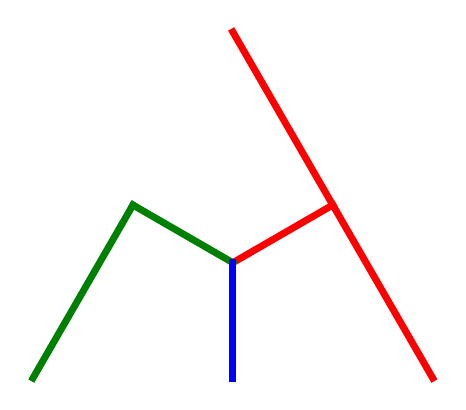}
\hspace{2cm}
\includegraphics[scale=.6]{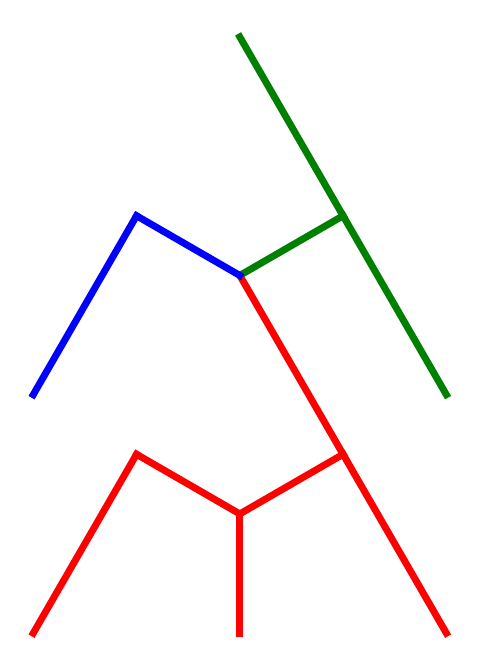}
\caption{Initial tree $W$ and its image $\tau(W)$ for the Tribonacci tree substitution.\label{fig:tribo-tree-0-and-1}}
\end{figure}

\end{example}

It may be frustrating to the reader to let too much freedom for the
choice of the simplicial trees $W_i$, for $i\in A$. It is however 
possible to discover the simplicial structure of the $Y_i$ by iterating the abstract tree substitution $\tau$. Then, it is possible to give to each $W_i$ the same simplicial structure as $Y_i$.

\begin{rem}\label{rem:simplicial-struct-prototiles}
  For $n$ big enough, the iterated images of the gluing points
  $\tau^n(V_a)$ lie in distinct copies of the $W_b$. For such an $n$,
  the simplicial tree spanned by $\tau^n(V_a)$ inside $\tau^n(W_a)$ is
  isomorphic to the simplicial structure of $Y_a$.

  Moreover, if $W_a$ and $Y_a$ have isomorphic simplicial structures,
  then all iterates $\tau^n(W)$ and $\sigma_T^n(Y)$ have isomorphic
  simplicial structures.
\end{rem}
\begin{proof}
  Recall that, for a singular point $P\in\Sing(\Omega_a)$ the tree
  substitution inside the repelling tree comes from the expanding
  homothety: $\sigma_T(P)=H\inv (P)$. As the prototiles $Y_a$ have
  bounded diameter, after finitely many iterations, the images
  $\sigma_T^n(P)$ of the fintely many singular points in
  $\Sing(\Omega_a)$ lie in distinct tiles $\Omega_\gamma$. As the
  abstract tree substitution $\tau$ mimics $\sigma_T$ we get that the
  points in $\tau^n(V_a)$ lie in distinct tiles $W_b$. Finally, as the
  tiles $Y_b$, $\Omega_\gamma$ and $W_b$ are trees, we conclude that
  the simplicial trees spanned by $\sigma_T^n(\Sing(\Omega_a))$ in the
  repelling tree $\Trepul$ and, by $\tau^n(V_a)$ in $\tau^n(W_a)$ are
  isomorphic.
\end{proof}

Another frustration may come from the lack of knowledge of the branch
points, in particular of their prefix-suffix expansions and precise
locations.

\begin{rem}\label{rem:branch-point-desubstitution}
  The prefix-suffix expansions of the branch points inside $Y_a$ are
  eventually periodic. They can be computed using the abstract tree
  substitution $\tau$.
\end{rem}
\begin{proof}
   The first startegy to describe the branch points is to study the
   inverse automorphism $\sigma\inv$.
   The duality between the repelling tree and the attracting shift
   implies that the branch points in $\Trepul$ are described by
   singular bi-infinite words of the repelling shift
   \cite{ch-a}. However, the inverse automorphism $\sigma\inv$ needs
   not be a substitution and the repelling shift is rather the
   repelling lamination which can be studied using a train-track
   representative of $\sigma\inv$, but this is beyond the scope of
   this paper.

   An alternative strategy is to give $W_a$ the same simplicial
   structure as $Y_a$ through
   Remark~\ref{rem:simplicial-struct-prototiles}. 

   As $Y_a$ is the convex hull of the singular points, each branch
   point $P$ is the center of a tripod of three singular points
   $P_1,P_2,P_3$. The tree substitution inside the repelling tree
   $\Trepul$ is defined by the contracting homothety and using the
   $\sigma_T(P_i)=H\inv(P_i)$ and thus, $H\inv (P)$ is the center $P'$
   of the tripod $\sigma_T(P_1)$, $\sigma_T(P_2)$ and,
   $\sigma_T(P_3)$. This center $P'$ lies in a translate of $Y_b$ for an
   edge $a\stackrel{p,s}{\longleftarrow}b$ of the prefix-suffix
   automaton. This edge is the ending edge of the prefix-suffix
   expansion of $P$. Recall that the translates of the $Y_c$ in
   $\sigma_T(Y_a)$ intersect in singular points, thus, $P'$ is a
   singular point or a branch point of the translate of $Y_b$. As
   before, $P'$ is the center of a tripod (possibly degenerate) of
   singular points in $Y_b$. Iterating this construction, we get the
   prefix-suffix expansion of $P$.

   As there are finitely many tripods in the finitely many prototiles
   $Y_b$, the above construction yields an eventually periodic
   prefix-suffix expansion for all branch points of $Y_a$.

   Finally, as the abstract trees $\tau^n(W_a)$ have the same
   simplicial structure as $\sigma_T^n(Y_a)$, the above construction
   can be achieved using the abstract tree substitution $\tau$ rather
   than the tree substitution $\sigma_T$ inside $\Trepul$. The
   prefix-suffix expansions of branch points can be computed from the
   abstract tree substitution $\tau$.
\end{proof}

Finally we remark that we can add extra gluing points to the abstract
tree substitution.

\begin{rem}\label{rem:comb-tree-subst-adding-gluing-vertices} 
 Let $P$ be a point in the limit set $\Omega_A$ with eventually
 periodic prefix-suffix expansion $\gamma$.  The set of tails
 $\{B^n(\gamma)\ | n\in\N\}$ of $\gamma$ is finite. The finitely many
 points $Q(B^n(\gamma))$ can be added to the sets of gluing points
 $V_{a_n}$. The map $\tau$ is then extended by mapping the gluing
 point $Q(B^n(\gamma))$ of $V_{a_n}\subseteq W_{a_n}$ to the gluing
 point $Q(B^{n+1}(\gamma))\in V_{a_{n+1}}\subseteq W_{a_{n+1}}$ of the
 copy of $W_{a_{n+1}}$ corresponding to the $n$-th edge
 $a_n\stackrel{p_n,s_n}{\longleftarrow}a_{n+1}$ of $\gamma$.
\end{rem}

\subsection{Tree Substitution inside the Rauzy fractal}\label{sec:tree-substitution-in-Rauzy}

We are now ready to use the two previous Sections to get a tree
substitution inside the Rauzy fractal.

Using the map $\psi$ of Proposition~\ref{prop:global-picture},
we embed the finite trees $Y_a$ of Section \ref{sec:tree-substitution-inside} inside the tiles $\mathcal{R}(a)$ of the Rauzy fractal. The tree substitution inside the Rauzy fractal $\sigma_R$ is defined by
\[
\sigma_R(\psi(Y_a))=\bigcup_{a\stackrel{p,s}{\longleftarrow}b} \psi(Y_b) + \pi_c(M_\sigma^{-1}\ell(p))
\]
where the union is taken over all edges
$a\stackrel{p,s}{\longleftarrow}b$ in the prefix-suffix automaton.
This version of the tree substitution inside the Rauzy fractal is quite
ineffective as we expect the embedding of an arc of the tree $Y_a$
into the contracting plane to be a Peano curve. We can rather focus on
the finite set of singular points.

For each $a\in A$, the finite set of points $\Sing(\Omega_a)$ is
mapped by $\psi$ to a finite set of points in the tile
$\mathcal{R}(a)$ of the Rauzy fractal. Recall that these points have
eventually periodic prefix-suffix expansions and that we gave an
explicit formula in Section~\ref{sec:preliminary-rauzy} to compute
their images in the contracting space.

For prefix-suffix expansions $\gamma$, $\gamma'$ and for the point $P$
as in Proposition~\ref{prop:singular-periodic}, $\psi(P)$ is a common
point of $\widetilde{\mathcal{R}_\gamma}$ and
$\widetilde{\mathcal{R}_{\gamma'}}$.

We can now consider trees spanned by the images of the singular points embedded inside the contracting space. This gives a tree substitution inside the Rauzy fractal. Remark that, as the subtile $\mathcal R(a)$ is the image by $Q$ of the compact tree $\Omega_a$, it is arcwise connected and we can choose such a linking arc between two singularities of $Y_a$ inside $\mathcal R(a)$.

The question that now arises is whether iterating this tree substitution inside the contracting space (or inside the Rauzy fractal) creates loops.

\begin{quest}Given a primitive parageometric iwip substitution, does there always exist a choice of the linking arcs between the images of the singular points in the contracting space such that the iteration of the tree substitution inside the contracting space does not create loops? Moreover, may the linking arcs be chosen inside the corresponding tiles of the Rauzy fractal without creating loops?
\end{quest}

The answer to this question is negative in general, as illustrated by the
case of the Tribonacci substitution. 
In Example~\ref{ex:tribo-tree-substitution}, after two
iterations the tree substitution embedded in the contracting plane
$E_c$ creates loops, see Figure~\ref{fig:tribo_2}.
And this is not because of a bad choice of the linking arcs: the loops
are created by the non-injectivity of the projection map $\psi$ on
singular points. It is not anymore visible that we are
working with trees. However this is only an artifact of the
representation inside the contracting plane, since the tree substitution was initially defined as an abstract tree substitution.

\begin{figure}[h]
\centering
\begin{overpic}[scale=.6]{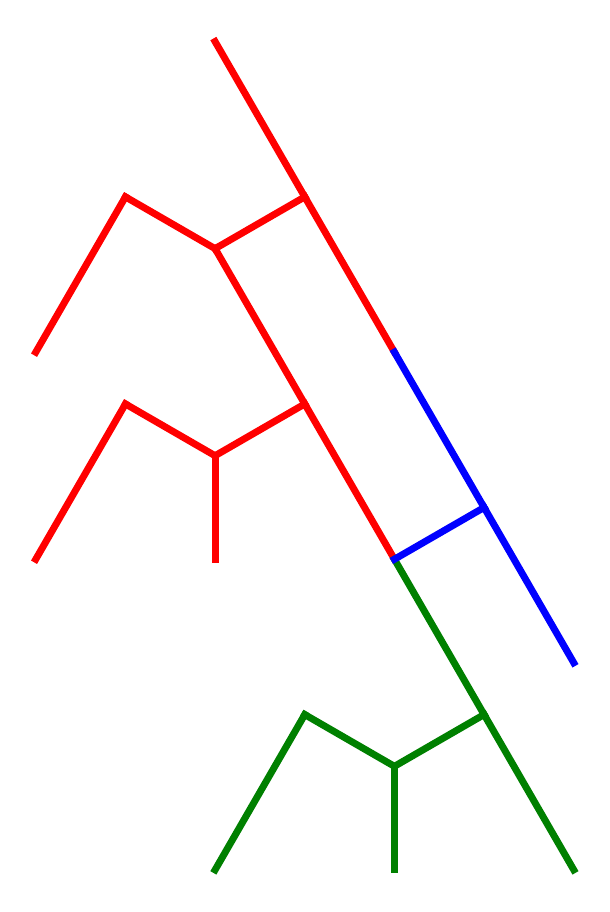}
\put (40,60) {$\bigcirc$ \parbox{5cm}{\scriptsize These two edges are not adjacent in the abstract tree substitution.}}
\end{overpic}
\caption{\label{fig:tribo_2} Second iterate of the Tribonacci tree substitution. The embedding into the contracting
  plane of $\tau^2(W)$ creates a loop.}
\end{figure}

We propose to overcome this difficulty by pruning the tree, as described in the next Section.

\subsection{Pruning and covering}\label{sec:pruning-cheating}

As we observed in Figure~\ref{fig:tribo_2} and
Example~\ref{ex:tribo-tree-substitution}, the tree substitution we
constructed in Sections~\ref{sec:combinatorial-tree-substitution-parageometric} and
\ref{sec:tree-substitution-in-Rauzy} may fail to produce a tree inside
the contracting space because of non-injectivity of the map $\psi$. We
propose in this section a pruning of the tree that will produce for
the Tribonacci substitution a tree inside the contracting plane.

We remark that in the tree substitution some branches are dead-end
which will never grow up: they are leaves of the tree for all steps
$n$. Thus we prune those branches. This will result into a new tree
substitution defined on more pieces: we have a new alphabet $A'$ with
a forgetful map $f:A'\to A$, and for each $a\in A'$ the prototile
$W_a$ is a subtile of the original prototile $W_{f(a)}$. Indeed, the
set of gluing points $V_a$ is a subset of the set of gluing points
$V_{f(a)}$.

As we kept at least one point for each tile, the proof of
Proposition~\ref{prop:tree-substitution-converge} applies and, we get
that the limit tree (after renormalization) is again $\Omega_A$.

\begin{example}\label{ex:tribo_pruned}
  For the Tribonacci tree substitution of
  Example~\ref{ex:tribo-tree-substitution}, pruning the dead-end
  vertices gives the pruned tree substitution described in
  Figure~\ref{fig:tribo-pruned-thierry}.

  It turns out that the choice of the linking arcs is inside the Rauzy
  fractal and that we now have a tree substitution inside the Rauzy
  fractal that does not create loops.  More spectacularly, this tree
  substitution occurs inside the tiles of the dual substitution as
  illustrated in Figure~\ref{fig:tree-substitution-pruned-tribo} and
  \ref{fig:prunedtribo-iterated}.

\begin{figure}[h]
\includegraphics[scale=.7]{images/tribo_prototile_a_color_label_ratio_07.pdf}
\hspace{-.5cm}
\raisebox{1cm}{$\mapsto$}
\hspace{-.3cm}
\includegraphics[scale=.7]{images/tribo_image_a_color_label_ratio_07.pdf}
\quad
\raisebox{.7cm}{\includegraphics[scale=.7]{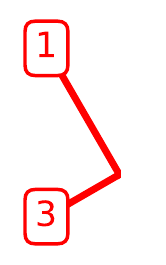}}
\hspace{-.5cm}
\raisebox{1cm}{$\mapsto$}
\hspace{-.3cm}
\includegraphics[scale=.7]{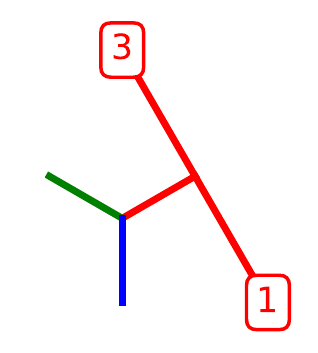}
\quad
\raisebox{.7cm}{\includegraphics[scale=.7]{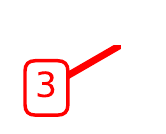}}
\hspace{-.5cm}
\raisebox{1cm}{$\mapsto$}
\hspace{-.3cm}
\includegraphics[scale=.7]{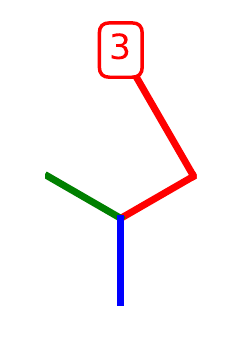}
\quad
\includegraphics[scale=.7]{images/tribo_prototile_b_color_label_ratio_07.pdf}
\raisebox{.5cm}{$\mapsto$}
\includegraphics[scale=.7]{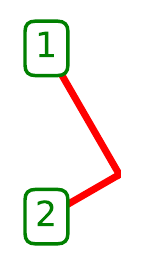}
\includegraphics[scale=.7]{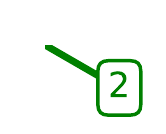}
\raisebox{.5cm}{$\mapsto$}
\includegraphics[scale=.7]{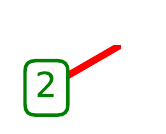}
\includegraphics[scale=.7]{images/tribo_prototile_c_color_label_ratio_07.pdf}
\raisebox{.5cm}{$\mapsto$}
\includegraphics[scale=.7]{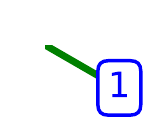}
\caption{\label{fig:tribo-pruned-thierry} The pruned Tribonacci tree substitution. In red the tiles: $W_a$, $W_{a'}$, $W_{a''}$; in green $W_b$ and $W_{b'}$; in blue $W_c$.}
\end{figure}

\begin{figure}[h]
\centering
\begin{tikzpicture}
[x={(-0.6062cm, -0.35cm)}, y={(0.6062cm, -0.35cm)},
z={(0.000000cm,0.700000cm)}]
\definecolor{facecolor}{rgb}{1.000,0.000,0.000}
\fill[fill=facecolor, draw=black, shift={(0,0,0)}]
(0, 0, 0) -- (0, 1, 0) -- (0, 1, 1) -- (0, 0, 1) -- cycle;
\draw[very thick] (0,0,0)--(0,1/2,1/2);
\node[circle,fill=black,draw=black,minimum size=1mm,inner sep=0pt] at
(0,0,0) {};
\end{tikzpicture}
\raisebox{.5cm}{$\mapsto$}
\begin{tikzpicture}
[x={(-0.6062cm, -0.35cm)}, y={(0.6062cm, -0.35cm)},
z={(0.000000cm,0.700000cm)}]
\definecolor{facecolor}{rgb}{0.000,1.000,0.000}
\fill[fill=facecolor, draw=black, shift={(0,0,0)}]
(0, 0, 0) -- (0, 0, 1) -- (1, 0, 1) -- (1, 0, 0) -- cycle;
\definecolor{facecolor}{rgb}{1.000,0.000,0.000}
\fill[fill=facecolor, draw=black, shift={(0,0,0)}]
(0, 0, 0) -- (0, 1, 0) -- (0, 1, 1) -- (0, 0, 1) -- cycle;
\definecolor{facecolor}{rgb}{0.000,0.500,1.000}
\fill[fill=facecolor, draw=black, shift={(0,0,0)}]
(0, 0, 0) -- (1, 0, 0) -- (1, 1, 0) -- (0, 1, 0) -- cycle;
\draw[very thick] (0,0,0)--(0,1/2,1/2)--(0,0,1) (0,0,0)--(1/2,0,1/2) (0,0,0)--(1/2,1/2,0);
\node[circle,fill=black,draw=black,minimum size=1mm,inner sep=0pt] at
(0,0,0) {};
\end{tikzpicture}\qquad
\begin{tikzpicture}
[x={(-0.6062cm, -0.35cm)}, y={(0.6062cm, -0.35cm)},
z={(0.000000cm,0.700000cm)}]
%\definecolor{facecolor}{rgb}{0.000,1.000,0.000}
%\fill[fill=facecolor, draw=black, shift={(0,0,0)}]
%(0, 0, 0) -- (0, 0, 1) -- (1, 0, 1) -- (1, 0, 0) -- cycle;
\definecolor{facecolor}{rgb}{1.000,0.000,0.000}
\fill[fill=facecolor, draw=black, shift={(0,0,0)}]
(0, 0, 0) -- (0, 1, 0) -- (0, 1, 1) -- (0, 0, 1) -- cycle;
\draw[very thick] (0,0,0)--(0,1/2,1/2)--(0,0,1);
%\definecolor{facecolor}{rgb}{0.000,0.000,1.000}
%\fill[fill=facecolor, draw=black, shift={(0,0,0)}]
%(0, 0, 0) -- (1, 0, 0) -- (1, 1, 0) -- (0, 1, 0) -- cycle;
\node[circle,fill=black,draw=black,minimum size=1mm,inner sep=0pt] at
(0,0,0) {};
\end{tikzpicture}
\raisebox{.5cm}{$\mapsto$}
\begin{tikzpicture}
[x={(-0.6062cm, -0.35cm)}, y={(0.6062cm, -0.35cm)},
z={(0.000000cm,0.700000cm)}]
\definecolor{facecolor}{rgb}{0.000,1.000,0.000}
\fill[fill=facecolor, draw=black, shift={(0,0,0)}]
(0, 0, 0) -- (0, 0, 1) -- (1, 0, 1) -- (1, 0, 0) -- cycle;
\definecolor{facecolor}{rgb}{1.000,0.000,0.000}
\fill[fill=facecolor, draw=black, shift={(0,0,0)}]
(0, 0, 0) -- (0, 1, 0) -- (0, 1, 1) -- (0, 0, 1) -- cycle;
\definecolor{facecolor}{rgb}{0.000,0.500,1.000}
\fill[fill=facecolor, draw=black, shift={(0,0,0)}]
(0, 0, 0) -- (1, 0, 0) -- (1, 1, 0) -- (0, 1, 0) -- cycle;

\draw[very thick] (0,0,0)--(0,1/2,1/2)--(0,0,1) (0,1/2,1/2)--(0,1,0) (0,0,0)--(1/2,0,1/2) (0,0,0)--(1/2,1/2,0);
\node[circle,fill=black,draw=black,minimum size=1mm,inner sep=0pt] at
(0,0,0) {};
\end{tikzpicture}
\qquad
\begin{tikzpicture}
[x={(-0.6062cm, -0.35cm)}, y={(0.6062cm, -0.35cm)},
z={(0.000000cm,0.700000cm)}]
\definecolor{facecolor}{rgb}{1.000,0.000,0.000}
\fill[fill=facecolor, draw=black, shift={(0,0,0)}]
(0, 0, 0) -- (0, 1, 0) -- (0, 1, 1) -- (0, 0, 1) -- cycle;
\draw[very thick] (0,0,0)--(0,1/2,1/2) (0,0,1)--(0,1,0);
\node[circle,fill=black,draw=black,minimum size=1mm,inner sep=0pt] at
(0,0,0) {};
\end{tikzpicture}
\raisebox{.5cm}{$\mapsto$}
\begin{tikzpicture}
[x={(-0.6062cm, -0.35cm)}, y={(0.6062cm, -0.35cm)},
z={(0.000000cm,0.700000cm)}]
\definecolor{facecolor}{rgb}{0.000,1.000,0.000}
\fill[fill=facecolor, draw=black, shift={(0,0,0)}]
(0, 0, 0) -- (0, 0, 1) -- (1, 0, 1) -- (1, 0, 0) -- cycle;
\definecolor{facecolor}{rgb}{1.000,0.000,0.000}
\fill[fill=facecolor, draw=black, shift={(0,0,0)}]
(0, 0, 0) -- (0, 1, 0) -- (0, 1, 1) -- (0, 0, 1) -- cycle;
\definecolor{facecolor}{rgb}{0.000,0.500,1.000}
\fill[fill=facecolor, draw=black, shift={(0,0,0)}]
(0, 0, 0) -- (1, 0, 0) -- (1, 1, 0) -- (0, 1, 0) -- cycle;
\draw[very thick] (0,0,0)--(0,1/2,1/2)--(0,0,1) (0,1/2,1/2)--(0,1,0) (0,0,0)--(1/2,0,1/2) (0,0,0)--(1/2,1/2,0) (1/2,0,1/2)--(1,0,0);
\node[circle,fill=black,draw=black,minimum size=1mm,inner sep=0pt] at
(0,0,0) {};
\end{tikzpicture}

\begin{tikzpicture}
[x={(-0.6062cm, -0.35cm)}, y={(0.6062cm, -0.35cm)},
z={(0.000000cm,0.700000cm)}]
\definecolor{facecolor}{rgb}{0.000,1.000,0.000}
\fill[fill=facecolor, draw=black, shift={(0,0,0)}]
(0, 0, 0) -- (0, 0, 1) -- (1, 0, 1) -- (1, 0, 0) -- cycle;
\draw[very thick] (0,0,0)--(1/2,0,1/2);
\node[circle,fill=black,draw=black,minimum size=1mm,inner sep=0pt] at
(0,0,0) {};
\end{tikzpicture}
\raisebox{.6cm}{$\mapsto$}
\begin{tikzpicture}
[x={(-0.6062cm, -0.35cm)}, y={(0.6062cm, -0.35cm)},
z={(0.000000cm,0.700000cm)}]
\definecolor{facecolor}{rgb}{1.000,0.000,0.000}
\fill[fill=facecolor, draw=black, shift={(0,0,1)}]
(0, 0, 0) -- (0, 1, 0) -- (0, 1, 1) -- (0, 0, 1) -- cycle;
\draw[very thick] (0,0,1)--(0,1/2,1+1/2);
\node[circle,fill=black,draw=black,minimum size=1mm,inner sep=0pt] at
(0,0,0) {};
\draw[dashed] (0,0,0)--(0,0,1);
\end{tikzpicture}
\qquad
\begin{tikzpicture}
[x={(-0.6062cm, -0.35cm)}, y={(0.6062cm, -0.35cm)},
z={(0.000000cm,0.700000cm)}]
\definecolor{facecolor}{rgb}{0.000,1.000,0.000}
\fill[fill=facecolor, draw=black, shift={(0,0,0)}]
(0, 0, 0) -- (0, 0, 1) -- (1, 0, 1) -- (1, 0, 0) -- cycle;
\draw[very thick] (0,0,0)--(1/2,0,1/2)--(1,0,0);
\node[circle,fill=black,draw=black,minimum size=1mm,inner sep=0pt] at
(0,0,0) {};
\end{tikzpicture}
\raisebox{.6cm}{$\mapsto$}
\begin{tikzpicture}
[x={(-0.6062cm, -0.35cm)}, y={(0.6062cm, -0.35cm)},
z={(0.000000cm,0.700000cm)}]
\definecolor{facecolor}{rgb}{1.000,0.000,0.000}
\fill[fill=facecolor, draw=black, shift={(0,0,1)}]
(0, 0, 0) -- (0, 1, 0) -- (0, 1, 1) -- (0, 0, 1) -- cycle;
\draw[very thick] (0,0,1)--(0,1/2,3/2)--(0,0,2);
\node[circle,fill=black,draw=black,minimum size=1mm,inner sep=0pt] at
(0,0,0) {};
\draw[dashed] (0,0,0)--(0,0,1);
\end{tikzpicture}

\begin{tikzpicture}
[x={(-0.6062cm, -0.35cm)}, y={(0.6062cm, -0.35cm)},
z={(0.000000cm,0.700000cm)}]
\definecolor{facecolor}{rgb}{0.000,0.500,1.000}
\fill[fill=facecolor, draw=black, shift={(0,0,0)}]
(0, 0, 0) -- (1, 0, 0) -- (1, 1, 0) -- (0, 1, 0) -- cycle;
\draw[very thick] (0,0,0)--(1/2,1/2,0);
\node[circle,fill=black,draw=black,minimum size=1mm,inner sep=0pt] at
(0,0,0) {};
\end{tikzpicture}
\raisebox{.6cm}{$\mapsto$}
\begin{tikzpicture}
[x={(-0.6062cm, -0.35cm)}, y={(0.6062cm, -0.35cm)},
z={(0.000000cm,0.700000cm)}]
\definecolor{facecolor}{rgb}{0.000,1.000,0.000}
\fill[fill=facecolor, draw=black, shift={(0,0,1)}]
(0, 0, 0) -- (1, 0, 0) -- (1, 0, 1) -- (0, 0, 1) -- cycle;
\draw[very thick] (0,0,1)--(1/2,0,3/2);
\node[circle,fill=black,draw=black,minimum size=1mm,inner sep=0pt] at
(0,0,0) {};
\draw[dashed] (0,0,0)--(0,0,1);
\end{tikzpicture}
\caption{\label{fig:tree-substitution-pruned-tribo} The pruned Tribonacci tree substitution inside the tiles of the dual substitution.}
\end{figure}
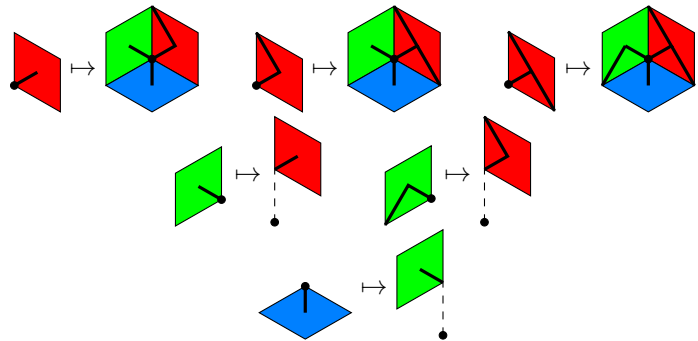

\begin{figure}[h]
\centering
\includegraphics[scale=.15]{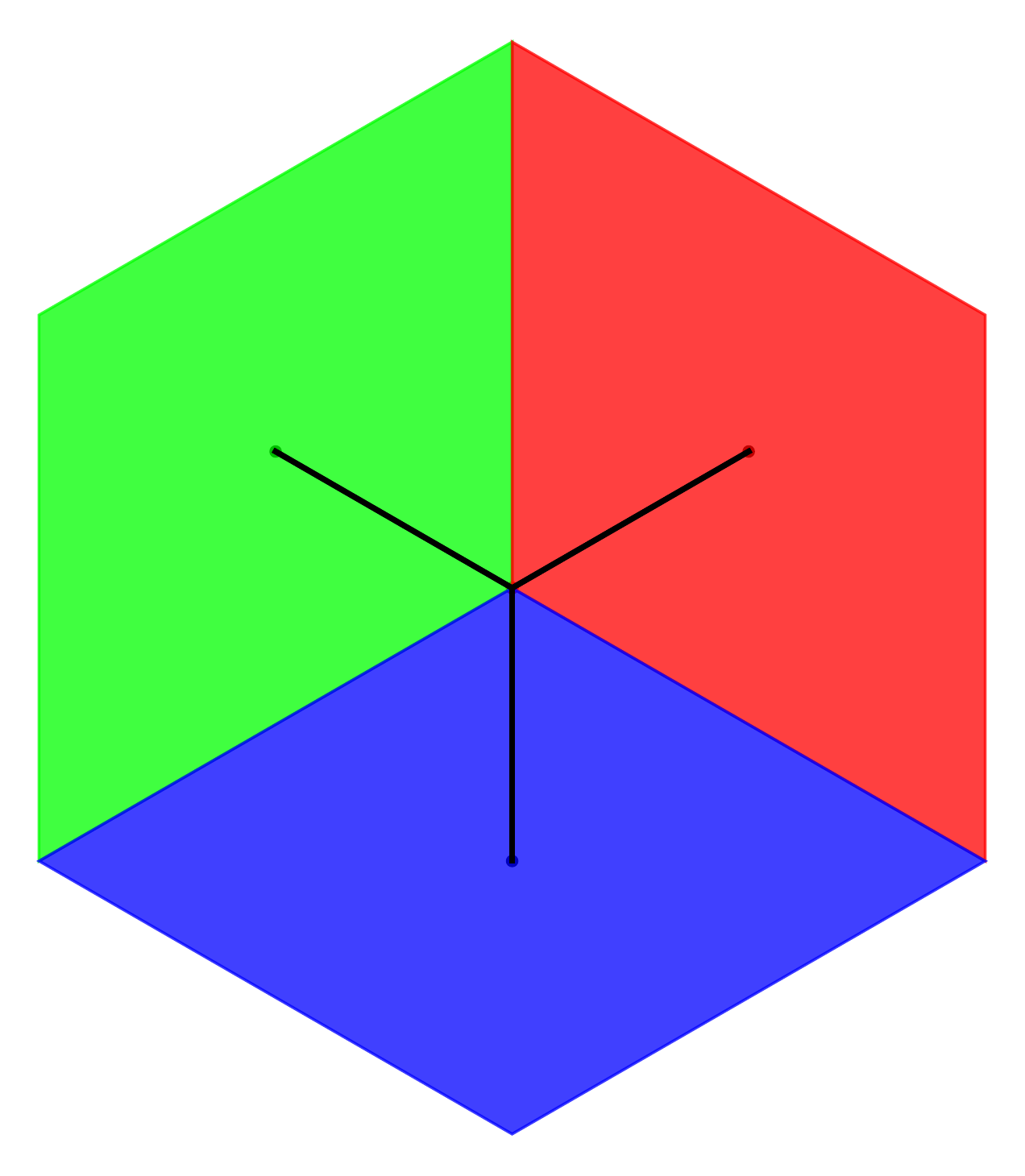}
\qquad 
\includegraphics[scale=.25]{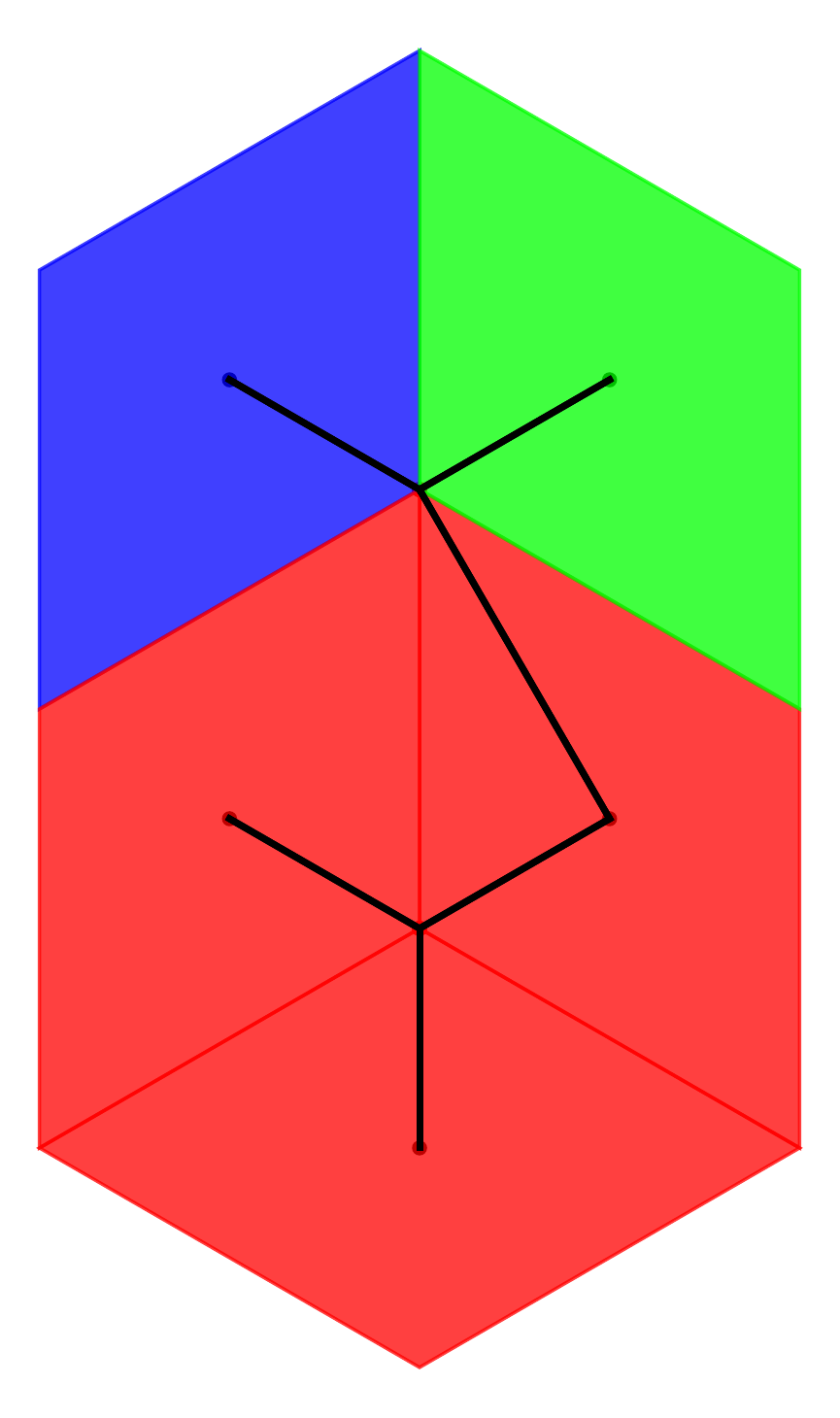}
\qquad 
\includegraphics[scale=.32]{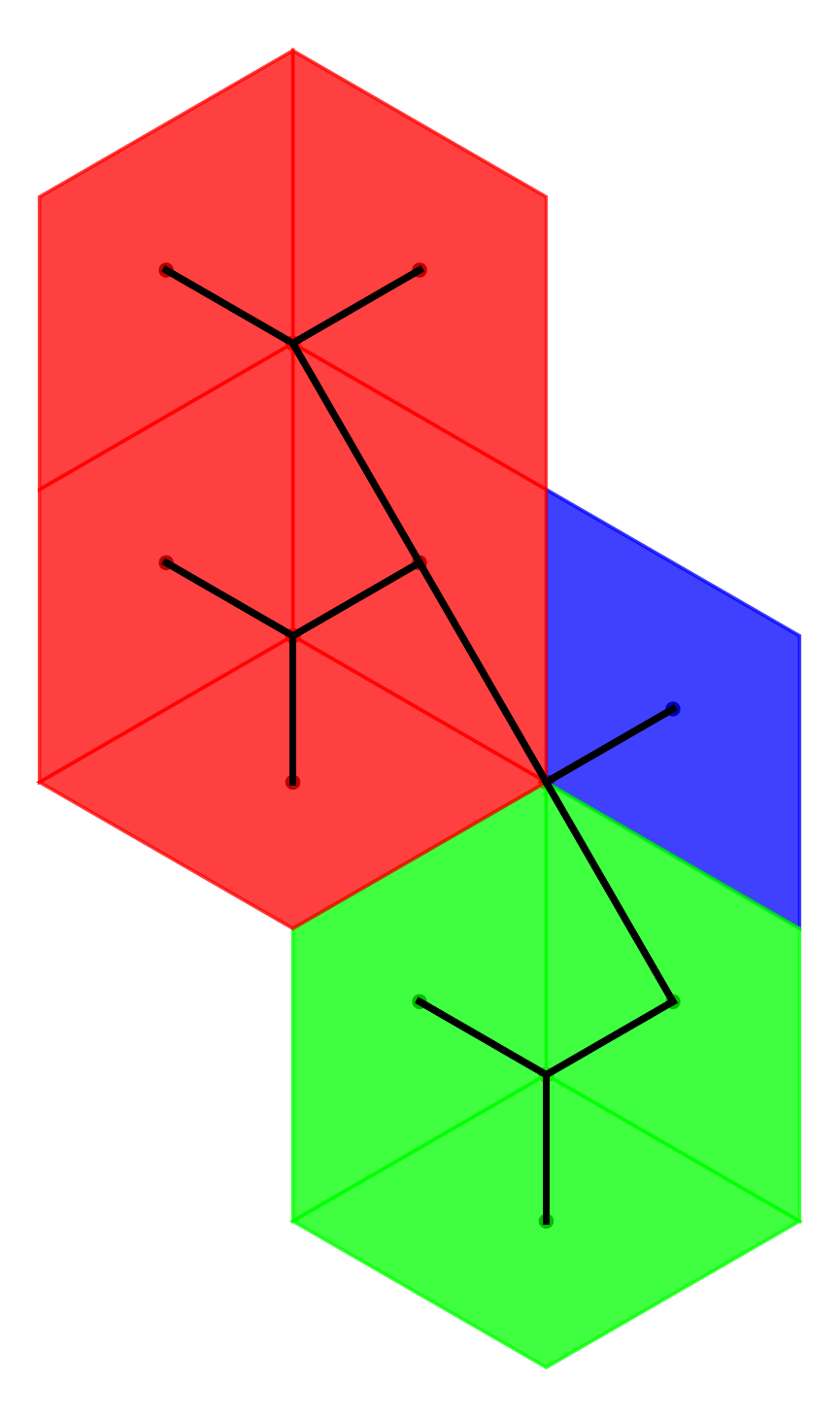}
 
\includegraphics[scale=.3]{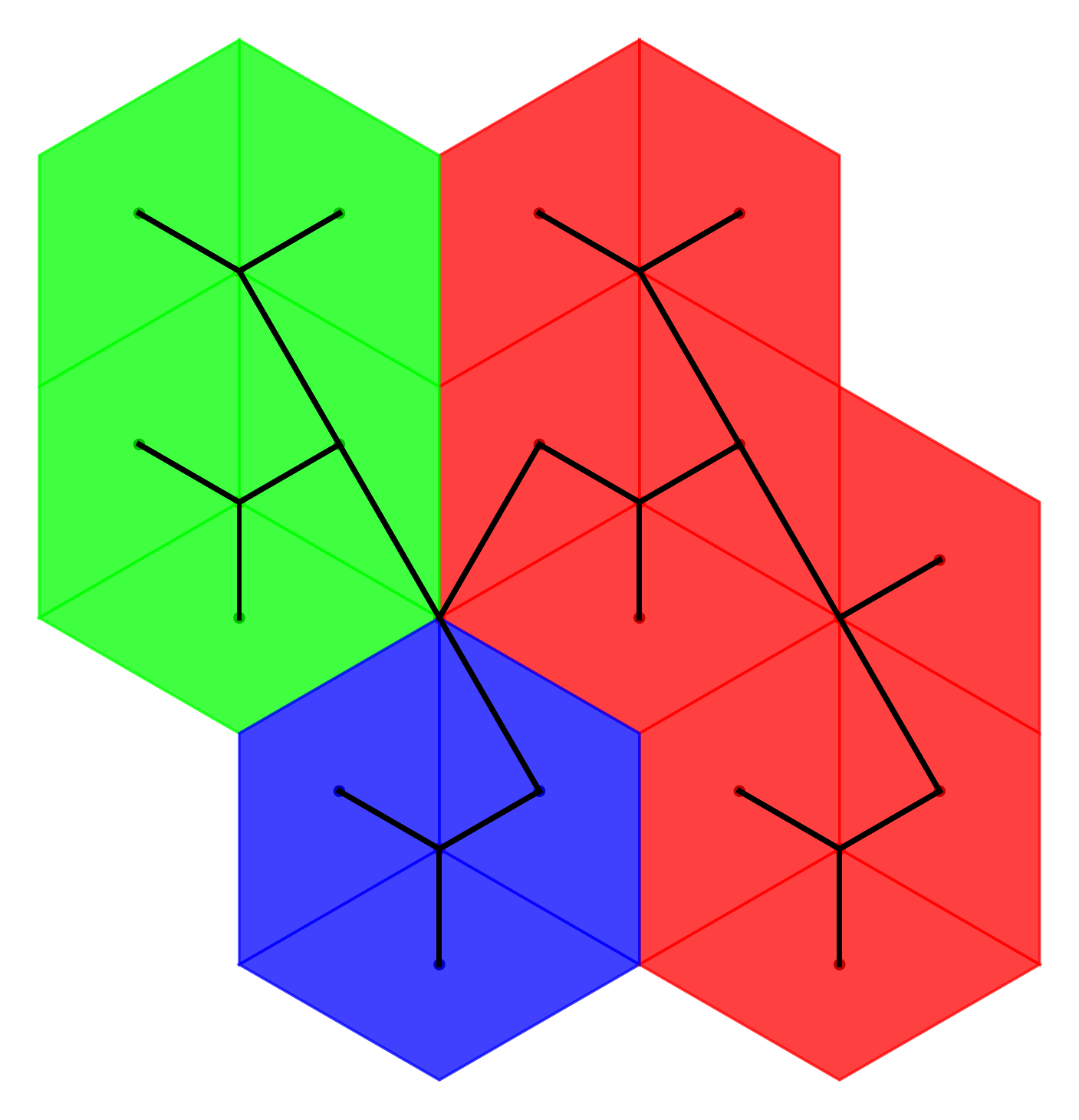}
\qquad 
\includegraphics[scale=.38]{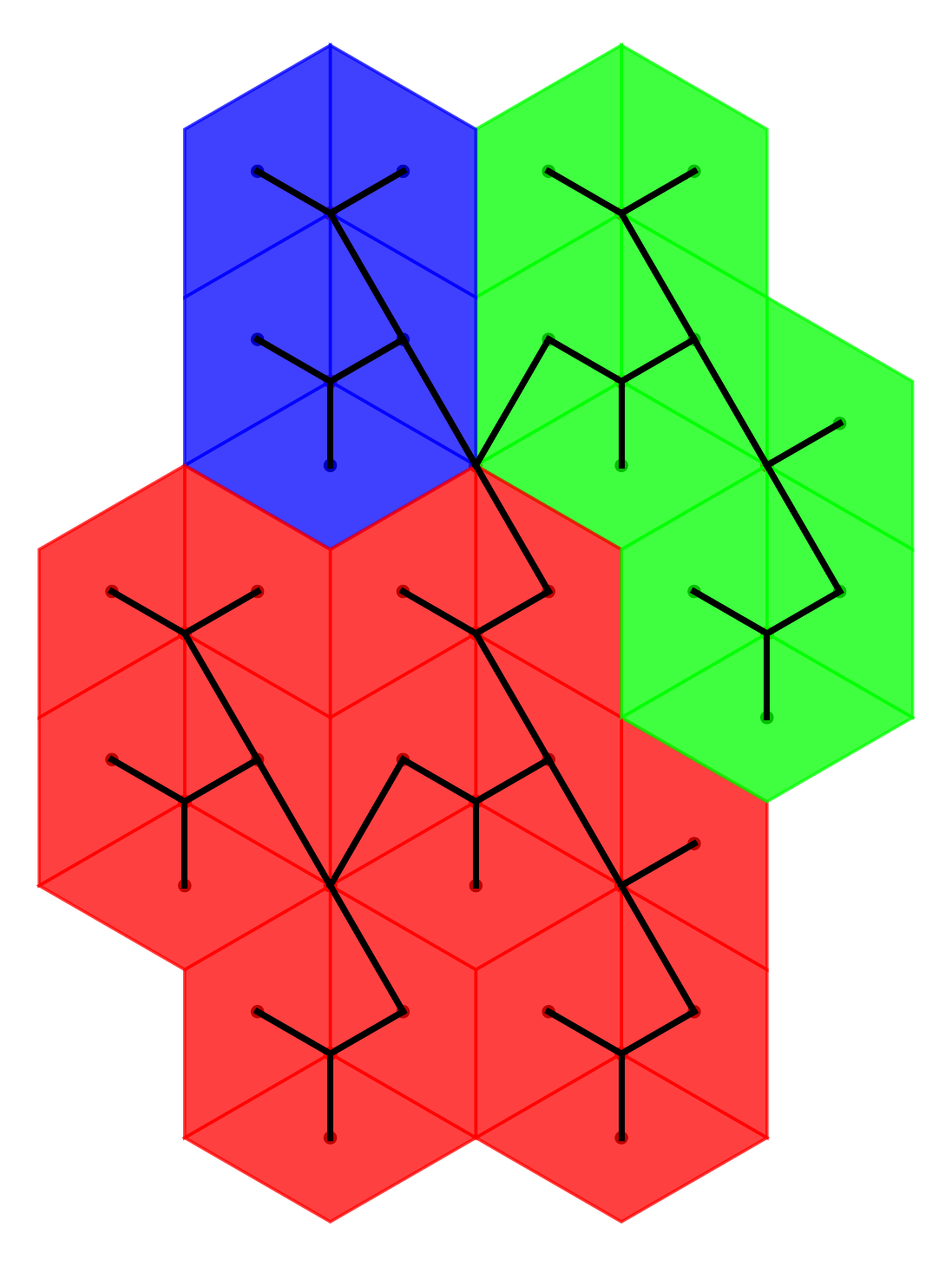}
\caption{Iterates of the pruned Tribonacci tree substitution.\label{fig:prunedtribo-iterated}}
\end{figure}

Turning back to Section~\ref{sec:dual-substitution}, we remark that in
the case of the Tribonacci substitution the usual prototiles
$P_a=\pi_c((0,a)^*)\subseteq E_c$ yield tiles $P_\gamma$ that are
disk-like and satisfy Condition~\ref{item:adjacency-quest-properties-dual-substitution} of
Question~\ref{quest:properties-dual-substitution}. The trees in
Figure~\ref{fig:prunedtribo-iterated} can be viewed as connecting the
tiles of the iterated images of the dual substitution.
\end{example}

Unfortunately pruning is not always enough: in our third example
detailed in Section~\ref{sec:second-example} even the pruned tree
substitution inside the contracting space is not injective and creates
loops.  

Let us describe our problem and our general aim. We will see pruning as a special case of covering.

Let $\sigma$ be an irreducible Pisot substitution over the alphabet
$A$ which is a parageometric iwip automorphism. A \textbf{covering} of
the tree substitution $\tau$ is a forgetful map $f:G\to \PF(\sigma)$
where $G$ is an oriented graph and $\PF(\sigma)$ is the prefix-suffix
automaton of $\sigma$ such that for each vertex $a$ of $G$, the map
$f$ induces a bijection between the edges of $G$ ending in $a$ and the
edges of $\PF(\sigma)$ ending in $f(a)$. We denote by $\tilde A$ the
set of vertices of $G$.

For each $\tilde a\in\tilde A$, let $P_{\tilde a}\subseteq E_c$ be a
prototile in the contracting space $E_c$. Pick a section $s:
A\to\tilde A$ of the forgetful map to select a starting patch:
\[
P_0=\bigcup_{a\in A}P_{s(a)}.
\]
Recall that $\ell:F_A\to\Z^A$ is the abelianization map, the
incidence matrix $M_\sigma$ restricted to the contracting
space $E_c$ is a contraction, and $\pi_c$ is the projection onto $E_c$ along the expanding direction.

We consider the iterated patches
\[P_n=\bigcup_{\gamma} P_{\tilde a_n}+\pi_c(M_\sigma^{-n}\ell(p)) \]
where the union is taken over all reverse paths of length $n$ in $G$
ending at the image of the section $s$:
\[
\gamma=s(a_0)\stackrel{e_1}{\longleftarrow}\tilde
a_1\cdots\stackrel{e_{n-1}}{\longleftarrow}\tilde a_n,
\]
and $p$ is the prefix of the image $f(\gamma)$ in the prefix-suffix
automaton of $\sigma$.

\begin{quest}
For any irreducible Pisot substitution which is an iwip automorphism, do there always exist a covering substitution and a choice of prototiles such that all the iterated patches are trees inside the contracting space?
\end{quest}

In Example~\ref{ex:tribo_pruned} the covering tree substitution was
defined by pruning to give a positive answer to this question. The
covering graph $G$ and the forgetful map $f$ can be recovered
from Figure~\ref{fig:tribo-pruned-thierry}.

\section{Interval exchange on the circle}

In this Section we show how a tree substitution induces an interval
exchange on the circle $\mathbb{S}^1$. Our construction further
extends the work of X.~Bressaud and Y.~Jullian~\cite{BJ12}. Indeed
they interpreted the celebrated construction of P.~Arnoux and
J.-C.~Yoccoz~\cite{arnoux-yoccoz} of an interval exchange
transformation related to the Tribonacci substitution as being the
contour of the compact heart $\Omega_A$ of the repelling tree
$\Trepul$.

\subsection{Contour of the tree substitution}\label{sec:contour}

Again we fix a substitution $\sigma$ that is a parageometric iwip
 automorphism of the free group $F_A$. We consider the tree
substitution $\tau$ defined in
Section~\ref{sec:combinatorial-tree-substitution-parageometric}.  At
each branch point of the finite simplicial tree $W$ we fix a cyclic
order of the outgoing edges. With respect to these cyclic orders
we get a way to turn around the tree, that is to say a map
$c:\mathbb S^1\to W$ which is locally an isometry except at leaves of
$W$ and which is $2$-to-$1$ except at leaves and branch points. Recall
that $V_a$ is the set of gluing vertices of each $W_a$ and that the set
of gluing points contains all leaves of $W_a$. We consider that the
circle is divided in arcs by these gluing points: each arc is labeled
by the letter $a_{ij}$ if it wraps around $W_a$ and is bounded by the gluing points $i$ and $j$. We denote by $\tilde A$ the set of labels $a_{ij}$ of these arcs.

We will consider $c$ alternatively as the cyclic word in $\tilde A$
labeling these arcs.

To define the contour substitution, we first require that:
\begin{enumerate}
  \renewcommand{\theenumi}{(C\arabic{enumi})}
  \renewcommand{\labelenumi}{\theenumi}
  \item\label{condition:simplicial-isomorphism} for each
$a\in A$, the map $\tau$ induces a simplicial
isomorphism between $W_a$ and the convex hull of the points
$\tau(V_a)$ in the simplicial tree $\tau(W_a)$. 
\end{enumerate}
This condition is satisfied as soon as $W_a$ has the same simplicial
structure as $Y_a$ which can be achieved using
Remark~\ref{rem:simplicial-struct-prototiles}.

Furthermore, we require that:
\begin{enumerate}
  \renewcommand{\theenumi}{(C\arabic{enumi})}
  \renewcommand{\labelenumi}{\theenumi}
  \setcounter{enumi}{1}
  \item\label{condition:equal-valence} for each $a\in A$, and each gluing point $x\in V_a$ the
    valence of $x$ in $W_a$ is equal to the valence of $x$ in
    $\tau(W_a)$.
\end{enumerate}
If $\tau$ satisfies Condition~\ref{condition:simplicial-isomorphism},
the valence of $\tau(x)$ in $\tau(W_a)$ is at least the valence of $x$
in $W_a$. If it is stricly bigger, there exists an edge
$a\stackrel{p,s}{\longleftarrow}b$ in the prefix-suffix automaton and
a gluing point $y\in V_b$ such that $y$ is in a direction $d$ outgoing from
$\tau(x)$ in $\tau(W_a)$ and such that $d$ does not intersect the tree
spanned by $\tau(V_a)$. Let $\gamma$ be the prefix-suffix expansion of
$y$ ending at the letter $b$, and let
$\gamma'=(a\stackrel{p,s}{\longleftarrow})\cdot\gamma$. Using
Remark~\ref{rem:comb-tree-subst-adding-gluing-vertices}, we extend
$\tau$ by adding the point associated with $\gamma'$ in the set of
gluing points $V_a$. Repeating this operation, we will get a tree
substitution satisfying
Conditions~\ref{condition:simplicial-isomorphism} and
\ref{condition:equal-valence}.

We now fix a cyclic order at each branch point of the image tree
$\tau(W)$. For a tree substitution $\tau$ satisfying Conditions~\ref{condition:simplicial-isomorphism} and
\ref{condition:equal-valence}, the cyclic orders of $W$ and $\tau(W)$ are \defi{compatible} if moreover:
\begin{enumerate}
  \renewcommand{\theenumi}{(C\arabic{enumi})}
  \renewcommand{\labelenumi}{\theenumi}
  \setcounter{enumi}{2}
\item\label{condition:tile-cyclic-order}the cyclic orders of each tile
  $W_a$ of $\tau(W)$ are the same as the cyclic orders of the initial
  prototile $W_a$.
\item\label{condition:compatible-cyclic-orders}for any gluing points
  $x,y,z$ of $W$, the cyclic order of $x,y,z$ and that of
  $\tau(x),\tau(y),\tau(z)$ are the same.
\end{enumerate}
By convention we say that two triples of points of two
trees have the same cyclic order if they are either aligned in the
same order or if the cyclic order of the branch points of the centers
of the two tripods are the same. In particular 
Condition~\ref{condition:compatible-cyclic-orders} implies Condition~\ref{condition:simplicial-isomorphism}.

From the above paragraph, we get a contour map $c':\mathbb
S^1\to\tau(W)$ where the circle is divided by the gluing points of
$\tau(W)$. Using Condition~\ref{condition:compatible-cyclic-orders},
the contour $c'$ is divided by the gluing points in $\bigcup_{i\in A}\tau(V_i)$, exactly as the contour $c$ was divided by the gluing points in $\bigcup_{i\in A} V_i$. Thus the contour $c'$ of
$\tau(W)$ is divided into arcs labeled by $\tau(a_{ij})$, for $a_{ij}\in\tilde A$. Each of these arcs of $c'$ is further divided by
arcs between gluing vertices of the tiles $W_b$ of $\tau(W)$
and, Condition~\ref{condition:tile-cyclic-order} identifies these
subarcs with letters in $\tilde A$.

Thus, we get a \defi{contour substitution} $\chi:\tilde A\to\tilde A^*$.

\begin{example}\label{ex:contour-tribo} 
  Consider the Tribonacci tree substitution of
  Figure~\ref{fig:tribo-tree-substitution}. We fix the cyclic
  orders of $W$ and $\tau(W)$ as being given by the embedding into
  the contracting plane (for which we fix the usual clockwise
  orientation). We get that the contour $c$ of $W$ is divided in arcs
  $\tilde A=\{a_{31},a_{12},a_{23},c_{11},b_{21},b_{12}\}$ and from
  the contour $c'$ of $\tau(W)$ we get the contour substitution
\[
\chi:\begin{array}[t]{rcl}
a_{12} &\mapsto & a_{23}c_{11}b_{21} \\
a_{23} &\mapsto & b_{12}a_{31} \\
a_{31} &\mapsto & a_{12} \\
b_{12} &\mapsto & a_{12}a_{23} \\
b_{21} &\mapsto & a_{31} \\
c_{11} &\mapsto & b_{21}b_{12}
\end{array}
\]
(Note that we slightly cheated from our construction to get over the
fact that the patch $\Omega_c$ has only one singular point. We
consider the tree $W_c$ not as a single vertex but as a vertex with a
tiny outgrow, and thus allow an arc $c_{11}$ turning around this
outgrow clockwise).

Note that, this substitution $\chi$ has reciprocal characteristic polynomial $x^6 - x^4
- 4x^3 - x^2 +1$ which factors as $(x^3-x^2-x-1)(x^3+x^2+x-1)$, where
the leftmost factor is the Tribonacci polynomial.
\end{example}

From our definitions we get a forgetful map $f:\tilde A\to A$ which
extends to a map $f:\tilde A^*\to A^*$. We remark that each arc
$b_{k\ell}\in\tilde A$ in the contour $c'$ of $\tau(W)$ is turning
around a patch $W_b$ which comes from an edge
$a\stackrel{p,s}{\longleftarrow}b$ of the prefix-suffix automaton of
the original substitution $\sigma$. The arc $b_{k\ell}$ is covered by
the image of exactly one arc $a_{ij}$ of the contour of $W_a$:
$\chi(a_{ij})=\tilde q\cdot b_{k\ell}\cdot \tilde t$.  We consider
the \textbf{dual contour substitution} $\chi^*:\tilde A\to\tilde A^*$ such
that the letter at position $|p|$ of the word $\chi^*(b_{k\ell})$ is
$a_{ij}$. The substitutions $\chi$ and $\chi^*$ come with a
bijection that maps the edge $a_{ij}\stackrel{\tilde p,\tilde
  s}{\longleftarrow}b_{k\ell}$ (with $f(\tilde p)=p$ and $f(\tilde
s)=s$) of the prefix-suffix automaton of $\chi^*$ to the edge
$a_{ij}\stackrel{\tilde q,\tilde t}{\longrightarrow}b_{k\ell}$ of the
prefix-suffix automaton of $\chi$.

Using the forgetful map $f$, we get that 
\[
f(\chi^*(b_{k\ell}))=\sigma(f(b_{k\ell}))=\sigma(b)
\]
which amount to saying that $\chi^*$ is a \defi{covering} of
$\sigma$.  Turning to matrices, the duality is expressed by
\[
^tM_{\chi}=M_{\chi^*}.
\]

\begin{example}\label{ex:tribo-dual-contour}
  For the Tribonacci tree substitution of
  Example~\ref{ex:tribo-tree-substitution} and the contour of
  Example~\ref{ex:contour-tribo}, the dual contour substitution is
\[
\chi^*:\begin{array}[t]{rcl}
a_{12} &\mapsto &a_{31}b_{12} \\
a_{23} &\mapsto &a_{12}b_{12} \\
a_{31} &\mapsto &a_{23}b_{21} \\
b_{12} &\mapsto &a_{23}c_{11} \\
b_{21} &\mapsto &a_{12}c_{11} \\
c_{11} &\mapsto &a_{12}
\end{array}
\]
We can compute the indexes of this free group automorphism and of its
inverse. Both are equal to $10=2\times 6-2$ and maximal. This
substitution is an iwip geometric automorphism: it is induced by a
pseudo-Anosov transformation of a surface of genus $3$ with one
boundary component.
\end{example}

As usual we are interested in iterating the substitution. Thus, we state
\begin{prop}\label{prop:contour-iterate}
  Let $\tau$ be a tree substitution and assume that $W$ and $\tau(W)$
  are equipped with compatible cyclic orders. Let $c$ be the
  corresponding contour of $W$ and $\chi$ be the contour tree
  substitution. Then, for each $n>0$, $\chi^n(c)$ is a contour of
  $\tau^n(W)$.
\end{prop}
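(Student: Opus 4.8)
The plan is an induction on $n$. The base case $n=1$ is nothing but the construction carried out in Section~\ref{sec:contour}: with the chosen compatible cyclic orders on $W$ and $\tau(W)$, the contour $c'$ of $\tau(W)$ is obtained by traversing, for each arc $a_{ij}$ of the contour $c$ of $W$, the image of that arc inside $\tau(W_a)$, and by definition this traversal spells out $\chi(a_{ij})$; hence $c'=\chi(c)$ is a contour of $\tau(W)$.

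For the inductive step I would reduce the passage from $\tau^n(W)$ to $\tau^{n+1}(W)=\tau(\tau^n(W))$ to one more application of the base case, used with $\tau^n(W)$ in place of $W$. The point to isolate first is that the contour substitution $\chi$ depends only on the prototiles $W_a$, on their fixed cyclic orders, and on the restriction of $\tau$ to the prototiles, and \emph{not} on the ambient tree to which $\tau$ is applied: indeed $\chi(a_{ij})$ is read off from $\tau(W_a)$, its tiles, and the identification of tile cyclic orders with prototile cyclic orders coming from Condition~\ref{condition:tile-cyclic-order}. Consequently, once $\tau^n(W)$ and $\tau^{n+1}(W)$ are equipped with cyclic orders compatible in the sense of Conditions~\ref{condition:simplicial-isomorphism}--\ref{condition:compatible-cyclic-orders}, the base-case argument applied to $\tau^n(W)$ gives $c(\tau^{n+1}(W))=\chi\big(c(\tau^n(W))\big)$, and the induction hypothesis $c(\tau^n(W))=\chi^n(c)$ then yields $c(\tau^{n+1}(W))=\chi^{n+1}(c)$.

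So the work is to produce, for every $n$, a cyclic order on $\tau^n(W)$ making consecutive iterates compatible; this too I would do inductively. Writing $\tau^{n+1}(W)=\bigsqcup_{|\gamma|=n+1}W_{b(\gamma)}/\!\sim$ and factoring each path $\gamma$ of length $n+1$ as a single edge followed by a path of length $n$, every tile of $\tau^{n+1}(W)$ sits inside the $\tau$-image of a unique tile $W_b$ of $\tau^n(W)$; I equip that copy of $W_b$ with the prototile cyclic order, which by Condition~\ref{condition:tile-cyclic-order} is exactly the cyclic order $\tau$ assigns to the tiles of an image tree, so the assignment is coherent with the cyclic order chosen one level down. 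At a branch point $P$ of $\tau^{n+1}(W)$ that is a gluing vertex, Condition~\ref{condition:equal-valence} guarantees that no new outgoing directions have appeared, so the directions at $P$ are precisely those contributed by the incident tiles, and Condition~\ref{condition:compatible-cyclic-orders} — which says $\tau$ preserves the cyclic order of every triple of gluing points — forces the cyclic order at $P$ to be the one obtained by turning around $\tau^n(W)$ and replacing each arc $a_{ij}$ by $\chi(a_{ij})$. Thus the cyclic orders propagate to all iterates and remain pairwise compatible, and the reading-off of the contour behaves arc by arc exactly as $\chi$.

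The step I expect to be the main obstacle is the coherence of the cyclic order at those gluing vertices of $\tau^{n+1}(W)$ which are shared by tiles descending from several distinct tiles of $\tau^n(W)$: one must verify that the local cyclic orders contributed by these tiles interleave precisely as the contour dictates, i.e. that Condition~\ref{condition:compatible-cyclic-orders}, imposed only for the pair $(W,\tau(W))$, genuinely propagates to all iterates. I would handle this by following each such vertex through its (eventually periodic) prefix-suffix expansion and using that $\tau$ acts on singular points by the expanding homothety $H\inv$ (Section~\ref{sec:combinatorial-tree-substitution-parageometric}): the combinatorial neighbourhood of a gluing vertex at level $n+1$ is literally the $\tau$-image of one at level $n$, so Condition~\ref{condition:compatible-cyclic-orders} applies to it verbatim and the induction closes.
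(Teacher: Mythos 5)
Your proposal is correct and follows essentially the same route as the paper's proof: an induction in which cyclic orders are propagated to $\tau^n(W)$ by pulling back along the simplicial isomorphism of Condition~\ref{condition:simplicial-isomorphism}, using Condition~\ref{condition:equal-valence} to see that no new directions appear at images of gluing points, Condition~\ref{condition:tile-cyclic-order} to order the interior branch points of each patch $\tau(W_a)$, and Condition~\ref{condition:compatible-cyclic-orders} to reconcile the two definitions where they overlap, after which the contour of $\tau^n(W)$ is read off arc by arc as $\chi$ applied to the contour of $\tau^{n-1}(W)$. The "main obstacle" you isolate (coherence at gluing vertices shared by several tiles) is exactly the point the paper settles with Condition~\ref{condition:compatible-cyclic-orders}, and your resolution amounts to the same observation.
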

\begin{proof}
  Iterating the tree substitution $\tau$, we consider the set $V_n$ of
  gluing points in $\tau^n(W)$.  We first remark that, by definition of
  a tree substitution, if
  Condition~\ref{condition:simplicial-isomorphism} holds, then $\tau$
  induces a simplicial isomorphism between $\tau^{n-1}(W)$ and the
  convex hull of the points $\tau(V_{n-1})$ in $\tau^n(W)$. Similarly,
  if Condition~\ref{condition:equal-valence} holds, then the valence
  of a gluing point $x\in V_{n-1}$ in $\tau^{n-1}(W)$ is equal to the
  valence of $\tau(x)$ in $\tau^n(W)$.
  
  Using the simplicial isomorphism $\tau$ between $\tau^{n-1}(W)$ and
  the convex hull of the images of the gluing points $\tau(V_{n-1})$
  in $\tau^n(W)$, we define by induction the cyclic orders on
  $\tau^n(W)$ by requiring that the cyclic order of each triple of
  gluing points $x,y,z$ of $\tau^{n-1}(W)$ is the same as the cyclic
  order of the gluing points $\tau(x),\tau(y),\tau(z)$ of
  $\tau^n(W)$. Condition~\ref{condition:equal-valence} implies that
  this completely defines the cyclic order at all points in
  $\tau(V_{n-1})$. Let now $x$ be a branch point of $\tau^n(W)$ which
  is not in $\tau(V_{n-1})$. Then, there exists a tile $W_a$ of
  $\tau^{n-1}(W)$ such that $x$ and all the outgoing edges from $x$
  belong to the patch $\tau(W_a)$. We define the cyclic order at $x$
  in $\tau^n(W)$ as being the cyclic order at $x$ in the patch
  $\tau(W_a)$ of $\tau(W)$. Note that if $x$ is also in the convex
  hull of $\tau(V_{n-1})$,
  Condition~\ref{condition:compatible-cyclic-orders} implies that the
  two cyclic orders that we used are compatible.

  The cyclic orders defined on $\tau^n(W)$ give contour maps
  $c_n:\mathbb S^1\to\tau^n(W)$.

  Two consecutive gluing points $x,y$ of $\tau^{n-1}(W)$ belong to the
  same tile $W_a$ and we fixed the cyclic order of $\tau^n(W)$ such
  that the gluing points in $\tau(W_a)\subseteq\tau^n(W)$ are ordered
  between $\tau(x)$ and $\tau(y)$ as they are in $\tau(W)$. This
  proves that the contour $c_n$ is obtain by applying the contour
  substitution $\chi$ to $c_{n-1}$. This concludes the proof by
  induction.
\end{proof}

For the next Proposition we will assume that the number of extremal
tiles in $\tau^n(W)$, i.e.~tiles $W_i$ of $\tau^n(W)$ which are
adjacent to exactly one other tile, is unbounded. We will say in this
case that $\tau^n(W)$ is \defi{sufficiently branching}. We have in mind that
the limit tree $\Omega_A$, which exists if $\sigma$ is an iwip
parageometric automorphism, is not the convex hull of finitely many
points.
  
\begin{prop}\label{prop:Mdeta-primitive}
  Let $\sigma$ be a primitive substitution and let $\tau$ be a tree
  substitution associated to $\sigma$. Assume that $W$ and $\tau(W)$
  are equipped with compatible cyclic orders and that $\tau^n(W)$
  is sufficiently branching.
  Then, the contour substitution $\chi$ is primitive and has the same
  dominant eigenvalue $\lambda_\sigma$ as $\sigma$.
\end{prop}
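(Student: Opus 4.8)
The plan is to reduce the whole statement to a single combinatorial claim about the iterates $\chi^n(\alpha)$ of individual arcs, and then to read off the dominant eigenvalue from the fact that $\chi^*$ covers $\sigma$. I will use the following facts. By Proposition~\ref{prop:contour-iterate}, $\chi^n(c)$ is a contour of $\tau^n(W)$; more locally, for an arc $\alpha=a_{ij}\in\tilde A$ the word $\chi^n(\alpha)$ is the sequence of arc labels read along the portion of the contour of $\tau^n(W_a)$ joining the images $\tau^n(i),\tau^n(j)$ of the two gluing points bounding $\alpha$ (this follows by induction from the definition of $\chi$). Since the contour of a finite tree glued from copies of the prototiles traverses the boundary circuit of each of its tiles completely, it contains every arc of each of its tiles. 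By the self-similar decomposition $\tau^n(W)=\bigcup_\gamma W_\gamma$, a prototile $W_b$ occurs as a tile of $\tau^n(W_a)$ as soon as $(M_\sigma^n)_{ab}\ge1$; as $\sigma$ is primitive we fix $m$ with $M_\sigma^m>0$, so that every prototile is a tile of every $\tau^m(W_a)$, whence, for one \emph{single} prototile $W_b$, the letters occurring in the words $\chi^m(b_{ij})$, as $b_{ij}$ ranges over the arcs of $W_b$, exhaust $\tilde A$. Finally, every letter of $\tilde A$ occurs in $\chi(\gamma)$ for some $\gamma$, since every prototile is a tile of some $\tau(W_{a'})$.

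The heart of the argument is the claim: \emph{for each arc $\alpha=a_{ij}\in\tilde A$ there is $n_\alpha$ such that $\chi^{n_\alpha}(\alpha)$ contains every arc of some prototile $W_b$.} Here is how I would prove it. By Condition~\ref{condition:simplicial-isomorphism}, $\tau^n$ is a simplicial isomorphism onto the convex hull of the images of the gluing points, so $\tau^n(i)\ne\tau^n(j)$, and the portion of the contour of $\tau^n(W_a)$ between them wraps completely around the sub-forest of $\tau^n(W_a)$ it encloses; hence any tile lying in the \emph{interior} of that enclosed region contributes its entire boundary circuit to $\chi^n(\alpha)$. Renormalizing by $(1/\lambda_{\sigma\inv})^{n}$, Proposition~\ref{prop:tree-substitution-converge} gives that $\tau^n(W_a)$ converges to $\Omega_a$, that $\tau^n(i),\tau^n(j)$ converge to distinct singular points $P_i,P_j\in\Sing(\Omega_a)$, and that the enclosed region converges to the closed sub-tree of $\Omega_a$ swept by $\alpha$ between $P_i$ and $P_j$. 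Because $\tau^n(W)$ is sufficiently branching, $\Omega_A$, and hence $\Omega_a$, is not the convex hull of finitely many points, so this swept sub-tree has non-empty interior. Since the tiles of $\tau^n(W_a)$ tile $\Omega_a$ with renormalized diameters tending to $0$, for $n$ large one of them lies in the interior of the enclosed region, and its whole boundary circuit then appears inside $\chi^n(\alpha)$, proving the claim. I expect this to be the main obstacle: the delicate step is turning the combinatorial hypothesis "unboundedly many extremal tiles" into the metric statement that the region swept by \emph{every} arc becomes non-degenerate after finitely many iterations of $\tau$, which is where the non-triviality of $\Omega_A$ (ultimately the indecomposability of $\Trepul$ already used in Proposition~\ref{prop:tree-substitution-converge}) must be exploited with some care.

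Granting the claim, primitivity is immediate. Put $N=m+\max_{\alpha\in\tilde A}n_\alpha$. Fix $\alpha$: by the claim $\chi^{n_\alpha}(\alpha)$ contains every arc of some prototile $W_b$, so by the first paragraph $\chi^{n_\alpha+m}(\alpha)$ contains every letter of $\tilde A$; applying $\chi$ once more preserves this property (every letter of $\tilde A$ occurs in some $\chi(\gamma)$), so $\chi^n(\alpha)$ contains every letter of $\tilde A$ for all $n\ge N$. Hence $M_\chi^N$ has all entries positive and $\chi$ is primitive.

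It remains to identify the dominant eigenvalue. By primitivity $M_\chi$ has a simple, strictly dominant eigenvalue $\rho$ equal to its spectral radius; transposing, $^tM_\chi=M_{\chi^*}$ has the same spectral radius $\rho$, realized by a non-negative non-zero eigenvector $\vec w$. The covering relation $f\circ\chi^*=\sigma\circ f$ translates into $FM_{\chi^*}=M_\sigma F$, where $F\in\{0,1\}^{A\times\tilde A}$ is the incidence matrix of the forgetful map $f$; since each column of $F$ carries exactly one $1$, the vector $F\vec w$ is non-negative and non-zero and satisfies $M_\sigma(F\vec w)=\rho(F\vec w)$. A primitive matrix admits no non-negative eigenvector other than its Perron eigenvector, so $\rho=\lambda_\sigma$, and therefore $\chi$ is primitive with dominant eigenvalue $\lambda_\sigma$.
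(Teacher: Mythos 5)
Your overall architecture is reasonable, and the two outer layers of the argument are correct: the deduction of primitivity from the claim that $\chi^{n}(\alpha)$ eventually contains every letter of $\tilde A$ for every $\alpha$, and the identification of the dominant eigenvalue via the intertwining $FM_{\chi^*}=M_\sigma F$ together with the fact that a primitive matrix admits no non-negative eigenvector outside its Perron direction (this last part is essentially the paper's own first paragraph). The gap is in the central claim. You assert that for \emph{each} arc $\alpha=a_{ij}$ the doubly-covered region of $\chi^{n}(\alpha)$ converges, after renormalization, to a subtree of $\Omega_a$ with non-empty interior, and you justify this by the fact that $\Omega_a$ is not the convex hull of finitely many points. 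That fact only says that $\Omega_a\setminus Y_a\neq\emptyset$, i.e.\ that the union over \emph{all} arcs of the enclosed regions is non-degenerate. The components of $\Omega_a\setminus Y_a$ are distributed among the arcs of $W_a$ according to the cyclic orders at the branch points, and nothing in your argument excludes an arc $\alpha$ to which no component is assigned: all the branching along the segment of $Y_a$ swept by $\alpha$ could be attached on the other side, i.e.\ inside the arc that traverses the same segment in the opposite direction. For such an $\alpha$ your tile-capturing step produces nothing, and you cannot bootstrap from the arcs that do capture a tile to the remaining ones without already knowing irreducibility of $\chi$, which is what is being proved. You flag this yourself as ``the delicate step'' that ``must be exploited with some care'', but that care is not supplied, and it is exactly where the proof lives or dies.

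The paper's proof is organized precisely so as never to need a statement about every individual arc. From the unbounded number of extremal tiles and a pigeonhole argument it extracts only that \emph{some} arc $a_{ij}$ wholly wraps some extremal tile $W_b$, so that one communicating class of $M_{\chi^*}$ contains all of $f^{-1}(b)$. The rest is linear algebra: the covering relation gives a positive eigenvector of $M_{\chi^*}$ for $\lambda_\sigma$, which, combined with the fact (coming from primitivity of $\sigma$) that every recurrent class contains a letter above each $a\in A$, forces $M_{\chi^*}$ to be block-diagonal over its communicating classes; since distinct classes are disjoint, each meets $f^{-1}(b)$, and one contains all of $f^{-1}(b)$, there is a single class and $\chi^*$, hence $\chi$, is primitive. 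If you want to keep your more direct route, you must genuinely prove that every arc eventually encloses a whole tile, and that appears to require an input of the same strength as the paper's block-structure argument rather than just the non-triviality of $\Omega_A$.
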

\begin{proof}
  
  The growth rate of any letter $a$
  in $\tilde A$ under iteration of the dual contour substitution $\chi^*$ 
  is the same as that of $f(a)$ under $\sigma$, where $f$ is the   	
  forgetful map. This proves that there
  exists a positive eigenvector $\vec\ell$ such that
  $M_{\chi^*}\vec\ell=\lambda_\sigma\vec\ell$.

  As $\sigma$ is primitive, for some $n\geq 1$, $\tau^n(W)$ contains a copy of all tiles
  $W_a$ for all $a\in A$. Such a copy is wrapped around by all
  $a_{ij}$ in $\tilde A$ with $f(a_{ij})=a$. Thus all letters of
  $\tilde A$ appear in the image of the contour substitution $\chi$
  and, by duality, all letters of $\tilde A$ appear in the image of
  the dual contour substitution $\chi^*$.

  This proves that the matrix of $\chi^*$ is block-wise
  diagonalizable, each block contains at least one letter of
  $f\inv(a)$ for each $a\in A$ and, has dominant eigenvalue
  $\lambda_\sigma$. Moreover, up to passing to a positive power, each
  diagonal block is positive.

  From our assumption, the number of extremal tiles in $\tau^n(W)$
  goes to infinity.  As the circle is originally divided into finitely
  many arcs, there exists an arc $a_{ij}\in\tilde A$ such that
  $\chi^n(a_{ij})$ wraps around a whole extremal patch $W_b$: all
  letters $b_{kl}\in f\inv (b)$ appear in $\chi^n(a_{ij})$.

  By duality, one the blocks of the block-wise diagonal matrix
  $M_{\chi^*}^n$ spans all of $f\inv(b)$. We proved that
  $M_{\chi^*}$ has a unique diagonal block, hence $\chi^*$ is
  primitive. And by duality $\chi$ is primitive as well.
\end{proof}

From Proposition~\ref{prop:Mdeta-primitive} we get that the matrix
$M_\chi$ has a unique (projective) positive left eigenvector
$\vec\ell_{\mathbb S^1}$ associated with the dominant eigenvalue $\lambda_\sigma$.

We use this eigenvector $\vec\ell_{\mathbb S^1}$ to describe the points of the unit
circle $\mathbb S^1$ by infinite paths in the prefix-suffix
automaton. First divide the unit circle $\mathbb S^1$ by the arcs of
$\tilde A$ each with the length given by $\vec\ell_{\mathbb S^1}$.  Let
$\upor{\mathcal P}$ be the set of infinite paths
$\gamma=a_0\stackrel{p_1,s_1}{\longrightarrow}a_1\stackrel{p_2,s_2}{\longrightarrow}\ldots$,
with $a_i\in\tilde A$, $\chi(a_i)=p_{i+1}\cdot a_{i+1}\cdot
s_{i+1}$. We insist that here the infinite path in the prefix-suffix
automaton is in the positive direction of edges which is reverse to
all the paths (or prefix-suffix expansions) we considered up to
here. This is why we use the funny notation $\upor{\mathcal
  P}$. Consider the map $\QSone:\upor{\mathcal P}\to\mathbb S^1$ such that
\[
\QSone(\gamma)=x_0+\sum_{n=1}^{+\infty}\frac 1{\lambda_\sigma^n}\ell_{\mathbb S^1}(p_{n}),
\]
where $x_0$ is the left endpoint of the arc $a_0$ and $\ell_{\mathbb S^1}(p_n)$ is the scalar product between the abelianization of the word $p_n$ and the eigenvector $\vec\ell_{\mathbb S^1}$. The map $\QSone$ is obviously continuous.

For any finite path
$\gamma=a_0\stackrel{p_1,s_1}{\longrightarrow}a_1\stackrel{p_2,s_2}{\longrightarrow}\ldots
\stackrel{p_n,s_n}{\longrightarrow}a_n$, the cylinder $[\gamma]$ of all infinite paths
starting with $\gamma$ is mapped by $\QSone$ to an arc $\QSone([\gamma])$ of
$\mathbb S^1$ of length $(\frac 1{\lambda_\sigma})^n\ell(a_n)$.

The bijection between the edges of the prefix-suffix automata of
$\chi$ and $\chi^*$ yields a homeomorphism
$\upor{\mathcal P}_\chi\simeq{\mathcal P}_{\chi^*}$ and by composition a
continuous map ${\mathcal P}_{\chi^*}\to\mathbb S^1$ which, by abuse of notation, we also denote by $\QSone$.

\subsection{Piecewise exchange and self-similarity on the
  circle}\label{sec:vershik-map}

The contour we constructed in the previous Section carries a self-similar
piecewise exchange exactly as the attracting shift $X_\sigma$ and
the limit set $\Omega_A$ do. In this Section we describe this
self-similar piecewise exchange on the circle.

Our description of the circle comes from the map $\QSone:\mathcal
P_{\chi^*}\to\mathbb S^1$. 
The piecewise exchange comes primarily from the shift map $S$ on the
attracting shift $X_\sigma$, which is carried to the Vershik map on the space $\mathcal{P}_{\chi^*}$ of infinite prefix-suffix expansions of $\chi^*$.  
We will obtain the piecewise exchange on the circle by pushing forward the Vershik map through $\QSone$. 

Let $\sigma$ be a substitution. 
Recall that in Section~\ref{sec:subst-attr-shift} we introduced the notation $p(\gamma)$ for the prefix of a prefix-suffix expansion $\gamma$. Here we rather deal with
suffixes and we use the similar notation $s(\gamma)=s_0\sigma(s_1)\cdots\sigma^{n-1}(s_{n-1})$.   
We denote by $\Pmax$ the set of infinite prefix-suffix expansions with empty suffix (and by $\Pmin$ those with empty prefix).

The \textbf{Vershik map}~\cite[Section~6.3.3]{durand-cant6}
\[
V:\mathcal P\ssm\Pmax\to\mathcal P\ssm\Pmin
\]
sends a prefix-suffix expansion
\[
  \gamma=a_0\stackrel{p_{0},\epsilon}{\longleftarrow}a_{1}\stackrel{p_{1},\epsilon}{\longleftarrow}
  \cdots a_r\stackrel{p_{r},s_{r}}{\longleftarrow}a_{r+1}\cdots
  a_{n-1}\stackrel{p_{n-1},s_{n-1}}{\longleftarrow}a_n\cdots
\]
where $r$ is the smallest index such that $s_r=b_r t_r\neq\epsilon$, with $b_r\in A$ and $t_r\in A^*$, to
\[
  V(\gamma)=b_0\stackrel{\epsilon,t_{0}}{\longleftarrow}b_{1}\stackrel{\epsilon,t_{1}}{\longleftarrow}
  \cdots b_r\stackrel{p_{r}a_r,t_{r}}{\longleftarrow}a_{r+1}\cdots
  a_{n-1}\stackrel{p_{n-1},s_{n-1}}{\longleftarrow}a_n\cdots,
\]
with $\sigma(a_{r+1})=p_r a_r s_r=p_r a_r b_r
t_r$ and, for $i=1,\ldots,r-1$, $\sigma(b_{i+1})=b_it_i$.
As it is defined using only the head of a prefix-suffix expansion
path, the Vershik map is continuous.

In the setting of Section~\ref{sec:global-picture}, the Vershik map is
simply the push-forward of the shift-map $S$ through the map $\Gamma$
which associates to any bi-infinite word in the attracting shift
$X_\sigma$ of $\sigma$ its prefix-suffix expansion:
\[
\forall Z\in X_\sigma,\ \Gamma(Z)\not\in\Pmax\Rightarrow V(\Gamma(Z))=\Gamma(SZ),
\]
and pushing forward to the limit set $\Omega_A$ of the repelling tree
$\Trepul$ we get:
\begin{equation}\label{eq:ver1}
\forall \gamma\in\mathcal P\ssm\Pmax,\ Q(V(\gamma))=a_0\inv Q(\gamma),
\end{equation}
where $a_0$ is the ending letter of the infinite prefix-suffix
expansion $\gamma$. And, finally, for a finite path $\gamma$ in the
prefix-suffix automaton ending in $a_0$ with non-empty suffix
\begin{equation}\label{eq:ver2}
a_0\inv\Omega_\gamma=\Omega_{V(\gamma)}.
\end{equation}
We can partially push-forward the Vershik map to the iterates of the
abstract tree substitution. 
A gluing point $x$ in $\tau^n(W)$ is
described as a finite path $\gamma$ of length $n$ in the prefix-suffix
automaton, together with a gluing point $y\in V_{a_n}$, where $a_n$ is
the starting letter of $\gamma$. If $\gamma$ has non-empty suffix then
$\gamma$ and $V(\gamma)$ both start at the same letter $a_n$. 
For sake of simplicity, we abuse again of notation and we let the Vershik map act on gluing points, instead of writing $V$ on their prefix-suffix expansions. Then, 
the image $V(x)$ is described by the same gluing point $y$ of the tile
$W_{a_n}$ associated to the path $V(\gamma)$ in $\tau^n(W)$.

To push-forward the Vershik map to the contour circle, we need that the cyclic orders of $W$
and $\tau(W)$ are compatible with the translations defined by \eqref{eq:ver1}-\eqref{eq:ver2}. Thus we
strengthen the compatibility condition of the previous section.

The cyclic orders of $W$ and $\tau(W)$ are \textbf{strongly
  compatible} if, in addition to
Conditions~\ref{condition:simplicial-isomorphism},
\ref{condition:equal-valence}, \ref{condition:tile-cyclic-order} and
\ref{condition:compatible-cyclic-orders}, the following holds:
\begin{enumerate}
  \renewcommand{\theenumi}{(C\arabic{enumi})}
  \renewcommand{\labelenumi}{\theenumi}
  \setcounter{enumi}{4}
\item\label{condition:Vershik-compatible} For $a\in A$ let $x$, $y$,
  $z$ be gluing points of $\tau(W_a)\subseteq\tau(W)$ where the map
  $V$ is defined. Then, the cyclic orders of $(x,y,z)$ and
  $(V(x),V(y),V(z))$ are equal in $\tau(W)$.
\end{enumerate}
We remark that using the cyclic order defined in the proof of
Proposition~\ref{prop:contour-iterate},
Condition~\ref{condition:Vershik-compatible} propagates to gluing points
in $\tau^n(W)$: for any gluing points $x$, $y$ and $z$ in $\tau^n(W_a)$
where the map $V$ is defined, the cyclic orders of $(x,y,z)$ and
$(V(x),V(y),V(z))$ are equal in $\tau^n(W)$.

\begin{rem}\label{rem:order_from_Trepul}
  Conditions~\ref{condition:simplicial-isomorphism}~-~\ref{condition:Vershik-compatible}
  are finite combinatorial conditions that are easily checked.

  We already explained how to satisfy
  Conditions~\ref{condition:simplicial-isomorphism} and
  \ref{condition:equal-valence}. We now prove that cyclic orders
  satisfying
  Conditions~\ref{condition:tile-cyclic-order}~-~\ref{condition:Vershik-compatible}
  exist.

  With Condition \ref{condition:simplicial-isomorphism},
  $\tau^n(W)$ is isomorphic to the subtree $\sigma_T^n(Y)$ of
  $\Omega_A$, for each $n$. Recall that the free group $F_A$ acts freely 
  on the tree
  $\Trepul$ and, that there are finitely many orbits of branch points,
  each with finite valence.  The contracting homothety $H$ permutes
  the finitely many directions at the finitely many orbits of branch
  points. Up to replacing $H$ by a suitable power (as well as the
  initial substitution $\sigma$), any cyclic orders on directions at
  orbits of branch points extend to cyclic orders at branch points of
  $\Trepul$ which are preserved by the action of $F_A$ and
  $H$. Finally, through the isomorphisms with substrees of $\Trepul$
  we get cyclic orders on $W$ and $\tau(W)$.

  As the expanding homothety $H\inv$ corresponds to the map $\tau$ and
  as the translation by $a\inv$ for $a\in A$ corresponds to the
  Vershik map, we conclude that this choice of cyclic orders satisfies
  Conditions~\ref{condition:tile-cyclic-order}~-~\ref{condition:Vershik-compatible}.
\end{rem}

\begin{prop}\label{prop:vershik-on-circle}
  Let $\sigma$ be a primitive substitution and a parageometric
  iwip automorphism. Let $\tau$ be a tree substitution associated to
  $\sigma$. Assume that $W$ and $\tau(W)$ are equipped with strongly
  compatible cyclic orders.

  Then, the Vershik map is pushed forward by $\QSone$ to a piecewise rotation
  of the unit circle $\mathbb S^1$.
\end{prop}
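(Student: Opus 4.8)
The plan is to carry the Vershik (adic) structure of $\mathcal P_{\chi^*}$ over to $\mathbb S^1$ through $\QSone$ and to recognise the rotation pieces as the finitely many arcs $\QSone([\tilde a])$, $\tilde a\in\tilde A$. First I would recall, from Proposition~\ref{prop:contour-iterate}, that $\chi^n(c)$ is a contour of $\tau^n(W)$, so its gluing points cut $\mathbb S^1$ into arcs; under the identification $\upor{\mathcal P}_\chi\simeq\mathcal P_{\chi^*}$ these arcs are exactly the images $\QSone([\gamma])$ of cylinders of length-$n$ paths $\gamma$ in the prefix-suffix automaton of $\chi^*$, and $\QSone([\gamma])$ has length $\lambda_\sigma^{-n}\ell_{\mathbb S^1}(a_n)$, where $a_n$ is the beginning of $\gamma$. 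Since $\lambda_\sigma>1$, as $n\to\infty$ these partitions refine one another and their mesh tends to $0$.

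Second, I would analyse how $V$ acts on these arcs. If $\gamma$ is a length-$n$ path whose suffix is non-empty at some level $r<n$, then $V$ rewrites only the first $r+1$ edges, so $V(\gamma\gamma')=V(\gamma)\gamma'$ for every extension $\gamma'$, and $V(\gamma)$ is again a length-$n$ path with the same beginning $a_n$; in particular $\QSone([\gamma])$ and $\QSone([V(\gamma)])$ have the same length. By Condition~\ref{condition:Vershik-compatible}, propagated to $\tau^n(W)$, the map $V$ preserves the cyclic order of the gluing points of $\tau^n(W)$, so it carries $\QSone([\gamma])$ onto $\QSone([V(\gamma)])$ by an \emph{orientation-preserving} isometry of $\mathbb S^1$, that is, by a rotation; the relation $V(\gamma\gamma')=V(\gamma)\gamma'$ makes the rotations attached to nested arcs agree. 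Using that $\chi^*$ is primitive (Proposition~\ref{prop:Mdeta-primitive}), so that $\Pmax$ and $\Pmin$ are finite, these rotations patch into a single map $R\colon\mathbb S^1\setminus\QSone(\Pmax)\to\mathbb S^1\setminus\QSone(\Pmin)$, continuous, locally a rotation, and determined by $R\circ\QSone=\QSone\circ V$. As $V$ is a bijection between $\mathcal P_{\chi^*}\setminus\Pmax$ and $\mathcal P_{\chi^*}\setminus\Pmin$ and $\QSone$ is onto, $R$ extends to a bijection of $\mathbb S^1$, defined off finitely many points as usual for interval exchanges.

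Third, I would show that the local rotation angle of $R$ is constant on each of the finitely many arcs $\QSone([\tilde a])$. The angle is locally constant off $\QSone(\Pmax)\cup\QSone(\Pmin)$, so it suffices to prove it depends only on the initial letter $\tilde a$ of the path. Here \eqref{eq:ver1}--\eqref{eq:ver2} enter: $Q(V(\gamma))=f(\tilde a)^{-1}Q(\gamma)$, so under the identification of $\mathbb S^1$ with the contour of $\Omega_A$ the displacement of $R$ on $\QSone([\tilde a])$ is the one induced by the isometry $f(\tilde a)^{-1}$ of $\Trepul$, the same for all $\gamma$ beginning with $\tilde a$; equivalently one checks directly from the self-similar formula defining $\QSone$ and from the definition of $V$ that $\QSone(V(\gamma))-\QSone(\gamma)$ is the fixed $\ell_{\mathbb S^1}$-quantity attached to $\tilde a$, reflecting the way $\chi^*$ was built to cover $\sigma$. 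Hence $R$ is a piecewise rotation of $\mathbb S^1$ with pieces $\{\QSone([\tilde a])\}_{\tilde a\in\tilde A}$, and precomposing with $\Gamma_{\chi^*}$ gives the asserted push-forward of the shift on $X_{\chi^*}$.

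I expect the main obstacle to be precisely this last step — assembling the locally defined rotations into a bona fide piecewise rotation with the $\QSone([\tilde a])$ as pieces: one must control $R$ near the (finitely many) images of maximal and minimal paths and verify that the displacement is a cocycle depending only on the initial letter of the prefix-suffix expansion. The combinatorial conditions~\ref{condition:simplicial-isomorphism}--\ref{condition:Vershik-compatible} are exactly what make this bookkeeping work, Condition~\ref{condition:Vershik-compatible} being the one that excludes reflections and forces genuine rotations.
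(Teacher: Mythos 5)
Your first step coincides with the paper's: cylinders $[\gamma]$ with non\nobreakdash-empty suffix are sent by $\QSone$ to arcs of the same length as $\QSone([V(\gamma)])$ (same length $n$, same beginning $a_n$), Condition~\ref{condition:Vershik-compatible} rules out orientation reversal, and $V(\gamma\gamma')=V(\gamma)\gamma'$ makes the rotations on nested cylinders consistent. The gap is in your third step, and it is the heart of the matter. You claim that the pieces of the exchange are the arcs $\QSone([\tilde a])$, $\tilde a\in\tilde A$, because the displacement is ``induced by the isometry $f(\tilde a)^{-1}$ of $\Trepul$'' and hence depends only on the letter at which the expansion ends. This is false: an isometry of $\Trepul$ carrying $\Omega_\gamma$ onto $\Omega_{V(\gamma)}$ does not induce a single rotation of the whole contour arc, because the contour wraps twice around each edge and the two sides of an arc of the tree can land in different positions of the contour circle. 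The paper's own Tribonacci computation (Example~\ref{ex:piec-exch-circle-tribo}) exhibits this: the arc $c_{11}$ must be cut at the point of $\QSone(\Pmax)$ into $c_{12}$ and $c_{21}$, and these two subarcs are rotated by \emph{different} rotations, even though every expansion in either cylinder ends at the same letter. The correct pieces are the subarcs of the $\QSone([\tilde a])$ cut by the finitely many points of $\QSone(\Pmax)$, not the $\QSone([\tilde a])$ themselves.

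Consequently the statement you defer to as ``the angle is locally constant off $\QSone(\Pmax)\cup\QSone(\Pmin)$'' is exactly what has to be proved, and it does not follow from the tree-level identity $Q(V(\gamma))=a_0^{-1}Q(\gamma)$ alone. Since boundary points of cylinder arcs are dense in $\mathbb S^1$, knowing that $V$ acts as a rotation on each of the infinitely many cylinders gives no finite partition; one must show that two \emph{adjacent} arcs $\QSone([\gamma])$, $\QSone([\gamma'])$ coming from same-length paths with non-empty suffixes ending at the same letter have images $\QSone([V(\gamma)])$, $\QSone([V(\gamma')])$ that are again adjacent and in the same order. This is Lemma~\ref{lem:vershik-adjacent-cylinders} in the paper, and its proof is genuinely combinatorial: it uses the singular point shared by the adjacent tiles $\Omega_{f(\gamma)}$ and $\Omega_{f(\gamma')}$, the fact that $V$ preserves the identification of the corresponding gluing points, and a contradiction argument that invokes all of Conditions~\ref{condition:equal-valence}, \ref{condition:tile-cyclic-order}, \ref{condition:compatible-cyclic-orders} and \ref{condition:Vershik-compatible} after pushing to a deeper level $\tau^{n+m}(W)$. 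Without this lemma your construction does not assemble into a piecewise rotation with finitely many pieces.
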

\begin{proof}
  We use the notation of the previous Section. For any finite path $\gamma$
  in the prefix-suffix automaton of $\chi^*$, the image
  $\QSone([\gamma])$ of the cylinder $[\gamma]$ is an arc of the unit
  circle of length $(\frac 1{\lambda_\sigma})^n\ell_{\mathbb S^1}(a_n)$. If we
  assume that $\gamma$ as non-empty suffix, then $V(\gamma)$ has the
  same length $n$ as $\gamma$ and the same starting letter $a_n$. Thus
  we get that $V$ is pushed-forward by $\QSone$ to a rotation from the arc
  $\QSone([\gamma])$ to the arc $\QSone([V(\gamma)])$.

  Remark that the above paragraph is abusive as the map $\QSone$ is
  non-injective and thus the map $V$ cannot be pushed-forward to
  $\mathbb S^1$. However for us, a piecewise rotation is not a map
  $\mathbb S^1\to\mathbb S^1$ but rather a division of the circle into
  finitely many arcs and on each of these arcs a rotation. In
  particular, boundary points of the arcs can be rotated to two
  different points according to whether they are considered as
  boundary points of one or the other of the two arcs they belong to.

  The division of the circle into arcs with non-empty suffixes is not
  finite. In order to get a piecewise rotation of the circle, we are
  left with proving that all but finitely many adjacent such arcs are rotated
  by the same angle. This is the purpose of the next lemma. 

\begin{lem}\label{lem:vershik-adjacent-cylinders}
  Let $\gamma$ and $\gamma'$ be two prefix-suffix expansion paths of
  $\chi^*$ with same length, non-empty suffixes and ending at the
  same letter such that the associated arcs $\QSone([\gamma])$ and
  $\QSone([\gamma'])$ are adjacent on $\mathbb S^1$.  Then the arcs
  $\QSone([V(\gamma)])$ and $\QSone([V(\gamma')])$ are adjacent on the circle
  (and in the same order).
\end{lem}
\begin{proof}
  Let $n$ be the length of $\gamma$ and $\gamma'$. From
  Proposition~\ref{prop:contour-iterate}, $\chi^n(c)$ is the contour
  of $\tau^n(W)$. The forgetful map $f$ maps a prefix-suffix expansion
  of $\chi^*$ to a prefix-suffix expansion of $\sigma$.  Thus, the
  tiles $W_{f(\gamma)}$ and $W_{f(\gamma')}$ are equal or adjacent in
  $\tau^n(W)$. If they are equal, then $f(\gamma)=f(\gamma')$ and thus
  $V(f(\gamma))=V(f(\gamma'))$. Else, in the repelling tree the tiles
  $\Omega_{f(\gamma)}$ and $\Omega_{f(\gamma')}$ have a singular point
  $P$ in common. Let $a_0\in A$ be the common ending letter of both
  $f(\gamma)$ and $f(\gamma')$. Then
  $a_0^{-1}\Omega_{f(\gamma)}=\Omega_{V(f(\gamma))}$ and
  $a_0^{-1}\Omega_{f(\gamma')}=\Omega_{V(f(\gamma'))}$. These
  translated tiles have the singular point $a_0^{-1}P$ in common. The
  singular point $P$ gives rise to gluing points $y\in
  W_{f(\gamma)}$ and $y'\in W_{f(\gamma')}$ which are identified in
  $\tau^n(W)$. The Vershik map is defined on these two points and
  $V(y)$ and $V(y')$ are identified in $\tau^n(W)$: $W_{V(f(\gamma))}$
  and $W_{V(f(\gamma'))}$ have a point in common.

  Let us denote by $[x,y]=\QSone([\gamma])$ and
  $[y',z]=\QSone([\gamma'])$ the arcs of the contour of
  $\tau^n(W)$. The Vershik map is defined on these four points and the
  gluing points $y$ and $y'$ are identified in $\tau^n(W)$ as well as
  the gluing points $V(y)$ and $V(y')$. By
  Condition~\ref{condition:tile-cyclic-order}, $[V(x),V(y)]$ and
  $[V(y'),V(z)]$ are arcs of the contour of $\tau^n(W)$ with a common
  point. Assume by contradiction that, in the contour of $\tau^n(W)$,
  the arc $[V(x),V(y)]$ is followed by the arc $[y'',t]$ which is
  different from $[V(y'),V(z)]$. This arc turns around a tile
  $W_{\gamma''}$ and if $\gamma''$ is equal to $V(f(\gamma))$ or
  $V(f(\gamma'))$ then it is the image by $V$ of an arc of
  $W_{f(\gamma)}$ or $W_{f(\gamma')}$. Then
  Condition~\ref{condition:Vershik-compatible} implies that the arc
  $[x,y]$ is not followed by the arc $[y',z]$ in the contour of
  $\tau^n(W)$, which is a contradiction. We now assume that $\gamma''$ is
  different from both $f(\gamma)$ and $f(\gamma')$. Let $\alpha$ be a
  finite path of the length $m$ in the prefix-suffix automaton of $\sigma$, ending at
  the starting letter of $\gamma''$ with non-empty prefix. Then
  $\gamma''\cdot\alpha$ has non-empy prefix and is the image by $V$ of
  a path $\gamma'''$. Let $t'$ be a gluing point in the tile
  $W_{\gamma'''}$ of $\tau^{n+m}(W)$ such that $V(t')$ is distinct
  from $\tau^m(y'')$. From
  Condition~\ref{condition:Vershik-compatible}, the cyclic order of
  $(\tau^m(x),\tau^m(z),t')$ in $\tau^{n+m}(W)$ is equal to that of
  $(V(\tau^m(x)),V(\tau^m(z)),V(t'))$. In particular, $\tau^m(x)$,
  $\tau^m(z)$ and, $t'$ lie in three different directions outgoing
  from $y$. From Condition~\ref{condition:equal-valence}, there exist
  a point $t''$ in $\tau^n(W)$ such that $\tau^m(t'')$ lies in the
  same direction from $y$ as
  $t'$. Condition~\ref{condition:compatible-cyclic-orders} implies
  that the cyclic order of $(x,z,t'')$ in $\tau^n(W)$ is the same as
  that of $(\tau^m(x),\tau^m(z),\tau^m(t''))$ which we proved to be
  equal to that of $(V(x),V(z),t)$. This contradicts the fact that
  $[x,y]$ is followed by $[y',z]$ in the contour of $\tau^n(W)$.

  By contradiction we proved that the arc
  $[V(x),V(y)]=\QSone([V(\gamma)])$ is followed by the arc
  $[V(y'),z]=\QSone([\gamma'])$ in the contour of $\tau^n(W)$.
\end{proof}
The set $\Pmax$ of infinite prefix-suffix expansions with empty
suffixes is finite. We divide the circle $\mathbb S^1$ into finitely
many subarcs of the arcs $\QSone([a])$, for $a\in\tilde A$, bounded by 
points in $\QSone(\Pmax)$. This is a finite subdivision. According to
Lemma~\ref{lem:vershik-adjacent-cylinders}, on each of these arcs the
Vershik map $V$ is pushed-forward by $\QSone$ to a rotation.
This concludes the proof of Proposition~\ref{prop:vershik-on-circle}.
\end{proof}

\begin{example}\label{ex:piec-exch-circle-tribo}

  Consider the Tribonacci substitution $\sigma$, the tree substitution
  $\tau$ of Example~\ref{ex:tribo-tree-substitution}, the contour
  substitution $\chi$ of Example~\ref{ex:contour-tribo} and the dual
  contour substitution $\chi^*$ of
  Example~\ref{ex:tribo-dual-contour}. 

  The set $\Pmax$ of infinite prefix-suffix expansions with empty
  suffixes consists of the three infinite paths $\Gamma(S\inv(w_a))$,
  $\Gamma(S\inv(w_b))$ and, $\Gamma(S\inv(w_c))$ which appear in
  Example~\ref{ex:tribo-fix-points}:
  \begin{center}
    \begin{tikzpicture}[->,>=stealth',shorten >=1pt,auto,node
      distance=1.8cm, thick,main
      node/.style={circle,draw,font=\sffamily\bfseries}]

      \node[main node] (1) {\large $a$}; 
      \node[main node] (2) [above right of=1] {\large $b$}; 
      \node[main node] (3) [below right of=2] {\large $c$};
  
      \path[every node/.style={font=\sffamily\small}]
      (1) edge [] node [above left] {$a,\epsilon$} (2)    
      (2) edge [] node[above right] {$a,\epsilon$} (3)
      (3) edge [] node[below] {$\epsilon,\epsilon$} (1);

    \end{tikzpicture}
  \end{center}
  We now have two options: either we add these three points
  $Q(\Gamma(S\inv(w_a))$, $Q(\Gamma(S\inv(w_b))$ and,
  $Q(\Gamma(S\inv(w_c))$ as gluing vertices in the tree subsitution or
  alternatively, we add these points to the contour of the circle. As
  the prefix-suffix expansions are periodic, using
  Remark~\ref{rem:comb-tree-subst-adding-gluing-vertices} the first
  option can be achieved. However, in order to fix compatible cyclic
  orders this will require to further describe the branch points of
  $W_a$ which now has four gluing vertices.

  The second option requires to consider the infinite prefix-suffix
  expansions with empty suffixes $\widetilde{\mathcal
    P}_{\text{max}}=\Pmax(\chi^*)$ of the dual contour substitution
  $\chi^*$ rather than those of $\sigma$: the three infinite paths
  \begin{center}
    \begin{tikzpicture}[->,>=stealth',shorten >=1pt,auto,node
      distance=1.8cm, thick,main
      node/.style={circle,draw,font=\sffamily\bfseries}]

      \node[main node] (1) {\large $a_{12}$};
      \node[main node] (2) [above right of=1] {\large $b_{12}$};
      \node[main node] (3) [below right of=2] {\large $c_{11}$};
  
      \path[every node/.style={font=\sffamily\small}]
      (1) edge [] node [above left] {$a_{31},\epsilon$} (2)    
      (2) edge [] node[above right] {$a_{23},\epsilon$} (3)
      (3) edge [] node[below] {$\epsilon,\epsilon$} (1);
    \end{tikzpicture}
  \end{center}
  We thus introduce a new subdivision by the three points
  $\QSone(\widetilde{\mathcal P}_{\text{max}})$ of the arcs
  $a_{12}=a_{14}a_{42}$, $b_{12}=b_{13}b_{31}$ and
  $c_{11}=c_{12}c_{21}$. We get an extended  contour substitution
  \[
    \begin{array}[t]{rcl}
      a_{14} &\mapsto & a_{23}c_{12}\\
      a_{42} &\mapsto & c_{21}b_{21} \\
      a_{23} &\mapsto & b_{12}a_{31} \\
      a_{31} &\mapsto & a_{14}a_{42} \\
      b_{13} &\mapsto & a_{14}\\
      b_{32} &\mapsto & a_{42}a_{23} \\
      b_{21} &\mapsto & a_{31} \\
      c_{12} &\mapsto & b_{21}b_{13}\\
      c_{21} &\mapsto & b_{32}
    \end{array}
  \]
  From Lemma~\ref{lem:vershik-adjacent-cylinders}, we remark that the
  adjacent arcs $Q([a_{31}])$ and $Q([a_{14}])$ are rotated by the
  same angle as are the adjacent arcs $Q([a_{42}])$ and $Q([a_{23}])$
  and, the adjacent arcs $Q([b_{21}])$ and $Q([b_{13}])$. We thus get
  a division of the circle into six arcs.

  To describe the piecewise rotation on each of these six arcs, we
  give the image of some of the boundary points.

  The left endpoint of the arc $a_{31}$ comes from the singular
  point $P=Q(w'_a)$ of $\Omega_a$. We already computed (see
  Examples~\ref{ex:tribo-right-left-special} and
  \ref{ex:tribo-periodic-desubstitution}) that
  \[
    \Gamma(w'_a)=\gamma_a,\ V(\gamma_a)=V(\Gamma(w'_a))=\Gamma(S(w'_a)),\
    Q(w'_a)=P,
  \]
  \[
    Q(V(\Gamma(w'_a)))=Q(S(w'_a))=a\inv P,
  \]
  and $a\inv P$ is a singularity of $\Omega_b$.  Thus we get that the
  left endpoint of the arc $a_{31}$ is rotated to the left endpoint of the arc $b_{13}$.  The right endpoint of the arc
  $a_{23}$ comes from the same singularity $P$ of $\Omega_a$ and is
  also rotated to the same point of the circle.

  Similarly, we compute:
  \[
    \Gamma(w'_b)=\gamma_b,\ V(\gamma_b)=V(\Gamma(w'_b))=\Gamma(S(w'_b)),\
    Q(w'_b)=P,
  \]
  \[
    Q(V(\Gamma(w'_b)))=Q(S(w'_b))=b\inv P,
  \]
  \[
    \Gamma(w'_c)=\gamma_c,\ V(\gamma_c)=V(\Gamma(w'_c))=\Gamma(S(w'_c)),\
    Q(w'_c)=P,
  \] 
  \[
    Q(V(\Gamma(w'_c)))=Q(S(w'_c))=c\inv P.
  \]
  Thus the left endpoint of the arc $b_{21}$ and the right endpoint of the arc $b_{32}$ are both rotated (by different
  rotations) to the left endpoint of $a_{23}$.
  And, the left endpoint of $c_{12}$ and the right endpoint of
  $c_{21}$ are both rotated (by different rotations) to the point
  which is the left endpoint of $a_{14}$.

  The piecewise rotation of the circle for Tribonacci substitution is
  pictured in Figure~\ref{fig:tribo-circle}. Notice that this is the Arnoux-Yoccoz interval exchange on the six intervals $a_{31}a_{14}$, $a_{42}a_{23}$, $c_{12}$, $c_{21}$, $b_{21}b_{13}$ and $b_{32}$.

\begin{figure}[h]
\centering
\includegraphics[scale=1]{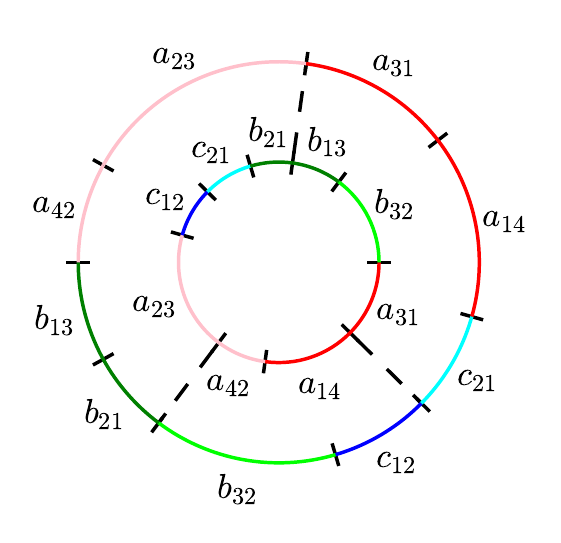}
\caption{\label{fig:tribo-circle}Interval exchange on the circle for the Tribonacci
  substitution: the innermost circle is the contour of $W$ while the
  outermost circle is the result of the piecewise exchange.}
\end{figure}

The suspension of this piecewise rotation of the circle with six
pieces is the surface of genus $3$ with one boundary component that we
referred to at the end of Example~\ref{ex:tribo-dual-contour}. 
\end{example}

\section{Examples}

We survey in this Section two other examples.

\subsection{First example}

We consider the substitution
\[
\sigma:\begin{array}[t]{rcl}a&\mapsto&ac\\b&\mapsto&ab\\c&\mapsto&b\end{array}
\]
which is an iwip automorphism of the free group $F_{\{a,b,c\}}$.  As
before we denote by $X_\sigma$ its attracting shift. The topological
representative of $\sigma$ on the rose with three petals is an
irreducible train-track and has three periodic Nielsen paths (of
period $4$):

\[
\begin{tikzpicture}
\draw (0,0) to [out=0,in=180] node[near end,above,sloped] {$a$} (.6,.3);
\draw (0,0) to [out=0,in=180] node[near end,below,sloped] {$b$} (.6,-.3);

\draw (1,0) to [out=0,in=180] node[near end,above,sloped] {$b$} ++(.6,.3);
\draw (1,0) to [out=0,in=180] node[near end,below,sloped] {$c$} ++(.6,-.3);

\draw (3,0) to [out=180,in=0] node[midway,above,sloped] {$a$} ++(-.6,.3);
\draw (3,0) to [out=180,in=0] node[midway,below,sloped] {$c$} ++(-.6,-.3);

\draw (4,0) to [out=180,in=0] node[midway,above,sloped] {$b$} ++(-.6,.3);
\draw (4,0) to [out=180,in=0] node[midway,below,sloped] {$ac$} ++(-.6,-.3);
\end{tikzpicture}
\]
Thus $X_\sigma$ contains four singular leaves:

\begin{tikzpicture}
\node (gauche) at (0,0) {$\cdots acbacabacbabac\cdot$}; 
\draw (gauche.east) to [out=0,in=180] ++(.6,.3) node[anchor=west] {$abacbabacbacab\cdots\quad : Z_1$};
\draw (gauche.east) to [out=0,in=180] ++(.6,-.3) node[anchor=west]{$bacabacbabacab\cdots\quad: Z_2$};
\end{tikzpicture}

\begin{tikzpicture}
\node (gauche) at (0,0) {$\cdots acbabacab acba\cdot$}; 
\draw (gauche.east) to [out=0,in=180] ++(.6,.3) node[anchor=west] {$bacbacab acbabacab\cdots\quad : Z_3$};
\draw (gauche.east) to [out=0,in=180] ++(.6,-.3) node[anchor=west]{$cab acbabacab\cdots\quad : Z_4$};
\end{tikzpicture}

\begin{tikzpicture}
\node[anchor=east] at (0,.3) {$Z_5:\quad\cdots acbabacbacab  acba$}; 
\node[anchor=east] at (0,-.3) {$Z_6:\quad\cdots  acbabacab ac$};
\draw (0,.3) to [out=0,in=180] ++(.6,-.3) node[anchor=west] {$\cdot bacbab acbabacbacab \cdots$};
\draw (0,-.3) to [out=0,in=180] ++(.6,.3);
\end{tikzpicture}

\begin{tikzpicture}
\node[anchor=east] at (0,.3) {$Z_7:\quad\cdots  acbabacbacab acb$}; 
\node[anchor=east] at (0,-.3) {$Z_8:\quad\cdots  acbabacbacab acbabac$};
\draw (0,.3) to [out=0,in=180] ++(.6,-.3) node[anchor=west] {$\cdot abacbacab acbabacab\cdots$};
\draw (0,-.3) to [out=0,in=180] ++(.6,.3);
\end{tikzpicture}

An easy calculation shows that the index is maximal, thus $\sigma$ is parageometric.

The map $Q$ identifies these left and right special infinite words, defining the points 
\begin{align*}
P_1=Q(Z_1)=Q(Z_2),\quad P_2=Q(Z_3)=Q(Z_4), \\
P_3=Q(Z_5)=Q(Z_6),\quad P_4=Q(Z_7)=Q(Z_8).
\end{align*}

The infinite desubstitution paths of the singular bi-infinite words are:
\begin{center}
\begin{tikzpicture}[->,>=stealth',shorten >=1pt,auto,node distance=1.4cm,thick,main node/.style={circle,draw,font=\sffamily\bfseries}]

  \node (0) {$\Gamma(Z_1)$};
  \node[main node] (1) [right of=0] {$a$};
  \node[main node] (2) [right of=1] {$b$};
  \node[main node] (3) [below of=2] {$b$};
  \node[main node] (4) [below of=1] {$c$};
  \node (5) [right of=2] {$\Gamma(Z_3)$};
  \node (6) [right of=3] {$\Gamma(Z_2)$};
  \node (7) [below of=0] {$\Gamma(Z_4)$};

  \path[every node/.style={font=\sffamily\small}]
    (1) edge (0)
    (2) edge (5)
    (3) edge (6)
    (4) edge (7)
    (2) edge node [above] {$\epsilon,b$} (1)   
    (3) edge node [right] {$a,\epsilon$} (2)
    (4) edge node [above] {$\epsilon,\epsilon$} (3)
    (1) edge node [left] {$a,\epsilon$} (4);
\end{tikzpicture}

\medskip

\begin{tikzpicture}[->,>=stealth',shorten >=1pt,auto,node distance=1.4cm,thick,main node/.style={circle,draw,font=\sffamily\bfseries}]

  \node (0) {$\Gamma(Z_8)$};
  \node[main node] (7) [above of=0] {$a$};
  \node[main node] (1) [left of=7] {$b$};
  \node[main node] (2) [left of=1] {$b$};
  \node[main node] (3) [below of=2] {$c$};
  \node[main node] (4) [left of=3] {$a$};
  \node[main node] (5) [above of=4] {$a$};
  \node[main node] (6) [left of=5] {$c$};
  \node[main node] (8) [left of=6] {$b$};
  \node[main node] (9) [left of=8] {$a$};
  \node (10) [below of=9] {$\Gamma(Z_7)$};
  \node (11) [below of=8] {$\Gamma(Z_6)$};
  \node (12) [below of=1] {$\Gamma(Z_5)$};

  \path[every node/.style={font=\sffamily\small}]
    (7) edge (0)
    (9) edge (10)
    (8) edge (11)
    (1) edge (12)
    (8) edge node [above] {$\epsilon,b$} (9)
    (6) edge node [above] {$\epsilon,\epsilon$} (8)
    (5) edge node [above] {$a,\epsilon$} (6)
    (1) edge node [above] {$\epsilon,b$} (7)
    (2) edge node [above] {$a,\epsilon$} (1)   
    (3) edge node [right] {$\epsilon,\epsilon$} (2)
    (4) edge node [above] {$a,\epsilon$} (3)
    (5) edge node [left] {$\epsilon,c$} (4)
    (2) edge node [above] {$\epsilon,b$} (5);
\end{tikzpicture}

\end{center}

Singular points in $\Omega_a$ are given by infinite tails ending
at occurrences of the letter $a$ as a state in the prefix-suffix expansion
of the above singular prefix-suffix expansions. However, for $Z_5$, $Z_6$ and, $Z_7$, $Z_8$ we
only need to consider desubstitution paths with different first vertex,
and thus $P_3$ is not a singular point of $\Omega_b$ and, $P_4$ is not a singular point of $\Omega_a$. 
Thus, for $\Omega_a$ we have to consider: $\Gamma(Z_1)$, $B^2(\Gamma(Z_6))$ and, $B^3(\Gamma(Z_5))$ and we get
\[
V_a=\Sing(\Omega_a)=\{P_1, aP_3, acP_4\}.
\]
Similarly, singular points in $\Omega_b$ and $\Omega_c$ are given by occurrences of the state $b$
in desubstitution paths.
\[
V_b=\Sing(\Omega_b)=\{P_1,P_2,bP_4\},\quad V_c=\Sing(\Omega_c)=\{P_2,cP_3,cP_4\}.
\]

Finally, to understand the tree substitution we need to know the
images of the singular points under the map $\tau$. Recall that the
image by $\tau$ of a singular point given by a singular prefix-suffix
expansion $\gamma$ is given by the beheaded prefix-suffix expansion
$B(\gamma)$ in the copy of the prototile corresponding to the heading
edge $e_1(\gamma)$ of $\gamma$.

Using the map $Q$ we also get descriptions of the singular points in
the repelling tree as well as their images by the expanding homothety
$H\inv$. We sum-up our calculations in the following table:

\[
\begin{array}{|r||c|c|c||c|c|c||}
\cline{2-7}
\multicolumn{1}{c||}{}&\multicolumn{3}{c||}{V_a}&\multicolumn{3}{c||}{V_b}\\
\cline{2-7}
\multicolumn{1}{c||}{}&\circlebox{red}{1}&\circlebox{red}{2}&\circlebox{red}{3}&\circlebox{green}{1}&\circlebox{green}{2}&\circlebox{green}{3}\\
\hline
\gamma&B^2(\Gamma(Z_6))&\Gamma(Z_1)&B^3(\Gamma(Z_5))&\Gamma(Z_2)&B(\Gamma(Z_5))&\Gamma(Z_3)\\
\hline
Q(\gamma)&aP_3    &P_1     &acP_4&P_1     &bP_4    &P_2\\
\hline
\hline
e_1(\gamma)&a\stackrel{\epsilon,b}{\longleftarrow}b&a\stackrel{\epsilon,b}{\longleftarrow}b&a\stackrel{\epsilon,c}{\longleftarrow}a&b\stackrel{\epsilon,\epsilon}{\longleftarrow}c&b\stackrel{\epsilon,\epsilon}{\longleftarrow}c&b\stackrel{a,\epsilon}{\longleftarrow}b\\
\hline
B(\gamma)&B(\Gamma(Z_5))&\Gamma(Z_3)&B^2(\Gamma(Z_6))&\Gamma(Z_4)&B^2(\Gamma(Z_5))&\Gamma(Z_2)\\
\hline
\tau (Q(\gamma))&bP_4&P_2&aP_3&P_2&cP_3&cb\inv P_1\\
\hline
\end{array}
\]

\[
\begin{array}{|r||c|c|c||}
\cline{2-4}
\multicolumn{1}{c||}{}&\multicolumn{3}{c||}{V_c}\\
\cline{2-4}
\multicolumn{1}{c||}{}&\circlebox{blue}{1}&\circlebox{blue}{2}&\circlebox{blue}{3}\\
\hline
\gamma&\Gamma(Z_4)&B^2(\Gamma(Z_5))&B(\Gamma(Z_6))\\
\hline
Q(\gamma)&P_2       &cP_3        &cP_4\\
\hline
\hline
e_1(\gamma)&c\stackrel{a,\epsilon}{\longleftarrow}a&c\stackrel{a,\epsilon}{\longleftarrow}a&c\stackrel{a,\epsilon}{\longleftarrow}a\\
\hline 
B(\gamma)&\Gamma(Z_1)&B^3(\Gamma(Z_5))&B^2(\Gamma(Z_6))\\
\hline
\tau(Q(\gamma))&cb\inv P_1&cb\inv acP_4&cb\inv aP_3\\
\hline
\end{array}
\]

We get the abstract tree substitution depicted in Figure~\ref{fig:ex1-tree-substitution}: 
\begin{align*}
\tau : W_a &\mapsto (W_a \sqcup W_b)/\sim  \\
W_b &\mapsto (W_b \sqcup W_c)/\sim \\
W_c &\mapsto W_a
\end{align*}
From the pairs of singular prefix-suffix expansions we get that the
gluing points $\Gamma(Z_1)$ in $W_a$ and $\Gamma(Z_2)$ in $W_b$ are
identified in $W$ as well as the gluing points $\Gamma(Z_3)$ in $W_b$
and $\Gamma(Z_4)$ in $W_c$. The same gluing points are identified in
$\tau(W_a)$ and $\tau(W_b)$.

\begin{figure}[h]
\centering
\includegraphics[scale=.7]{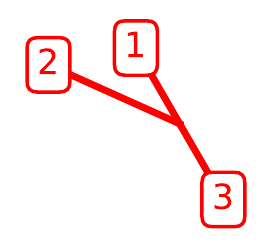}
\raisebox{.7cm}{$\mapsto$}
\includegraphics[scale=.7]{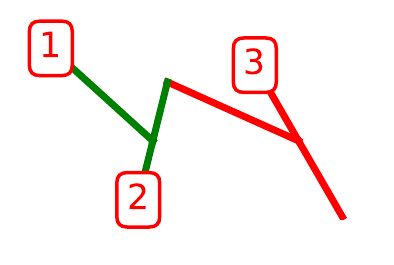}
\qquad
\includegraphics[scale=.7]{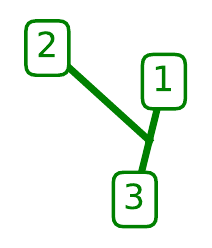}
\raisebox{.7cm}{$\mapsto$}
\includegraphics[scale=.7]{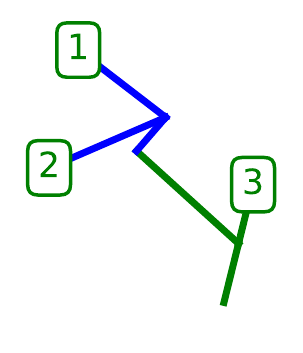}
\newline
\includegraphics[scale=.7]{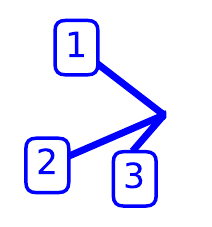}
\raisebox{.7cm}{$\mapsto$}\hspace{-.1cm}
\includegraphics[scale=.7]{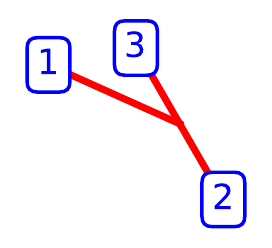}
\caption{\label{fig:ex1-tree-substitution} Tree substitution for $a\mapsto ac,b\mapsto ab,c\mapsto b$. From left to right we have the substitution of $W_a$ (red), $W_b$ (green) and $W_c$ (blue) respectively. }
\end{figure}

\begin{figure}[h]
\centering
\includegraphics[scale=.4]{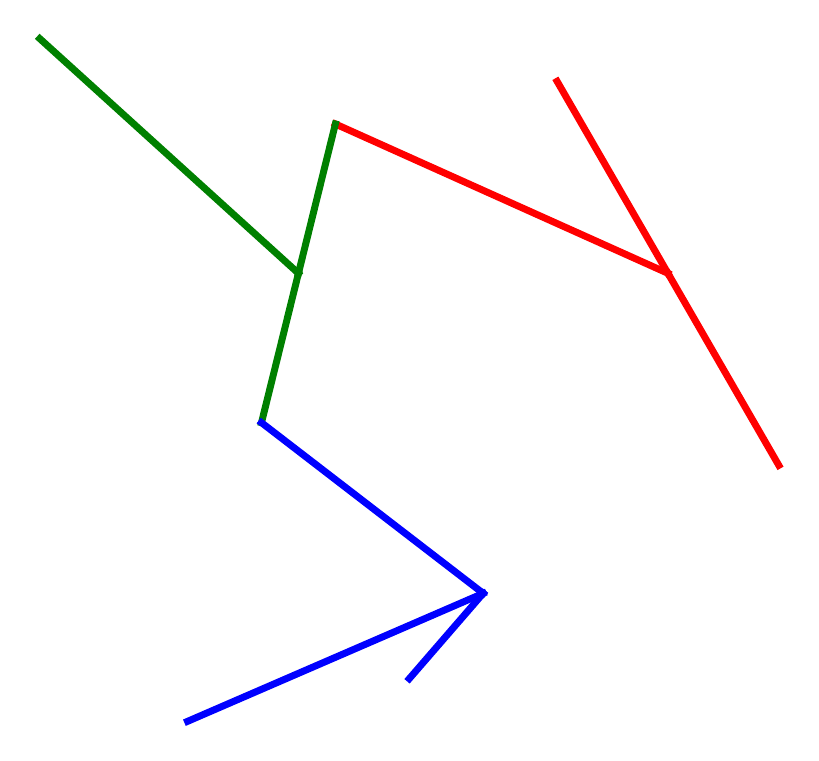}
\includegraphics[scale=.4]{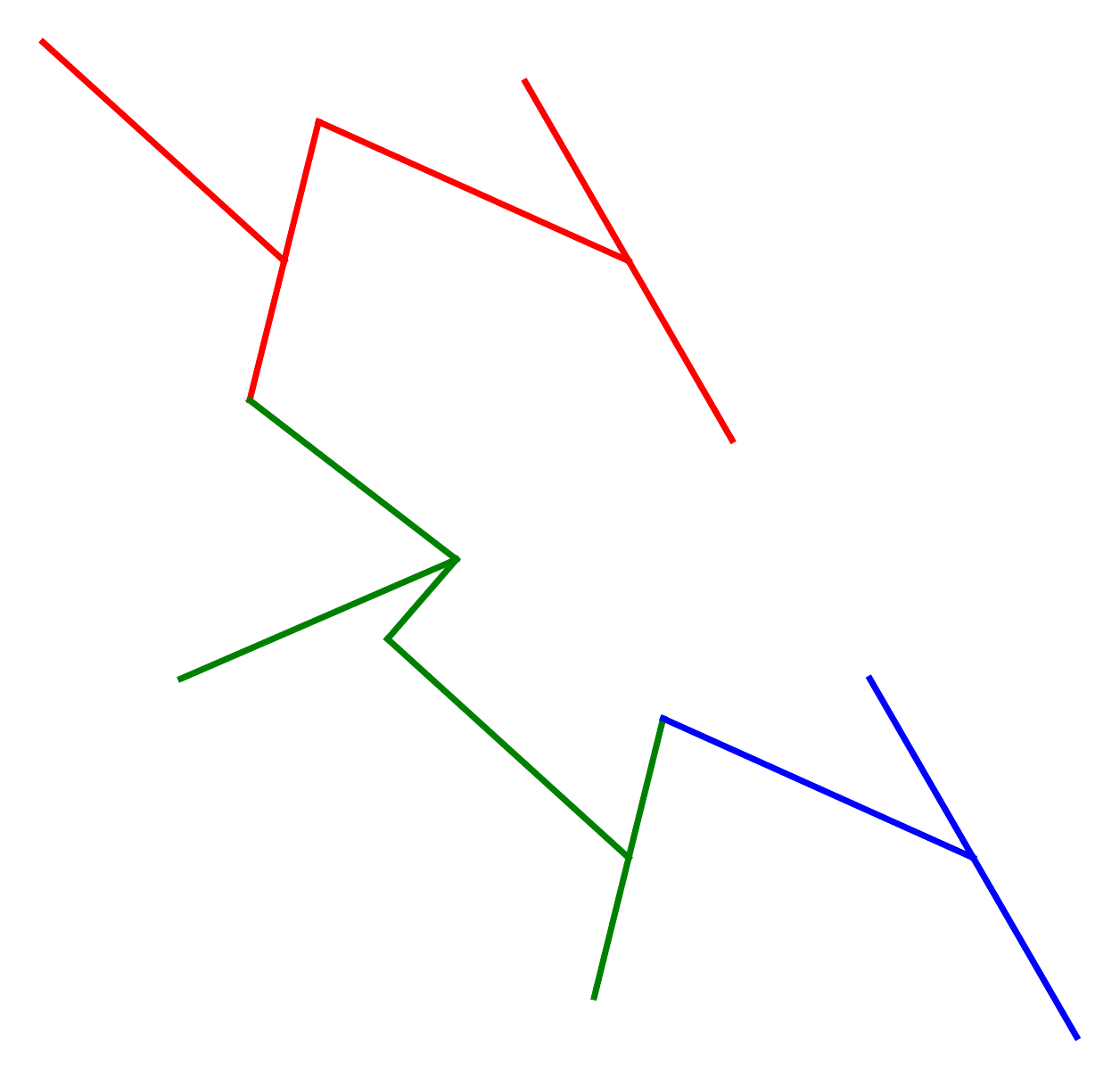}
\caption{\label{fig:ex1-arbres-0-et-1}The trees $W$ and $\tau(W)$ for  $a\mapsto ac,b\mapsto ab,c\mapsto b$.}
\end{figure}

We proceed to get a piecewise rotation of the contour circle. We fix
the cyclic orders given by the embeddings in the contracting plane
oriented clockwise. The contour substitution $\chi$ and its dual are
\[
\chi:\begin{array}[t]{rcl}
a_{13} &\mapsto& b_{21}a_{21}\\
a_{21} &\mapsto& b_{32}\\
a_{32} &\mapsto& a_{13}a_{32}b_{13}\\
b_{13} &\mapsto& c_{13}b_{21}\\
b_{21} &\mapsto& c_{21}\\
b_{32} &\mapsto& b_{13}b_{32}c_{32}\\
c_{13} &\mapsto& a_{21}\\
c_{21} &\mapsto& a_{32}\\
c_{32} &\mapsto& a_{13}
\end{array}
\quad
\chi^*:\begin{array}[t]{rcl}
a_{13} &\mapsto& a_{32}c_{32}\\
a_{21} &\mapsto& a_{13}c_{13}\\
a_{32} &\mapsto& a_{32}c_{21}\\
b_{13} &\mapsto& a_{32}b_{32}\\
b_{21} &\mapsto& a_{13}b_{13}\\
b_{32} &\mapsto& a_{21}b_{32}\\
c_{13} &\mapsto& b_{13}\\
c_{21} &\mapsto& b_{21}\\
c_{32} &\mapsto& b_{32}
\end{array}
\]
There is a unique infinite prefix-suffix expansion with empy suffix both for $\sigma$ and $\chi^*$:

\begin{center}
\begin{tikzpicture}[->,>=stealth',shorten >=1pt,auto,node distance=2cm,
                    thick,main node/.style={circle,draw,font=\sffamily\bfseries}]

  \node[main node] (1) [right of=0] {$b$};
  
  \path[every node/.style={font=\sffamily\small}]
    (1) edge (0)
    (1) edge [loop right] node [right] {$a,\epsilon$} (1);   
\end{tikzpicture}
\begin{tikzpicture}[->,>=stealth',shorten >=1pt,auto,node distance=2cm,
                    thick,main node/.style={circle,draw,font=\sffamily\bfseries}]

  \node (0) {};                  
  \node[main node] (1) [right of=0] {$b_{32}$};
  
  \path[every node/.style={font=\sffamily\small}]
    (1) edge (0)
    (1) edge [loop right] node [right] {$a_{21},\epsilon$} (1);   
\end{tikzpicture}
\end{center}

Thus we subdivide the arc $b_{32}$ into two consecutive arcs $b_{34}$
and $b_{42}$. Using the Vershik map, we compute the images of the arcs
$Q([\gamma])$ for finite prefix-suffix expansion paths with non-empty
suffix. We observe that the consecutive arcs $a_{13}$ and $a_{32}$ are
rotated by the same angle and, similarly the consecutive arcs $b_{42}$
and $b_{21}$. We get the piecewise rotation of the circle with eight
pieces described in Figure~\ref{fig:ex1-circle}
\begin{figure}[h]
\centering
\includegraphics[scale=1]{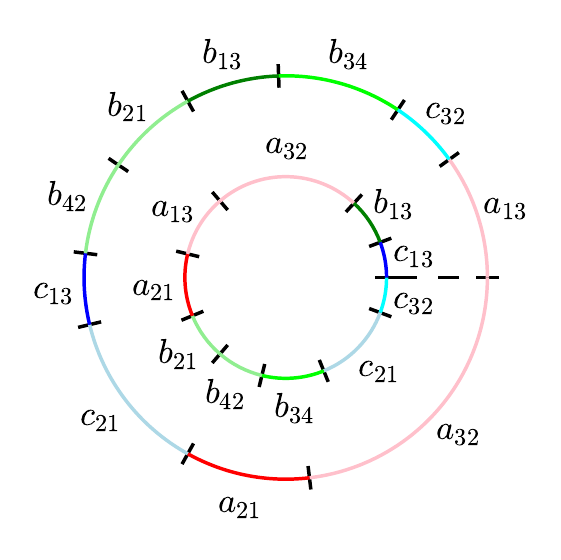}
\caption{\label{fig:ex1-circle} Piecewise rotation of the circle for
  the substitution $a\mapsto ac,b\mapsto ab,c\mapsto b$.  The
  innermost circle is the contour while the outermost circle is the
  result of the piecewise exchange.}
\end{figure}

The tree substitution fails to depict a tree inside the
contracting plane, as illustrated in Figure~\ref{fig:ex1-fourth-iterate}. Thus we also need in this example to prune the tree
substitution (see Section~\ref{sec:pruning-cheating}) to get a tree inside the contracting plane. After some iterations we get the tree pictured in Figure~\ref{fig:ex1-eigth-iterate}.

\begin{figure}[h]
\centering
\begin{overpic}[scale=.6]{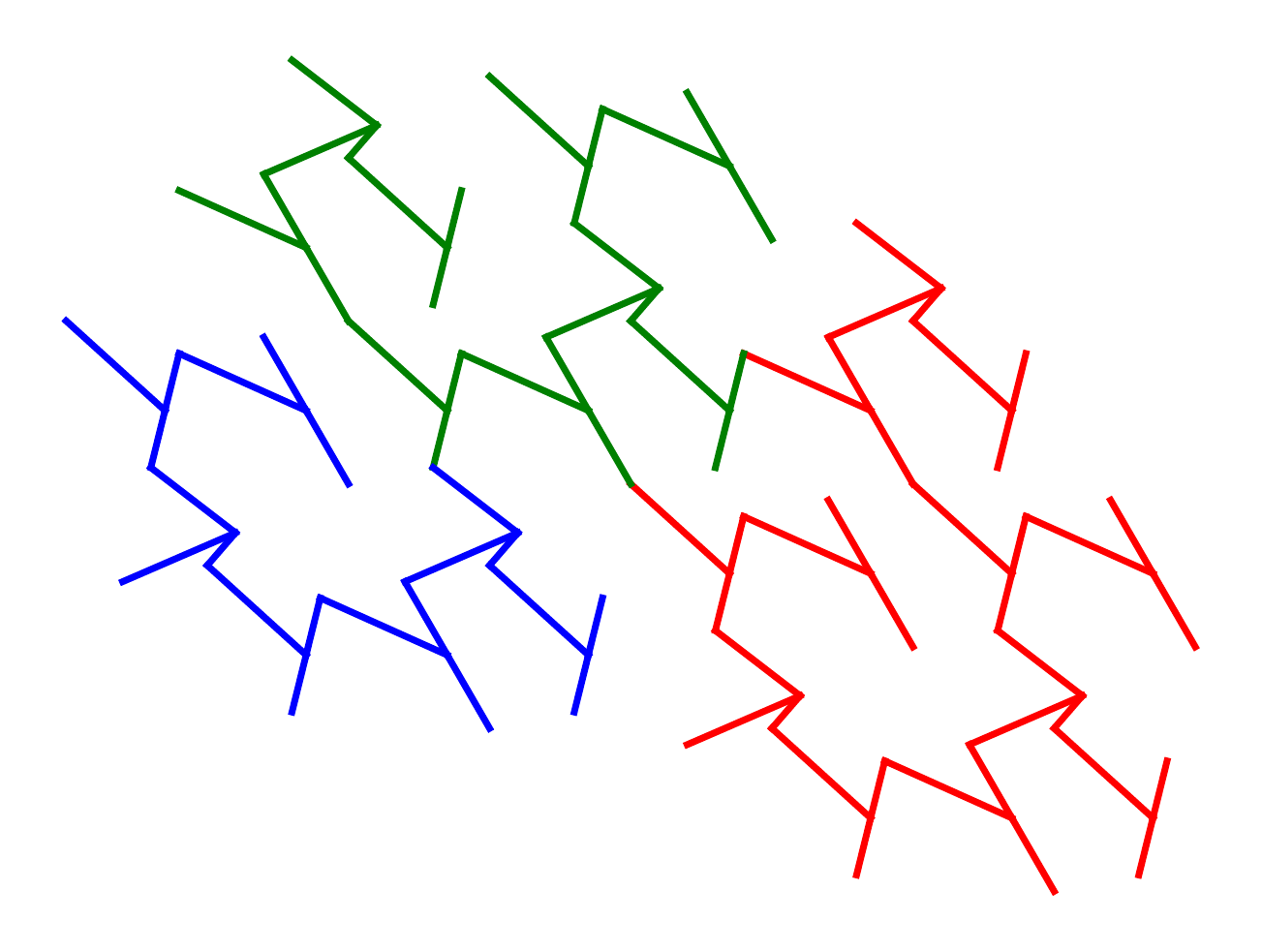}
\put (48,36) {$\bigcirc$}
\put (50,10) {\rule{1pt}{20mm}}
\put (0,10) {\parbox{3.9cm}{\scriptsize These two edges are not adjacent in the abstract tree.}}
\end{overpic}
\caption{\label{fig:ex1-fourth-iterate}Projection of the tree $\tau^4(W)$ for the tree substitution of Figure~\ref{fig:ex1-tree-substitution}. This is not a tree inside the contracting plane.}
\end{figure}

\begin{figure}[h]
\centering
\includegraphics[scale=.2]{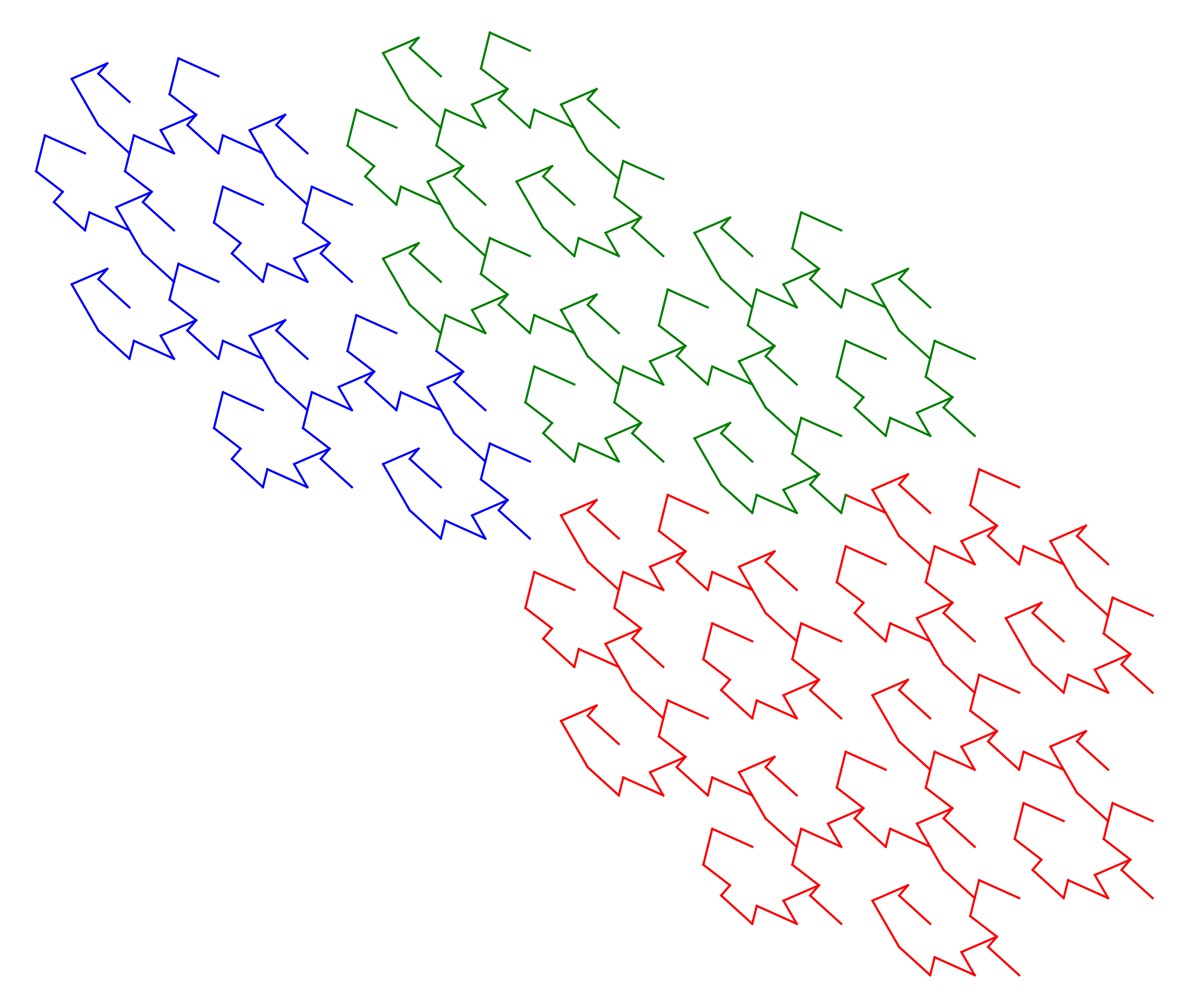}
\caption{\label{fig:ex1-eigth-iterate}Projection in the contracting plane of the eighth iterate of the pruned tree substitution for $\sigma:a\mapsto ac$, $b\mapsto ab$, $c\mapsto b$.}
\end{figure}

\subsection{Second example}\label{sec:second-example}

We consider the following substitution which is an iwip automorphism of the free group $F_{\{a,b,c\}}$:
\[
\sigma:\begin{array}[t]{rcl}a&\mapsto&abc\\b&\mapsto&bcabc\\c&\mapsto&cbcabc\end{array}
\]
This example comes from a family of irreducible substitutions which are parageometric iwip automorphisms studied by Leroy~\cite{leroy-Sadic}.

As the computations are more complicated we will not detail them here. 

There are six attracting infinite fixed  words (three positive, one
negative and two more coming from two indivisible Nielsen paths).
Computing the singular pairs of prefix-suffix expansions and their
tails we get the prototiles of the tree substitution and their images
described in Figure~\ref{fig:leroy-ex-0-tree-substitution}.

\begin{figure}[h]
\centering
\includegraphics[scale=.5]{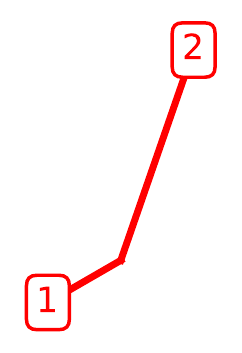}
\raisebox{1cm}{$\mapsto$}
\includegraphics[scale=.5]{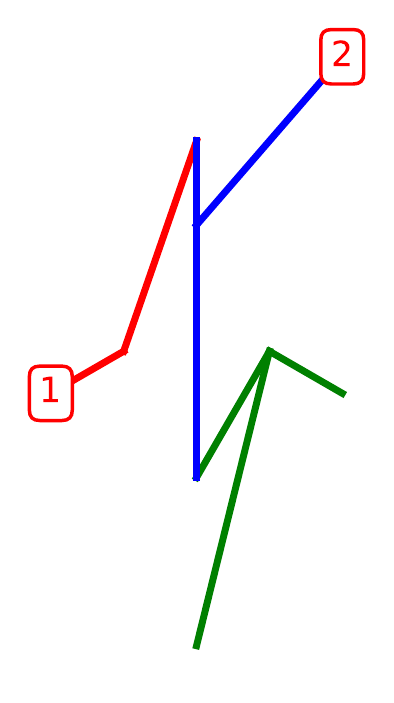}
\qquad
\includegraphics[scale=.5]{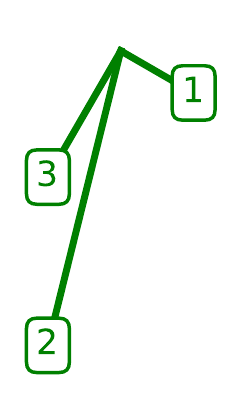}
\raisebox{1cm}{$\mapsto$}
\begin{overpic}[scale=.5]{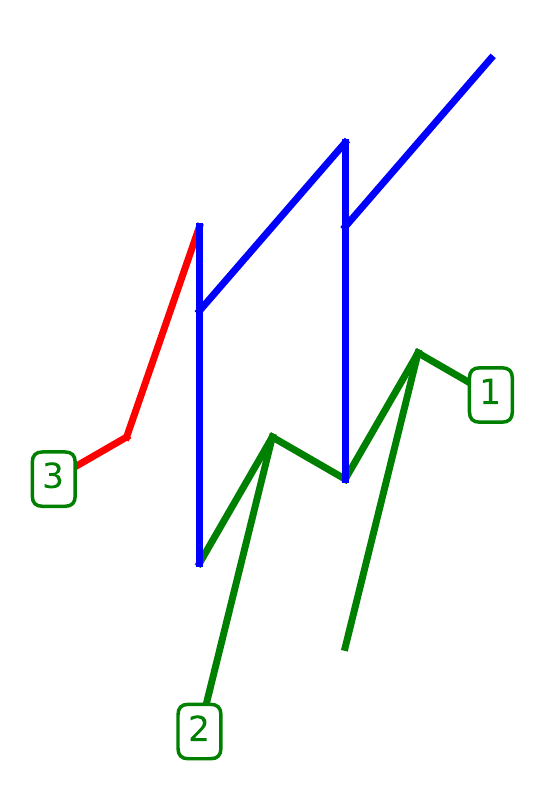}
\put (39,38) {$\bigcirc$}
%\put (45,44) {\line(1,2){14}}
\end{overpic}
\newline
\includegraphics[scale=.5]{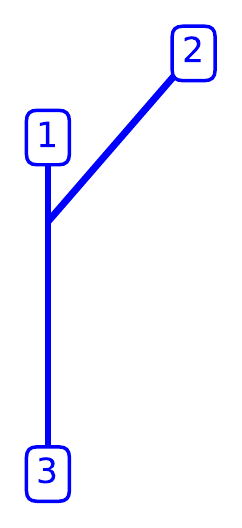}
\raisebox{1.5cm}{$\mapsto$}\hspace{-.1cm}
\begin{overpic}[scale=.5]{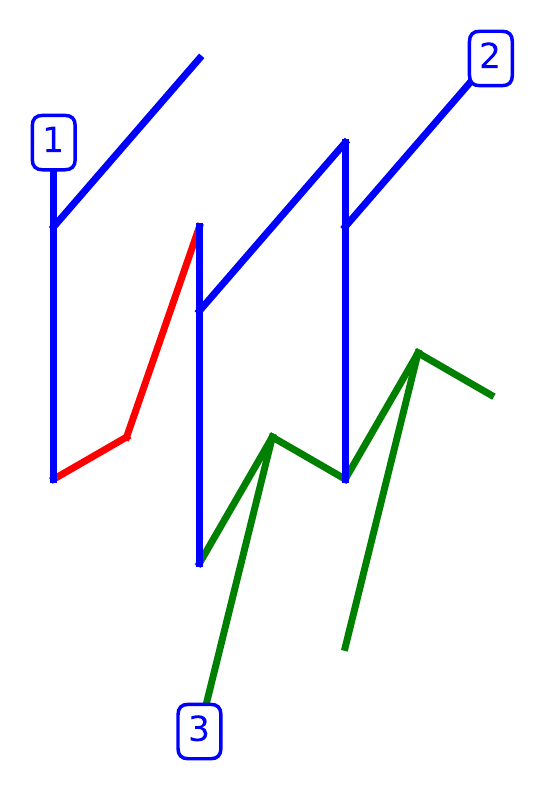}
\put (39,38) {$\bigcirc$}
\end{overpic}
\qquad
\raisebox{2cm}{\parbox{5cm}{In the abstract tree substitution the adjacency inside the circles should rather be:
\begin{center}
\includegraphics[scale=.5]{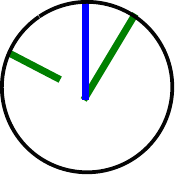}
\end{center}
}}
\caption{\label{fig:leroy-ex-0-tree-substitution}Tree substitution for the substitution $a\mapsto abc,\ b\mapsto bcabc,\ c\mapsto cbcabc$. The images of the prototiles $\tau(W_b)$ and $\tau(W_c)$ in the contracting plane are not trees.}
\end{figure}

We remark that the images $\tau(W_b)$ and $\tau(W_c)$ are not trees when embedded in the contracting plane. Thus we turn to the pruned tree substitution
as before. But again, as shown in Figure~\ref{fig:leroy-ex-0-first-iterate-pruned}, already the first image $\tau(W)$ is not a tree.

\begin{figure}[h]
\centering
\begin{overpic}[scale=.5]{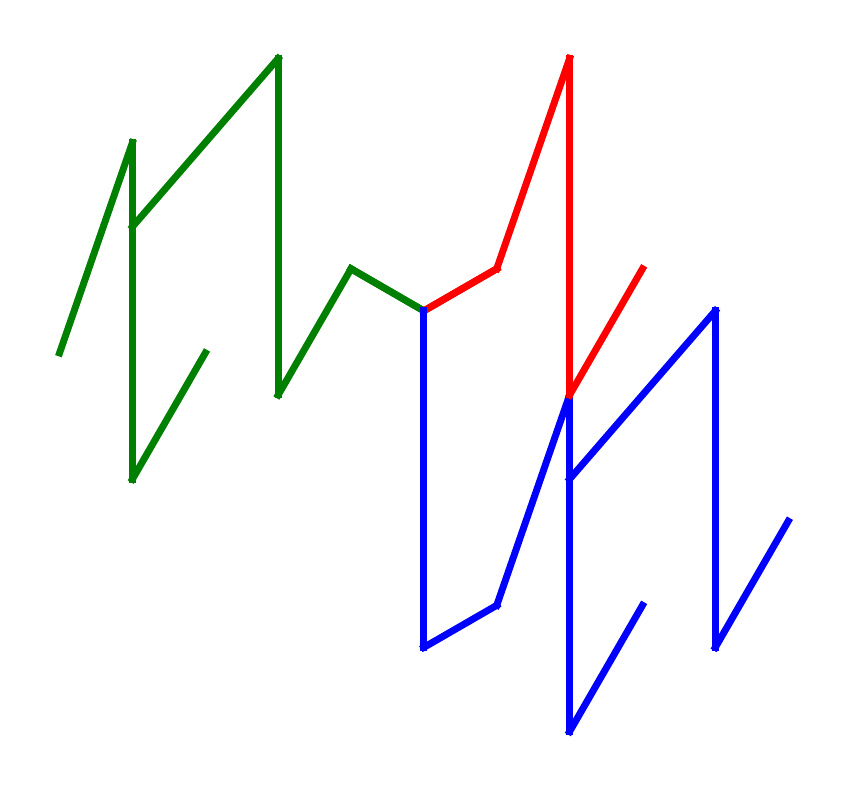}
\put (63,44) {$\bigcirc$}
\end{overpic}
\caption{\label{fig:leroy-ex-0-first-iterate-pruned}Projection in the contracting plane of the first iterate of the pruned tree substitution of $a\mapsto abc,\ b\mapsto bcabc,\ c\mapsto cbcabc$. In the abstract tree, the red and blue patches do not touch inside the circle.}
\end{figure}

We thus need to take a higher cover. This cover is provided by the
observation that the dual substitution satisfies
Condition~\ref{item:adjacency-quest-properties-dual-substitution} of
Question~\ref{quest:properties-dual-substitution}. Thus we can create
a tree substitution inside the tiles of the dual substitution as
illustrated in
Figures~\ref{fig:leroy_0_tree_substitution_pruned_and_dual} and \ref{fig:leroy_0_iterated_image_pruned_and_dual}.

\begin{figure}
  \centering
\begin{tabular}{rccccccc}
$W_i$:&
\raisebox{-2mm}{\includegraphics[scale=.22]{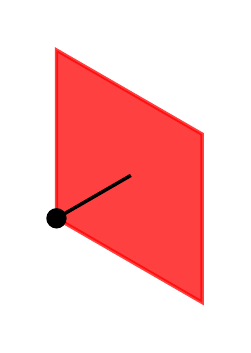}}&
\raisebox{-2mm}{\includegraphics[scale=.22]{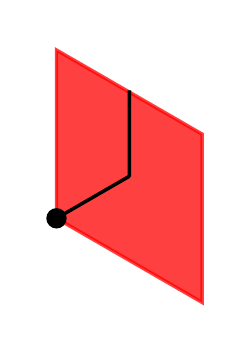}}&
\raisebox{-2mm}{\includegraphics[scale=.22]{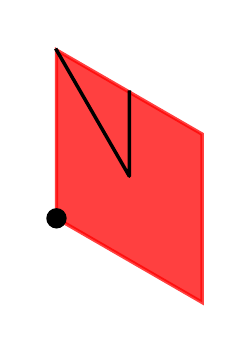}}&
\raisebox{-2mm}{\includegraphics[scale=.22]{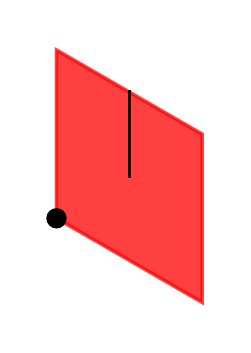}}&
\raisebox{-2mm}{\includegraphics[scale=.22]{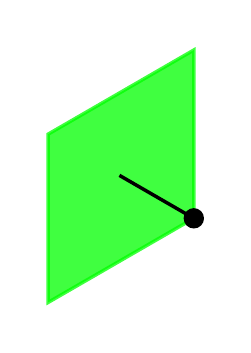}}&
\raisebox{-2mm}{\includegraphics[scale=.22]{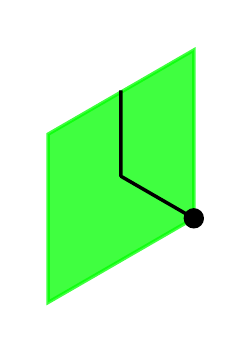}}&
\raisebox{-2mm}{\includegraphics[scale=.22]{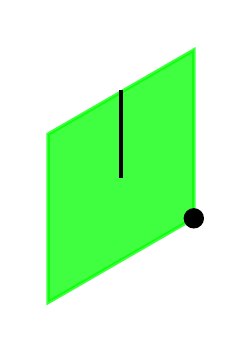}}\\
$\tau(W_i)$:&
\raisebox{-4mm}{\includegraphics[scale=.22]{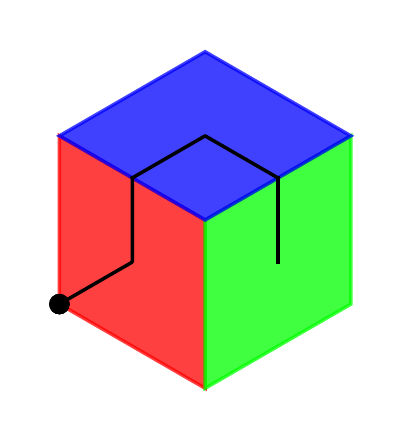}}&
\raisebox{-4mm}{\includegraphics[scale=.22]{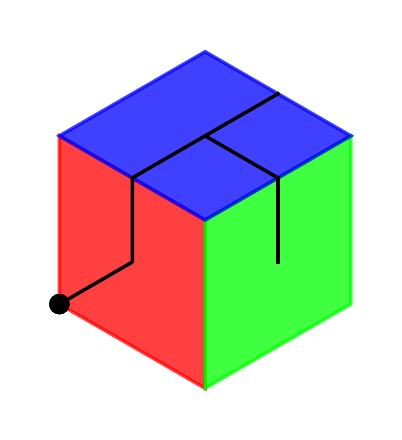}}&
\raisebox{-4mm}{\includegraphics[scale=.22]{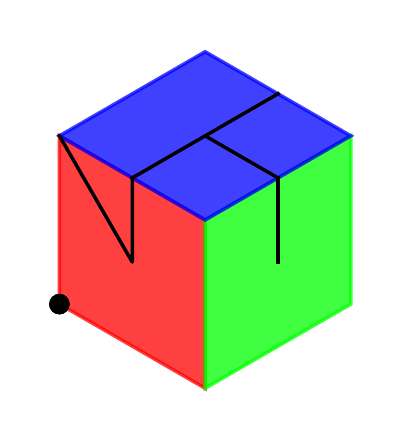}}&
\raisebox{-4mm}{\includegraphics[scale=.22]{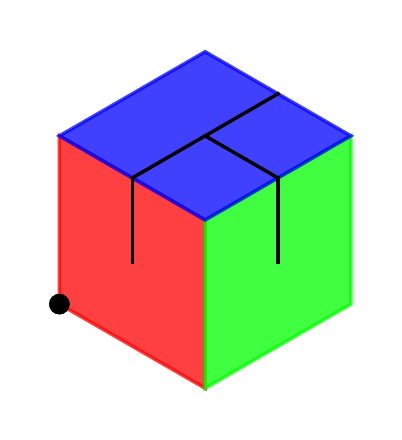}}&
\raisebox{-4mm}{\includegraphics[scale=.22]{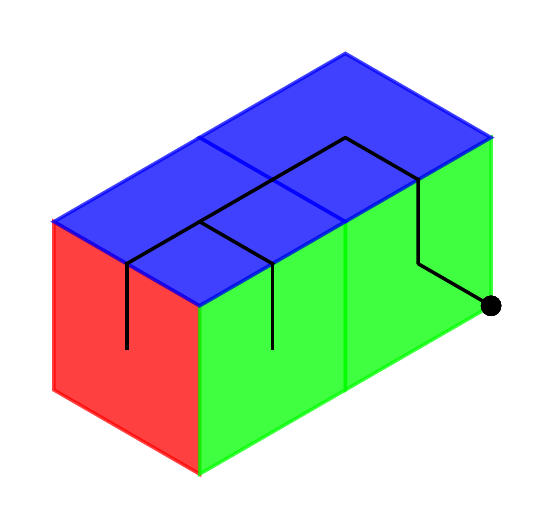}}&
\raisebox{-4mm}{\includegraphics[scale=.22]{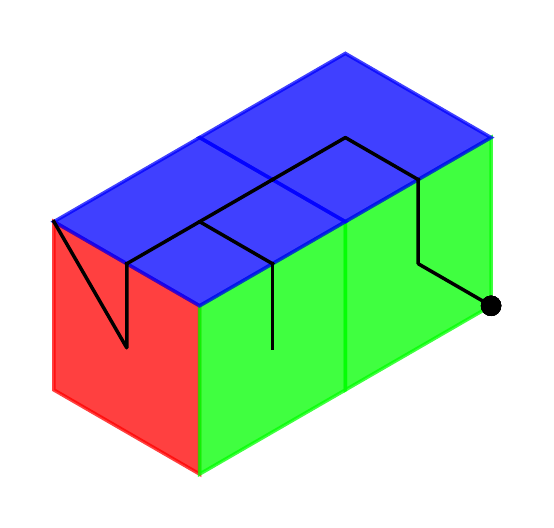}}&
\raisebox{-4mm}{\includegraphics[scale=.22]{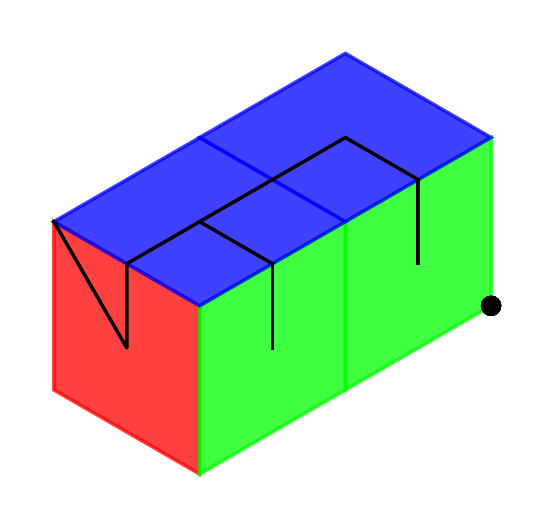}}
\end{tabular}

\begin{tabular}{rccccccc}
$W_i$:&
\raisebox{-2mm}{\includegraphics[scale=.22]{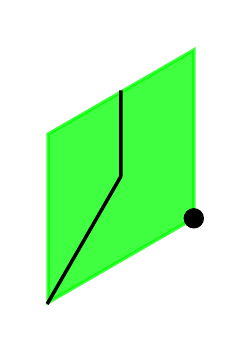}}&
\raisebox{-2mm}{\includegraphics[scale=.22]{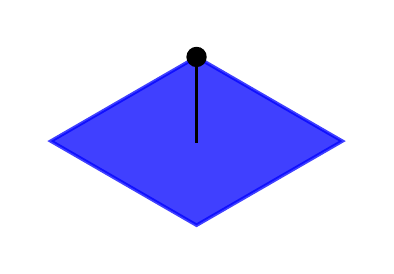}}&
\raisebox{-2mm}{\includegraphics[scale=.22]{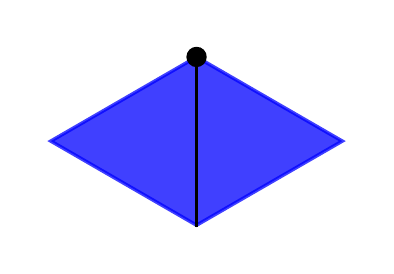}}&
\raisebox{-2mm}{\includegraphics[scale=.22]{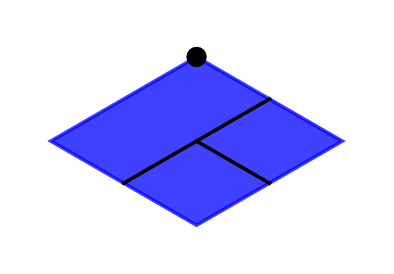}}&
\raisebox{-2mm}{\includegraphics[scale=.22]{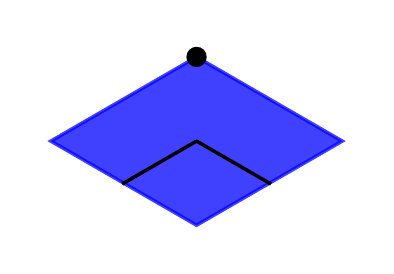}}&
\raisebox{-2mm}{\includegraphics[scale=.22]{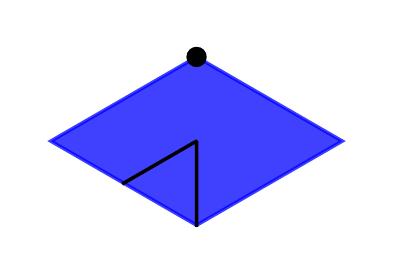}}\\
$\tau(W_i)$:&
\raisebox{-6mm}{\includegraphics[scale=.22]{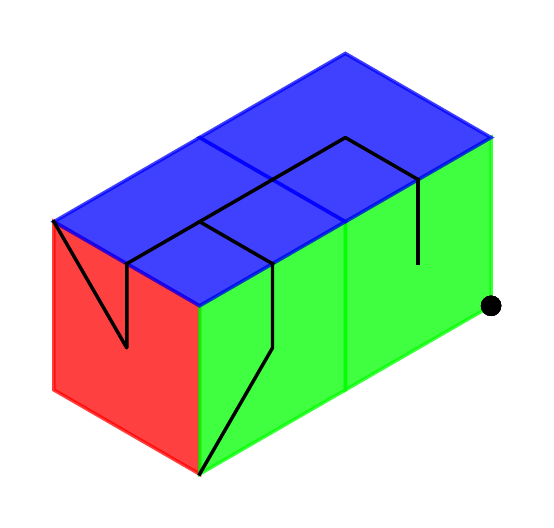}}&
\raisebox{-6mm}{\includegraphics[scale=.22]{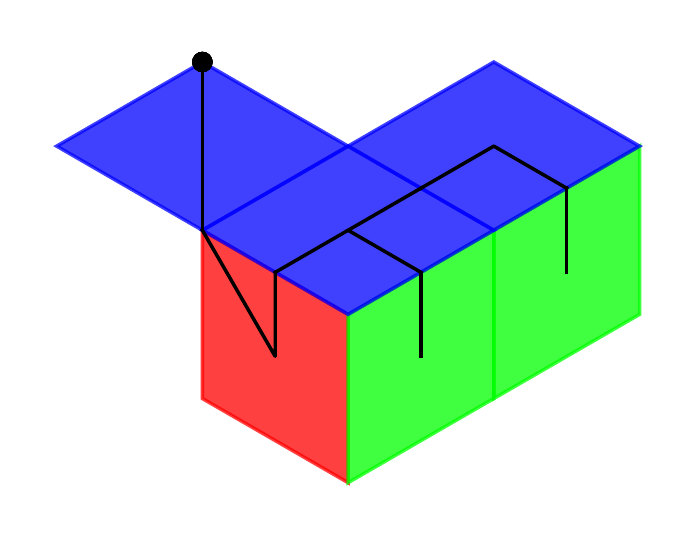}}&
\raisebox{-6mm}{\includegraphics[scale=.22]{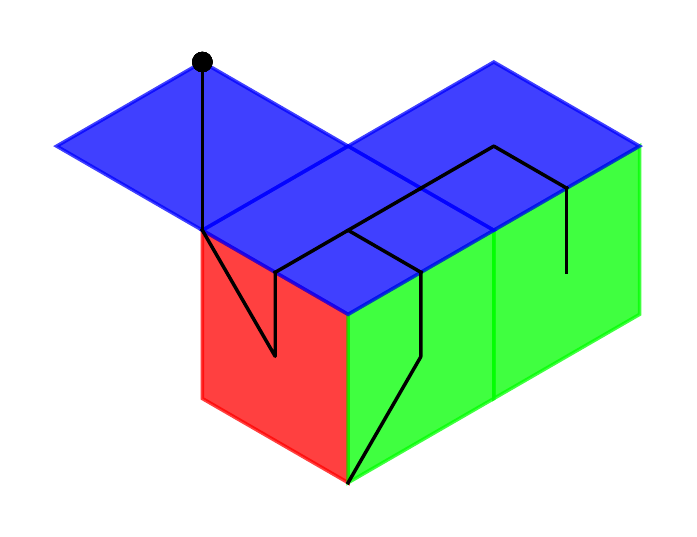}}&
\raisebox{-6mm}{\includegraphics[scale=.22]{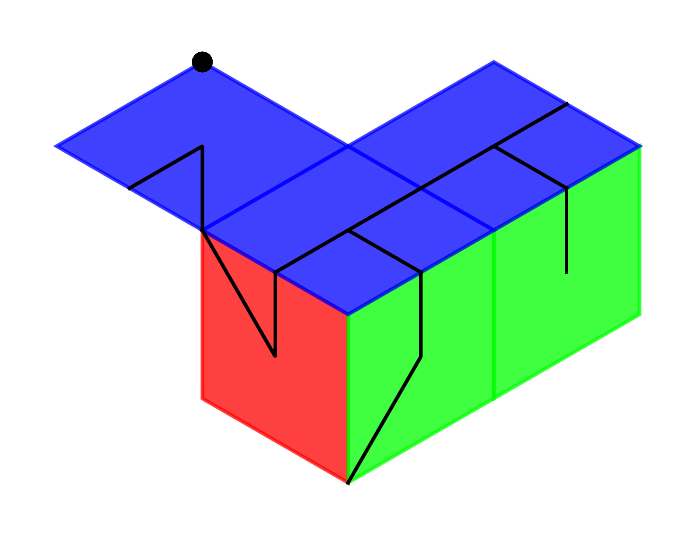}}&
\raisebox{-6mm}{\includegraphics[scale=.22]{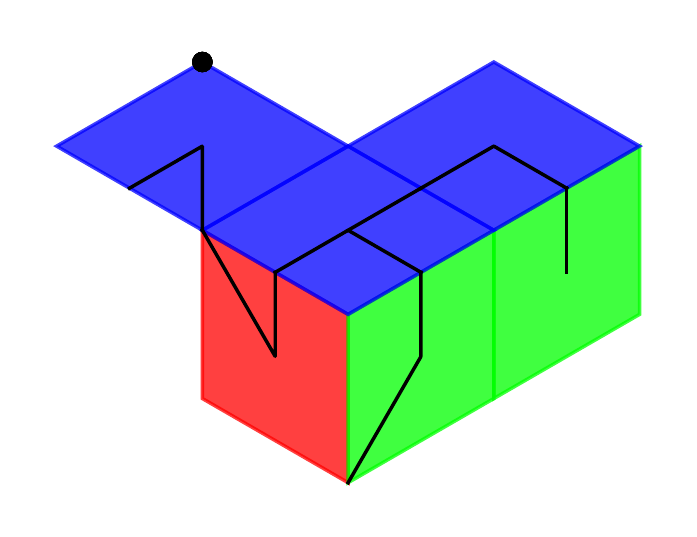}}&
\raisebox{-6mm}{\includegraphics[scale=.22]{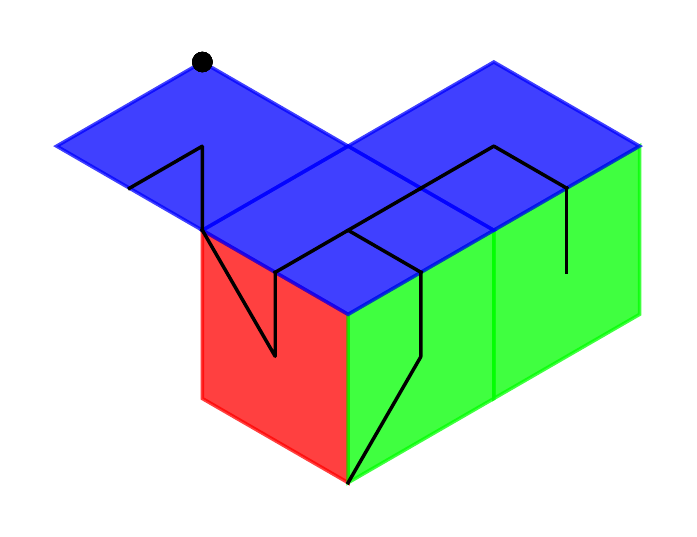}}
\end{tabular}
\caption{\label{fig:leroy_0_tree_substitution_pruned_and_dual}Tree
  substitution obtained by adjacency of tiles of the dual substitution
  for the substitution $a\mapsto abc,\ b\mapsto bcabc,\ c\mapsto
  cbcabc$. The black dot indicates the position of the origin.}\end{figure}

\begin{figure}[h]
\centering
\includegraphics[scale=.13]{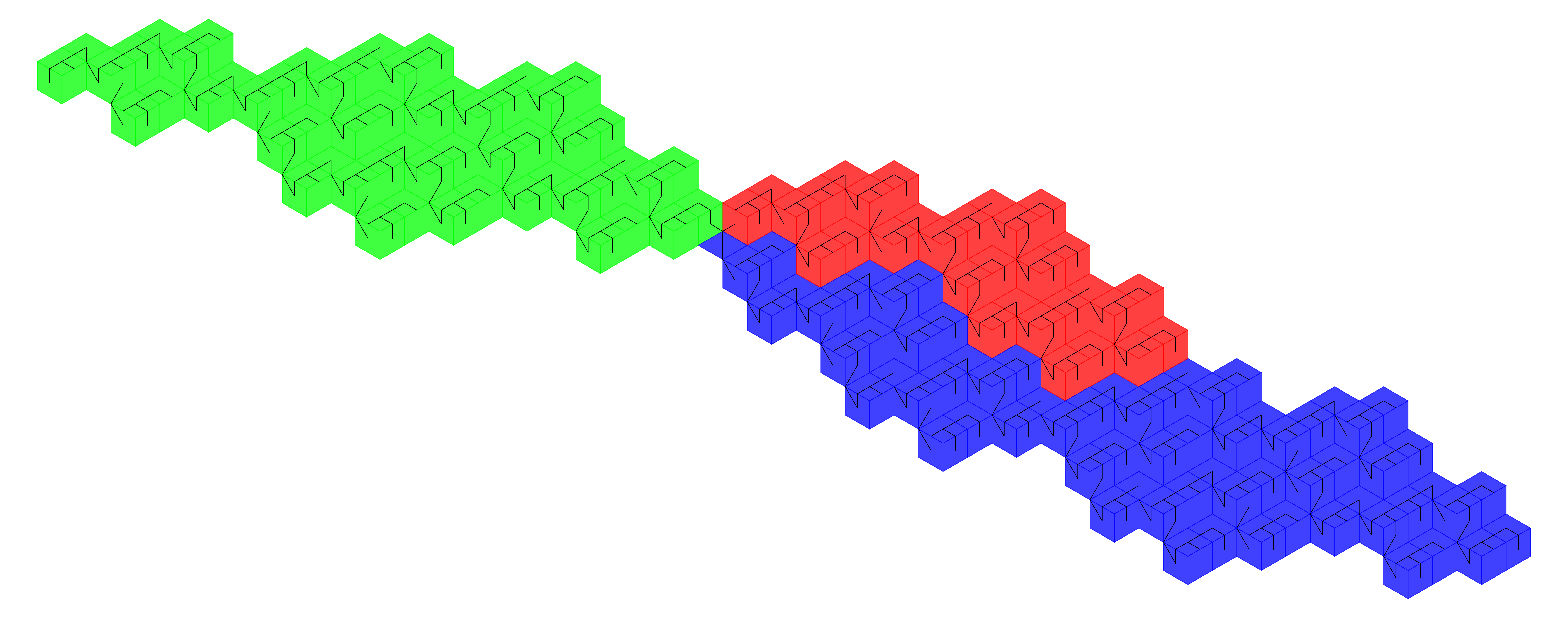}
\caption{\label{fig:leroy_0_iterated_image_pruned_and_dual}Third iterate of the tree substitution obtained by using Condition~\ref{item:adjacency-quest-properties-dual-substitution} of Question~\ref{quest:properties-dual-substitution} for the substitution $a\mapsto abc,\ b\mapsto bcabc,\ c\mapsto cbcabc$.}
\end{figure}

\bigskip

\noindent\textbf{Acknowledgements:} The authors are grateful to Pierre Arnoux for many discussions and his interest in our work.

%\bibliographystyle{alpha}
%\bibliography{bibli.bib}

\newcommand{\etalchar}[1]{$^{#1}$}

\end{document}